\pdfoutput=1
\documentclass{amsart}

\usepackage{mathtools}
\usepackage{latexsym}
\usepackage{amssymb}
\usepackage{braket}
\usepackage{mathrsfs}
\usepackage{ifthen}
\usepackage{here}
\usepackage{todonotes}
\usepackage{tikz}
\usetikzlibrary{patterns,decorations.pathreplacing,calligraphy}
\usepackage{comment} 
\usepackage{mleftright}
\usepackage{amsmath, amsthm,verbatim,amsfonts, graphicx, enumerate}
\usepackage{thmtools,thm-restate}

\usepackage[dvipsnames]{xcolor}
\usepackage[pagebackref,hypertexnames=false,colorlinks=true,
            linkcolor=NavyBlue,
            citecolor=NavyBlue,
            urlcolor=NavyBlue]{hyperref} 
\usepackage{cleveref}
 \usepackage[all]{xy}
 \usepackage{amscd}
 \usepackage[alphabetic,backrefs,msc-links]{amsrefs}
 \usepackage{color}
 \usepackage{enumitem}
\newlist{steps}{enumerate}{1}
\setlist[steps, 1]{label = Step \arabic*:}
\usepackage[abs]{overpic}
\usepackage{tikz-cd}
\usepackage{pinlabel}

\newcounter{casenum}

\usepackage{geometry}
\makeatletter
\DeclareRobustCommand\widecheck[1]{{\mathpalette\@widecheck{#1}}}
\def\@widecheck#1#2{%
   \setbox\z@\hbox{\m@th$#1#2$}%
   \setbox\tw@\hbox{\m@th$#1%
      {%
         \vrule\@width\z@\@height\ht\z@
         \vrule\@height\z@\@width\wd\z@}$}%
   \dp\tw@-\ht\z@
   \@tempdima\ht\z@ \advance\@tempdima2\ht\tw@ \divide\@tempdima\thr@@
   \setbox\tw@\hbox{%
      \raise\@tempdima\hbox{\scalebox{1}[-1]{\lower\@tempdima\box\tw@}}}%
   {\ooalign{\box\tw@ \cr \box\z@}}}
\makeatother

\theoremstyle{plain}
\newtheorem{thm}{Theorem}[section]
\crefname{thm}{Theorem}{Theorems}
\Crefname{thm}{Theorem}{Theorems}
\newtheorem{prop}[thm]{Proposition}
\crefname{prop}{Proposition}{Propositions}
\Crefname{prop}{Proposition}{Propositions}
\newtheorem{lem}[thm]{Lemma}
\crefname{lem}{Lemma}{Lemmas}
\Crefname{lem}{Lemma}{Lemmas}
\newtheorem{cor}[thm]{Corollary}
\crefname{cor}{Corollary}{Corollaries}
\Crefname{cor}{Corollary}{Corollaries}

\crefname{claim}{Claim}{Claims}
\Crefname{claim}{Claim}{Claims}

\crefname{property}{Property}{Properties}
\Crefname{property}{Property}{Properties}

\crefname{problem}{Problem}{Problems}
\Crefname{problem}{Problem}{Problems}

\crefname{conjecture}{Conjecture}{Conjecture}
\Crefname{conjecture}{Conjecture}{Conjecture}

\theoremstyle{definition}
\newtheorem{defn}[thm]{Definition}
\crefname{defn}{Definition}{Definitions}
\Crefname{defn}{Definition}{Definitions}

\crefname{notation}{Notation}{Notations}
\Crefname{notation}{Notation}{Notations}

\crefname{convention}{Convention}{Conventions}
\Crefname{convention}{Convention}{Conventions}

\crefname{cond}{Condition}{Conditions}
\Crefname{cond}{Condition}{Conditions}

\crefname{assum}{Assumption}{Assumptions}
\Crefname{assum}{Assumption}{Assumptions}

\crefname{conj}{Conjecture}{Conjectures}
\Crefname{conj}{Conjecture}{Conjectures}

\crefname{claim1}{Claim}{Claims}
\Crefname{claim1}{Claim}{Claims}
\Crefname{ques}{Question}{Question}
\newtheorem{ques}[thm]{Question}
\crefname{que}{Question}{Question}
\Crefname{que}{Question}{Question}

\theoremstyle{remark}
\newtheorem{rem}[thm]{Remark}
\crefname{rem}{Remark}{Remarks}
\Crefname{rem}{Remark}{Remarks}
\newtheorem{ex}[thm]{Example}
\crefname{ex}{Example}{Examples}
\Crefname{ex}{Example}{Examples}

\crefname{section}{Section}{Sections}
\Crefname{section}{Section}{Sections}
\crefname{subsection}{Subsection}{Subsections}
\Crefname{subsection}{Subsection}{Subsections}
\crefname{figure}{Figure}{Figures}
\Crefname{figure}{Figure}{Figures}

\newcommand{\spinc}{\mathrm{spin}^c}
\newcommand{\Z}{\mathbb{Z}}

\newcommand{\Q}{\mathbb{Q}}

\newcommand{\id}{\mathrm{id}}
\newcommand{\ind}{\mathop{\mathrm{ind}}\nolimits}

\newcommand{\C}{\mathbb{C}}
\newcommand{\s}{\mathfrak{s}}

\newcommand{\R}{\mathbb R}

\def\ker{\operatorname{Ker}}

\def\det{\operatorname{det}}

\def\dim{\operatorname{dim}}

\def\id{\operatorname{Id}}

\def\ind{\operatorname{ind}}

\newcommand{\mbar}[1]{{\ooalign{\hfil#1\hfil\crcr\raise.167ex\hbox{--}}}}

\def\wt{\widetilde}

\newcommand{\Pin}{\operatorname{Pin}}

     \RequirePackage{rotating}                   
    \def\HMt{%
       \setbox0=\hbox{$\widehat{\mathit{HM}}$}
       \setbox1=\hbox{$\mathit{HM}$}
       \dimen0=1.1\ht0
       \advance\dimen0 by 1.17\ht1
       \smash{\mskip2mu\raise\dimen0\rlap{%
          \begin{turn}{180}
              {$\widehat{\phantom{\mathit{HM}}}$}
           \end{turn}} \mskip-2mu    
                \mathit{HM}
                    }{\vphantom{\widehat{\mathit{HM}}}}{}}

\title[$\widetilde{H}$-cobordisms, infinite cyclic covers, and real Seiberg--Witten theory]{$\widetilde{H}$-cobordisms, infinite cyclic covers,\\ and real Seiberg--Witten theory}

\author{Sungkyung Kang}
\address{Department of Pure Mathematics and Mathematical Statistics, University of Cambridge, United Kingdom}
\email{sungkyung.kang@dpmms.cam.ac.uk}

\author{JungHwan Park}
\address{Department of Mathematical Sciences and RIM, Seoul National University, Republic of Korea}
\email{jungpark0817@snu.ac.kr}

\author{Masaki Taniguchi} 
\address{Department of Mathematics, Kyoto University, Japan}
\email{taniguchi.masaki.7m@kyoto-u.ac.jp}

\begin{document}

\begin{abstract}
We study $\wt{H}$-cobordisms of distinguished homology handles, introduced by Kawauchi in 1976 using infinite cyclic covers. Despite the extensive development of gauge-theoretic and Floer-theoretic invariants since Kawauchi's work, none were previously known to distinguish smooth and topological $\wt{H}$-cobordism. In this paper, we construct asymptotic invariants of distinguished homology handles by applying real Seiberg--Witten theory to finite cyclic covers. Using these invariants, we show that the kernel of the natural map from the smooth $\wt{H}$-cobordism group to its topological counterpart contains a subgroup isomorphic to $\Z$. We also define spin versions of these groups and show that the kernel of the corresponding natural map contains a subgroup isomorphic to $\Z^\infty$.
\end{abstract}

\maketitle

%\tableofcontents

% \textcolor{red}{SK: My philosophy for writing section 2 and 3 was that, whenever something can be proven in full generality (like any field instead of characteristic 0), then I would write in that way. The reason is that I expect this paper to be the starting point or a standard reference for a direction towards studying infinite cyclic covers via gauge theory, and I would not want anyone having to write a generalized variant of lemmas here just because we have not written in that way. Please let me know if anything more can still be generalized.}

\section{Introduction}

Since the groundbreaking work of Donaldson \cite{Donaldson:1983} and Freedman \cite{Freedman:1982}, distinguishing the smooth and topological categories has been a central theme in 4-dimensional topology. Gauge theory and Floer theory have since revealed rich structures reflecting this distinction in knot concordance and homology cobordism; see, for example, \cite{Fintushel-Stern:1985, Cochran-Gompf:1988,Furuta:1990,Endo:1995,Fr02,HLR:2012,Hedden-Kirk:2012,CHH:2013,Hom:2015,HKL:2016,OSS:2017,Pinzon-Caicedo:2017,Kim-Park:2018,FPR:2019,Daemi:2020,Cha-Kim:2021,DHST:2021,Cha:2021,dai2023infinite,Chen:2024,Nozaki-Sato-Taniguchi:2024}. It has remained unclear, however, whether this distinction can also be detected by $\wt{H}$-cobordism, introduced by Kawauchi in 1976 \cite{Kawauchi:1976}. This relation gives rise to an abelian group of distinguished homology handles under circle sum. A central difficulty in studying its smooth aspects is that the defining homological condition concerns an infinite cyclic cover, so the usual gauge-theoretic and Floer-theoretic tools for homology cobordism do not apply directly. In this paper, we initiate a gauge-theoretic study of this relation by applying the real Seiberg--Witten Floer theory developed by Konno--Miyazawa--Taniguchi~\cite{KMT21,konno2024involutions} to finite cyclic covers and extracting invariants from their behavior as the covering degree tends to infinity. As a consequence, we give the first known examples of nontrivial smooth $\wt{H}$-cobordism classes which become trivial in the topological category.

More precisely, a \emph{distinguished homology handle} is a closed oriented $3$-manifold with the integral homology of $S^1\times S^2$, equipped with a chosen generator of its first integral cohomology. An \emph{$\wt{H}$-cobordism} between two such handles is a compact oriented cobordism $W$ equipped with an integral cohomology class extending the chosen generators, such that the associated infinite cyclic cover has finite-dimensional rational homology. Equivalently, the defining homological condition is $H_\ast(W;\Q(t))=0$, where the local coefficients are determined by the distinguished class. Thus $\wt{H}$-cobordism can also be viewed as $L^2$-acyclic cobordism over $\Z$; see \cite[Lemma~1.34]{Lueck:2002} and the account of work of Cappell--Davis--Weinberger in \cite{Davis:2015}. Not every $\wt{H}$-cobordism is an integral homology cobordism. However, by \Cref{lem: homology cobordism implies Kawauchi}, every distinguished integral homology cobordism is an $\wt{H}$-cobordism. Thus $\wt{H}$-cobordism generalizes integral homology cobordism.

One advantage of this generalization is that distinguished homology handles modulo $\wt{H}$-cobordism form an abelian group under \emph{circle sum}, as proved by Kawauchi \cite[Section~1]{Kawauchi:1976}. This group is called the \emph{$\wt{H}$-cobordism group} and is denoted by $\Omega(S^1\times S^2)$. Roughly speaking, the circle sum is obtained by removing tubular neighborhoods of oriented loops on which the distinguished classes evaluate to $1$ and gluing the resulting boundary tori compatibly with these classes; see \Cref{sec: real spin and spin c structures} for the precise construction.

Moreover, Kawauchi showed that $0$-surgery induces a well-defined homomorphism from the smooth knot concordance group $\mathcal{C}$,
\[
\mathcal{S}\colon\mathcal{C}\longrightarrow\Omega(S^1\times S^2);
\qquad [K]\longmapsto[S^3_0(K)],
\]
and that the algebraic concordance class of $K$ is determined by $\mathcal{S}([K])$. More precisely, he constructed a surjective homomorphism $\psi$ fitting into the commutative diagram
\[
\begin{tikzcd}[column sep=2.5em, row sep=1.5em]
\mathcal{C} \arrow[rr,"\mathcal{S}"] \arrow[dr,swap,"\phi"] & & \Omega(S^1\times S^2) \arrow[dl,"\psi"] \\
& \mathcal{AC} &
\end{tikzcd}
\]
where $\mathcal{AC}$ denotes the algebraic concordance group and $\phi$ is Levine's natural surjective homomorphism defined using Seifert matrices \cite{Levine:1969}. In particular, $\Omega(S^1\times S^2)$ has infinite rank. Furthermore, the topological counterparts naturally fit into the following commutative diagram \cite[Section~2]{Kawauchi:1976}:
\[
\begin{tikzcd}[column sep=2.5em, row sep=1.5em]
\;\;\mathcal{C}\;\; \arrow[rr,"\mathcal{S}"] \arrow[d,swap,"\iota_{\mathcal{C}}"] & & \Omega(S^1\times S^2) \arrow[d,"\iota"] \\
\;\;\mathcal{C}^{\mathrm{top}}\;\; \arrow[rr,"\mathcal{S}^{\mathrm{top}}"] \arrow[dr,swap,"\phi'"] & & \Omega^{\mathrm{top}}(S^1\times S^2) \arrow[dl,"\psi'"] \\
& \;\mathcal{AC}\;&
\end{tikzcd}
\]
Here $\iota_{\mathcal{C}}$ and $\iota$ are the natural forgetful maps, while $\phi'$ and $\psi'$ are the topological counterparts of $\phi$ and $\psi$, respectively. Thus $\phi=\phi'\circ\iota_{\mathcal{C}}$ and $\psi=\psi'\circ\iota$. The kernels of $\mathcal{S}$ and $\mathcal{S}^{\mathrm{top}}$ each contain a subgroup isomorphic to $\Z^\infty$ \cite{Litherland:1984,lee2023kernel}, while the kernel of $\phi'$ contains a subgroup isomorphic to $\Z^\infty\oplus\Z_2^\infty$ \cite{Casson-Gordon:1978,Jiang:1981,Livingston:1999}. Moreover, Cha announced results implying that the kernel of $\psi'$ contains a subgroup isomorphic to $\Z^\infty$ \cite{Cha:2016}.

The main motivation for this paper is to study the $\wt{H}$-cobordism analogue of the kernel of the forgetful homomorphism
\[
\iota_{\mathcal{C}}\colon\mathcal{C}\longrightarrow\mathcal{C}^{\mathrm{top}}.
\]
The kernel of $\iota_{\mathcal{C}}$ has attracted significant attention, as it illustrates how the distinction between the topological and smooth categories in dimension 4 can be detected through knot theory. In particular, its nontriviality implies the existence of an exotic $\mathbb{R}^4$, that is, a smooth manifold homeomorphic but not diffeomorphic to the standard $\mathbb{R}^4$; see, for instance, \cite[Exercise~9.4.23]{Gompf-Stipsicz:1999}. The kernel has been studied extensively, including in many of the works cited in the first paragraph, and we do not attempt to review the full scope of known results here. It is worth emphasizing, however, that the kernel of $\iota_{\mathcal{C}}$ contains a direct summand of infinite rank~\cite{Hom:2015,OSS:2017,DHST:2021}, as well as a subgroup isomorphic to $\Z_2^\infty$~\cite{HKL:2016}.

In contrast, it has remained unknown whether the kernel of $\iota$ is even nontrivial. The rich structure of $\ker\iota_{\mathcal{C}}$ does not by itself resolve this question, since the zero-surgery homomorphism $\mathcal{S}$ can annihilate classes that distinguish smooth from topological knot concordance. A related result was established by Sato under a fundamental-group restriction \cite[Theorem~4]{sato:1998}. Specifically, he exhibited homology handles that bound compact topological $4$-manifolds homotopy equivalent to $S^1$, but admit no compact smooth filling with the same homotopy type. Our main result shows that the distinction between the two categories persists for $\wt{H}$-cobordism.\footnote{In particular, smooth and topological $L^2$-acyclic cobordism over $\Z$ differ even when restricted to distinguished homology handles.}

% In contrast, it has remained unknown whether the kernel of $\iota$ is even nontrivial. The rich structure of $\ker\iota_{\mathcal{C}}$ does not by itself resolve this question, since the zero-surgery homomorphism $\mathcal{S}$ can annihilate classes that distinguish smooth from topological knot concordance. Our main result shows that the distinction between the two categories persists for $\wt{H}$-cobordism.\footnote{In particular, smooth and topological $L^2$-acyclic cobordism over $\Z$ differ even when restricted to distinguished homology handles.}

\begin{restatable}{thm}{mainkerneltheorem}\label{thm:main}
The kernel of
\[
\iota\colon\Omega(S^1\times S^2)\longrightarrow\Omega^{\mathrm{top}}(S^1\times S^2)
\]
contains infinitely many nontrivial elements. In fact, it contains a subgroup isomorphic to $\Z$.
\end{restatable}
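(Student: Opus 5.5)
The plan is to build a homomorphism-like invariant of smooth $\wt{H}$-cobordism that vanishes on topologically slice-type examples but is nonzero on a specific element, and then to show that element has infinite order. Concretely, for a distinguished homology handle $Y$ with class $\phi\in H^1(Y;\Z)$, take the $n$-fold cyclic cover $Y_n\to Y$ determined by $\phi \bmod n$. The deck generator, composed with a suitable orientation-reversing structure, gives an involution; I would rather use the real structure coming from an involution $\tau$ on $Y$ compatible with $\phi$ (for $0$-surgeries on strongly invertible knots, say), lifted to $Y_n$. Apply the real Seiberg--Witten Floer theory of Konno--Miyazawa--Taniguchi to obtain a Frøyshov-type real invariant $\delta_R(Y_n,\mathfrak{t}_n)$ for the canonical real spin$^c$ structure. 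Then define an asymptotic invariant such as $\underline{\delta}(Y)=\lim_{n\to\infty}\delta_R(Y_n)/n$ (or $\liminf$, or along primes $n$).

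First, I would show that if $W$ is a smooth $\wt{H}$-cobordism from $Y$ to $Y'$, then the $n$-fold cyclic covers $W_n$ are cobordisms whose rational homology, by the finite-dimensionality of $H_*(\wt{W};\Q)$ and a Milnor-type exact sequence, has $b_1,b_2$ bounded independently of $n$. The real Frøyshov inequality for negative-definite (in the real/invariant sense) cobordisms then gives $|\delta_R(Y_n)-\delta_R(Y'_n)|\le C$ with $C$ independent of $n$; after dividing by $n$ the invariant becomes a smooth $\wt{H}$-cobordism invariant. Additivity under circle sum should follow since the $n$-fold cover of a circle sum is a fiberwise sum along tori, and one proves subadditivity in both directions via the natural cobordisms, again with $n$-independent error, so the invariant is a homomorphism $\Omega(S^1\times S^2)\to\R$ (at least a quasi-homomorphism, enough for detecting $\Z$).

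Second, choose a topologically slice knot $K$ (e.g.\ a Whitehead double or $(2,1)$-cable-type example with trivial Alexander polynomial, strongly invertible) so that $S^3_0(K)$ is topologically $\wt{H}$-cobordant to $S^1\times S^2$ by Freedman (the slice disk exterior is a topological $\wt{H}$-cobordism); hence $[S^3_0(K)]\in\ker\iota$. Compute the invariant: the covers $Y_n$ are $0$-surgeries' cyclic covers, which contain a surgery description on the $n$-fold branched cover picture; using a real negative-definite equivariant filling built from an equivariant crossing-change/surgery diagram repeated $n$ times, I expect $\delta_R(Y_n)$ to grow linearly in $n$ with nonzero slope (for instance from a contribution like $n$ times the real invariant of a positive crossing-change cobordism). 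This gives $\underline{\delta}(S^3_0(K))\neq 0$, and the homomorphism property yields that $[S^3_0(K)]$ has infinite order, giving $\Z\subset\ker\iota$.

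The main obstacle is the uniform-in-$n$ control: both the cobordism inequality (ensuring the error terms from $b^+$ and torsion of $H_1(W_n)$ stay bounded, which needs that invariant positive-definite parts do not grow) and the lower bound computation showing genuinely linear growth for the chosen example. I would first try to handle the inequality using the real setting where only the $\tau$-anti-invariant part of $H^2$ enters, and reduce boundedness to the finite-dimensionality of $H_2(\wt W;\Q)$ via the Wang sequence for $\wt W\to W_n$.
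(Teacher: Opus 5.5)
\textbf{The choice of involution is a genuine gap, and it comes first.} Your overall shape is right: real Seiberg--Witten invariants of finite cyclic covers, made asymptotic in the covering degree, evaluated on an element of $\ker\iota$. But real Seiberg--Witten theory needs an involution on the four-dimensional cobordism, and a general smooth $\wt{H}$-cobordism $W$ carries none. An involution lifted from a symmetry of $Y$, such as a strong inversion, does not extend over $W_n$. So there is no real Bauer--Furuta map and no real Fr{\o}yshov inequality for $W_n$. Your invariant is not even defined on $\Omega(S^1\times S^2)$: most handles admit no such symmetry, and the value could depend on the choice. Moreover, a strong inversion reverses the meridian, so $\tau^\ast\phi=-\phi$. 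The anti-invariant part of $H^1$ of $Y$ and of its covers is then nonzero, which is exactly the case the real theory must exclude. The missing idea is an involution that every distinguished cobordism carries canonically. The paper passes to the $2$-power tower and uses the deck involution $t^{2^{n-1}}$ of $W^{2^n}\to W^{2^{n-1}}$, and likewise of $Y^{2^n}\to Y^{2^{n-1}}$.

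\textbf{Bounded Betti numbers do not give an $n$-independent error, and the computation is only an expectation.} Even with an involution, bounding $b_1,b_2$ of the covers uniformly in $n$ is not enough. If the anti-invariant $b^+$ is positive, the fixed-point part of the Bauer--Furuta map is the inclusion $S^0\to S^{b^+}$. This vanishes after inverting $v$, so you get no inequality at all. The paper proves exact vanishing instead. Since $H_\ast(\wt W;\Q)$ is a finitely generated torsion $\Q[t,t^{-1}]$-module, only finitely many $2$-power roots of unity are roots of its annihilator. Hence $t^{2^{n-1}}$ acts trivially on $H_\ast(W^{2^n};\Q)$ for $n\gg0$. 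This fails along arbitrary degrees: if $\Phi_6(t)$ divides the annihilator, then $t^m$ acts by $-1$ on part of $H_\ast(W^{2m};\Q)$ whenever $m$ is an odd multiple of $3$. That is why the $2$-power tower matters. So the anti-invariant $H^1$ and $H^+$ vanish. The paper combines this with three further inputs: real $\mathrm{spin}^c$ structures on $W^{2^n}$ restricting to the canonical boundary ones (which you do not address), torsion $c_1$, and $\sigma(W^{2^n})=0$. Then the real Bauer--Furuta maps of $W^{2^n}$ and its reverse are local with zero shift. This gives genuine local equivalence for all large $n$, so no division by $n$ is needed and the invariant lands in $\wt\prod_{n>0}\mathcal{LE}_{\Z_2}$. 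Finally, you give no mechanism for linear growth on your topologically slice example. The paper uses $\Sigma(2,3,5)\#(S^1\times S^2)$ instead. It lies in $\ker\iota$ because $\Sigma(2,3,5)$ bounds a contractible topological manifold. Its $2^n$-fold cover is $Y^{\#2^{n-1}}\#(S^1\times S^2)\#Y^{\#2^{n-1}}$, with $\tau_n$ swapping the two halves. The real Floer spectrum is locally $SWF(Y)^{\wedge 2^{n-1}}$, so $\delta_R=2^{n-1}$, which has infinite order in $\wt\prod_{n>0}\Q$.
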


The infinite cyclic subgroup constructed in the proof of \Cref{thm:main} is generated by $[(N,\phi)]$, where
\[
N \coloneqq \Sigma(2,3,5)\#(S^1\times S^2)
\]
and $\phi\in H^1(N;\Z)$ is the distinguished generator coming from the $S^1$ factor. Let $M$ be the infinite cyclic cover of $N$ determined by $\phi$, equipped with its deck $\Z$-action. We next give two applications of this example.

There has been considerable recent interest in the existence of equivariant fillings of homology $3$-spheres and, more generally, rational homology $3$-spheres, even when suitable nonequivariant smooth fillings exist; see, e.g., \cite{Dai-Hedden-Mallick:2023,Dai-Kang-Mallick-Park-Stoffregen:2024,KMT21,Alfieri-Dai-Mallick-Taniguchi:2023,Kang-Park-Taniguchi:2026}. The following corollary gives an analogue of this phenomenon for infinite cyclic actions on noncompact $3$-manifolds. We call a connected oriented smooth $3$-manifold $M$ a \emph{$\Z$-periodic homology $\R\times S^2$} if it has the integral homology of $\R\times S^2$ and is equipped with a free, properly discontinuous, cocompact, orientation-preserving smooth $\Z$-action. Throughout this discussion, a \emph{free $\Z$-equivariant filling} of $M$ means a connected oriented $4$-manifold $W$ with $\partial W=M$, equipped with a free, properly discontinuous, cocompact $\Z$-action extending the given action on $M$.

\begin{restatable}{cor}{nonextendablecor}\label{cor: nonextendable cyclic action}
There exists a $\Z$-periodic homology $\R\times S^2$ manifold $M$ that admits a smooth contractible filling and satisfies the following:
\begin{enumerate}
\item $M$ admits a free $\Z$-equivariant topological filling that is contractible;
\item $M$ admits no free $\Z$-equivariant smooth filling with finite-dimensional total rational homology. In particular, $M$ admits no free $\Z$-equivariant smooth filling with the rational homology of $\R\times B^3$.
\end{enumerate}
\end{restatable}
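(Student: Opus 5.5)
Take $N=\Sigma(2,3,5)\#(S^1\times S^2)$ with its distinguished class $\phi$, as in the discussion after \Cref{thm:main}, and let $M\to N$ be the infinite cyclic cover with its deck action. The plan is to relate free $\Z$-equivariant fillings of $M$ to $\wt{H}$-cobordisms from $(N,\phi)$ to $(S^1\times S^2,\text{standard})$ by passing to quotients, and then to invoke \Cref{thm:main}, specifically that $[(N,\phi)]$ is nontrivial in $\Omega(S^1\times S^2)$ but trivial in $\Omega^{\mathrm{top}}(S^1\times S^2)$.

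First, the preliminary facts. $M=\R\times S^2\,\#_{\Z}\,\Sigma(2,3,5)$, an infinite equivariant connected sum, so $H_*(M;\Z)\cong H_*(\R\times S^2;\Z)$ because $\Sigma(2,3,5)$ is a homology sphere. The deck action is free, properly discontinuous, cocompact and orientation preserving. For the smooth contractible filling, take $\R\times B^3$ with infinitely many copies of a contractible-after-modification filling of punctured $\Sigma(2,3,5)$ boundary-summed in. Concretely, $\Sigma(2,3,5)$ minus a ball bounds a compact smooth 4-manifold $P$; one can take $P$ to be a punctured $-E_8$ plumbing made acyclic by noncompact tricks. The simplest route is to use that a noncompact filling need not be compact: take the infinite boundary connected sum $\natural_{i\in\Z}(\Sigma(2,3,5)\times[0,\infty))$, or an Eilenberg-swindle style construction, whose homotopy groups vanish. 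I expect this to work but will check contractibility via van Kampen and Whitehead.

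Proof of (1). By Freedman, $\Sigma(2,3,5)$ bounds a contractible topological manifold $C$. Then $S^1\times B^3\natural C$ is a topological $\wt{H}$-null-cobordism whose infinite cyclic cover $\R\times B^3\natural_{\Z}C$ is contractible and carries a free cocompact $\Z$-action extending that on $M$.

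Proof of (2). Suppose $W$ is a smooth free $\Z$-equivariant filling with $\dim H_*(W;\Q)<\infty$. Then $X=W/\Z$ is a compact smooth 4-manifold with $\partial X=N$, and the map $X\to S^1$ classifying the cover restricts to $\phi$ on $N$. Since the action is free with cocompact quotient, $W$ is the infinite cyclic cover of $X$. Finite-dimensionality of $H_*(W;\Q)$ is, by the paper's definition (or its $\Q(t)$-formulation), exactly the $\wt{H}$-condition. Removing a neighborhood of an arc, or gluing, converts $X$ into an $\wt{H}$-cobordism from $(N,\phi)$ to $(S^1\times S^2,\text{std})$, giving $[(N,\phi)]=0$ in $\Omega(S^1\times S^2)$. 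This contradicts \Cref{thm:main}.

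The main obstacle is this conversion step. I would drill out $S^1\times B^3$ along a lift of a loop dual to $\phi$ in the interior of $X$, which adds an $S^1\times S^2$ boundary component compatibly. I would then check by Mayer--Vietoris over $\Q[t^{\pm1}]$ that the $\wt{H}$-condition survives; this should hold because $S^1\times S^2$ and $S^1\times B^3$ have $\Q(t)$-acyclic homology. Connectedness of $W$ is needed so that the cover is the one given by $\phi$ rather than a disconnected one. The "in particular" clause is immediate, since $\R\times B^3$ has finite total rational homology.
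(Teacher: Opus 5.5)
Your arguments for (1) and (2) are essentially the paper's. For (1), the universal cover of $C\natural(S^1\times B^3)$ is the contractible equivariant topological filling. For (2), you pass to the compact quotient, drill out a tubular neighborhood of an interior loop on which the covering class is $1$, and use Mayer--Vietoris along $\R\times S^2$. This produces a smooth $\wt{H}_{\Q}$-cobordism from $\Sigma(2,3,5)\#(S^1\times S^2)$ to $S^1\times S^2$, contradicting \Cref{thm:main}.

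The gap is the claim that $M$ admits a smooth contractible filling, which is part of the statement.

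\textbf{Your candidate fails.} $\natural_{i\in\Z}(\Sigma(2,3,5)\times[0,\infty))$ does have boundary $M$, but it is not contractible. Each summand deformation retracts onto $\Sigma(2,3,5)$, so the infinite boundary connected sum is homotopy equivalent to an infinite wedge of copies of $\Sigma(2,3,5)$. Its fundamental group is therefore a free product of infinitely many binary icosahedral groups, and its $H_3$ is nonzero. This is exactly what the van Kampen check you postponed would reveal.

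\textbf{The alternatives are not arguments.} A punctured $-E_8$ plumbing ``made acyclic by noncompact tricks'' and an unspecified Eilenberg swindle are not constructions as written.

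\textbf{The missing idea.} You already have the right object. The contractible topological filling $\R\times B^3\natural_{\Z}C$ from (1) is a connected noncompact $4$-manifold whose boundary is the smooth manifold $M$. The Freedman--Quinn smoothing theorem for noncompact $4$-manifolds, in its relative form \cite[Theorem~8.2]{FQ90}, gives it a smooth structure extending the given smoothing near the boundary. This is exactly how the paper proceeds.

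The resulting smoothing is necessarily not $\Z$-invariant. The deck action on this filling is free, properly discontinuous and cocompact, so an invariant smoothing would be a contractible free $\Z$-equivariant smooth filling, contradicting (2). This is also why you should not expect a naive periodic construction to produce the smooth filling.
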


This nonexistence result admits a quantitative refinement: the second Betti numbers of finite cyclic quotients of any free $\Z$-equivariant smooth filling of $M$ grow at least linearly, whereas they can all vanish for a topological filling. The linear lower bound is sharp; see \Cref{cor: cyclic cover Betti growth} in \Cref{sec: additive invariants}.

More generally, we extend Kawauchi's notion of $\wt{H}$-cobordism to arbitrary fields. For a field $F$, we define \emph{$\wt{H}_F$-cobordism} by requiring the associated infinite cyclic cover to have finite-dimensional homology with coefficients in $F$; $\wt{H}$-cobordisms then correspond to the case $F=\Q$. In both the smooth and topological categories, distinguished homology handles modulo this relation form an abelian group under circle sum; see \Cref{sec: real spin and spin c structures}. We denote these groups by $\Omega_F(S^1\times S^2)$ and $\Omega_F^{\mathrm{top}}(S^1\times S^2)$, respectively. The distinction between the smooth and topological categories persists over every field.

\begin{restatable}{cor}{Fkerneltheorem}\label{cor:main}
For any field $F$, the kernel of
\[
\iota_F\colon\Omega_F(S^1\times S^2)\longrightarrow\Omega_F^{\mathrm{top}}(S^1\times S^2)
\]
contains a subgroup isomorphic to $\Z$.
\end{restatable}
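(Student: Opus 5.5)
Having \Cref{thm:main} for $F=\Q$ in hand, the natural plan is to show that the class $[(N,\phi)]$ with $N=\Sigma(2,3,5)\#(S^1\times S^2)$ generates an infinite cyclic subgroup of $\ker\iota_F$ for every field $F$. The argument splits into two halves: triviality in the topological group, and infinite order in the smooth group.

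\textbf{Topological triviality.} I will show that $(N,\phi)$ bounds a contractible topological filling in a form compatible with $\phi$. Since $\Sigma(2,3,5)$ bounds a contractible topological $4$-manifold $C$ by Freedman, the boundary connected sum $C\natural(S^1\times B^3)$ has boundary $N$. This filling is homotopy equivalent to $S^1$, with $\phi$ extending. Its infinite cyclic cover is contractible, so its homology with coefficients in any field $F$ is finite-dimensional. Thus $(N,\phi)$ is $\wt{H}_F$-null-cobordant topologically, and the same holds for all multiples $n[(N,\phi)]$ via circle sums of the fillings. Hence the subgroup generated by $[(N,\phi)]$ lies in $\ker\iota_F$.

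\textbf{Smooth infinite order: reduction.} The key observation is that the obstruction should only need $F$-homology finiteness in characteristic $2$ or the relevant characteristic. The real Seiberg--Witten invariants of \cite{KMT21,konno2024involutions} are typically $\Z_2$-based. An $\wt{H}_F$-cobordism $W$ implies, by the universal coefficient theorem applied to the PID $F[t^{\pm1}]$, that $H_*(\wt{W};F)$ is finite-dimensional. One then needs that the finite cyclic quotients $W_k$ have Betti numbers over $F$ bounded independently of $k$, or growing sublinearly. This follows from a Milnor-type exact sequence
\[
\cdots\to H_i(\wt W;F)\xrightarrow{t^k-1}H_i(\wt W;F)\to H_i(W_k;F)\to\cdots.
\]
The next step is to pass from field $F$ to $\Z_2$ or $\Q$ Betti numbers. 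I expect this to be the main obstacle. Finite dimensionality over one field does not obviously transfer to another: $F[t^{\pm1}]$-torsion modules could differ across characteristics.

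\textbf{Smooth infinite order: the invariant.} I would try to arrange the asymptotic invariant so that its vanishing only uses the sublinear growth of $b_2^{\pm}(W_k;F)$ for the field actually given. The lower bound on $b_2$ is provided by \Cref{cor: cyclic cover Betti growth} and the additive invariants of \Cref{sec: additive invariants}. Concretely, I would define the asymptotic invariant as
\[
\lim_k \frac{\kappa(W_k)}{k},
\]
where $\kappa$ is a Frøyshov-type real invariant. I would bound its change under cobordism by $b_2(W_k;F)$ whenever the relevant inequality holds over $F$. If the real-theoretic inequality is only available over $\Z_2$, I would use the fact that $\dim_{\Z_2}$ and $\dim_F$ homology of finite complexes agree up to torsion contributions, and control those contributions uniformly in $k$.

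Given this bound, additivity under circle sum and nonvanishing for $N$ force $n[(N,\phi)]\neq0$ for all $n\neq0$. This yields the desired subgroup $\Z\subset\ker\iota_F$.
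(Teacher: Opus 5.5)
\textbf{Gap in the smooth half.} Your topological half is fine; it is essentially \Cref{cor: connected sum homomorphism}. The smooth half has a genuine gap exactly where you flag the ``main obstacle'': you never show that a smooth $\wt{H}_F$-cobordism can be fed into the real Seiberg--Witten machinery, and the workaround you sketch does not work as stated.

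First, the invariant $\lim_k\kappa/k$ with a cobordism inequality whose error is bounded by $b_2(W_k;F)$ is not available. In the paper, invariance comes only from local maps (\Cref{prop: real BF gives local map,thm: cyclic covers give local maps}). These require the anti-invariant $H^1$ and $b^+$ of the covers to \emph{vanish}. When $b^+_\ell>0$, the fixed-point part of the real Bauer--Furuta map is the inclusion of $S^0$ into a positive-dimensional sphere, so the map is not local. Bounded or sublinear Betti numbers do not replace the vanishing actually used, and such an inequality would be a new result, not a routine step.

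Second, comparing $F$ with $\mathbb{F}_2$ cannot be done ``uniformly in $k$'' when $\operatorname{char}F$ is odd. Take $W=Z\#(S^1\times S^2\times I)$ from the remark following \Cref{thm: Q-Kawauchi implies real spin c}, with $Z=(S^2\times S^2)/\langle\iota\rangle$. It is an $\mathbb{F}_3$-homology cobordism, hence $\mathbb{F}_3$-Kawauchi. Yet $W^k\cong Z^{\#k}\#(S^1\times S^2\times I)$ has $\dim_{\mathbb{F}_2}H_2(W^k;\mathbb{F}_2)\geq 2k$.

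\textbf{The missing idea.} The transfer works automatically once you aim at $\Q$ rather than $\mathbb{F}_2$: every $F$-Kawauchi cobordism is $\Q$-Kawauchi (\Cref{lem: F-kawauchi implies Q-kawauchi}). Then no new invariant is needed. If $n[(N,\phi)]=0$ in $\Omega_F(S^1\times S^2)$, the same smooth cobordism shows $n\,\Sigma_\Q(\Sigma(2,3,5))=0$ in $\Omega(S^1\times S^2)$, contradicting \Cref{thm:main}.

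The paper proves this lemma by localizing $H_\ast(\wt{W};\Z)$ at $(p)\subset\Z[t,t^{-1}]$ and applying Nakayama's lemma. Your torsion remark also proves it, once you use the inequality in the correct direction:
\begin{itemize}
\item Characteristic zero is \Cref{lem: kawauchi depends on char}; otherwise reduce to $F=\mathbb{F}_p$ by flat base change.
\item The Milnor sequence for $\wt{W}\to W^k$ gives $\dim_{\mathbb{F}_p}H_\ast(W^k;\mathbb{F}_p)\leq 2\dim_{\mathbb{F}_p}H_\ast(\wt{W};\mathbb{F}_p)$ for all $k$.
\item The universal coefficient theorem gives $b_i(W^k;\Q)\leq\dim_{\mathbb{F}_p}H_i(W^k;\mathbb{F}_p)$, since integral torsion only adds to mod-$p$ homology.
\item A free $\Q[t,t^{-1}]$-summand of rank $r$ in $H_i(\wt{W};\Q)$ injects modulo $t^k-1$ into $H_i(W^k;\Q)$, forcing $b_i(W^k;\Q)\geq rk$.
\end{itemize}
Hence $H_\ast(\wt{W};\Q)$ is a finitely generated torsion module, so it is finite-dimensional.
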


Over $\mathbb{F}_2$, we obtain a stronger result. In this case, we use the notation
\[
\Omega_{\mathrm{spin}}(S^1\times S^2)\coloneqq\Omega_{\mathbb{F}_2}(S^1\times S^2),
\qquad
\Omega_{\mathrm{spin}}^{\mathrm{top}}(S^1\times S^2)\coloneqq\Omega_{\mathbb{F}_2}^{\mathrm{top}}(S^1\times S^2).
\]
This terminology is motivated by the fact that sufficiently large $2$-power cyclic covers of smooth $\wt{H}_{\mathbb{F}_2}$-cobordisms are spin and, moreover, admit real spin structures with respect to the deck involutions of the successive double covers; see \Cref{lem: Z2-Kawauchi implies spin covers,lem: Z2-Kawauchi implies real spin}.

\begin{restatable}{thm}{mainspinkerneltheorem}\label{thm: main2}
The kernel of
\[
\iota_{\mathrm{spin}}\colon\Omega_{\mathrm{spin}}(S^1\times S^2)\longrightarrow\Omega_{\mathrm{spin}}^{\mathrm{top}}(S^1\times S^2)
\]
contains a subgroup isomorphic to $\Z^\infty$.
\end{restatable}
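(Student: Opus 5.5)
The plan is to build, for each $2$-power $n=2^k$, a family of homomorphisms $\Omega_{\mathrm{spin}}(S^1\times S^2)\to\R$ (or to $\Q$) that vanish on the image of topologically slice classes, and then to find a sequence of homology handles on which these homomorphisms are linearly independent.

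\textbf{Step 1: the invariants.} For a distinguished handle $(N,\phi)$ and $k\ge 1$, let $N_k$ be the $2^k$-fold cyclic cover determined by $\phi$, with the deck involution $\tau_k$ of the double cover $N_k\to N_{k-1}$. By \Cref{lem: Z2-Kawauchi implies spin covers,lem: Z2-Kawauchi implies real spin}, for $k$ large a smooth $\wt H_{\mathbb F_2}$-cobordism $W$ lifts to $W_k$, which is spin and carries a real spin structure for $\tau_k$. Moreover, $b_\ast(W_k;\mathbb F_2)$ is bounded independently of $k$, because the infinite cyclic cover has finite $\mathbb F_2$-homology. I would take the real Seiberg--Witten Fr\o yshov-type invariant $\delta_R$ of Konno--Miyazawa--Taniguchi, applied to $(N_k,\tau_k)$ with the real spin structure, and set
\[
\underline{\delta}(N,\phi)=\limsup_{k\to\infty}2^{-k}\delta_R(N_k,\tau_k)
\]
(or the analogous $\liminf$). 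The real $10/8$-type inequalities of \cite{KMT21,konno2024involutions} for real spin cobordisms bound $|\delta_R(N_k)-\delta_R(N'_k)|$ by a constant times $b^\pm$ of the invariant part of $W_k$. That quantity is $O(1)$, so after dividing by $2^k$ the invariant becomes an $\wt H_{\mathbb F_2}$-cobordism invariant. Additivity under circle sum follows because circle sums lift to equivariant connected-sum-type gluings of the covers along tori. These gluings change $\delta_R$ by at most a bounded error, so the normalized limit is additive.

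\textbf{Step 2: topological triviality and computation.} For the examples I would take $N_j=\Sigma(2,3,6j\pm1)\#(S^1\times S^2)$, or more generally $Y_j\#(S^1\times S^2)$ for homology spheres $Y_j$. Each such manifold is topologically $\wt H$-cobordant to $S^1\times S^2$, via Freedman's contractible topological filling of $Y_j$ boundary-summed with $S^1\times B^3$. Hence $[N_j]\in\ker\iota_{\mathrm{spin}}$. The $2^k$-fold cover of $Y\#(S^1\times S^2)$ along the $S^1$ factor is $\#^{2^k}Y\#(S^1\times S^2)$, with $\tau_k$ swapping two halves of the copies. For real structures of swap type, $\delta_R$ should reduce to the ordinary invariant of half the summands, giving $\delta_R(N_k)\approx 2^{k-1}\delta(Y)$ and hence $\underline\delta(N_j)=\tfrac12\delta(Y_j)$. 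This already gives $\Z$; this computation is where the paper's main examples presumably sit.

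\textbf{Step 3: infinite rank.} For $\Z^\infty$, one real-valued homomorphism is not enough. I would refine Step 1 into infinitely many invariants, either by tracking the full real local equivalence class (real Floer homotopy type) of $N_k$ and extracting connected-Fr\o yshov-type or $\Theta$-type filtration invariants normalized by $2^{-k}$, or by using several real spin structures and twisted versions. I would then show that these are linearly independent on $\{N_j\}$, mimicking the $\Z^\infty$ argument of \cite{DHST:2021}: the components $Y_j=\Sigma(2,2j+1,4j+3)$ have local equivalence classes that are linearly independent in a totally ordered group, and this independence should survive the swap-type reduction. \textbf{The main obstacle} is the asymptotic bound in Step 1, and especially making a richer-than-numerical invariant stable under cobordisms whose $b_2$ is bounded but nonzero. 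The natural approach is to show that $W_k$ induces a local map whose ``defect'' is bounded independently of $k$, so that it disappears in the $2^{-k}$ normalization and leaves a homomorphism to a limiting ordered group in which the $N_j$ remain independent.
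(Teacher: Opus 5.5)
Your Steps 1--2 are essentially a normalized version of the paper's rank-one argument. The paper obtains $\delta_R^\infty(\Sigma_\Q(Y))=[\{2^{n-1}\delta(Y)\}_{n>0}]$, which is your $\tfrac12\delta(Y)$ after dividing by $2^k$. Step 3, however, is the entire content of the theorem, and it is only a hope. Worse, the normalization in Step 1 destroys the invariant that is needed.

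Anything that stays bounded as the covering level grows vanishes under the $2^{-k}$ normalization, and the obstruction that actually detects $\Z^\infty$ is of this kind. For $Y_k=-\Sigma(2k+1,4k+1,4k+3)$, the almost $j$-complex of $SWF(Y_k)$ (Borel $\mathbb{F}_2[v]$-cochains with the residual action of $j\in\Z_4$) is locally $2k$-simple by Stoffregen's computation. Consequently the $v$-torsion in the cohomology of the core of $\mathcal{C}_R^n(Y_k\#S^1\times S^2)^{\otimes s}$ is annihilated by exactly $v^{2k}$ for \emph{every} $n$ and $s$, so it does not grow with $n$.

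The other sources you suggest are not available. The real spin structure on each boundary cover is unique up to sign. The $\Z_2$-equivariant part alone (modules over the graded PID $\mathbb{F}_2[v]$) carries no chain-level local information beyond the tower degree, i.e.\ beyond $\delta_R$, which gives a single homomorphism. On your Step 2 family, $\delta$ takes only the values $0$ and $1$.

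The missing idea is that the defect you identify as the main obstacle is zero, not merely bounded, so no normalization is needed.
\begin{itemize}
\item A smooth $\wt H_{\mathbb{F}_2}$-cobordism is an $\wt H_\Q$-cobordism, so $\tau_n$ acts trivially on $H^\ast(W^{2^n};\Q)$ for $n\gg0$.
\item Hence the \emph{anti}-invariant $H^1$ and $b^+$ vanish. These, not the invariant parts, govern the real monopole map.
\item Moreover $\sigma(W^{2^n})=0$ and $c_1$ is torsion. So the real Bauer--Furuta maps of $W^{2^n}$ and $-W^{2^n}$ are genuine $\Z_4$-local maps with zero grading shift.
\item One therefore works in the reduced product $\wt\prod_{n>0}\mathcal{LE}_{\Z_4}$ (sequences modulo finitely many terms), where the whole local class at each level survives.
\item Circle sum is handled the same way, via an excision cobordism with vanishing anti-invariant cohomology and zero signature.
\end{itemize}

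Linear independence then comes from a non-numerical argument on cores. Tensoring elementary complexes with exponents $a$ and $b$ gives exponent $\min\{a,b\}$, and duality preserves exponents. So in any relation, the handle with the largest $k$ would have its core torsion exponent $2k$ bounded by the strictly smaller exponents of the remaining terms, a contradiction. This uses the quaternionic $j$-symmetry of real \emph{spin} structures in an essential way, which is why the $\mathbb{F}_2$ hypothesis matters.
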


To prove these results, we use real Seiberg--Witten theory. Tian--Wang initiated a study of real Seiberg--Witten invariants for 4-manifolds with involutions~\cite{Tian-Wang:2009}. Nakamura developed the $\mathrm{Pin}^{-}(2)$-monopole theory associated with free involutions~\cite{NakamuraPinMinus}. For spin 4-manifolds with odd involutions, Kato introduced the real involutive symmetry of the Seiberg--Witten equations in \cite{Kato:2022}. Building on Kato's construction, Konno--Miyazawa--Taniguchi defined real Seiberg--Witten Floer homotopy types~\cite{KMT21,konno2024involutions}; for a real monopole Floer formulation, see also Li~\cite{Li:2022}.

We develop an asymptotic version of the real Seiberg--Witten Floer homotopy type for homology handles introduced by Miyazawa--Park--Taniguchi~\cite[Sections~4 and~5]{miyazawa2025satellite}, using higher cyclic covers. For a distinguished homology handle $(Y,\phi)$, let $Y^{2^n}\to Y^{2^{n-1}}$ be the successive double covering in the $2$-power cyclic tower determined by $\phi$, and let $\tau_n$ denote its deck involution. Recall that, in the real spin case, the quaternionic symmetry $j$ gives the real Floer homotopy type an action of $\langle j\rangle\cong\Z_4$, viewed as a subgroup of $\mathrm{Pin}(2)$. Applying real Seiberg--Witten theory at each level therefore gives a $\Z_4$-equivariant spectrum and, by restricting to $\langle -1\rangle\cong\Z_2$, a $\Z_2$-equivariant spectrum:
\[
SWF_R^n(Y,\phi),
\qquad
SWF_{R,c}^n(Y,\phi).
\]
Their local equivalence classes lie in $\mathcal{LE}_{\Z_4}$ and $\mathcal{LE}_{\Z_2}$, respectively. These are the abelian groups of local equivalence classes of the corresponding SWF-like equivariant spectra, with group operation induced by the smash product; see \Cref{sec: real SWF for handles}.

%Passing to sufficiently high covers is essential for cobordism invariance. For a general $\wt{H}_F$-cobordism $(W,\phi_W)$, the real Bauer--Furuta map at the first level need not be local, since the twisted first cohomology and the positive part of the twisted intersection form may be nonzero. We show that the finite-dimensionality of $H_\ast(\wt W;F)$ forces the deck involution of $W^{2^n}\to W^{2^{n-1}}$ to act trivially on cohomology over $F$ for all sufficiently large $n$. 
%This is an essential observation to get a real spin/spin$^c$ structure on high covers $W^{2^n}\to W^{2^{n-1}}$ and also the corresponding real Bauer--Furuta invariants to be local. 
%Consequently, if $\ell_{n-1}$ denotes the associated rank-one $\Z$-local system, then
%\[
%H^1(W^{2^{n-1}};\ell_{n-1}\otimes_{\Z}\R)=0
%\qquad\text{ and }\qquad
%H^+(W^{2^{n-1}};\ell_{n-1}\otimes_{\Z}\R)=0, 
%\]
%where $H^+(W^{2^{n-1}};\ell_{n-1}\otimes_{\Z}\R)$ denotes the self-dual part of the intersection form with the local coefficient $\ell_{n-1}\otimes_{\Z}\R$. 
%The high covers also admit real $\spinc$ structures extending the prescribed boundary structures, and real spin structures in the $\mathbb{F}_2$-setting.
%The relative real Bauer--Furuta construction therefore gives local maps in both directions, establishing local equivalence for all sufficiently large $n$.

Passing to sufficiently high covers is essential for cobordism invariance. For a smooth $\wt{H}_{\Q}$-cobordism $(W,\phi_W)$, the finite-dimensionality of $H_\ast(\wt W;\Q)$ forces the deck involution of $W^{2^n}\rightarrow W^{2^{n-1}}$ to act trivially on rational cohomology for all sufficiently large $n$. At each level, the class defining this double cover admits an integral lift and therefore has square zero modulo $2$. Together with a twisted integral lift of the second Stiefel--Whitney class of the quotient, this gives a real $\mathrm{spin}^c$ structure on the cover; see \Cref{prop: relative existence spinc-minus}. For sufficiently large $n$, the trivial action on rational cohomology forces its first Chern class to be torsion. We then modify the real $\mathrm{spin}^c$ structure to remove the odd-order part of this class. The remaining $2$-primary class restricts to zero on the boundary, since the boundary covers have no $2$-torsion in integral second cohomology. The uniqueness of the real lift then identifies the boundary restrictions with the real $\mathrm{spin}^c$ structures induced by the canonical real spin structures; see \Cref{thm: Q-Kawauchi implies real spin c}.

The trivial action on rational cohomology also implies that the anti-invariant first cohomology and the positive part of the anti-invariant intersection form vanish. On the fixed-point sets used to define local maps, the spinor terms vanish, and the relative real Bauer--Furuta map is governed by the linear operator on anti-invariant differential forms. The preceding vanishings make this fixed-point restriction a homotopy equivalence, so the map is local. Moreover, the first Chern class is torsion, and the signature of the cover is zero, so the normalized Dirac-index vanishes. In the $\mathbb{F}_2$-setting, sufficiently high covers admit compatible real spin structures, making these maps $\Z_4$-equivariant rather than merely $\Z_2$-equivariant. Applying the construction to the cobordism and its reverse therefore gives local maps in both directions, establishing local equivalence for all sufficiently large $n$; see \Cref{thm: cyclic covers give local maps,lem: local equivalence lemma}.

Since the values of $n$ for which local equivalence holds may depend on the cobordism, we disregard finitely many terms by defining the maps
\[
SWF_R^\infty\colon\Omega_{\mathrm{spin}}(S^1\times S^2)\longrightarrow\wt\prod_{n>0}\mathcal{LE}_{\Z_4}
\qquad\text{ and }\qquad
SWF_{R,c}^\infty\colon\Omega(S^1\times S^2)\longrightarrow\wt\prod_{n>0}\mathcal{LE}_{\Z_2},
\]
given by
\[
SWF_R^\infty(Y,\phi)=\bigl[(SWF_R^1(Y,\phi),SWF_R^2(Y,\phi),\ldots)\bigr]
\]
and similarly for $SWF_{R,c}^\infty$, where $\wt\prod_{n>0}A\coloneqq(\prod_{n>0}A)/(\bigoplus_{n>0}A)$ for an abelian group $A$. We prove in \Cref{lem: additivity} that these maps are group homomorphisms and use them to establish \Cref{thm:main,thm: main2}.

%Applying the zero-surgery comparison in the proof of \cite[Theorem~4.10]{miyazawa2025satellite} at each level of the branched-cover tower gives the commutative diagram
%\[
%\begin{tikzcd}[column sep=2.5em, row sep=1.5em]
%\mathcal{C} \arrow[rr,"\mathcal{S}_{\mathbb{F}_2}"] \arrow[dr,swap] & & \Omega_{\mathrm{spin}}(S^1\times S^2) \arrow[dl,"SWF_R^\infty"] \\
%& \text{ \phantom{13}} \wt\prod_{n>0}\mathcal{LE}_{\Z_4} &
%\end{tikzcd}
%\]
%Here the diagonal map from $\mathcal{C}$ sends $[K]$ to $[([SWF_R^{n}(K)]_{\operatorname{loc}})_{n>0}]$, where $SWF_R^{n}(K)$ denotes the real Floer homotopy type of the $2^n$-fold cyclic branched cover of $K$, with respect to the deck involution of the successive double covering, defined in \cite[Definition~3.1]{KPT25}. Similarly, restricting the symmetry to $\Z_2$ gives a homomorphism $\mathcal{C}\to\wt\prod_{n>0}\mathcal{LE}_{\Z_2}$ that factors through $\mathcal{S}\colon\mathcal{C}\to\Omega(S^1\times S^2)$. Thus the asymptotic higher-cover knot invariants depend only on the $\wt{H}_{\mathbb{F}_2}$-cobordism class of the zero surgery, and, after restricting the symmetry to $\Z_2$, only on its $\wt{H}_{\Q}$-cobordism class.

We conclude with some natural questions.

\begin{ques}
Does the kernel of $\iota\colon\Omega(S^1\times S^2)\rightarrow\Omega^{\mathrm{top}}(S^1\times S^2)$ contain a subgroup isomorphic to $\Z^\infty$? Does it contain an element that cannot be written as $[Y\#(S^1\times S^2)]$ for any homology $3$-sphere $Y$ (see \Cref{cor: connected sum homomorphism} for a related discussion)?
\end{ques}

\begin{ques}
Does the kernel of $\iota_F\colon\Omega_F(S^1\times S^2)\rightarrow\Omega_F^{\mathrm{top}}(S^1\times S^2)$ contain nonzero torsion elements for some field $F$? Does this hold for every field $F$?
\end{ques}

\begin{ques}
Is the zero-surgery homomorphism $\mathcal{S}_F\colon\mathcal{C}\rightarrow\Omega_F(S^1\times S^2)$ defined in \Cref{cor: zero-surgery homomorphism} surjective for every field $F$?
\end{ques}

% Applying the zero-surgery comparison in the proof of \cite[Theorem~4.10]{miyazawa2025satellite} at each level of the branched-cover tower gives the commutative diagram
% \[
% \begin{tikzcd}[column sep=2.5em, row sep=1.5em]
% \mathcal{C} \arrow[rr,"\mathcal{S}_{\mathbb{F}_2}"] \arrow[dr,swap,"SWF_R^\infty"] & & \Omega_{\mathrm{spin}}(S^1\times S^2) \arrow[dl,"SWF_R^\infty"] \\
% & \text{ \phantom{13}}\wt\prod_{n>0}\mathcal{LE}_{\Z_4} &
% \end{tikzcd}
% \]
% Here the diagonal map sends $[K]$ to $[([SWF_R^{(n)}(K)]_{\operatorname{loc}})_{n>0}]$, where $SWF_R^{(n)}(K)$ denotes the real Floer homotopy type of the $2^n$-fold cyclic branched cover of $K$, with respect to the deck involution of the successive double covering, defined in \cite[Definition~3.1]{KPT25}. Similarly, the real $\spinc$ invariant gives a homomorphism $SWF_{R,c}^\infty\colon\mathcal{C}\to\wt\prod_{n>0}\mathcal{LE}_{\Z_2}$ that factors through $\mathcal{S}\colon\mathcal{C}\to\Omega(S^1\times S^2)$.

\subsection*{AI Disclosure}

ChatGPT 6 Pro was used to spot various errors in previous drafts of this paper. During this process, ChatGPT suggested adding \Cref{prop: tensor product for connected base,cor: equivariant chain tensor product}, pointing out that they were being implicitly used in several places throughout the paper without a statement or a proof, and \Cref{cor: equivariant chain tensor product} does not hold if one works over a ``wrong'' characteristic. ChatGPT also suggested their rough proof sketches, together with various references. The authors then examined the given proof sketches and references carefully and used them to reconstruct the proofs, from which \Cref{subsec: appendix kunneth} was written.

\subsection*{Notation}

We write $f\sim g$ for homotopic morphisms and $X\simeq Y$ for homotopy equivalent objects, understood in the relevant category. Unless otherwise specified, homology spheres and homology handles are understood with integral coefficients. For a $G$-spectrum $X$ and a commutative ring $R$, we use $H_G^\ast(X;R)$ to denote reduced Borel cohomology with coefficients in $R$.

\subsection*{Acknowledgements}

The first author is grateful to Dongsoo Lee for bringing this question to his attention while visiting him in 2022. The first author is partially supported by the Royal Society University Research Fellowship URF\textbackslash R1\textbackslash 251501. The second author is partially supported by the Samsung Science and Technology Foundation (SSTF-BA2102-02) and by National Research Foundation of Korea (NRF) grants funded by the Korean government (MSIT) (RS-2025-00542968 and RS-2026-25506097). The third author was partially supported by JSPS KAKENHI Grant Number 22K13921.

\section{Homology handles, Kawauchi cobordisms, and real spin structures}\label{sec: real spin and spin c structures}

We recall Kawauchi's cobordism group and extend its definition to arbitrary coefficient fields. We then study the homology and intersection forms of finite cyclic covers of $\wt{H}_F$-cobordisms and establish the existence of real spin structures on sufficiently large $2$-power cyclic covers in the $\mathbb{F}_2$-setting. These results provide the topological input for the real Seiberg--Witten constructions below.

\subsection{The $\wt{H}_F$-cobordism group}

We start by reviewing several notations related to Kawauchi's cobordism group. 

\begin{defn}
A \emph{distinguished homology handle} is a closed oriented $3$-manifold $Y$ that is a homology $S^1\times S^2$, together with a choice of generator $\phi\in H^1(Y;\Z)\cong\Z$. Given distinguished homology handles $(Y,\phi)$ and $(Y',\phi')$, a \emph{distinguished cobordism} from $(Y,\phi)$ to $(Y',\phi')$ is a compact connected oriented
cobordism $W$, in the smooth or topological category as appropriate, from $Y$ to $Y'$, together with a class $\phi_W\in H^1(W;\Z)$ such that $\left.\phi_W\right|_Y=\phi$ and $\left.\phi_W\right|_{Y'}=\phi'$.
\end{defn}

\begin{defn}
Given distinguished homology handles $(Y,\phi)$ and $(Y',\phi')$, an orientation-preserving diffeomorphism $f\colon Y\to Y'$ is called a \emph{diffeomorphism of distinguished homology handles} if $\phi=f^\ast\phi'$. If such a diffeomorphism exists, we say that $(Y,\phi)$ and $(Y',\phi')$ are \emph{diffeomorphic}.
\end{defn}

\begin{rem}
We will usually suppress $\phi$, $\phi'$, and $\phi_W$ from the notation unless we need to refer to them explicitly.
\end{rem}
From now on, we use the following notation. Given a distinguished homology handle $(Y,\phi)$, the class $\phi$ corresponds to a homotopy class of maps $Y\to S^1$. Choosing a representative of this homotopy class, we obtain the induced infinite cyclic cover
\[
\wt{Y} \coloneqq Y\times_{S^1}\R\longrightarrow Y,
\]
which admits a natural properly discontinuous $\Z$-action induced by translation on $\R$. Furthermore, for any integer $n>0$, the quotient by the subgroup $n\Z\subset\Z$ gives the $n$-fold cyclic cover
\[
Y^n \coloneqq \wt{Y}/n\Z\longrightarrow Y.
\]
Similarly, given a distinguished cobordism $(W,\phi_W)$, we define $\wt{W}$ and $W^n$ in the same way.

\begin{defn}
Let $F$ be a field, and let $t$ denote the generating deck transformation of an infinite cyclic cover $\wt{W}\to W$. We say that a compact oriented topological manifold $W$, together with the infinite cyclic cover $\wt{W}\to W$, is \emph{$F$-Kawauchi} if
\[
\dim_F H_\ast(\wt{W};F)<\infty.
\]
We say that $W$, together with $\wt{W}\to W$, is \emph{$F$-stable} if $t^{2^k}$ acts trivially on $H_\ast(W^{2^{k+1}};F)$ for all sufficiently large integers $k>0$. Moreover, given a distinguished cobordism $(W,\phi_W)$ between two distinguished homology handles, we say that $(W,\phi_W)$ is an \emph{$\wt{H}_F$-cobordism} if $W$ is $F$-Kawauchi with respect to the infinite cyclic cover induced by $\phi_W$. Note that $\wt{H}_{\Q}$-cobordisms are precisely the $\wt{H}$-cobordisms defined in \cite{Kawauchi:1976}. We nevertheless retain the subscript $\Q$ throughout this article to distinguish these cobordisms from $\wt{H}_{\mathbb{F}_2}$-cobordisms.
\end{defn}

Let $F$ be a field. Define a relation $\sim_F$ on the set of diffeomorphism classes of distinguished homology handles by
\[
(Y,\phi)\sim_F(Y',\phi')
\quad\text{if}\quad
(Y,\phi)\text{ and }(Y',\phi')\text{ are smoothly $\wt{H}_F$-cobordant}.
\]
By the proof of \cite[Lemma~1.2]{Kawauchi:1976}, which works over any field, $\sim_F$ is an equivalence relation. We therefore define
\[
\Omega_F(S^1\times S^2)
 \coloneqq 
\{\text{diffeomorphism classes of distinguished homology handles}\}/\sim_F.
\]
To equip this set with a group operation, we use the \emph{circle sum} introduced by Kawauchi in \cite{Kawauchi:1976}. For $[(Y,\phi)],[(Y',\phi')]\in\Omega_F(S^1\times S^2)$, we denote their circle sum by $[(Y,\phi)\bigcirc(Y',\phi')]$. Briefly, choose oriented simple closed curves $\gamma\subset Y$ and $\gamma'\subset Y'$ such that $\phi([\gamma])=\phi'([\gamma'])=1$, and let $N_\gamma$ and $N_{\gamma'}$ be tubular neighborhoods of these curves. Remove the interiors of these neighborhoods and glue the resulting torus boundaries by an orientation-reversing diffeomorphism compatible with the restrictions of $\phi$ and $\phi'$. The resulting manifold
\[
(Y\smallsetminus\operatorname{int}N_\gamma)
\cup_{\partial N_\gamma\cong\partial N_{\gamma'}}
(Y'\smallsetminus\operatorname{int}N_{\gamma'})
\]
together with the induced distinguished cohomology class is called the \emph{circle sum} of $(Y,\phi)$ and $(Y',\phi')$.

One caveat is that the circle sum is not well-defined up to diffeomorphism, but only up to $\wt{H}_{\mathbb{Q}}$-cobordism, as shown in \cite[Lemma~1.6]{Kawauchi:1976}. The same proof, with coefficients in $F$, applies to the $\wt{H}_F$-cobordism setting, but we will rewrite it here with a more explicit computation for the sake of self-containedness.

\begin{lem}\label{lem: inf cyclic cover of homology handle has fin dim homology}
Let $(Y,\phi)$ be a distinguished homology handle. Then $\dim_F H_\ast(\wt{Y};F)<\infty$ for any field $F$. Equivalently, $Y\times I$ is an $\wt{H}_F$-cobordism.
\end{lem}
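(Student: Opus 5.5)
The plan is to use the Milnor exact sequence of the infinite cyclic cover, with coefficients in $F$. Write $\Lambda_F\coloneqq F[t,t^{-1}]$, which is a PID. The short exact sequence of cellular chain complexes
\[
0\longrightarrow C_\ast(\wt{Y};F)\xrightarrow{\;t-1\;}C_\ast(\wt{Y};F)\longrightarrow C_\ast(Y;F)\longrightarrow 0
\]
(obtained from a $\Z$-equivariant lift of a finite CW structure on $Y$) gives a long exact sequence
\[
\cdots\to H_i(\wt{Y};F)\xrightarrow{t-1}H_i(\wt{Y};F)\to H_i(Y;F)\to H_{i-1}(\wt{Y};F)\xrightarrow{t-1}\cdots .
\]
Each $H_i(\wt{Y};F)$ is a finitely generated $\Lambda_F$-module, since $C_\ast(\wt{Y};F)$ is a finite free $\Lambda_F$-complex and $\Lambda_F$ is Noetherian. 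Moreover $H_i(\wt{Y};F)=0$ for $i\ge 4$, and also for $i=3$ because $\wt{Y}$ is a noncompact $3$-manifold.

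\emph{Degree $0$.} Since $\phi$ is surjective, $\wt{Y}$ is connected, so $H_0(\wt{Y};F)\cong F$ with $t$ acting trivially.

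\emph{Degrees $1$ and $2$.} By the universal coefficient theorem, $H_\ast(Y;F)\cong H_\ast(S^1\times S^2;F)$, so $H_1(Y;F)\cong H_2(Y;F)\cong F$ and $H_3(Y;F)\cong F$.
\begin{itemize}
\item From the sequence
\[
H_1(\wt{Y})\xrightarrow{t-1}H_1(\wt{Y})\to H_1(Y)\to H_0(\wt{Y})\xrightarrow{t-1=0}H_0(\wt{Y})
\]
the map $H_1(Y;F)\to H_0(\wt{Y};F)$ is surjective onto $F$, hence an isomorphism. Therefore $t-1$ is surjective on $H_1(\wt{Y};F)$.
\item Since $H_3(\wt{Y};F)=0$, the map $H_3(Y;F)\cong F\to H_2(\wt{Y};F)$ is injective. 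Its cokernel is the kernel of $t-1$ on $H_2(\wt{Y};F)$, which in turn maps into $H_2(Y;F)\cong F$. The dimensions around $H_2$ must be tracked carefully here.
\end{itemize}

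The cleaner route is an Euler characteristic argument, together with the fact that a finitely generated $\Lambda_F$-module $M$ with $t-1$ an isomorphism is torsion. Indeed, if $M$ had a free summand, $t-1$ would not be surjective on it, since $\Lambda_F/(t-1)\neq 0$. A finitely generated torsion $\Lambda_F$-module is finite-dimensional over $F$.

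\emph{Degree $1$.} Surjectivity of $t-1$ already forces $H_1(\wt{Y};F)$ to have no free part, so $H_1(\wt{Y};F)$ is finite-dimensional.

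\emph{Degree $2$.} Over the fraction field $F(t)$, the Euler characteristic gives
\[
\sum_i(-1)^i\operatorname{rank}_{\Lambda_F}H_i(\wt{Y};F)=\chi(Y)=0.
\]
The ranks in degrees $0$, $1$ and $3$ vanish, so $\operatorname{rank}H_2(\wt{Y};F)=0$. Thus $H_2(\wt{Y};F)$ is a finitely generated torsion module, hence finite-dimensional over $F$.

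\emph{The cobordism statement.} The equivalence with $Y\times I$ being an $\wt{H}_F$-cobordism is immediate. The product class $\phi\times 1$ restricts to $\phi$ on both ends, and the induced cover is $\wt{Y}\times I$, which is homotopy equivalent to $\wt{Y}$.

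\emph{Main obstacle.} The step needing most care is the module-theoretic justification: finite generation over the Noetherian ring $\Lambda_F$, and the fact that $\Lambda_F$-torsion plus finite generation gives finite $F$-dimension. Both follow from the structure theorem over the PID $\Lambda_F$.
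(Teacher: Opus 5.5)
Your proof is correct, but it handles degree $2$ by a different route from the paper. Both arguments start from Milnor's exact sequence and use the same degree-one step: $H_1(Y;F)\to H_0(\wt{Y};F)$ is an isomorphism, so $t-1$ is surjective on $H_1(\wt{Y};F)$.

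The paper's key tool is the inequality $\dim_F\ker(t-1)\le\dim_F\operatorname{coker}(t-1)$ on each $H_k(\wt{Y};F)$, where the difference is exactly the free rank. Using $H_3(Y;F)\cong H_2(Y;F)\cong F$, it chases the degree-$2$/$3$ part of the sequence. This shows that $\ker(t-1)$ and $\operatorname{coker}(t-1)$ on $H_2(\wt{Y};F)$ are both one-dimensional, so $H_2(\wt{Y};F)\to H_2(Y;F)$ is onto. It follows that $t-1$ is also injective, hence bijective, on $H_1(\wt{Y};F)$. Equality of the total kernel and cokernel dimensions then forces every free rank to vanish.

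You instead kill the free part of $H_1(\wt{Y};F)$ directly from surjectivity of $t-1$ and the structure theorem. You then handle $H_2$ by the Euler characteristic of the finite free $\Lambda_F$-complex after tensoring with the flat extension $F(t)$, so the degree-$2$/$3$ chase is never needed. Beyond the degree-one step, your only input is $\chi(Y)=0$.

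Your route is shorter and more conceptual; it is the standard ``rank over the fraction field'' argument. The paper's route stays entirely inside the Milnor sequence and records finer information, such as the surjectivity of $H_2(\wt{Y};F)\to H_2(Y;F)$ and the exact kernel and cokernel dimensions in each degree.

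There is one small slip, in the bullet you abandon. It is the \emph{image}, not the cokernel, of the injection $H_3(Y;F)\to H_2(\wt{Y};F)$ that equals $\ker(t-1)$. This does not affect your final argument.
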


\begin{proof}
Since $H_\ast(\wt{Y};F)$ is a finitely generated $F[t,t^{-1}]$-module and $F[t,t^{-1}]$ is a PID, it decomposes into cyclic summands. Following the arguments in \cite[Proof of Assertion 5]{Milnor1968InfiniteCyclicCoverings}, we start by comparing the kernel and cokernel of multiplication by $t-1$ on each cyclic summand. Then, in each degree $k$, we get
\[
\dim_F\ker(H_k(\wt{Y};F)\xrightarrow{t-1}H_k(\wt{Y};F))
\leq
\dim_F\operatorname{coker}(H_k(\wt{Y};F)\xrightarrow{t-1}H_k(\wt{Y};F)),
\]
with equality if and only if $H_k(\wt{Y};F)$ is finite-dimensional over $F$; their difference is exactly the free rank of $H_k(\wt{Y};F)$.

Consider the long exact sequence
\[
\cdots\longrightarrow H_k(\wt{Y};F)
\xrightarrow{t-1}H_k(\wt{Y};F)
\longrightarrow H_k(Y;F)\longrightarrow\cdots
\]
induced by the short exact sequence
\[
0\longrightarrow F[t,t^{-1}]
\xrightarrow{t-1}F[t,t^{-1}]
\longrightarrow F\longrightarrow0
\]
of local systems on $Y$. Since $\wt{Y}$ is a connected noncompact $3$-manifold, we have $H_k(\wt{Y};F)=0$ for $k\geq3$. Thus, in degrees 2 and 3, we obtain
\[
0\longrightarrow H_3(Y;F)
\longrightarrow H_2(\wt{Y};F)
\xrightarrow{t-1}H_2(\wt{Y};F)
\longrightarrow H_2(Y;F).
\]
It follows that the kernel of $t-1$ on $H_2(\wt{Y};F)$ is one-dimensional. By the preceding inequality, its cokernel is at least one-dimensional. Since this cokernel embeds into $H_2(Y;F)\cong F$, it is exactly one-dimensional. Hence $H_2(\wt{Y};F)\to H_2(Y;F)$ is surjective, and the sequence in degrees 1 and 2 becomes
\[
0\longrightarrow H_1(\wt{Y};F)
\xrightarrow{t-1}H_1(\wt{Y};F)
\longrightarrow H_1(Y;F)
\longrightarrow H_0(\wt{Y};F)
\overset{0}{\longrightarrow}H_0(\wt{Y};F).
\]
Here we have used the fact that $t-1$ acts trivially on $H_0(\wt{Y};F)$. Since $H_1(Y;F)\cong H_0(\wt{Y};F)\cong F$, exactness implies that $H_1(Y;F)\to H_0(\wt{Y};F)$ is an isomorphism. Consequently,
\[
H_1(\wt{Y};F)\xrightarrow{t-1}H_1(\wt{Y};F)
\]
is also an isomorphism. It follows that
\[
\dim_F\ker(H_\ast(\wt{Y};F)\xrightarrow{t-1}H_\ast(\wt{Y};F))
=
\dim_F\operatorname{coker}(H_\ast(\wt{Y};F)\xrightarrow{t-1}H_\ast(\wt{Y};F))
=2.
\]
Thus $\dim_F H_\ast(\wt{Y};F)<\infty$, as desired.
\end{proof}

\begin{lem}\label{lem: circle sum is well def over any field}
Let $(Y,\phi)$ and $(Y',\phi')$ be distinguished homology handles, and let $Y\bigcirc Y'$ and $Y\bigcirc' Y'$ be any two circle sums of them. Then there exists a smooth distinguished cobordism $W$ between these circle sums that is an $\wt{H}_F$-cobordism for every field $F$.
\end{lem}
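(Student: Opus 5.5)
We will construct $W$ by taking the boundary connected sum of the two product cobordisms, in the spirit of Kawauchi's argument, and then verify the $F$-Kawauchi condition via Mayer--Vietoris on infinite cyclic covers together with \Cref{lem: inf cyclic cover of homology handle has fin dim homology}.

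\emph{Construction.} Choose loops $\gamma_0,\gamma_1\subset Y$ and $\gamma_0',\gamma_1'\subset Y'$ with $\phi(\gamma_i)=\phi'(\gamma_i')=1$, together with gluing maps, so that the two circle sums are $Y\bigcirc Y'$ (built from $\gamma_0,\gamma_0'$) and $Y\bigcirc' Y'$ (built from $\gamma_1,\gamma_1'$). Set $X=Y\times I$ and $X'=Y'\times I$. Since $\gamma_0$ and $\gamma_1$ are homologous (both are dual to the generator in $H_1(Y)/\mathrm{tors}$; after adding a null-homologous modification we may assume they represent the same class), we try to choose a properly embedded annulus-neighbourhood $A\cong S^1\times D^2\times I\subset X$ running from $N_{\gamma_0}\times\{0\}$ to $N_{\gamma_1}\times\{1\}$, and similarly $A'\subset X'$. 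If $\gamma_0,\gamma_1$ are merely homologous and not isotopic, we instead use a general embedded surface cobordism, but we first try the simplest version in which an embedded copy of $S^1\times D^2\times I$ exists (one can always arrange this in dimension $4$: homotopic loops in a $4$-manifold are isotopic, and $\gamma_0\times\{0\}$ and $\gamma_1\times\{1\}$ can be made homotopic rel framing after connect-summing loops with commutators, which does not change $\phi$). Then we define
\[
W=(X\smallsetminus \operatorname{int}A)\cup_{S^1\times S^1\times I}(X'\smallsetminus\operatorname{int}A'),
\]
glued via a map compatible with $\phi,\phi'$ that restricts to the prescribed gluings at the two ends. The classes $\phi,\phi'$ then glue to $\phi_W$, because they agree on $T^2\times I$, and $\partial W=-(Y\bigcirc Y')\sqcup(Y\bigcirc' Y')$.

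\emph{Homology.} The infinite cyclic covers satisfy $\wt W=\wt{X\smallsetminus A}\cup\wt{X'\smallsetminus A'}$, with intersection the cover of $T^2\times I$, which is $\R\times S^1\times I$. Here $\phi$ evaluates to $1$ on the core circle, so this cover is finite-type with homology $H_*(S^1;F)$. Next, $X\smallsetminus A$ is $Y\times I$ with a thickened circle of $\phi$-degree one removed. Mayer--Vietoris for $X=(X\smallsetminus A)\cup A$, lifted to covers, gives
\[
H_*(\wt{X\smallsetminus A})\oplus H_*(\wt A)\to H_*(\wt X)\to\cdots .
\]
Here $\wt A\simeq\R$ has finite homology, $\wt X\simeq\wt Y$ has finite homology by \Cref{lem: inf cyclic cover of homology handle has fin dim homology}, and the overlap $\wt{\partial_{\text{side}}A}$ has finite homology, so $H_*(\wt{X\smallsetminus A};F)$ is finite-dimensional. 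The same holds for $X'$, and a final Mayer--Vietoris shows that $\dim_F H_*(\wt W;F)<\infty$ for every $F$.

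\emph{Main obstacle.} The main difficulty is the geometric step: producing the embedded $S^1\times D^2\times I$ in $Y\times I$ joining the two chosen curves with framings compatible with the chosen torus identifications. The homological argument above is insensitive to the particular embedding, since it uses only that the removed piece is a thickened circle of $\phi$-degree one. Accordingly, I would first reduce the problem, via \Cref{lem: inf cyclic cover of homology handle has fin dim homology} and transitivity of $\sim_F$, to changing one curve at a time. I would then handle the framing and gluing-map ambiguity by a separate cobordism that inserts a Dehn twist along $T^2$. For that cobordism the same Mayer--Vietoris count applies.
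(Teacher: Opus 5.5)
Your Mayer--Vietoris count is fine once the annulus $A$ exists. The gap is that $A$ does not exist in the generality the lemma requires, and this is the whole difficulty rather than a technicality to defer. A properly embedded $S^1\times D^2\times I\subset Y\times I$ from $N_{\gamma_0}\times\{0\}$ to $N_{\gamma_1}\times\{1\}$ is a framed concordance from $\gamma_0$ to $\gamma_1$. Projecting its core to $Y$ gives a free homotopy, so it can exist only if $\gamma_0$ and $\gamma_1$ are freely homotopic, and even then free homotopy does not imply concordance. The condition $\phi(\gamma_i)=1$ only makes them homologous. For example, in $\Sigma(2,3,5)\#(S^1\times S^2)$, with $\pi_1\cong P\ast\Z$ and $P=\pi_1\Sigma(2,3,5)$, the loops $t$ and $tg$ with $1\neq g\in P$ both have $\phi=1$ but are not conjugate.

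None of your remedies closes this gap:
\begin{itemize}
\item The fact that homotopic loops in a $4$-manifold are isotopic gives an annulus in $(Y\times I)\times I$, not a properly embedded one in $Y\times I$.
\item Modifying $\gamma_0$ or $\gamma_1$ by commutators changes the very circle sums $Y\bigcirc Y'$ and $Y\bigcirc' Y'$ you must compare, so that step is circular.
\item Changing one curve at a time still needs the same annulus.
\item The natural version of your surface fallback fails. Suppose $\Sigma\subset Y\times I$ has genus $g$, and you glue the complements of $\Sigma\times D^2$ and $\Sigma'\times D^2$ along $\Sigma\times S^1$. With $K=F(t)$ one has $H_1(\Sigma;K)\cong K^{2g}$, and Mayer--Vietoris gives $H_2(W;K)\cong K^{4g}$. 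So $W$ is not $F$-Kawauchi when $g\geq 1$.
\end{itemize}

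The missing idea is that you never need to relate the two curve choices to each other: route the cobordism through $Y\sqcup Y'$. Attach a round $1$-handle $S^1\times D^1\times D^2$ to $(Y\sqcup Y')\times\{1\}$ along $N_{\gamma_0}\sqcup N_{\gamma_0'}$, with attaching map encoding the chosen torus gluing; the new boundary is exactly $Y\bigcirc Y'$. Doing the same at $(Y\sqcup Y')\times\{0\}$ with the data defining $Y\bigcirc' Y'$ gives a connected distinguished cobordism $W$ between the two circle sums, for completely arbitrary choices.

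Each round handle is a $1$-handle plus a $2$-handle whose $S^1$-direction has $\phi=1$, so
\[
C_\ast\bigl(\wt W,(\wt Y\sqcup\wt Y')\times I\bigr)\simeq\bigl[\Z[t^{\pm1}]^2\xrightarrow{\;t-1\;}\Z[t^{\pm1}]^2\bigr],
\]
whose homology over any field $F$ is $2$-dimensional. Combined with \Cref{lem: inf cyclic cover of homology handle has fin dim homology} and the long exact sequence of the pair, this gives $\dim_F H_\ast(\wt W;F)<\infty$. This is the paper's argument.
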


\begin{proof}
Following the construction in \cite[Proof of Proposition~2.6(1)]{lee2023kernel}, we obtain $W$ by attaching one round $1$-handle at each end of $(Y\sqcup Y')\times I$, using the choices defining the respective circle sums. Each round $1$-handle decomposes into a $1$-handle and a $2$-handle, and its $S^1$ direction lifts to $\R$ in the infinite cyclic cover $\wt{W}$. Thus
\[
C_\ast(\wt{W},(\wt{Y}\sqcup\wt{Y}')\times I;\Z)
\simeq
\left[0\longrightarrow\Z[t,t^{-1}]^2
\xrightarrow{t-1}\Z[t,t^{-1}]^2\longrightarrow0\right],
\]
where the two nonzero terms lie in degrees 2 and 1, respectively. Tensoring with any field $F$ gives
\[
C_\ast(\wt{W},(\wt{Y}\sqcup\wt{Y}')\times I;F)
\simeq
\left[0\longrightarrow F[t,t^{-1}]^2
\xrightarrow{t-1}F[t,t^{-1}]^2\longrightarrow0\right].
\]
Consequently,
\[
\dim_F H_\ast(\wt{W},(\wt{Y}\sqcup\wt{Y}')\times I;F)=2.
\]
By \Cref{lem: inf cyclic cover of homology handle has fin dim homology}, we also have
\[
\dim_F H_\ast((\wt{Y}\sqcup\wt{Y}')\times I;F)<\infty.
\]
The long exact sequence of the pair therefore gives $\dim_F H_\ast(\wt{W};F)<\infty$. The lemma follows.
\end{proof}

Hence the circle sum is well-defined on $\Omega_F(S^1\times S^2)$, and it is clearly associative. Similarly, the arguments in \cite[Section~1]{Kawauchi:1976} extend to $\wt{H}_F$-cobordisms and show that the circle sum satisfies the abelian group axioms on $\Omega_F(S^1\times S^2)$ for every field $F$.

\begin{defn}
We call the abelian group $(\Omega_F(S^1\times S^2),\bigcirc)$ the \emph{smooth $\wt{H}_F$-cobordism group}. Replacing smooth $\wt{H}_F$-cobordisms with topological ones defines an equivalence relation $\sim_F^{\mathrm{top}}$ and an abelian group $(\Omega_F^{\mathrm{top}}(S^1\times S^2),\bigcirc)$, which we call the \emph{topological $\wt{H}_F$-cobordism group}.
\end{defn}

\begin{rem}
    It is straightforward, via a standard Mayer--Vietoris argument, to show that a distinguished homology hande $(Y,\phi)$ represents the identity $0\in \Omega_F(S^1 \times S^2)$ if and only if there exists a smooth $F$-Kawauchi cobordism between $(Y,\phi)$ and $\emptyset$. Similarly, $(Y,\phi)$ represents $0\in \Omega_F^{\mathrm{top}}(S^1 \times S^2)$ if and only if there exists a topological $F$-Kawauchi cobordism between $(Y,\phi)$ and $\emptyset$.
\end{rem}

Although some intermediate results will be stated for arbitrary fields, our main applications concern the cases $F=\Q$ and $F=\mathbb{F}_2$.

\begin{rem}
The groups $\Omega_{\Q}(S^1\times S^2)$ and $\Omega_{\Q}^{\mathrm{top}}(S^1\times S^2)$ coincide with the groups studied in \cite{Kawauchi:1976,lee2023kernel}. We therefore omit the subscript $\Q$ and write them as $\Omega(S^1\times S^2)$ and $\Omega^{\mathrm{top}}(S^1\times S^2)$, respectively. We denote $\Omega_{\mathbb{F}_2}(S^1\times S^2)$ and $\Omega_{\mathbb{F}_2}^{\mathrm{top}}(S^1\times S^2)$ by $\Omega_{\mathrm{spin}}(S^1\times S^2)$ and $\Omega_{\mathrm{spin}}^{\mathrm{top}}(S^1\times S^2)$, respectively. The notation ``spin'' is motivated by \Cref{subsec:F2-real-spin}, where we show that sufficiently large $2$-power cyclic covers of smooth $\wt{H}_{\mathbb{F}_2}$-cobordisms admit real spin structures. We emphasize that no choice of spin structure is included in the definition of these groups.
\end{rem}

We next establish some basic relationships between $\Omega_F(S^1\times S^2)$ and the usual homology cobordism groups.

\begin{lem}\label{lem: homology cobordism implies Kawauchi}
Let $(W,\phi_W)$ be a distinguished cobordism between distinguished homology handles and $F$ be a field. If $W$ is an $F$-homology cobordism, then $(W,\phi_W)$ is an $\wt{H}_F$-cobordism.
\end{lem}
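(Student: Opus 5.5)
We reduce to \Cref{lem: inf cyclic cover of homology handle has fin dim homology} by a relative argument. The hypothesis that $W$ is an $F$-homology cobordism means $H_\ast(W,Y;F)=0$, where $Y$ is the incoming end. The plan is to show that $H_\ast(\wt{W},\wt{Y};F)$ is finite-dimensional, in fact zero. Then the long exact sequence of the pair $(\wt{W},\wt{Y})$, combined with $\dim_F H_\ast(\wt{Y};F)<\infty$ from \Cref{lem: inf cyclic cover of homology handle has fin dim homology}, gives $\dim_F H_\ast(\wt{W};F)<\infty$, which is the claim. Here $\wt{Y}$ is the restriction of the cover $\wt{W}$ to $Y$; it is the infinite cyclic cover determined by $\phi_W|_Y=\phi$, and it is connected because $\phi$ is a generator.

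To get the vanishing, set $M\coloneqq H_k(\wt{W},\wt{Y};F)$. This is a finitely generated module over the PID $\Lambda\coloneqq F[t,t^{-1}]$, since $(W,Y)$ is a finite CW pair and the relative cellular chain complex of the cover is a finite free $\Lambda$-complex. Tensoring the short exact sequence of local systems $0\to\Lambda\xrightarrow{t-1}\Lambda\to F\to0$ with this complex, as in the proof of \Cref{lem: inf cyclic cover of homology handle has fin dim homology}, produces the Milnor long exact sequence
\[
\cdots\longrightarrow H_k(\wt{W},\wt{Y};F)\xrightarrow{t-1}H_k(\wt{W},\wt{Y};F)\longrightarrow H_k(W,Y;F)\longrightarrow\cdots.
\]
Since $H_\ast(W,Y;F)=0$, multiplication by $t-1$ is an isomorphism on every $M$.

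The remaining step, which is the only real point, is to show that a finitely generated $\Lambda$-module on which $t-1$ acts invertibly is finite-dimensional over $F$. In the cyclic decomposition $M\cong\Lambda^r\oplus\bigoplus_i\Lambda/(p_i)$, the element $t-1$ is not a unit of $\Lambda$. Hence $t-1$ is not surjective on any free summand, so $r=0$, and $M$ is a finite direct sum of finite-dimensional torsion modules. (In fact $(t-1)\nmid p_i$, and a cleaner argument also gives $M=0$, though finiteness is all we need.)

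The only care needed is in justifying finiteness of the chain complex and the relative Milnor sequence. In the topological category one should use that compact topological manifolds, and compact manifold pairs, have the homotopy type of finite CW complexes, so that $C_\ast(\wt{W},\wt{Y};F)$ is chain homotopy equivalent to a finite free $\Lambda$-complex. Beyond that, the argument is routine.
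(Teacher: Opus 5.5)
Your proof is correct and is essentially the paper's argument. The paper uses the change-of-rings universal coefficient sequence to get $H_j(\wt{W},\wt{Y};F)\otimes_{F[t,t^{-1}]}F=0$, which is the cokernel half of your Milnor-sequence observation; it then deduces that the free rank is zero and finishes with the long exact sequence of the pair and \Cref{lem: inf cyclic cover of homology handle has fin dim homology}.

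You should drop the aside that $H_\ast(\wt{W},\wt{Y};F)$ is ``in fact zero''. Invertibility of $t-1$ only forces $\gcd(p_i,t-1)=1$, and nonzero relative modules do occur. By duality, vanishing would force the two ends to have isomorphic $H_1$ of their infinite cyclic covers, and this fails for the zero-surgery cobordism induced by a concordance from the unknot to the square knot. Nothing in your argument depends on that aside.
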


\begin{proof}
Denote the incoming and outgoing boundaries of $W$ by $Y$ and $Y'$, respectively, and set $R=F[t,t^{-1}]$. We regard $F$ as an $R$-module via the homomorphism $R\to F$ given by $t\mapsto 1$. Since $W$ is compact, each $H_j(\wt{W},\wt{Y};F)$ is a finitely generated $R$-module. Because $R$ is a PID, we may write
\[
H_j(\wt{W},\wt{Y};F)\cong R^{m_j}\oplus\bigoplus_{i=1}^{k_j}R/(p_{j,i}(t))
\]
for nonzero polynomials $p_{j,i}\in R$. The change-of-rings universal coefficient theorem gives a short exact sequence
\[
0\longrightarrow H_j(\wt{W},\wt{Y};F)\otimes_R F\longrightarrow H_j(W,Y;F)\longrightarrow \operatorname{Tor}_1^R(H_{j-1}(\wt{W},\wt{Y};F),F)\longrightarrow 0.
\]
Since $W$ is an $F$-homology cobordism, $H_j(W,Y;F)=0$. Therefore $H_j(\wt{W},\wt{Y};F)\otimes_R F=0$, which implies that $m_j=0$. Thus $H_j(\wt{W},\wt{Y};F)$ is a finitely generated torsion $R$-module and hence is finite-dimensional over $F$. By the proof of \cite[Lemma~1.2]{Kawauchi:1976}, which works over any coefficient field, $H_\ast(\wt{Y};F)$ is also finite-dimensional over $F$. The long exact sequence
\[
\cdots\longrightarrow H_j(\wt{Y};F)\longrightarrow H_j(\wt{W};F)\longrightarrow H_j(\wt{W},\wt{Y};F)\longrightarrow\cdots,
\]
together with \Cref{lem: inf cyclic cover of homology handle has fin dim homology}, then shows that $H_\ast(\wt{W};F)$ is finite-dimensional over $F$. Hence $W$ is $F$-Kawauchi, and therefore $(W,\phi_W)$ is an $\wt{H}_F$-cobordism.
\end{proof}

By \Cref{lem: homology cobordism implies Kawauchi}, we obtain two natural group homomorphisms.\begin{cor}\label{cor: zero-surgery homomorphism}
For any field $F$, the zero-surgery operation defines a group homomorphism
\[
\mathcal{S}_F\colon\mathcal{C}\longrightarrow\Omega_F(S^1\times S^2);\qquad [K]\longmapsto[(S^3_0(K),\phi_K)],
\]
where $\phi_K\in H^1(S^3_0(K);\Z)\cong\Z$ is the generator that evaluates to $1$ on the positively oriented meridian of $K$.
\end{cor}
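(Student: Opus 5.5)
The plan is to check two things: that $\mathcal{S}_F$ is well defined on concordance classes, and that it is additive.

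\emph{Well-definedness.} Suppose $K_0$ and $K_1$ are smoothly concordant via an annulus $A\subset S^3\times I$. Let $W$ be the standard cobordism from $S^3_0(K_0)$ to $S^3_0(K_1)$: remove a tubular neighborhood $A\times D^2$ from $S^3\times I$ and glue in $A\times D^2$ using the zero-framing. Note that the zero-framing extends over $A$, since the framings on the two ends are both the Seifert framing and the annulus has trivial normal bundle.

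\begin{itemize}
\item \emph{$W$ is an integral homology cobordism.} The exterior $E=(S^3\times I)\smallsetminus \nu A$ is a homology $S^1$ by Alexander duality. Gluing $S^1\times D^2\times I$ along $S^1\times S^1\times I$ kills the meridian and adds a class in degree $2$. A Mayer--Vietoris argument then gives $H_\ast(W;\Z)\cong H_\ast(S^1\times S^2)$, and both inclusions $S^3_0(K_i)\hookrightarrow W$ are isomorphisms on homology.
\item \emph{The distinguished class extends.} The generator $\phi_W\in H^1(W;\Z)$, dual to the meridian of $A$, restricts to $\phi_{K_0}$ and $\phi_{K_1}$. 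This is because the meridians of $K_0$ and $K_1$ are isotopic in $W$ to the meridian of $A$, with orientations consistent.
\item \emph{Conclusion.} $W$ is in particular an $F$-homology cobordism for every field $F$. By \Cref{lem: homology cobordism implies Kawauchi}, $(W,\phi_W)$ is an $\wt{H}_F$-cobordism, so the class $[(S^3_0(K),\phi_K)]$ depends only on $[K]$.
\end{itemize}

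\emph{Additivity.} Here the aim is to show $S^3_0(K\#K')$ is a circle sum of $S^3_0(K)$ and $S^3_0(K')$. I would take $\gamma$ and $\gamma'$ to be the meridians $\mu_K$ and $\mu_{K'}$, on which $\phi_K$ and $\phi_{K'}$ evaluate to $1$.

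\begin{itemize}
\item \emph{Decompose the exteriors.} Removing $N_{\mu_K}$ from $S^3_0(K)$ gives the knot exterior $E_K$ with its boundary torus. The gluing by the zero-framing places the meridian $\mu$ as the core, so $S^3_0(K)\smallsetminus N_\mu$ is exactly $E_K$.
\item \emph{Identify the glued manifold.} Gluing $E_K$ and $E_{K'}$ by the orientation-reversing map that exchanges meridian with meridian (reversed) and longitude with longitude produces $S^3_0(K\#K')$. This is a standard fact: the exterior of $K\#K'$ is $E_K\cup_{A}E_{K'}$ glued along meridional annuli, and the zero-surgery fills the remaining longitudinal torus.
\end{itemize}

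The main obstacle is this identification. It must be checked carefully, in particular the compatibility of the gluing with the distinguished classes and orientations. If it turns out to be easier, I would instead exhibit an explicit integral homology cobordism, built from the standard band-sum cobordism, between $S^3_0(K\#K')$ and some circle sum. Either route concludes by invoking \Cref{lem: circle sum is well def over any field}: since any two circle sums are $\wt{H}_F$-cobordant, this yields $\mathcal{S}_F([K\#K'])=\mathcal{S}_F([K])\bigcirc\mathcal{S}_F([K'])$.

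Finally, the unknot maps to $S^1\times S^2$, which is the identity element. So $\mathcal{S}_F$ is a homomorphism.
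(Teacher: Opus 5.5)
\textbf{Overall.} Your plan is the paper's. A concordance gives a distinguished integral homology cobordism, hence an $\wt{H}_F$-cobordism by \Cref{lem: homology cobordism implies Kawauchi}. Additivity then follows from identifying $(S^3_0(K\#K'),\phi_{K\#K'})$ with a circle sum, which the paper takes from Kawauchi's proof of his Lemma~2.4, combined with \Cref{lem: circle sum is well def over any field}. However, at exactly the step you flag as the main obstacle, the gluing map you write down has the wrong sign.

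\textbf{The correct gluing.} Take $N_\gamma$ to be the surgery solid torus. Then $\partial N_\gamma=\partial E_K$, the meridian of $N_\gamma$ is the zero-framed longitude $\lambda_K$, and a parallel of $\gamma$ is $\mu_K$. So $\phi_K$ is $1$ on $\mu_K$ and $0$ on $\lambda_K$, and likewise for $K'$.
\begin{itemize}
\item A gluing $g\colon\partial E_K\to\partial E_{K'}$ compatible with the distinguished classes must satisfy $g(\mu_K)=\mu_{K'}+m\lambda_{K'}$ and $g(\lambda_K)=\pm\lambda_{K'}$.
\item Orientation reversal then forces $g(\lambda_K)=-\lambda_{K'}$. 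So the correct map is $\mu_K\mapsto\mu_{K'}$, $\lambda_K\mapsto-\lambda_{K'}$.
\item This is what the decomposition $E_{K\#K'}=E_K\cup_A E_{K'}$ actually gives. The meridional annulus $A$ is identified by the identity, so $\mu_K=\mu_{K'}$ as oriented curves.
\item The longitudes $\lambda_K$ and $\lambda_{K'}$ can be chosen to share an arc crossing $A$, traversed in opposite directions. Their remaining arcs together form $\lambda_{K\#K'}$.
\item Identifying the two complementary annuli so that these arcs match is the zero-filling of $E_{K\#K'}$.
\end{itemize}

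\textbf{What goes wrong with your map.} The map $\mu_K\mapsto-\mu_{K'}$, $\lambda_K\mapsto\lambda_{K'}$ is also orientation-reversing. But $\phi_{K'}(-\mu_{K'})=-1\neq 1=\phi_K(\mu_K)$, so it is not a circle sum of $(S^3_0(K),\phi_K)$ and $(S^3_0(K'),\phi_{K'})$. To make the classes glue you would have to use $-\phi_{K'}$, which amounts to replacing $K'$ by its reverse $rK'$. The glued manifold is then $S^3_0(K\#rK')$, not $S^3_0(K\#K')$. With the sign corrected, your argument goes through.

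\textbf{A smaller slip in the first part.} Gluing in $S^1\times D^2\times I$ along the zero framing kills the longitude of $A$, which is already null-homologous in the exterior. It does not kill the meridian. The meridian survives and generates $H_1(W)\cong\Z$, and this is what lets you define $\phi_W$ by evaluating on it. Your homology conclusion is nevertheless correct.
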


\begin{proof}
A smooth concordance between knots $K$ and $K'$ induces a distinguished integral homology cobordism between $(S^3_0(K),\phi_K)$ and $(S^3_0(K'),\phi_{K'})$. By \Cref{lem: homology cobordism implies Kawauchi}, this is an $\wt{H}_F$-cobordism. Hence the assignment is well-defined.

Moreover, as explained in \cite[Proof of Lemma~2.4]{Kawauchi:1976}, suitable choices in the circle-sum construction give a diffeomorphism of distinguished homology handles
\[
(S^3_0(K\#K'),\phi_{K\#K'})\cong(S^3_0(K),\phi_K)\bigcirc(S^3_0(K'),\phi_{K'}).
\]
Since the circle sum is well-defined up to $\wt{H}_F$-cobordism by \Cref{lem: circle sum is well def over any field}, it follows that $\mathcal{S}_F$ respects addition and is therefore a group homomorphism.
\end{proof}

\begin{cor}\label{cor: connected sum homomorphism}
Let $\Theta^3_{\Z}$ denote the smooth homology cobordism group of integral homology $3$-spheres. For any field $F$, connected sum with $S^1\times S^2$ defines a group homomorphism
\[
\Sigma_F\colon\Theta^3_{\Z}\longrightarrow\Omega_F(S^1\times S^2);\qquad [Y]\longmapsto[(Y\#(S^1\times S^2),\phi)],
\]
where $\phi\in H^1(Y\#(S^1\times S^2);\Z)\cong\Z$ is the generator induced by the standard generator of $H^1(S^1\times S^2;\Z)$. Furthermore, the image of $\Sigma_F$ is contained in the kernel of the natural map $\Omega_F(S^1\times S^2)\to\Omega_F^{\mathrm{top}}(S^1\times S^2)$.
\end{cor}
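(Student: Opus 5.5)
We need to establish three things: that $\Sigma_F$ is well defined on homology cobordism classes, that it is additive, and that its image lies in $\ker\iota_F$.

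\emph{Well-definedness.} Let $X$ be a smooth integral homology cobordism from $Y_0$ to $Y_1$. Choose a properly embedded arc in $X$ joining the two boundaries, and form the boundary connected sum of $X$ with $(S^1\times S^2)\times I$ along this arc and along $\{pt\}\times I$. The result $W$ is a cobordism from $Y_0\#(S^1\times S^2)$ to $Y_1\#(S^1\times S^2)$. Mayer--Vietoris shows that $W$ is an integral homology cobordism, and hence an $F$-homology cobordism for every field $F$. The class $\phi_W$ pulled back from the $S^1$ factor restricts to the distinguished generators on both ends, so $(W,\phi_W)$ is a distinguished cobordism. By \Cref{lem: homology cobordism implies Kawauchi} it is an $\wt{H}_F$-cobordism. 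Diffeomorphic representatives clearly give diffeomorphic distinguished handles.

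\emph{Additivity.} We must compare $(Y\#Y')\#(S^1\times S^2)$ with the circle sum of $Y\#(S^1\times S^2)$ and $Y'\#(S^1\times S^2)$.

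Take $\gamma$ and $\gamma'$ to be the loops $S^1\times\{pt\}$ in the respective $S^1\times S^2$ summands. Removing a tubular neighborhood of $S^1\times\{pt\}$ from $S^1\times S^2$ leaves $S^1\times D^2$, a solid torus whose meridian is $\{pt\}\times\partial D^2$. Gluing two such complements compatibly with the distinguished classes means identifying the $S^1$ directions and the meridional directions. This reproduces $S^1\times S^2$ as $S^1\times(D^2\cup D^2)$.

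Since the summands $Y$ and $Y'$ sit inside balls away from $\gamma$ and $\gamma'$, the circle sum is diffeomorphic, as a distinguished handle, to $(Y\#Y')\#(S^1\times S^2)$. A nonstandard gluing would differ only by the choice of framing. By \Cref{lem: circle sum is well def over any field}, any circle sum represents the same class, so this particular choice of gluing suffices.

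The identity element is also respected: $\Sigma_F([S^3])=[S^1\times S^2]$, which is the zero element because it bounds $S^1\times B^3$, whose infinite cyclic cover $\R\times B^3$ has finite-dimensional homology.

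\emph{Image in the kernel of $\iota_F$.} This is the main point. By Freedman, every integral homology sphere $Y$ bounds a compact contractible topological $4$-manifold $C$. Form the boundary connected sum $V=C\natural(S^1\times B^3)$, so that $\partial V=Y\#(S^1\times S^2)$. Then $V$ is homotopy equivalent to $S^1$, and its infinite cyclic cover is homotopy equivalent to $\R$. Hence $\dim_F H_\ast(\wt V;F)=1$, and $V$, viewed as a cobordism to $\emptyset$, is a topological $F$-Kawauchi filling.

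By the preceding remark, this shows that $\iota_F\Sigma_F([Y])=0$. The only delicate input is Freedman's theorem; everything else is routine Mayer--Vietoris bookkeeping.
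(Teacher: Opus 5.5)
Your proof is correct and follows the paper's argument: the same arc connected sum with $(S^1\times S^2)\times I$ plus \Cref{lem: homology cobordism implies Kawauchi} for well-definedness, the standard circles in the $S^1\times S^2$ summands for additivity, and Freedman's contractible filling for the kernel statement. The only difference is in the last step: you cap off with $C\natural(S^1\times B^3)\simeq S^1$ and invoke the remark characterizing the zero element, whereas the paper punctures $C$ and reruns the arc construction to get a topological integral homology cobordism to $(S^1\times S^2,\phi)$ (your filling is the one the paper itself uses in the proof of \Cref{cor: nonextendable cyclic action}).
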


\begin{proof}
A smooth integral homology cobordism from $Y$ to $Y'$ induces a distinguished integral homology cobordism from $Y\#(S^1\times S^2)$ to $Y'\#(S^1\times S^2)$ by taking its connected sum along a properly embedded arc with the product cobordism $(S^1\times S^2)\times[0,1]$. By \Cref{lem: homology cobordism implies Kawauchi}, the resulting cobordism is an $\wt{H}_F$-cobordism. Moreover, using the standard circles in the $S^1\times S^2$ summands to form the circle sum gives
\[
(Y\#(S^1\times S^2),\phi)\bigcirc(Y'\#(S^1\times S^2),\phi)\cong((Y\# Y')\#(S^1\times S^2),\phi).
\]
Thus $\Sigma_F$ is a well-defined group homomorphism.

By \cite[Theorem~1.4$^\prime$]{Freedman:1982}, every integral homology $3$-sphere $Y$ bounds a contractible topological $4$-manifold. Applying the construction above and \Cref{lem: homology cobordism implies Kawauchi} shows that $(Y\#(S^1\times S^2),\phi)$ is topologically $\wt{H}_F$-cobordant to $(S^1\times S^2,\phi)$. Therefore $\Sigma_F([Y])$ maps to zero in $\Omega_F^{\mathrm{top}}(S^1\times S^2)$.
\end{proof}

\begin{rem}
    Given the definition of $\Sigma_F:
    \Theta^3_\Z\rightarrow \Omega_F(S^1 \times S^2)$ for any field $F$, it is clearly not surjective (as its images map to zero in the algebraic concordance group), so one may ask whether they are injective. Suppose that $\operatorname{char}(F)\neq 2$ and consider the homology sphere
    \[
    Y = \Sigma(2,3,7) \simeq S^3_{+1}(4_1),
    \]
    where $4_1$ denotes the figure-eight knot. Since $4_1$ (or, in general, any strongly negative amphichiral knot) bounds an $\mathbb{F}_2$-homology ball~\cite[Theorem 1]{Levine23}, we see that $Y$ also bounds a $\mathbb{F}_2$-homology ball, which we denote by $W$. Then the boundary connected sum $W\natural (S^1 \times S^2 \times I)$ is an $\wt{H}_F$-cobordism from $Y\sharp S^1 \times S^2$ to $S^1 \times S^2$, i.e.
    \[
    \Sigma(Y) = 0 \in \Omega_F(S^1 \times S^2).
    \]
    On the other hand, since the Rochlin invariant of $Y$ is nonzero, we have $[Y]\neq 0$ in $\Theta^3_\Z$. Hence we see that $\Sigma_F$ is not injective whenever $\operatorname{char}(F)\neq 2$. We do not know whether $\Sigma_{\mathbb{F}_2}$ is injective.
\end{rem}

\subsection{Algebraic properties of $\wt{H}_F$-cobordisms}
We now establish some basic properties of $\wt{H}_F$-cobordisms.

\begin{lem}\label{lem: kawauchi depends on char}
Let $F_1$ and $F_2$ be fields of the same characteristic. Then a distinguished cobordism is $F_1$-Kawauchi if and only if it is $F_2$-Kawauchi. Consequently, $\wt{H}_{F_1}$-cobordisms are the same as $\wt{H}_{F_2}$-cobordisms.
\end{lem}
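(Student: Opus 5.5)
Both conditions reduce to a statement over the prime field $P$ of the common characteristic (so $P=\Q$ or $P=\mathbb{F}_p$). It therefore suffices to show that, for any field extension $P\subset F$, the cover $\wt{W}$ is $F$-Kawauchi if and only if it is $P$-Kawauchi. Applying this to $F_1$ and $F_2$ then gives the lemma, and the statement about $\wt{H}_{F_i}$-cobordisms follows immediately from the definition.

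The plan is to realize $H_\ast(\wt{W};F)$ as a base change of $H_\ast(\wt{W};P)$. Since $W$ is compact, fix a finite CW (or, in the topological category, a finite complex up to homotopy) structure on $W$ and lift it to $\wt{W}$. Then $C_\ast(\wt{W};P)$ is a finite complex of finitely generated free $P[t,t^{-1}]$-modules, and
\[
C_\ast(\wt{W};F)=C_\ast(\wt{W};P)\otimes_P F.
\]
Because $F$ is flat over the field $P$, the universal coefficient theorem (or simply exactness of $-\otimes_P F$) gives
\[
H_k(\wt{W};F)\cong H_k(\wt{W};P)\otimes_P F
\]
for every $k$, and hence $\dim_F H_k(\wt{W};F)=\dim_P H_k(\wt{W};P)$, with both sides possibly infinite. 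Summing over $k$, finite-dimensionality of the total homology holds over $P$ if and only if it holds over $F$.

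The only point needing care is that the total homology is concentrated in finitely many degrees, so that each side is a finite sum. This holds because the complex has finite length. The dimension equality remains valid for infinite dimensions, since a $P$-basis of $H_k(\wt{W};P)$ tensors to an $F$-basis of $H_k(\wt{W};F)$. So I do not expect a real obstacle here; the mild subtlety is the topological category, where I would use that compact topological manifolds have the homotopy type of finite CW complexes (Kirby--Siebenmann), or alternatively work directly with singular chains. Singular chains are free $P$-modules, so the flat base change argument applies verbatim without any finiteness hypothesis.
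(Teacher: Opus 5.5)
Your proposal is correct and takes essentially the same route as the paper: pass to the common prime subfield $k$, use flatness of $k\subset F_i$ to get $H_\ast(\wt{W};F_i)\cong H_\ast(\wt{W};k)\otimes_k F_i$, and conclude from the resulting equality of dimensions. Your extra remarks about finite CW models in the topological category, or using singular chains instead, only make explicit a point the paper leaves implicit.
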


\begin{proof}
Let $k$ denote the common prime subfield of $F_1$ and $F_2$. For $i=1,2$, the extension $k\subset F_i$ is faithfully flat, and hence
\[
H_\ast(\wt{W};F_i)\cong H_\ast(\wt{W};k)\otimes_k F_i.
\]
It follows that
\[
\dim_{F_i}H_\ast(\wt{W};F_i)=\dim_k H_\ast(\wt{W};k).
\]
Therefore, $H_\ast(\wt{W};F_1)$ is finite-dimensional over $F_1$ if and only if $H_\ast(\wt{W};F_2)$ is finite-dimensional over $F_2$. Thus $W$ is $F_1$-Kawauchi if and only if it is $F_2$-Kawauchi.
\end{proof}

\begin{rem}\label{rem: common refinement}
It is worth noting that $\mathcal{S}_F$ and $\Sigma_F$ admit a natural
common refinement. Let $\mathcal{C}_{\Z}$ denote the smooth
$\Z$-concordance group of knots in integral homology
$3$-spheres.\footnote{Our notation differs from the convention used in
\cite{Hom-Levine-Lidman:2022}, \cite{Zhou:2021}, and
\cite{Dai-Hom-Stoffregen-Truong:2024}. In those papers,
$\mathcal{C}_{\Z}$ denotes the group of knots in $S^3$ modulo homology
concordance, while $\widehat{\mathcal{C}}_{\Z}$ denotes the group of
knots in integral homology $3$-spheres that bound integral homology
$4$-balls, modulo homology concordance. Here, by contrast, arbitrary
integral homology $3$-spheres are allowed.}
There are natural homomorphisms
\[
\begin{aligned}
\mathcal{C}&\longrightarrow\mathcal{C}_{\Z};
\qquad [K]\longmapsto[(S^3,K)],\\
\Theta^3_{\Z}&\longrightarrow\mathcal{C}_{\Z};
\qquad [Y]\longmapsto[(Y,U_Y)].
\end{aligned}
\]
where $U_Y\subset Y$ denotes an unknot. Following the proofs of
\Cref{cor: zero-surgery homomorphism,cor: connected sum homomorphism},
we obtain a homomorphism
\[
\mathcal{S}\Sigma_F\colon
\mathcal{C}_{\Z}
\longrightarrow
\Omega_F(S^1\times S^2)
\]
for every field $F$, defined again via zero-surgery. This homomorphism is surjective because every distinguished homology handle can be obtained by zero-surgery on a knot in an integral homology $3$-sphere. This map fits into the following commutative
diagram:
\[
\xymatrix{
\mathcal{C} \ar[rd] \ar[rrrd]^{\mathcal{S}_F} \\
& \mathcal{C}_{\Z} \ar[rr]^-{\mathcal{S}\Sigma_F}
&& \Omega_F(S^1\times S^2) \\
\Theta^3_{\Z} \ar[ru] \ar[rrru]^{\Sigma_F}
}
\]
\end{rem}

\begin{lem}\label{lem: F-kawauchi implies Q-kawauchi}
Let $F$ be a field. If $W$, together with its infinite cyclic cover, is $F$-Kawauchi, then it is $\Q$-Kawauchi. Consequently, every $\wt{H}_F$-cobordism is an $\wt{H}_{\Q}$-cobordism.
\end{lem}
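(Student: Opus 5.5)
By \Cref{lem: kawauchi depends on char}, $F$-Kawauchi depends only on the characteristic of $F$. If $\operatorname{char}F=0$ there is nothing to prove, so we may assume $F=\mathbb{F}_p$ for a prime $p$. The plan is to use the PID structure of $\Lambda=\Z[t,t^{-1}]$ together with a universal coefficient argument, comparing the free ranks of $H_\ast(\wt W;\Q)$ and $H_\ast(\wt W;\mathbb{F}_p)$ as modules over $\Q[t^{\pm1}]$ and $\mathbb{F}_p[t^{\pm1}]$.

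Let $C_\ast=C_\ast(\wt W;\Z)$. Since $W$ is compact, we may take $C_\ast$ to be a finite complex of finitely generated free $\Lambda$-modules, for instance by using a finite CW structure on $W$ up to homotopy. Then $H_\ast(\wt W;\Q)=H_\ast(C_\ast\otimes_\Lambda\Q[t^{\pm1}])$, and similarly with $\mathbb{F}_p$.

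The key observation is an Euler characteristic identity for ranks. Set $Q=\Q(t)$ and $K_p=\mathbb{F}_p(t)$. Finite-dimensionality of $H_\ast(\wt W;F)$ over $F$, for $F$ a field, is equivalent to $H_\ast(\wt W;F)$ being a torsion $F[t^{\pm1}]$-module. This is the same dichotomy used in \Cref{lem: inf cyclic cover of homology handle has fin dim homology}, since a finitely generated $F[t^{\pm1}]$-module is finite-dimensional exactly when it has free rank zero. Torsion in turn is equivalent to $H_\ast(C_\ast\otimes_\Lambda K_p)=0$, because localization is flat. So we need to show that acyclicity of $C_\ast\otimes K_p$ implies acyclicity of $C_\ast\otimes Q$.

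Acyclicity of a finite free complex over a field means that in each degree the rank of $\partial_{j+1}$ plus the rank of $\partial_j$ equals the rank of $C_j$. Over $K_p$, the rank of the matrix $\partial_j\otimes K_p$ is the rank of the reduction mod $p$ of the matrix of $\partial_j$ over $\Lambda$. A nonzero $r\times r$ minor mod $p$ lifts to a nonzero minor over $\Lambda$, so this rank is at most the rank over $Q$. Hence
\[
\operatorname{rk}_{K_p}\partial_j\le\operatorname{rk}_Q\partial_j.
\]
On the other hand, any complex satisfies $\operatorname{rk}\partial_j+\operatorname{rk}\partial_{j+1}\le\operatorname{rk}C_j$. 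If equality holds over $K_p$ in every degree, then summing the inequalities shows equality holds over $Q$ as well. Concretely, in each degree
\[
\operatorname{rk}C_j=\operatorname{rk}_{K_p}\partial_j+\operatorname{rk}_{K_p}\partial_{j+1}\le\operatorname{rk}_Q\partial_j+\operatorname{rk}_Q\partial_{j+1}\le\operatorname{rk}C_j.
\]
Therefore $C_\ast\otimes Q$ is acyclic, so $H_\ast(\wt W;\Q)$ is a torsion module and hence finite-dimensional over $\Q$. The consequence for $\wt{H}_F$-cobordisms follows immediately from the definitions.

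The main obstacle is justifying the reduction to a finite free $\Lambda$-complex for a compact topological manifold, where a CW structure need not exist. To handle this, I would use that compact topological manifolds are homotopy equivalent to finite CW complexes (Kirby--Siebenmann). Pulling this equivalence back to infinite cyclic covers gives a $\Lambda$-chain homotopy equivalence, which is all the argument requires. A secondary subtlety is that the minor comparison is made over $\Lambda$ rather than over $\Z[t]$, but after clearing powers of $t$ it is the same.
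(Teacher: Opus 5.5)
Your argument is correct, but it takes a different route from the paper.

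You work at the chain level. With a finite free $\Z[t^{\pm1}]$-model $C_\ast$ of $\wt W$, the Kawauchi condition becomes acyclicity of $C_\ast\otimes\mathbb{F}_p(t)$ (resp.\ $C_\ast\otimes\Q(t)$). Acyclicity then transfers because each boundary matrix can only lose rank under reduction mod $p$: every minor of the reduced matrix is the reduction of a minor over $\Z[t^{\pm1}]\subset\Q(t)$.

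The paper works instead with the homology module $M=H_\ast(\wt W;\Z)$. If $M\otimes_{\Z}\Q$ were infinite-dimensional, $M$ would contain a copy of $R=\Z[t^{\pm1}]$, so $M_{(p)}\neq0$ after localizing at the prime $(p)\subset R$. On the other hand, $M\otimes_{\Z}\mathbb{F}_p$ embeds in the finite-dimensional $H_\ast(\wt W;\mathbb{F}_p)$, so $M_{(p)}/pM_{(p)}\cong(M\otimes_{\Z}\mathbb{F}_p)\otimes_{\mathbb{F}_p[t^{\pm1}]}\mathbb{F}_p(t)=0$. Nakayama's lemma then gives a contradiction.

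The paper's version stays at the level of homology modules and needs only finite generation of $M$ and the universal coefficient injection. Yours is elementary linear algebra and proves slightly more, namely the degreewise bound $\operatorname{rk}_{\Q[t^{\pm1}]}H_j(\wt W;\Q)\le\operatorname{rk}_{\mathbb{F}_p[t^{\pm1}]}H_j(\wt W;\mathbb{F}_p)$ on free ranks. Your appeal to the finite homotopy type of compact topological manifolds supplies a point the paper leaves implicit; the paper needs it too, to know that $M$ is finitely generated.

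There are a few small inaccuracies in your write-up. First, $\Z[t^{\pm1}]$ is not a PID (e.g.\ $(p,t-1)$ is not principal), and you never actually use a universal coefficient argument. So the first sentence of your plan misdescribes the proof; only the PID property of $F[t^{\pm1}]$ enters, in the equivalence between finite-dimensionality and torsion. Second, no clearing of powers of $t$ is needed, since minors commute with the ring homomorphism $\Z[t^{\pm1}]\to\mathbb{F}_p[t^{\pm1}]$. Third, the nonzero reduced minor does not ``lift'' to a minor over $\Z[t^{\pm1}]$: it is by definition the reduction of one, which is therefore nonzero.
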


\begin{proof}
If $F$ has characteristic zero, the conclusion follows immediately from \Cref{lem: kawauchi depends on char}. Suppose that $F$ has characteristic $p>0$. By the same lemma, it suffices to consider $F=\mathbb{F}_p$.

Set $R=\Z[t,t^{-1}]$ and $M=H_\ast(\wt{W};\Z)$. Since $W$ is compact, $M$ is a finitely generated $R$-module. The universal coefficient theorem gives an injection
\[
M\otimes_{\Z}\mathbb{F}_p\longrightarrow H_\ast(\wt{W};\mathbb{F}_p)
\]
and an isomorphism
\[
M\otimes_{\Z}\Q\cong H_\ast(\wt{W};\Q).
\]
Thus it suffices to show that if $M\otimes_{\Z}\mathbb{F}_p$ is finite-dimensional over $\mathbb{F}_p$, then $M\otimes_{\Z}\Q$ is finite-dimensional over $\Q$.

Suppose otherwise. Since $M\otimes_{\Z}\Q$ is a finitely generated module over the PID $\Q[t,t^{-1}]$, it contains a free submodule isomorphic to $\Q[t,t^{-1}]$. After multiplying a generator of this submodule by a nonzero integer, we obtain an element of $M$ that generates a submodule isomorphic to $R$. Localizing at the prime ideal $(p)\subset R$, we find that $M_{(p)}$ contains a submodule isomorphic to $R_{(p)}$, as localizations are flat.

On the other hand,
\[
R_{(p)}/pR_{(p)}\cong\mathbb{F}_p(t),
\]
and hence
\[
M_{(p)}/pM_{(p)}\cong M\otimes_R\mathbb{F}_p(t)\cong\bigl(M\otimes_{\Z}\mathbb{F}_p\bigr)\otimes_{\mathbb{F}_p[t,t^{-1}]}\mathbb{F}_p(t)=0.
\]
Here the final equality follows because $M\otimes_{\Z}\mathbb{F}_p$ is finite-dimensional over $\mathbb{F}_p$ and therefore is a torsion $\mathbb{F}_p[t,t^{-1}]$-module. Thus $M_{(p)}=pM_{(p)}$. Since $R_{(p)}$ is a local ring with maximal ideal $pR_{(p)}$ and $M_{(p)}$ is finitely generated over $R_{(p)}$, Nakayama's lemma implies that $M_{(p)}=0$. This contradicts the fact that $M_{(p)}$ contains a submodule isomorphic to $R_{(p)}$. Therefore $M\otimes_{\Z}\Q$ is finite-dimensional over $\Q$, as required.
\end{proof}

\begin{lem}\label{lem: kawauchi lemma for modules}
Let $F$ be a field, let $M$ be a finitely generated torsion
$F[t,t^{-1}]$-module, and let $p$ be a prime. Then, for every
sufficiently large integer $n$, the element $t^{p^{n-1}}$ acts trivially
on
\[
M/(t^{p^n}-1)M.
\]
\end{lem}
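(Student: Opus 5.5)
The plan is to reduce to cyclic modules and then use a stabilization argument for greatest common divisors in the PID $R\coloneqq F[t,t^{-1}]$.

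\textbf{Step 1: reduce to cyclic modules.} Since $M$ is finitely generated and torsion over the PID $R$, we can write
\[
M\cong\bigoplus_{i=1}^k R/(f_i)
\]
with nonzero $f_i\in R$. The quotient construction $M\mapsto M/(t^{p^n}-1)M$, that is $M\mapsto M\otimes_R R/(t^{p^n}-1)$, commutes with finite direct sums, and so does the action of $t^{p^{n-1}}$. So it suffices to find, for each $i$, an integer $N_i$ such that the claim holds for $R/(f_i)$ whenever $n\ge N_i$. We then take $N=\max_i N_i$.

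\textbf{Step 2: identify the quotient of a cyclic module.} Fix $f\neq0$. For any $h\in R$ we have $(R/(f))/h(R/(f))\cong R/(f,h)=R/(\gcd(f,h))$, because $R$ is a PID. Set
\[
g_n\coloneqq\gcd\bigl(f,\,t^{p^n}-1\bigr),
\]
well defined up to units of $R$. Then $R/(f)$ modulo $(t^{p^n}-1)$ is $R/(g_n)$. On this module $t^{p^{n-1}}$ acts as the identity exactly when $g_n$ divides $t^{p^{n-1}}-1$.

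\textbf{Step 3: stabilization.} Since $t^{p^n}-1$ divides $t^{p^{n+1}}-1=(t^{p^n})^p-1$, we get $g_n\mid g_{n+1}$, and every $g_n$ divides $f$. Equivalently, the ideals $(g_n)\supseteq(g_{n+1})\supseteq\cdots$ all contain $(f)$. Because $f\neq0$, $R/(f)$ is finite-dimensional over $F$, so $f$ has only finitely many divisors up to units. Hence the chain stabilizes: there is $N_f$ with $g_n=g_{N_f}$ for all $n\ge N_f$.

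Now take $n\ge N_f+1$. Then $g_n=g_{n-1}$, and $g_{n-1}$ divides $t^{p^{n-1}}-1$ by definition. Thus $t^{p^{n-1}}$ acts trivially on $R/(g_n)$, which completes the cyclic case with $N_i=N_{f_i}+1$.

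The only point that needs care is the stabilization in Step 3, and it follows directly from $f\neq 0$, which is exactly the torsion hypothesis. The argument uses no property of the characteristic of $F$ or of the prime $p$.
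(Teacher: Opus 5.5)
Your proof is correct, and it takes a different, more uniform route than the paper. The paper splits into two cases.
\begin{itemize}
\item When $\operatorname{char}(F)\neq p$, it takes a nonzero annihilator $h$ of $M$. It uses separability of $t^{p^n}-1$, together with the fact that only finitely many roots of $h$ are roots of unity of $p$-power order, to get $\gcd(h,t^{p^n}-1)\mid t^{p^{n-1}}-1$.
\item When $\operatorname{char}(F)=p$, it uses $t^{p^n}-1=(t-1)^{p^n}$ and the primary decomposition. The non-$(t-1)$-primary parts die in the quotient, and the exponent of the $(t-1)$-primary part is bounded.
\end{itemize}

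Your monotone-gcd argument replaces both cases with one observation. The elements $g_n=\gcd(f,t^{p^n}-1)$ form a divisibility chain bounded by $f$, so the chain stabilizes, and then $g_n=g_{n-1}\mid t^{p^{n-1}}-1$. This uses only $t^{p^n}-1\mid t^{p^{n+1}}-1$, so it neither depends on the characteristic nor really on $p$: any exponent sequence $a_n$ with $a_n\mid a_{n+1}$ would do.

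Your Step 3 is also exactly \Cref{cor: division bound}: for $m\geq N_f$, $g_m=g_{N_f}$ divides $t^{p^{N_f}}-1$. The paper re-derives that corollary by the same case split, whereas your route gives it for free.

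You could also skip Step 1. Running Steps 2 and 3 on a single nonzero annihilator $h$ of $M$ shows that $\gcd(h,t^{p^n}-1)$, which annihilates $M/(t^{p^n}-1)M$, divides $t^{p^{n-1}}-1$ for large $n$.

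What the paper's case analysis buys is explicit control over when stabilization occurs. In characteristic $p$ it is governed by the $(t-1)$-adic exponent; otherwise by the $p$-power orders of the roots of unity among the roots of $h$. This is not needed for the statement.

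One small imprecision in your justification: finite-dimensionality of $R/(f)$ does not by itself imply that $f$ has finitely many divisors up to units, though that is true here by unique factorization in $R$. Stabilization follows more directly from either of these:
\begin{itemize}
\item the descending chain condition in the finite-dimensional ring $R/(f)$;
\item the fact that $\deg g_n$ is nondecreasing and bounded by $\deg f$, after normalizing by units to polynomials with nonzero constant term.
\end{itemize}
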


\begin{proof}
Set $M_n \coloneqq M/(t^{p^n}-1)M$. First, suppose that
$\operatorname{char}(F)\neq p$. Since $M$ is finitely generated and
torsion, there exists a nonzero element $h\in F[t,t^{-1}]$ that
annihilates $M$. Both $h$ and $t^{p^n}-1$ annihilate $M_n$, so
$d_n \coloneqq \gcd(h,t^{p^n}-1)$ also annihilates $M_n$. Moreover,
\[
\gcd\left(
t^{p^n}-1,
\frac{d}{dt}(t^{p^n}-1)
\right)
=
\gcd(t^{p^n}-1,p^nt^{p^n-1})
=
1.
\]
Thus $t^{p^n}-1$ is separable over $F$, and its roots in
$\overline{F}$ are precisely the $p^n$th roots of unity. Since $h$ has
only finitely many roots, for all sufficiently large $n$, every root
of $h$ whose order is a power of $p$ is a $p^{n-1}$th root of unity.
Since $d_n$ divides the separable polynomial $t^{p^n}-1$, it follows
that $d_n\mid t^{p^{n-1}}-1$. Therefore, $t^{p^{n-1}}-1$ annihilates
$M_n$, and hence $t^{p^{n-1}}$ acts trivially on $M_n$.

Now suppose that $\operatorname{char}(F)=p$. Since $F[t,t^{-1}]$ is a
PID, $M$ admits a primary decomposition
\[
M\cong\bigoplus_f M_f,
\]
where the sum ranges over the finitely many irreducible elements $f$,
up to associates, for which the $f$-primary submodule $M_f$ is
nonzero. If $f$ is not associated to $t-1$, then $f$ and $t-1$ are
coprime, so $t-1$ acts invertibly on $M_f$. Since
$$t^{p^n}-1=(t-1)^{p^n},$$ the element $t^{p^n}-1$ also acts invertibly
on $M_f$, and hence $M_f/(t^{p^n}-1)M_f=0$. It therefore remains to consider the $(t-1)$-primary submodule. Since
this submodule is finitely generated, there exists an integer $N>0$
such that $(t-1)^N$ annihilates it. For every sufficiently large $n$,
we have $p^{n-1}\geq N$, and hence
$$t^{p^{n-1}}-1=(t-1)^{p^{n-1}}$$ annihilates the $(t-1)$-primary
submodule. Thus $t^{p^{n-1}}$ acts trivially on $M_n$, completing the
proof.
\end{proof}

The proof of \Cref{lem: kawauchi lemma for modules} also yields the
following corollary.

\begin{cor}\label{cor: division bound}
Let $F$ be a field, let $p$ be a prime, and let
$0\neq f\in F[t,t^{-1}]$. Then there exists a positive integer $n$
such that $\gcd(f,t^{p^m}-1)$ divides $t^{p^n}-1$ for every $m\geq n$.
\end{cor}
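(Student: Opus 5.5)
We split according to whether $\operatorname{char}(F)=p$, exactly as in the proof of \Cref{lem: kawauchi lemma for modules}. Throughout we may work in $F[t]$ after multiplying $f$ by a suitable power of $t$. This changes neither the gcd with $t^{p^m}-1$ (since $t$ is a unit and coprime to $t^{p^m}-1$) nor divisibility up to associates. Set $d_m=\gcd(f,t^{p^m}-1)$.

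\emph{Case $\operatorname{char}(F)\neq p$.} As computed in the proof of \Cref{lem: kawauchi lemma for modules}, $t^{p^m}-1$ is separable. Its roots in $\overline{F}$ are the $p^m$th roots of unity, each with multiplicity one. Hence $d_m$ is, up to a unit, the product of $(t-\zeta)$ over the roots $\zeta$ of $f$ that are $p^m$th roots of unity; it is defined over $F$ as a gcd. Since $f\neq 0$ has finitely many roots, only finitely many $p$-power roots of unity occur among them.

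\begin{itemize}
\item If there are none, take $n=1$: then $d_m=1$.
\item Otherwise, choose $n$ so that each such root has order dividing $p^n$.
\end{itemize}

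For $m\geq n$, the roots of $d_m$ are then exactly these roots, each simple. Since they are all $p^n$th roots of unity and $t^{p^n}-1$ is separable, $d_m$ divides $t^{p^n}-1$.

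\emph{Case $\operatorname{char}(F)=p$.} Here $t^{p^m}-1=(t-1)^{p^m}$, so $d_m=(t-1)^{\min(e,p^m)}$, where $e$ is the multiplicity of $t-1$ in $f$. Choose $n\geq 1$ with $p^n\geq e$. Then for every $m\geq n$ we have $d_m=(t-1)^e$, which divides $(t-1)^{p^n}=t^{p^n}-1$.

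The only delicate point is the first case, in the step identifying $d_m$ as a product of \emph{simple} linear factors. This uses the separability of $t^{p^m}-1$, i.e.\ the hypothesis $p\neq\operatorname{char}(F)$. It guarantees that repeated roots of $f$ contribute only once, so the same $n$ works uniformly in $m$.
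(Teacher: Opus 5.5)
Your proposal is correct and follows essentially the same argument as the paper. You reduce to $f\in F[t]$; in characteristic $p$ you use $t^{p^m}-1=(t-1)^{p^m}$ and bound the multiplicity of $t-1$ in $f$, and in characteristic $\neq p$ you use the separability of $t^{p^m}-1$ together with the finiteness of the set of $p$-power-order roots of $f$.
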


\begin{proof}
Multiplying $f$ by a power of $t$ does not change the statement, so we
may assume that $f\in F[t]$. Suppose first that $\operatorname{char}(F)=p$. Write
$f=(t-1)^r g$, where $t-1$ does not divide $g$, and choose $n>0$ such
that $p^n\geq r$. Since $t^{p^m}-1=(t-1)^{p^m}$, for every $m\geq n$,
the polynomial $\gcd(f,t^{p^m}-1)$ is associated to $(t-1)^r$, which
divides $(t-1)^{p^n}=t^{p^n}-1$.

Now suppose that $\operatorname{char}(F)\neq p$. Let $\Lambda$ be the
finite set of roots of $f$ in $\overline{F}$ whose orders are powers of
$p$. Choose $n>0$ such that every element of $\Lambda$ is a
$p^n$th root of unity. For $m\geq n$, the polynomial
$t^{p^m}-1$ is separable. Hence $\gcd(f,t^{p^m}-1)$ is separable, and
each of its roots belongs to $\Lambda$. It follows that every root of
$\gcd(f,t^{p^m}-1)$ is a root of $t^{p^n}-1$, and therefore
$\gcd(f,t^{p^m}-1)$ divides $t^{p^n}-1$.
\end{proof}

\begin{lem}\label{lem: F-Kawauchi lemma}
Let $F$ be a field, let $p$ be a prime, and let $W$ be an
$F$-Kawauchi cobordism. Then, for every sufficiently large integer
$n$, the deck transformation group of the covering
$W^{p^n}\to W^{p^{n-1}}$ acts trivially on
$H_\ast(W^{p^n};F)$.
\end{lem}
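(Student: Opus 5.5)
Set $R=F[t,t^{-1}]$, let $C_\ast=C_\ast(\wt{W};F)$ be the cellular chain complex of $\wt{W}$ with respect to a lifted CW structure, and put $M=H_\ast(\wt{W};F)$. Then $C_\ast$ is a finite complex of finitely generated free $R$-modules. Since $W$ is $F$-Kawauchi, $M$ is finite-dimensional over $F$, so it is a finitely generated torsion $R$-module. The deck group of $W^{p^n}\to W^{p^{n-1}}$ is generated by $t^{p^{n-1}}$ acting on $W^{p^n}=\wt{W}/p^n\Z$. The aim is therefore to show that $t^{p^{n-1}}-1$ annihilates $H_\ast(W^{p^n};F)=H_\ast(C_\ast\otimes_R R/(t^{p^n}-1))$ for all large $n$.

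Write $g_n=t^{p^n}-1$. This is a nonzerodivisor in $R$, so the sequence $0\to C_\ast\xrightarrow{g_n}C_\ast\to C_\ast/g_nC_\ast\to 0$ is short exact. Its long exact sequence gives, in each degree $k$, a short exact sequence
\[
0\longrightarrow M_k/g_nM_k\longrightarrow H_k(W^{p^n};F)\longrightarrow \ker\bigl(g_n\colon M_{k-1}\to M_{k-1}\bigr)\longrightarrow 0
\]
of $R$-modules. Choose a nonzero $h\in R$ annihilating all of $M$. Both outer terms are then annihilated by $d_n=\gcd(h,g_n)$, so the middle term is annihilated by $d_n^2$.

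By \Cref{cor: division bound}, there is an $n_0$ with $d_m\mid g_{n_0}$ for all $m\geq n_0$. Hence $g_{n_0}^2$ annihilates $H_\ast(W^{p^m};F)$ for every $m\geq n_0$.

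The main obstacle is removing the square, since an extension of two modules killed by $t^{p^{n-1}}-1$ need not itself be killed by it. The case split follows \Cref{lem: kawauchi lemma for modules}.

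\begin{itemize}
\item If $\operatorname{char}(F)=p$, then $g_{n_0}^2=(t-1)^{2p^{n_0}}$. This divides $(t-1)^{p^{m-1}}=t^{p^{m-1}}-1$ as soon as $p^{m-1}\geq 2p^{n_0}$, so the claim holds for all large $m$.
\item If $\operatorname{char}(F)\neq p$, the module $H_\ast(W^{p^m};F)$ is killed by $g_m$, which is separable. It is also killed by $g_{n_0}^2$. The annihilator therefore contains $\gcd(g_m,g_{n_0}^2)$, and since $g_m$ is squarefree and $g_{n_0}\mid g_m$, this gcd equals $g_{n_0}$. For $m-1\geq n_0$ we have $g_{n_0}\mid t^{p^{m-1}}-1$, so $t^{p^{m-1}}-1$ annihilates $H_\ast(W^{p^m};F)$.
\end{itemize}

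In both cases the generator $t^{p^{m-1}}$ of the deck group of $W^{p^m}\to W^{p^{m-1}}$ acts trivially on $H_\ast(W^{p^m};F)$ for all sufficiently large $m$, which is the statement.
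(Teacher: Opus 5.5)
Your proof is correct, but it takes a different route from the paper's.

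The paper uses the universal coefficient theorem over the PID $R$ in its split form,
$H_j(W^{p^n};F)\cong H_j(\wt W;F)/g_nH_j(\wt W;F)\oplus\operatorname{Tor}_1^R(H_{j-1}(\wt W;F),R/(g_n))$ as $R$-modules. It then treats the two summands separately:
\begin{itemize}
\item the first summand is killed by $t^{p^{n-1}}-1$ via \Cref{lem: kawauchi lemma for modules};
\item for the Tor summand, a cyclic decomposition $H_{j-1}(\wt W;F)\cong\bigoplus_i R/(f_{j,i})$ gives $\bigoplus_i R/(\gcd(f_{j,i},g_n))$, and each factor is controlled by \Cref{cor: division bound}.
\end{itemize}
Because each summand is handled on its own, no square of an annihilator ever appears.

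You work only with the non-split short exact sequence. This forces the annihilator to be the product $d_n^2$, and you then remove the square by hand. In characteristic $p$ you push $n$ further, so that $p^{m-1}\geq 2p^{n_0}$. Otherwise you use that $g_m$ itself annihilates $H_\ast(W^{p^m};F)$ and is squarefree, so $\gcd(g_m,g_{n_0}^2)=g_{n_0}$.

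Your version avoids several things the paper relies on:
\begin{itemize}
\item the splitting, in particular the implicit check that the UCT splitting is $R$-linear;
\item the cyclic decomposition and the Tor computation;
\item \Cref{lem: kawauchi lemma for modules} itself, since you need only a single global annihilator $h$ and \Cref{cor: division bound}.
\end{itemize}
The paper's route, in return, gives a more explicit structural description of $H_\ast(W^{p^n};F)$. It also gives a marginally better threshold on $n$ in characteristic $p$, with no doubling of the exponent.
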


\begin{proof}
Set $R \coloneqq F[t,t^{-1}]$ and $g_n \coloneqq t^{p^n}-1$. The cellular chain
complex of $W^{p^n}$ is naturally identified with
$C_\ast(\wt{W};F)\otimes_R R/(g_n)$. Since $R$ is a PID, the universal
coefficient theorem gives a noncanonical isomorphism of $R$-modules
\[
H_j(W^{p^n};F)
\cong
H_j(\wt{W};F)/g_nH_j(\wt{W};F)
\oplus
\operatorname{Tor}_1^R
\left(H_{j-1}(\wt{W};F),R/(g_n)\right).
\]
Since $H_\ast(\wt{W};F)$ is a finitely generated torsion $R$-module,
\Cref{lem: kawauchi lemma for modules} implies that, for every
sufficiently large $n$, the element $t^{p^{n-1}}-1$ annihilates the
first summand in every degree.

We now consider the $\operatorname{Tor}$ term. For each $j$, write
$H_{j-1}(\wt{W};F)\cong\bigoplus_{i=1}^{k_j}R/(f_{j,i})$, where each
$f_{j,i}$ is nonzero. It follows that
\[
\operatorname{Tor}_1^R
\left(H_{j-1}(\wt{W};F),R/(g_n)\right)
\cong
\bigoplus_{i=1}^{k_j}
R/\left(\gcd(f_{j,i},g_n)\right).
\]
For each $f_{j,i}$, choose an integer $N_{j,i}>0$ as in \Cref{cor: division bound}. Since there are only finitely many such polynomials, we may choose $n$ sufficiently large that $n-1\geq N_{j,i}$ for every $j$ and $i$. Then $\gcd(f_{j,i},g_n)$ divides $g_{N_{j,i}}$, which in turn divides $g_{n-1}=t^{p^{n-1}}-1$. Hence $t^{p^{n-1}}-1$ annihilates every summand of the $\operatorname{Tor}$ term.

It follows that $t^{p^{n-1}}-1$ annihilates $H_\ast(W^{p^n};F)$, and therefore $t^{p^{n-1}}$ acts trivially on $H_\ast(W^{p^n};F)$. Finally, $t^{p^{n-1}}$ generates the deck transformation group of the covering $W^{p^n}\to W^{p^{n-1}}$, completing the proof.
\end{proof}

For a manifold $W$ equipped with an infinite cyclic cover and an integer $n>0$, let
\[
p_{n+1}\colon W^{2^{n+1}}\longrightarrow W^{2^n}
\]
denote the natural double covering, and let $\tau_{n+1}$ denote its nontrivial deck transformation. We denote by $\ell_n$ the rank-one $\Z$-local system on $W^{2^n}$ associated with $p_{n+1}$ via the sign representation of $\Z_2$. Thus $\tau_{n+1}$ is induced by the action of $t^{2^n}$ on $W^{2^{n+1}}$. For a field $F$, we write $\ell_n\otimes_{\Z}F$ for the induced rank-one $F$-local system.

\begin{cor}\label{cor: f-homology action}
Let $F$ be a field, and let $W$ be an $F$-Kawauchi cobordism. Then $W$ is $F$-stable. Moreover, if $\operatorname{char}(F)\neq 2$, then the following statements hold for all sufficiently large integers $n>0$:
\begin{itemize}
\item $H^\ast(W^{2^n};\ell_n\otimes_{\Z}F)=0$;
\item the pullback and transfer maps
\[
p_{n+1}^\ast\colon H^\ast(W^{2^n};F)\longrightarrow H^\ast(W^{2^{n+1}};F),
\qquad
(p_{n+1})_!\colon H^\ast(W^{2^{n+1}};F)\longrightarrow H^\ast(W^{2^n};F)
\]
are isomorphisms.
\end{itemize}
\end{cor}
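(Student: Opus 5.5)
First, $F$-stability follows directly from \Cref{lem: F-Kawauchi lemma} with $p=2$. For all sufficiently large $n$, the deck group of $W^{2^n}\to W^{2^{n-1}}$, generated by $t^{2^{n-1}}$, acts trivially on $H_\ast(W^{2^n};F)$. Reindexing with $k=n-1$, this says that $t^{2^k}$ acts trivially on $H_\ast(W^{2^{k+1}};F)$ for all large $k$, which is exactly the definition of $F$-stability.

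Now assume $\operatorname{char}(F)\neq 2$, and fix $n$ large enough that $\tau_{n+1}$ (the action of $t^{2^n}$) acts trivially on $H_\ast(W^{2^{n+1}};F)$. By the universal coefficient theorem over the field $F$, $\tau_{n+1}$ then also acts trivially on $H^\ast(W^{2^{n+1}};F)$. Since $2$ is invertible in $F$, I will use the transfer decomposition for the double cover $p_{n+1}$:
\[
H^\ast(W^{2^{n+1}};F)\cong H^\ast(W^{2^n};F)\oplus H^\ast(W^{2^n};\ell_n\otimes_{\Z}F),
\]
where the two summands are the $(+1)$- and $(-1)$-eigenspaces of $\tau_{n+1}$. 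To justify this, I identify the chain complex $C_\ast(W^{2^{n+1}};F)$, as an $F[\Z_2]$-complex, with $C_\ast(W^{2^n};F[\Z_2])$. The splitting $F[\Z_2]\cong F\oplus F_{\mathrm{sign}}$, given by the idempotents $(1\pm\tau)/2$, splits this complex as a direct sum of the complexes with trivial and twisted coefficients. The same applies to cochains. Because $\tau_{n+1}$ acts as the identity, its $(-1)$-eigenspace vanishes, so $H^\ast(W^{2^n};\ell_n\otimes_{\Z}F)=0$.

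For the second bullet, under this decomposition $p_{n+1}^\ast$ is the inclusion of the invariant summand, so it is injective. Its image is the $\tau$-invariants, and these are everything by the previous step, so $p_{n+1}^\ast$ is an isomorphism. The transfer satisfies $(p_{n+1})_!\circ p_{n+1}^\ast=2$. Since $2$ is a unit in $F$ and $p_{n+1}^\ast$ is an isomorphism, $(p_{n+1})_!$ is also an isomorphism.

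The only point needing care is the eigenspace splitting: identifying the $(-1)$-eigenspace with twisted cohomology in $\ell_n$. This is a standard chain-level argument with the idempotents $(1\pm\tau)/2$, and it is precisely where the hypothesis $\operatorname{char}(F)\neq 2$ is used. I do not expect any further obstacle.
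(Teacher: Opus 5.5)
Your proposal is correct and follows the paper's proof essentially step for step. Both arguments use the $p=2$ case of the $F$-Kawauchi lemma (reindexed to give $F$-stability), pass to cohomology by universal coefficients, invoke the $\pm1$-eigenspace splitting when $\operatorname{char}(F)\neq 2$, and finish with $(p_{n+1})_!\circ p_{n+1}^\ast=2$. The only addition on your side is the explicit idempotent $(1\pm\tau)/2$ justification of the splitting, which the paper simply cites as standard.
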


% \begin{cor}\label{cor: f-homology action}
% Let $F$ be a field, and let $W$ be an $F$-Kawauchi cobordism. Then $W$
% is $F$-stable. Moreover, if $\operatorname{char}(F)\neq 2$, then the
% following statements hold for all sufficiently large integers $n>0$:
% \begin{itemize}
% \item
% $H^\ast(W^{2^n};\ell_n\otimes F)=0$, where $\ell_n$ denotes the local
% system on $W^{2^n}$ associated with the double covering
% $p_{n+1}\colon W^{2^{n+1}}\to W^{2^n}$;
% \item
% the pullback and transfer maps
% \[
% p_{n+1}^\ast\colon H^\ast(W^{2^n};F)
% \longrightarrow H^\ast(W^{2^{n+1}};F),
% \qquad
% (p_{n+1})_!\colon H^\ast(W^{2^{n+1}};F)
% \longrightarrow H^\ast(W^{2^n};F)
% \]
% are isomorphisms.
% \end{itemize}
% \end{cor}

\begin{proof}
By \Cref{lem: F-Kawauchi lemma} with $p=2$, there exists an integer $N>0$ such that, for every $n\geq N$, the deck transformation $\tau_{n+1}$ of $p_{n+1}$ acts trivially on $H_\ast(W^{2^{n+1}};F)$. This is precisely the statement that $W$ is $F$-stable. Fix an integer $n\geq N$. Since $F$ is a field, the universal coefficient theorem implies that $\tau_{n+1}^\ast$ also acts trivially on $H^\ast(W^{2^{n+1}};F)$.

Suppose that $\operatorname{char}(F)\neq 2$. The standard eigenspace decomposition associated with the double covering $p_{n+1}$ gives
\[
H^\ast(W^{2^{n+1}};F)
\cong
H^\ast(W^{2^n};F)\oplus H^\ast(W^{2^n};\ell_n\otimes_{\Z}F),
\]
where the two summands are respectively the $+1$- and $-1$-eigenspaces of $\tau_{n+1}^\ast$. Since $\tau_{n+1}^\ast$ acts trivially, its $-1$-eigenspace vanishes. Therefore
\[
H^\ast(W^{2^n};\ell_n\otimes_{\Z}F)=0.
\]

Under the same decomposition, the pullback map $p_{n+1}^\ast$ identifies $H^\ast(W^{2^n};F)$ with the $+1$-eigenspace. It is therefore an isomorphism. Finally, the transfer satisfies
\[
(p_{n+1})_!\circ p_{n+1}^\ast=2\,\mathrm{id}.
\]
Since $p_{n+1}^\ast$ is an isomorphism and $2$ is invertible in $F$, the transfer $(p_{n+1})_!$ is also an isomorphism.
\end{proof}

\subsection{$\wt{H}_{\mathbb{F}_2}$-cobordisms and real spin structures}
\label{subsec:F2-real-spin}

We briefly recall the notions of real spin and real $\spinc$ structures used throughout the paper; see, for example, \cite{KMT21,miyazawa2025satellite}. Let $X$ be an oriented smooth $3$- or $4$-manifold equipped with an orientation-preserving smooth involution $\tau$. Fix a $\tau$-invariant Riemannian metric on $X$.

\begin{defn}\label{def: real spinc}
Let $\s$ be a $\spinc$ structure on $X$, with spinor bundle $S$ and Clifford multiplication $\rho$. A \emph{real structure} on $\s$ with respect to $\tau$ is an anti-complex linear isometric bundle map $I\colon S\to S$ covering $\tau$ such that $I^2=\id$ and
\[
I(\rho(v)\Phi)=\rho(d\tau(v))I(\Phi)
\]
for every tangent vector $v$ and spinor $\Phi$. In dimension $4$, we also require $I$ to preserve the decomposition $S=S^+\oplus S^-$. A pair $(\s,I)$ is called a \emph{real $\spinc$ structure} on $(X,\tau)$.
\end{defn}

Two real $\spinc$ structures are isomorphic if there is an isomorphism of the underlying $\spinc$ structures intertwining their real structures.  We next recall the spin case. Set $d \coloneqq \dim X$. Let $\mathfrak t$ be a spin structure on $X$, and let $P_{\mathfrak t}$ be the corresponding principal $\mathrm{Spin}(d)$-bundle.

\begin{defn}\label{def: real spin}
A \emph{real spin structure} on $(X,\tau)$ is a spin structure $\mathfrak t$ together with a lift $\widetilde\tau\colon P_{\mathfrak t}\to P_{\mathfrak t}$ of $\tau$ such that $\widetilde\tau^2=-1$, where $-1$ denotes the central element in the kernel of the spin double covering. This structure is also called an \emph{odd spin structure}. 
\end{defn}

To relate the two notions, let $j$ denote the quaternionic structure on the spinor bundle associated with a spin structure. Thus $j$ is anti-complex linear, $j^2=-1$, and it commutes with the action of $\mathrm{Spin}$ and hence with $\widetilde\tau$. Therefore $I \coloneqq j\widetilde\tau$ is an anti-complex linear involution compatible with Clifford multiplication. In particular, every real spin structure canonically induces a real $\spinc$ structure $(\s,I)$, where $\s$ is the $\spinc$ structure induced by $\mathfrak t$. Replacing $\widetilde\tau$ by $-\widetilde\tau$ replaces $I$ by $-I$; this is the sign ambiguity that will appear below.

The additional quaternionic symmetry $j$ preserves the real Seiberg--Witten equations associated with $(\s,I)$. This is the source of the $\Z_4$-symmetry in the real spin case, whereas a general real $\spinc$ structure gives only the corresponding $\Z_2$-symmetry.

\begin{rem}\label{rem: real spinc and spinc-minus}
The following observation is given in \cite[Lemma 2.3]{miyazawa2025satellite}. 
Suppose that $\tau$ is free, set $\overline X=X/\tau$, and let $\ell$ be the real line bundle associated with the double covering $X\to\overline X$. 
A real $\spinc$ structure on $(X,\tau)$ is equivalent to Nakamura's notion of a $\mathrm{spin}^{c-}$ structure on a pair $(\overline X,E)$ satisfying $\det E\cong\ell$; see \cite[Section~3(iii)]{NakamuraPinMinus}. 
More precisely, let $P\to\overline X$ be the principal $\mathrm{Spin}^{c-}$ bundle representing such a structure, and let $G_0$ denote the identity component of its structure group. Then $P/G_0\to\overline X$ is the double covering determined by $\det E$. Hence $P/G_0\cong X$, and, under this identification, the principal $G_0$-bundle $P\to X$ defines a $\spinc$ structure $\s$ on $X$. Right multiplication by the standard element in the nonidentity component of the structure group covers $\tau$ and induces an anti-complex linear involution $I$ on the spinor bundle. Thus $(\s,I)$ is a real $\spinc$ structure on $(X,\tau)$. Conversely, every real $\spinc$ structure on $(X,\tau)$ descends to a $\mathrm{spin}^{c-}$ structure on $\overline X$.

Under this correspondence, the $I$-invariant part of the Seiberg--Witten equations on $X$ is naturally identified with Nakamura's $\Pin^-(2)$-monopole equations on $\overline X$; see \cite[Section~4(v)]{NakamuraPinMinus}. Thus, when we use Nakamura's existence criterion for $\mathrm{spin}^{c-}$ structures below, the resulting structure on the double cover is precisely a real $\spinc$ structure in the sense of \Cref{def: real spinc}.
\end{rem}

Recall that, by \Cref{lem: F-kawauchi implies Q-kawauchi}, every $\wt{H}_F$-cobordism is an $\wt{H}_{\Q}$-cobordism for any field $F$.

\begin{lem}\label{lem: Z2-Kawauchi implies spin covers}
Let $p$ be a prime, let $F$ be a field of characteristic $p$, and let $(W,\phi_W)$ be a $\wt{H}_F$-cobordism between distinguished homology handles. Then, for every integer $n>0$, the short exact sequence
\[
0\longrightarrow H_2(\wt{W};F)/(t^{p^n}-1)\longrightarrow H_2(W^{p^n};F)\longrightarrow \operatorname{Tor}_1^{F[t,t^{-1}]}(H_1(\wt{W};F),F[t,t^{-1}]/(t^{p^n}-1))\longrightarrow 0
\]
splits. With respect to the absolute intersection form $Q_{p,n}(x,y)=x\cap j(y)$, where $$j\colon H_2(W^{p^n};F)\longrightarrow H_2(W^{p^n},\partial W^{p^n};F)$$ is the natural map, the following hold:
\begin{itemize}
\item The first summand is isotropic for every integer $n>0$.
\item For all sufficiently large integers $n>0$, the splitting can be chosen so that the second summand is also isotropic.
\end{itemize}
Furthermore, if $p=2$ and $W$ is smooth, then $W^{2^n}$ is spin for all sufficiently large integers $n>0$.
\end{lem}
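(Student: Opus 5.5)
The plan has three parts: the splitting, the two isotropy statements, and the spin conclusion for $p=2$.

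\emph{Splitting.} Set $R=F[t,t^{-1}]$ and $g_n=t^{p^n}-1$. The displayed sequence is the universal coefficient sequence for $C_\ast(\wt W;F)\otimes_R R/(g_n)$, as in the proof of \Cref{lem: F-Kawauchi lemma}. Since $R$ is a PID, this sequence splits as $F$-vector spaces, and in fact noncanonically as $R$-modules; this is the standard splitting of the Künneth/UCT sequence over a PID. I would record that the splitting is determined by a choice of chain-level splitting of the cycles.

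\emph{First summand is isotropic.} The first summand is the image of the transfer-type map $H_2(\wt W;F)\to H_2(W^{p^n};F)$, i.e.\ pushforward under $\pi\colon\wt W\to W^{p^n}$. For $x=\pi_\ast a$ and $y=\pi_\ast b$, I would write
\[
Q(x,y)=\sum_{k}\bigl(a\cdot t^{kp^n}b\bigr)
\]
using the equivariant (Blanchfield-type) intersection pairing on $\wt W$. Here $a$ and $b$ are compact classes in the noncompact manifold $\wt W$. Because $H_2(\wt W;F)$ is finite-dimensional (the $F$-Kawauchi hypothesis), the equivariant intersection form $H_2(\wt W)\times H_2(\wt W)\to R$ vanishes. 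A torsion module pairs to zero into the free module $R$: if $f\cdot a=0$ with $f\neq 0$, then $f\cdot\langle a,b\rangle=0$ in $R$, so $\langle a,b\rangle=0$. Reducing mod $g_n$ then shows $Q(x,y)=0$. In other words, classes lifting to $\wt W$ intersect trivially.

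\emph{Second summand can be made isotropic.} The Tor summand comes from the $g_n$-torsion of $H_1(\wt W;F)$, which has the form $\bigoplus R/\gcd(f_i,g_n)$. By \Cref{cor: division bound}, for large $n$ these gcds stabilize and divide $g_N$ for a fixed $N$. I would represent a Tor class by a chain $c$ on $\wt W$ with $\partial c=g_n z$, where $z$ is a cycle with $h z=\partial w$ and $h=\gcd$. Taking $c=(g_n/h)w$ and projecting gives a closed 2-cycle downstairs. To compute its intersection with another such class, I would lift one factor to $\wt W$. The pairing is then essentially a linking pairing $\mathrm{Bl}(z,z')$ multiplied by $g_n/h$, evaluated modulo $g_n$. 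Because $h\mid g_N$ while $g_n/h$ is divisible by an increasingly large factor (in characteristic $p$ it is $(t-1)^{p^n-r}$), the product vanishes modulo $g_n$ once $n$ is large. Any discrepancy with the chosen lift can be absorbed by adjusting the splitting by elements of the first summand, using a symplectic-basis-style correction. I expect this step to be the main obstacle: tracking the chain-level representative and correction terms carefully, and showing that the cross terms with the first summand can be killed. The first summand pairs trivially with itself but not necessarily with the second, so the correction must be chosen to cancel those mixed contributions.

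\emph{Spin for $p=2$.} For $p=2$ and $n$ large, the form $Q$ on $H_2(W^{2^n};\mathbb F_2)$ then vanishes on both summands. Its diagonal $x\mapsto Q(x,x)$ is additive mod $2$ and vanishes on each isotropic summand, hence vanishes identically. So the Wu class $v_2=w_2+w_1^2=w_2$ pairs trivially with every class in the image of $H_2\to H_2(W,\partial W)$. To conclude $w_2=0$, I would also need $w_2$ to vanish on the boundary. The boundary components are covers of homology handles, which are orientable 3-manifolds and hence spin. Relative Wu/Poincaré–Lefschetz duality then gives $w_2(W^{2^n})=0$.
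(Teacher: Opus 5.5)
Your splitting, your proof that the first summand is isotropic, and your characteristic-$2$ identity $Q(x,x)=Q(x_a,x_a)+2Q(x_a,x_t)+Q(x_t,x_t)=0$ all match the paper. For the first summand, a torsion class has vanishing $R$-valued equivariant pairing because $R$ is a domain, and $Q_{p,n}$ is a coefficient of its reduction. The gap is in the second bullet, which is the heart of the lemma.

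You propose that the pairing of two Tor-type classes is a linking number $\mathrm{Bl}(z,z')$ times $g_n/h$, and that it vanishes modulo $g_n$ because $g_n/h=(t-1)^{p^n-r}$ has large valuation. That does not work. A single factor $(t-1)^{p^n-r}$ with $r>0$ is never divisible by $g_n=(t-1)^{p^n}$, however large $n$ is, and nothing else in your description supplies the missing divisibility. Moreover, $\mathrm{Bl}(z,z')$ is not defined on $H_1(\wt W;F)\times H_1(\wt W;F)$ for a $4$-manifold; the torsion linking pairs $H_1(\wt W;F)$ with $H_2(\wt W,\partial\wt W;F)$. An expression with one factor of $g_n/h$ is what governs the mixed pairing between a Tor-type class and a class from the first summand, and that pairing need not vanish.

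The missing idea is that the factor appears twice, once from each argument. Fix a chain model $\bigoplus_i[R\alpha_i\xrightarrow{p_i}R\beta_i]$ for $C_\ast(\wt W;F)$ and transport the chain-level equivariant intersection pairing to it. Let $m_i$ be the $(t-1)$-adic valuation of $p_i$, and take the lifts $a_i\alpha_i$ with $a_i=g_n/\gcd(p_i,g_n)=(t-1)^{p^n-m_i}$. These are exactly your $c=(g_n/h)w$, with $w=\alpha_i$, and they generate the Tor summand.

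Sesquilinearity gives $(a_i\alpha_i)\cap(a_j\alpha_j)=a_i\overline{a_j}\,(\alpha_i\cap\alpha_j)$. Since $\overline{t-1}=-t^{-1}(t-1)$, the coefficient $a_i\overline{a_j}$ is a unit times $(t-1)^{2p^n-m_i-m_j}$. Once $m_i<p^{n-1}$ for all $i$, this exponent exceeds $2(p^n-p^{n-1})\geq p^n$. Hence the $R/(g_n)$-valued pairing vanishes identically on the chosen summand, and so does $Q_{p,n}$.

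No correction of the splitting is needed. A correction killing the mixed terms is also impossible in general: if both summands were isotropic and mutually orthogonal, $Q_{p,n}$ would vanish identically. The statement does not ask for orthogonality, and in the $p=2$ step the mixed terms cancel on their own.

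Finally, your boundary input in the spin step is unnecessary. The relative Wu formula gives $\langle w_2(W^{2^n}),x\rangle=Q_{2,n}(x,x)$ for every $x\in H_2(W^{2^n};\mathbb{F}_2)$, so $w_2=0$ follows directly from the universal coefficient theorem.
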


\begin{proof}
Set $R \coloneqq F[t,t^{-1}]$, where $t$ acts by the generating deck transformation of $\wt{W}\to W$, and let $C_\ast$ be the cellular chain complex of $\wt{W}$ obtained by lifting a finite cell structure on $W$. Since $W$ is $F$-Kawauchi, we have $\dim_F H_\ast(\wt{W};F)<\infty$, so $H_\ast(C_\ast)$ is a finitely generated torsion $R$-module. Since $R$ is a PID, there is a chain homotopy equivalence
\[
C_\ast\simeq\bigoplus_{i=1}^k C_i,
\qquad
C_i=[R\alpha_i\overset{p_i(t)}{\longrightarrow}R\beta_i],
\]
where each $p_i$ is nonzero and the generators lie in the appropriate consecutive degrees. We use this direct sum as a chain model from now on.

Transport the equivariant chain-level intersection pairing, obtained from Poincar\'e--Lefschetz duality and the absolute-to-relative map, to this model. Its degree-two part is a sesquilinear pairing
\[
\cap\colon C_2\times C_2\longrightarrow R,
\qquad
(ax)\cap(by)=a\overline b(x\cap y),
\]
where $\overline t=t^{-1}$. Set $f_n \coloneqq t^{p^n}-1$. Reduction modulo $f_n$ gives a chain model for $W^{p^n}$ and the splitting in the statement. The reduced pairing induces the $R/(f_n)$-valued equivariant intersection form of $W^{p^n}$. The ordinary intersection form $Q_{p,n}$ is obtained by taking the coefficient of $1$ with respect to the basis $1,t,\ldots,t^{p^n-1}$ of $R/(f_n)$.

We first show that the $H_2(\wt{W};F)/f_nH_2(\wt{W};F)$ summand is isotropic. This summand is generated by the images of the classes $[\beta_i]$ for which $\deg\beta_i=2$. Since these classes are $R$-torsion and $R$ is torsion-free, their equivariant intersection pairings vanish:
\[
[\beta_i]\cap[\beta_j]=0.
\]
Their images therefore pair trivially after reduction modulo $f_n$, and hence under $Q_{p,n}$. This proves the first assertion for every integer $n>0$.

Next, consider the summands with $\deg\alpha_i=2$ and $\deg\beta_i=1$. Let $m_i$ be the largest nonnegative integer such that $(t-1)^{m_i}$ divides $p_i(t)$. Choose $n$ sufficiently large that $m_i<p^{n-1}$ for every such $i$. Since $\operatorname{char}(F)=p$, we have $f_n=(t-1)^{p^n}$. Taking the gcd to be monic, set
\[
a_i \coloneqq \frac{f_n}{\gcd(p_i,f_n)}=(t-1)^{p^n-m_i}.
\]
The classes
\[
\alpha_i' \coloneqq [a_i\alpha_i]\in H_2(C_i/f_nC_i)
\]
generate the $\operatorname{Tor}$ summand selected by the chain decomposition.

Since $\overline{t-1}=-t^{-1}(t-1)$, the product $a_i\overline{a_j}$ is a unit multiple of $(t-1)^{2p^n-m_i-m_j}$. Moreover,
\[
2p^n-m_i-m_j>2(p^n-p^{n-1})
=\left(2-\frac{2}{p}\right)p^n\geq p^n.
\]
Thus $f_n$ divides $a_i\overline{a_j}$. By sesquilinearity,
\[
(a_i\alpha_i)\cap(a_j\alpha_j)
=a_i\overline{a_j}(\alpha_i\cap\alpha_j)
\equiv0\pmod{f_n}.
\]
Consequently, $Q_{p,n}(\alpha_i',\alpha_j')=0$ for every $i,j$, proving that the chosen $\operatorname{Tor}$ summand is isotropic.

Finally, suppose that $p=2$ and that $n$ is sufficiently large. The preceding argument gives a splitting
\[
H_2(W^{2^n};F)\cong A_n\oplus T_n
\]
into two isotropic summands. For $x=x_a+x_t$, where $x_a\in A_n$ and $x_t\in T_n$, symmetry and characteristic $2$ give
\[
Q_{2,n}(x,x)
=Q_{2,n}(x_a,x_a)+2Q_{2,n}(x_a,x_t)+Q_{2,n}(x_t,x_t)
=0.
\]
The intersection form over $F$ is obtained from the intersection form over $\mathbb{F}_2$ by extension of scalars. Hence the latter also has vanishing self-intersections. The relative Wu formula therefore gives
\[
\langle w_2(W^{2^n}),x\rangle=0
\qquad\text{for every }x\in H_2(W^{2^n};\mathbb{F}_2).
\]
By the universal coefficient theorem, this implies that $w_2(W^{2^n})=0$. Thus $W^{2^n}$ is spin, completing the proof.
\end{proof}

\begin{lem}\label{lem: metabolic if no t-1 primary boundary}
Let $F$ be a field and let $(W,\phi_W)$ be an $F$-Kawauchi cobordism. Suppose that the $(t-1)$-primary submodule of $H_1(\partial\wt{W};F)$ is zero. Consider the intersection form $Q(x,y)=x\cap j(y)$, where $$j\colon H_2(W;F)\longrightarrow H_2(W,\partial W;F)$$ is the natural map, and let $\overline{Q}$ denote the induced nonsingular form on $\overline{H}_2(W;F) \coloneqq H_2(W;F)/\ker j$. Let $A$ be the image of the map $H_2(\wt{W};F)\to H_2(W;F)$ induced by the covering projection, and write $\overline{A} \coloneqq A/(A\cap\ker j)\subset\overline{H}_2(W;F)$. Then $\overline{A}$ is a metabolizer of $\overline{Q}$.
\end{lem}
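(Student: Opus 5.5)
The plan is to show two things separately: that $\overline{A}$ is isotropic for $\overline{Q}$, and that $\dim_F\overline{A}=\tfrac12\dim_F\overline{H}_2(W;F)$. Since $\overline{Q}$ is nonsingular, these together say that $\overline{A}$ is a metabolizer.

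\emph{Isotropy.} Set $R\coloneqq F[t,t^{-1}]$ and use the equivariant intersection pairing $\lambda(x,y)=\sum_k (x\cdot t^k j(y))\,t^k\in R$ on $H_2(\wt{W};F)$, as in the proof of \Cref{lem: Z2-Kawauchi implies spin covers}. The sum is finite because $W$ is compact. Because $W$ is $F$-Kawauchi, $H_2(\wt{W};F)$ is a torsion $R$-module, and $R$ is a domain, so $\lambda\equiv0$. For $x=p_\ast\wt{x}$ and $y=p_\ast\wt{y}$ in $A$, lifting the intersection points to $\wt{W}$ gives
\[
Q(x,y)=\sum_k\wt{x}\cdot t^k j(\wt{y})=\lambda(\wt{x},\wt{y})|_{t=1}=0 .
\]
So $A$ is isotropic for $Q$, and hence $\overline{A}$ is isotropic for $\overline{Q}$.

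\emph{Dimension count.} Here I would use the Wang-type long exact sequences coming from $0\to R\xrightarrow{t-1}R\to F\to0$, applied to $\wt{W}$, to $\partial\wt{W}$ and to the pair $(\wt{W},\partial\wt{W})$, exactly as in the proof of \Cref{lem: inf cyclic cover of homology handle has fin dim homology}. The sequence for $\wt{W}$ gives
\[
A=\operatorname{coker}\bigl(t-1\colon H_2(\wt{W};F)\to H_2(\wt{W};F)\bigr),
\]
and the analogous relative sequence describes the image of $H_2(\wt{W},\partial\wt{W};F)\to H_2(W,\partial W;F)$. For a finite-dimensional $R$-module $M$, Milnor's argument gives $\dim\ker(t-1)=\dim\operatorname{coker}(t-1)$; in particular this holds for every $H_k(\wt{W};F)$ and $H_k(\wt{W},\partial\wt{W};F)$.

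The hypothesis on $\partial W$ enters here. Since the $(t-1)$-primary part of $H_1(\partial\wt{W};F)$ vanishes, $t-1$ acts invertibly on $H_1(\partial\wt{W};F)$. By \Cref{lem: inf cyclic cover of homology handle has fin dim homology}, $H_0$ and $H_2$ of each component of $\partial\wt{W}$ contribute exactly a one-dimensional kernel and cokernel of $t-1$, and these are the classes of the fibre surfaces and of the points. It follows that $\ker j$, which is the image of $H_2(\partial W;F)$, lies in $A$, being spanned by lifted fibre surfaces. Similarly, the intersection $A\cap\ker j$ can be computed explicitly from the boundary Wang sequence.

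I would then combine three ingredients:
\begin{itemize}
\item these kernel and cokernel dimensions;
\item Poincar\'e--Lefschetz duality for $W$;
\item equivariant (Milnor--Blanchfield) duality, which identifies $\operatorname{coker}(t-1)$ on $H_2(\wt{W},\partial\wt{W};F)$ with the dual of $\ker(t-1)$ on $H_2(\wt{W};F)$.
\end{itemize}
From these the goal is to show $\dim\overline{A}=\dim\operatorname{im}(j)-\dim\overline{A}$. Equivalently, I would show that the annihilator of $\overline{A}$ under $\overline{Q}$ is the image of $A$ under $j$: a class $y$ pairs to zero with all of $A$ exactly when $j(y)$ lifts to $H_2(\wt{W},\partial\wt{W};F)$, and the hypothesis then lets one lift $y$ itself modulo $\ker j$.

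\emph{Main obstacle.} I expect the hard step to be this last identification, $\operatorname{Ann}(\overline{A})=\overline{A}$. It requires a careful duality statement between the Wang sequences of $\wt{W}$ and of $(\wt{W},\partial\wt{W})$, and this is precisely where the vanishing of the $(t-1)$-primary boundary homology is needed. Without that hypothesis, boundary classes in $\ker(t-1)$ on $H_1(\partial\wt{W};F)$ would create a defect in the half-dimension count. I would first try to carry out the computation on the explicit chain model $\bigoplus_i[R\alpha_i\xrightarrow{p_i}R\beta_i]$ from the proof of \Cref{lem: Z2-Kawauchi implies spin covers}, reduced modulo $t-1$.
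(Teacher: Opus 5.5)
Your isotropy step is correct and matches the paper's. The gap is the reverse inclusion $\overline{A}^{\perp}\subseteq\overline{A}$. You leave it as the ``main obstacle'', and the duality you plan to feed into the dimension count is in the wrong degree. On a $4$-manifold the Blanchfield pairing pairs $H_i(\wt W;F)$ with $H_{3-i}(\wt W,\partial\wt W;F)$. So $\operatorname{coker}(t-1)$ on $H_2(\wt W,\partial\wt W;F)$ is dual to the kernel of $t-1$ on $H_1(\wt W;F)$, not on $H_2(\wt W;F)$. Your version already fails for $W=S^1\times S^2\times I$, which satisfies the hypotheses. There $H_2(\wt W,\partial\wt W;F)=0$, while $t-1$ acts as zero on $H_2(\wt W;F)\cong F$. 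With that identification, the count $\dim\overline A=\tfrac12\dim\overline H_2(W;F)$ does not close.

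The missing ingredient is the following. Set $\pi=(t-1)$ and $K=F(t)$.
\begin{itemize}
\item Duality over $K$ shows that the relative and boundary groups are also finitely generated torsion $R$-modules.
\item The nonsingular pairings $H_i(\wt W;F)\times H_{3-i}(\wt W,\partial\wt W;F)\to K/R$, combined with $(K/R)[\pi]\cong F$, induce nondegenerate pairings of $H_2(\wt W;F)/\pi$ with $H_1(\wt W,\partial\wt W;F)[\pi]$, and of $H_2(\wt W,\partial\wt W;F)/\pi$ with $H_1(\wt W;F)[\pi]$.
\item Under these pairings and Poincar\'e--Lefschetz duality, the universal coefficient sequences
\[
\begin{gathered}
0\to H_2(\wt W;F)/\pi\to H_2(W;F)\to H_1(\wt W;F)[\pi]\to 0,\\
0\to H_2(\wt W,\partial\wt W;F)/\pi\to H_2(W,\partial W;F)\to H_1(\wt W,\partial\wt W;F)[\pi]\to 0
\end{gathered}
\]
are dual to each other.
\end{itemize}
Hence the annihilator of $A$ under the pairing $H_2(W;F)\times H_2(W,\partial W;F)\to F$ is exactly the image of $H_2(\wt W,\partial\wt W;F)/\pi$. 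This is the precise form of your heuristic ``$y\perp A$ iff $j(y)$ lifts''.

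The hypothesis then closes the argument. Since $H_1(\partial\wt W;F)_\pi=0$, localizing the exact sequence of the pair at $\pi$ shows that $H_2(\wt W;F)/\pi\to H_2(\wt W,\partial\wt W;F)/\pi$ is surjective. So that image equals $j(A)$, which gives $A^\perp=j^{-1}(j(A))=A+\ker j$ and $\overline A^\perp=\overline A$.

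You also do not need \Cref{lem: inf cyclic cover of homology handle has fin dim homology} for the boundary; the statement assumes only the $(t-1)$-primary condition. The inclusion $\ker j\subseteq A$ already follows because $t-1$ is injective on $H_1(\partial\wt W;F)$, so $H_2(\partial\wt W;F)\to H_2(\partial W;F)$ is onto.
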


\begin{proof}
The isotropy argument in the proof of \Cref{lem: Z2-Kawauchi implies spin covers}, using reduction modulo $t-1$, applies over any field $F$ and shows that $A$ is isotropic. Thus $\overline{A}$ is isotropic, and it remains to show that $\overline{A}^{\perp}=\overline{A}$.

Set $R \coloneqq F[t,t^{-1}]$, $\pi \coloneqq (t-1)\subset R$, and $K \coloneqq F(t)$. For an $R$-module $M$, write $M[\pi] \coloneqq \{x\in M:(t-1)x=0\}$ and abbreviate $M/\pi M$ to $M/\pi$. Since $W$ is $F$-Kawauchi, $H_\ast(\wt{W};F)$ is a finitely generated torsion $R$-module. Poincar\'e--Lefschetz duality over $K$ and the long exact sequence of the pair imply that the relative and boundary homology groups are also finitely generated torsion $R$-modules.

The nonsingular Blanchfield pairings
\[
H_i(\wt{W};F)\times H_{3-i}(\wt{W},\partial\wt{W};F)\longrightarrow K/R
\]
induce nondegenerate pairings
\[
\begin{aligned}
\mathrm{Bl}_2\colon H_2(\wt{W};F)/\pi\times H_1(\wt{W},\partial\wt{W};F)[\pi]&\longrightarrow F,\\
\mathrm{Bl}_1\colon H_2(\wt{W},\partial\wt{W};F)/\pi\times H_1(\wt{W};F)[\pi]&\longrightarrow F,
\end{aligned}
\]
using the identification $(K/R)[\pi]\cong R/\pi\cong F$. Under these pairings and Poincar\'e--Lefschetz duality, the two universal coefficient sequences
\[
\begin{gathered}
0\longrightarrow H_2(\wt{W};F)/\pi\longrightarrow H_2(W;F)\longrightarrow H_1(\wt{W};F)[\pi]\longrightarrow 0,\\
0\longrightarrow H_2(\wt{W},\partial\wt{W};F)/\pi\longrightarrow H_2(W,\partial W;F)\longrightarrow H_1(\wt{W},\partial\wt{W};F)[\pi]\longrightarrow 0
\end{gathered}
\]
are dual to one another. Hence, with respect to the intersection pairing $Q_0$ between $H_2(W;F)$ and $H_2(W,\partial W;F)$, we have
\[
\operatorname{Ann}_{Q_0}(A)
=\operatorname{Im}(H_2(\wt{W},\partial\wt{W};F)/\pi\longrightarrow H_2(W,\partial W;F)).
\]

Now consider the commutative diagram
\[
\xymatrix{
0\ar[r]
& H_2(\wt{W};F)/\pi\ar[r]^{\varphi}\ar[d]^{j_2}
& H_2(W;F)\ar[r]^{\psi}\ar[d]^{j}
& H_1(\wt{W};F)[\pi]\ar[r]\ar[d]^{j_1}
& 0\\
0\ar[r]
& H_2(\wt{W},\partial\wt{W};F)/\pi\ar[r]^{\varphi'}
& H_2(W,\partial W;F)\ar[r]^{\psi'}
& H_1(\wt{W},\partial\wt{W};F)[\pi]\ar[r]
& 0,
}
\]
where the vertical maps are induced by the absolute-to-relative maps. Since the $\pi$-primary submodule of $H_1(\partial\wt{W};F)$ is zero, we have $H_1(\partial\wt{W};F)_\pi=0$. Localizing the long exact sequence of the pair therefore shows that
\[
H_2(\wt{W};F)_\pi\longrightarrow H_2(\wt{W},\partial\wt{W};F)_\pi
\]
is surjective. Tensoring with $R/\pi$ shows that $j_2$ is surjective. Consequently,
\[
\operatorname{Ann}_{Q_0}(A)
=\operatorname{Im}\varphi'
=\operatorname{Im}(\varphi'\circ j_2)
=\operatorname{Im}(j\circ\varphi)
=j(A).
\]
Since $Q(x,y)=Q_0(x,j(y))$, it follows that
\[
\begin{aligned}
A^\perp
&=\{v\in H_2(W;F):Q_0(a,j(v))=0\text{ for every }a\in A\}\\
&=j^{-1}(\operatorname{Ann}_{Q_0}(A))\\
&=j^{-1}(j(A))\\
&=A+\ker j.
\end{aligned}
\]
Passing to the quotient by $\ker j$, we obtain
\[
\overline{A}^{\perp}
=A^\perp/\ker j
=(A+\ker j)/\ker j
=\overline{A},
\]
as desired.
\end{proof}

\begin{lem}\label{lem: boundary has no t-1 primary part}
Let $(Y,\phi)$ be a distinguished homology handle and let $F$ be a field. Then the $(t-1)$-primary submodule of $H_1(\wt{Y};F)$ is zero. Furthermore, if $\operatorname{char}(F)=0$, then the $\Phi_{p^n}(t)$-primary submodule of $H_1(\wt{Y};F)$ is zero for every prime $p$ and every integer $n>0$, where $\Phi_k(t)$ denotes the $k$-th cyclotomic polynomial.
\end{lem}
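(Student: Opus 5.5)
The first assertion should come straight out of the proof of \Cref{lem: inf cyclic cover of homology handle has fin dim homology}, which shows that multiplication by $t-1$ on $H_1(\wt{Y};F)$ is an isomorphism. I would set $R\coloneqq F[t,t^{-1}]$ and write the finitely generated torsion $R$-module $H_1(\wt{Y};F)$ as a direct sum of its primary components $M_f$. On $M_{t-1}$ the element $t-1$ is nilpotent. Since $t-1$ is injective on all of $H_1(\wt{Y};F)$, it is injective on $M_{t-1}$, and an injective nilpotent endomorphism forces $M_{t-1}=0$. The only input is the exact sequence from that earlier proof:
\[
0\to H_1(\wt{Y};F)\xrightarrow{t-1}H_1(\wt{Y};F)\to H_1(Y;F)\xrightarrow{\cong}H_0(\wt{Y};F).
\]
This sequence holds for every field $F$.

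For the second assertion, with $\operatorname{char}F=0$, I would reduce to $F=\Q$ using the faithful flatness argument of \Cref{lem: kawauchi depends on char}. The primary decomposition of $H_1(\wt{Y};\Q)\otimes_{\Q}F$ refines that of $H_1(\wt{Y};\Q)$. Over $F$ the polynomial $\Phi_{p^n}$ may factor further, but vanishing of the $\Phi_{p^n}$-primary part over $\Q$ gives vanishing over $F$. Next I would use the integral structure. Let $M=H_1(\wt{Y};\Z)$ and take the order $\Delta(t)\in\Z[t,t^{-1}]$ of $M\otimes\Q$; this is the Alexander polynomial of the homology handle. The Milnor sequence with $\Z$ coefficients shows that $t-1$ is an isomorphism of $M$. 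Here one argues as before, using $H_1(Y;\Z)\cong\Z\cong H_0(\wt{Y};\Z)$ and $H_2(\wt{Y};\Z)\to H_2(Y;\Z)$ surjective, exactly as in \Cref{lem: inf cyclic cover of homology handle has fin dim homology}; alternatively one uses $M/(t-1)M=0$ directly. Thus $M\otimes_{\Z}\mathbb{F}_p$, viewed as an $\mathbb{F}_p[t,t^{-1}]$-module, also has $t-1$ acting surjectively. Since it is finitely generated, Nakayama's lemma over the localization at $(t-1)$ shows that its $(t-1)$-primary part vanishes.

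The key step is then to show that the $\Phi_{p^n}$-part over $\Q$ must vanish. Suppose $\Phi_{p^n}$ divided the order $\Delta$. Then $\Delta(1)$ would be divisible by $p$, because $\Phi_{p^n}(t)\equiv(t-1)^{\varphi(p^n)}\pmod p$, or because $\Phi_{p^n}(1)=p$ and Gauss's lemma applies to the content. On the other hand, $\Delta(1)=\pm1$. I would prove this through the order ideal: $M/(t-1)M=0$ means $\Delta(1)$ is a unit, since the order of $M/(t-1)M$ is $\Delta(1)$ up to units, by specializing a square presentation matrix of $M$ at $t=1$. Making this rigorous needs a square presentation. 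I would get one from the chain complex $C_\ast(\wt{Y})$: a handle decomposition gives a presentation of $M$, up to the $H_0$ term, by a matrix whose specialization at $t=1$ presents $H_1(Y,\text{pt})$ relative to the generator. Milnor's argument shows $\det A(1)=\pm1$.

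I expect the main obstacle to be exactly this point: passing cleanly from ``$t-1$ acts invertibly on integral homology'' to ``the rational Alexander polynomial has no factor $\Phi_{p^n}$''. If the presentation-matrix approach gets messy, I would instead argue directly modulo $p$. If $N=(M\otimes\Q)_{\Phi_{p^n}}\neq0$, pick a $\Z[t^{\pm1}]$-lattice $L\subset M$ mapping onto a nonzero submodule of $N$ annihilated by a power of $\Phi_{p^n}$, and choose $L$ $t$-invariant and pure. Then $L/pL$ is nonzero and killed by a power of $\Phi_{p^n}\equiv(t-1)^{\varphi(p^n)}$, so it has a nonzero $(t-1)$-primary part mod $p$. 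This part survives in $M\otimes\mathbb{F}_p$ if $L$ is chosen as a direct summand of the torsion-free quotient, contradicting the Nakayama conclusion above.
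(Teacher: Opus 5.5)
Your first part is essentially the paper's argument. The paper reads off $H_1(\wt Y;F)/(t-1)=0$ from the universal coefficient splitting of $H_1(Y;F)$, while you use injectivity of $t-1$ from the sequence in \Cref{lem: inf cyclic cover of homology handle has fin dim homology}; either one kills the $(t-1)$-primary part.

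For the second part, your main route aims at the same contradiction as the paper, $p=\Phi_{p^n}(1)\mid\Delta(1)=\pm1$ via Gauss's lemma. The paper, however, gets $\Delta(1)=\pm1$ for free. Via \Cref{rem: common refinement} it writes $Y$ as $0$-surgery on a knot $K$ in an integral homology sphere, so $H_1(\wt Y;\Q)$ is the rational Alexander module of $K$. That module has the square presentation $tV-V^{T}$ with $\det(V-V^{T})=\pm1$. This is exactly the step you flagged, and it is not automatic from a handle decomposition: the matrix of $\partial_2$ is singular, and you must remove a generator using $\partial_1$ and a relator using $\partial_3$ before specializing at $t=1$. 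So ``Milnor's argument'' would have to be written out.

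Your mod-$p$ fallback is a genuinely different and more intrinsic proof. It needs only $(t-1)M=M$ for $M=H_1(\wt Y;\Z)$, which follows from $H_1(Y;\Z)\cong\Z$. It proves the purely algebraic fact that no finitely generated $\Z[t^{\pm1}]$-module with this property has a nonzero rational $\Phi_{p^n}$-primary part, with no surgery description or Alexander polynomial. To make it airtight, work in the torsion-free quotient $\overline M\coloneqq M/T$, where $T$ is the $\Z$-torsion of $M$, and take $L\coloneqq\overline M\cap(\overline M\otimes\Q)_{\Phi_{p^n}}$.
\begin{itemize}
\item Purity of $L$, meaning $\overline M/L$ is torsion-free, is what gives $L/pL\hookrightarrow\overline M/p\overline M$. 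Being a $\Z[t^{\pm1}]$-direct summand is not needed and can fail.
\item $L$ is free abelian of finite positive rank, because it is killed by a power of the monic polynomial $\Phi_{p^n}$, whose constant term is $1$. Hence $L/pL\neq0$, and it is $(t-1)$-primary since $\Phi_{p^n}\equiv(t-1)^{\varphi(p^n)}\pmod p$.
\item $t-1$ is surjective on $\overline M/p\overline M$, since this is a quotient of $M$. This gives the contradiction directly, with no need to return to $M\otimes\mathbb{F}_p$.
\end{itemize}
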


\begin{proof}
Suppose first that $\operatorname{char}(F)=0$. By the change-of-coefficients argument in the proof of \Cref{lem: kawauchi depends on char}, it suffices to consider $F=\Q$. Fix a prime $p$ and an integer $n>0$. Since $H_1(\wt{Y};\Q)$ is a finitely generated $\Q[t,t^{-1}]$-module, it suffices to show that $H_1(\wt{Y};\Q)/\Phi_{p^n}(t)=0$.

Suppose otherwise. By \Cref{rem: common refinement}, $Y$ is obtained by zero-framed surgery on a knot $K$ in an integral homology $3$-sphere. The module $H_1(\wt{Y};\Q)$ is the rational Alexander module of $K$, whose order is represented by its Alexander polynomial $\Delta_K(t)\in\Z[t,t^{-1}]$. Since $\Phi_{p^n}(t)$ is irreducible over $\Q$, the nonvanishing of the quotient implies that $\Phi_{p^n}(t)$ divides $\Delta_K(t)$ over $\Q[t,t^{-1}]$. By Gauss's lemma, this divisibility also holds over $\Z[t,t^{-1}]$. Evaluating at $t=1$, we obtain
\[
\Phi_{p^n}(1)=p\quad\text{divides}\quad\Delta_K(1)=\pm1,
\]
a contradiction.

Now let $F$ be an arbitrary field. The universal coefficient theorem for the infinite cyclic cover gives
\[
H_1(Y;F)\cong H_1(\wt{Y};F)/(t-1)\oplus H_0(\wt{Y};F)[t-1].
\]
Since $\wt{Y}$ is connected, $H_0(\wt{Y};F)[t-1]$ is one-dimensional. Since $Y$ is an integral homology $S^1\times S^2$, $H_1(Y;F)$ is also one-dimensional. Thus $H_1(\wt{Y};F)/(t-1)=0$. Since $H_1(\wt{Y};F)$ is finitely generated over $F[t,t^{-1}]$, its $(t-1)$-primary submodule is zero. The lemma follows.
\end{proof}

\begin{lem}\label{lem: vanishing of intersection form}
Let $F$ be a field, let $p$ be a prime, and let $(W,\phi_W)$ be an $\wt{H}_F$-cobordism. For each integer $n>0$, let
\[
\overline{H}_2(W^{p^n};F)
 \coloneqq H_2(W^{p^n};F)/\ker(H_2(W^{p^n};F)\longrightarrow H_2(W^{p^n},\partial W^{p^n};F)),
\]
and let $\overline{Q}_{p,n}$ denote the induced nonsingular intersection form. Then the following hold:
\begin{itemize}
\item $\sigma(W^{p^n})=0$.
\item If $\operatorname{char}(F)=0$, then $\overline{Q}_{p,n}$ is hyperbolic.
\item If $\operatorname{char}(F)=p$ and either $p\neq2$ or $n$ is sufficiently large, then $\overline{Q}_{p,n}$ is hyperbolic.
\item The reduced absolute intersection form of $W$ over $F$ is metabolic.
\end{itemize}
\end{lem}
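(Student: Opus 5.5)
The plan is to deduce everything from the isotropic-splitting mechanism already developed in the proof of \Cref{lem: Z2-Kawauchi implies spin covers}, combined with the metabolizer statement \Cref{lem: metabolic if no t-1 primary boundary}. By \Cref{lem: kawauchi depends on char}, we may replace $F$ by its prime subfield, so that $F=\Q$ or $F=\mathbb{F}_p$ in the respective cases. The last bullet is immediate. By \Cref{lem: boundary has no t-1 primary part}, the $(t-1)$-primary part of $H_1(\partial\wt{W};F)$ vanishes, since $\partial\wt{W}$ is a disjoint union of infinite cyclic covers of distinguished homology handles. \Cref{lem: metabolic if no t-1 primary boundary} then says that $\overline{A}$ is a metabolizer of the reduced form of $W$.

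For the covers, I would apply the same reasoning to $W^{p^n}$, now with the covering $\wt{W}\to W^{p^n}$ whose deck generator is $s=t^{p^n}$. This is still $F$-Kawauchi, and the boundary condition to check is that the $(s-1)$-primary part of $H_1(\partial\wt{W};F)$, viewed over $F[s,s^{-1}]$, vanishes.
\begin{itemize}
\item In characteristic $p$, we have $s-1=(t-1)^{p^n}$, so this follows from the $(t-1)$-statement of \Cref{lem: boundary has no t-1 primary part}.
\item In characteristic $0$, $s-1$ factors as $\prod_{k\le n}\Phi_{p^k}(t)$ together with $t-1$. The needed vanishing is then exactly the cyclotomic part of \Cref{lem: boundary has no t-1 primary part}.
\end{itemize}
Hence $\overline{Q}_{p,n}$ admits a metabolizer $\overline{A}_n$, which has half the dimension of $\overline{H}_2(W^{p^n};F)$.

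It remains to upgrade metabolic to hyperbolic. Over a field, a symmetric nonsingular form with a metabolizer is hyperbolic exactly when it is even, i.e. alternating in characteristic $2$. So when $\operatorname{char}(F)\neq 2$, metabolic already implies hyperbolic: this covers $\operatorname{char}(F)=0$, and $\operatorname{char}(F)=p$ with $p\neq 2$. When $p=2$, I would invoke \Cref{lem: Z2-Kawauchi implies spin covers}. For large $n$, $H_2(W^{2^n};F)$ splits into two isotropic summands, so $Q_{2,n}(x,x)=0$ for all $x$. This property descends to $\overline{Q}_{2,n}$, so the form is alternating, and an alternating metabolic form is hyperbolic. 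This is where the hypothesis "$n$ sufficiently large" enters.

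For the signature, take $F=\Q$; this is legitimate since the $\Q$-Kawauchi property holds by \Cref{lem: F-kawauchi implies Q-kawauchi}. The characteristic-$0$ case gives a hyperbolic, hence metabolic, real form on $W^{p^n}$, so $\sigma(W^{p^n})=0$.

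The main obstacle I expect is the characteristic-$0$ boundary condition. I need to verify carefully that the $(t^{p^n}-1)$-primary part over $F[s^{\pm1}]$ corresponds precisely to the $\Phi_{p^k}$-primary parts for $k\le n$ over $F[t^{\pm1}]$, and that \Cref{lem: metabolic if no t-1 primary boundary} genuinely applies to $W^{p^n}$ with the $s$-cover. In particular, the connectedness of $\wt{W}$ and the torsion of the relative modules must be checked over $F[s^{\pm1}]$. Both should follow because $F[t^{\pm1}]$ is finite free over $F[s^{\pm1}]$.
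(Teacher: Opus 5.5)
Your proposal is correct and follows essentially the same route as the paper: apply \Cref{lem: metabolic if no t-1 primary boundary} to the cover $\wt{W}\to W^{p^n}$ after checking the $(t^{p^n}-1)$-primary boundary condition via the $(t-1)$-primary and cyclotomic vanishing of \Cref{lem: boundary has no t-1 primary part}. You then upgrade to hyperbolic using $\operatorname{char}(F)\neq 2$, or the isotropic splitting when $p=2$, and deduce the signature from the rational case via \Cref{lem: F-kawauchi implies Q-kawauchi}. Your preliminary reduction to the prime subfield is harmless but unnecessary, since every lemma you invoke is already stated over an arbitrary field of the relevant characteristic.
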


\begin{proof}
Applying \Cref{lem: metabolic if no t-1 primary boundary,lem: boundary has no t-1 primary part} to the infinite cyclic cover $\wt{W}\to W$ shows that the reduced absolute intersection form of $W$ over $F$ is metabolic. This proves the last assertion.

Fix an integer $n>0$, and let $t$ denote the generating deck transformation of $\wt{W}\to W$. The deck transformation group of $\wt{W}\to W^{p^n}$ is generated by $s \coloneqq t^{p^n}$. Since the infinite cyclic cover is unchanged, $W^{p^n}$ is again $F$-Kawauchi.

Suppose first that $\operatorname{char}(F)=0$. We have
\[
s-1=t^{p^n}-1=(t-1)\prod_{r=1}^n\Phi_{p^r}(t).
\]
By \Cref{lem: boundary has no t-1 primary part}, applied to each boundary component of $W$, the $(s-1)$-primary submodule of $H_1(\partial\wt{W};F)$ is zero. Thus \Cref{lem: metabolic if no t-1 primary boundary}, applied to $\wt{W}\to W^{p^n}$, shows that $\overline{Q}_{p,n}$ is metabolic. Since $F$ has characteristic zero, it is hyperbolic.

By \Cref{lem: F-kawauchi implies Q-kawauchi}, every $\wt{H}_F$-cobordism is an $\wt{H}_{\Q}$-cobordism. Applying the characteristic-zero case over $\Q$ therefore gives
\[
\sigma(W^{p^n})=0
\]
for every integer $n>0$, regardless of the characteristic of $F$.

Finally, suppose that $\operatorname{char}(F)=p$. Since
\[
s-1=t^{p^n}-1=(t-1)^{p^n},
\]
\Cref{lem: boundary has no t-1 primary part} again implies that the $(s-1)$-primary submodule of $H_1(\partial\wt{W};F)$ is zero. By \Cref{lem: metabolic if no t-1 primary boundary}, the form $\overline{Q}_{p,n}$ is metabolic. If $p\neq2$, it follows that $\overline{Q}_{p,n}$ is hyperbolic.

If $p=2$ and $n$ is sufficiently large, the isotropic-splitting argument in the proof of \Cref{lem: Z2-Kawauchi implies spin covers} gives a decomposition
\[
H_2(W^{2^n};F)\cong A_n\oplus T_n
\]
into two isotropic subspaces. Since $Q_{2,n}$ is symmetric and $F$ has characteristic $2$, every $x=x_a+x_t$, with $x_a\in A_n$ and $x_t\in T_n$, satisfies
\[
Q_{2,n}(x,x)
=Q_{2,n}(x_a,x_a)+2Q_{2,n}(x_a,x_t)+Q_{2,n}(x_t,x_t)
=0.
\]
Hence $\overline{Q}_{2,n}$ is alternating. Since it is nonsingular, it is hyperbolic, completing the proof.
\end{proof}

\begin{lem}\label{lem: spin double cover real spin}
Let $X$ be a compact connected oriented smooth spin $4$-manifold,
possibly with boundary, and let $\alpha\in H^1(X;\Z)$ be primitive. Let
\[
p\colon \widehat X\longrightarrow X
\]
be the double covering classified by the reduction of $\alpha$ modulo
$2$, and let $\tau$ denote its nontrivial deck transformation. Then
$\widehat X$ admits a real spin structure with respect to $\tau$.
\end{lem}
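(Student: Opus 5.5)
The plan is to build the real spin structure by hand. Start from the spin structure pulled back along $p$, which carries a canonical lift of $\tau$ squaring to $+1$. Then twist it by a $\Z_2$-bundle pulled back from a circle, on which the antipodal map has a lift squaring to the nontrivial deck transformation. The two signs multiply to $-1$.

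\textbf{Step 1 (equivariant circle-valued map).} Represent $\alpha$ by a smooth map $f\colon X\to S^1$, with $S^1\subset\C$ the unit circle. Since $\widehat X$ is the double cover classified by $\alpha \bmod 2$, it is the pullback of the squaring map $S^1\to S^1$, $z\mapsto z^2$, along $f$. Concretely, $\widehat X\cong\{(x,z)\in X\times S^1 : z^2=f(x)\}$ with $\tau(x,z)=(x,-z)$. Primitivity of $\alpha$ guarantees that $\widehat X$ is connected and $\tau$ is the nontrivial deck transformation. The second projection $\hat f\colon\widehat X\to S^1$ then satisfies $\hat f\circ\tau=-\hat f$; that is, $\hat f$ is equivariant for the antipodal map $A(z)=-z$.

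\textbf{Step 2 (even lift on the pulled-back structure).} Fix a spin structure $P\to X$ and set $\widehat P \coloneqq p^\ast P$, a spin structure on $\widehat X$ because $p$ is a local diffeomorphism. The deck transformation lifts tautologically to $\tau_0\colon\widehat P\to\widehat P$ covering $\tau$ (acting on the $\widehat X$ factor of the fibre product). This lift satisfies $\tau_0^2=\id$.

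\textbf{Step 3 (odd lift on a $\Z_2$-bundle over the circle).} Let $\xi\to S^1$ be the connected double cover $w\mapsto w^2$, regarded as a principal $\Z_2$-bundle with deck action $w\mapsto -w$. The antipodal map $A$ lifts to $\xi$ by $\tilde A(w)=iw$, since $(iw)^2=-w^2$. Its square $\tilde A^2(w)=-w$ is the nontrivial deck transformation, i.e.\ the central element $-1\in\Z_2$.

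\textbf{Step 4 (combine).} Twist the spin structure by the $\Z_2$-bundle $\hat f^\ast\xi$, forming
\[
P' \coloneqq \widehat P\times_{\Z_2}\hat f^\ast\xi .
\]
This is again a principal $\mathrm{Spin}(4)$-bundle lifting the frame bundle, because $\Z_2$ is the central kernel of $\mathrm{Spin}(4)\to SO(4)$. Since $\hat f$ is $(\tau,A)$-equivariant, $\tilde A$ induces a lift of $\tau$ on $\hat f^\ast\xi$ whose square is $-1$. Define
\[
\widetilde\tau \coloneqq [\tau_0\times \hat f^\ast\tilde A].
\]
It covers $\tau$, is $\mathrm{Spin}(4)$-equivariant, and satisfies $\widetilde\tau^2=[\id\times(-1)]=-1$. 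Hence $(P',\widetilde\tau)$ is a real (odd) spin structure in the sense of \Cref{def: real spin}. Boundary causes no extra difficulty, since every construction is local over $X$.

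\textbf{Main obstacle.} The step needing most care is verifying that the $\Z_2$-twist really produces a spin structure and that the two square computations multiply correctly in the center of $\mathrm{Spin}(4)$. The key points are that the $\Z_2$ identified in $\widehat P\times_{\Z_2}(\cdot)$ is exactly $\{\pm1\}$, and that the $\widetilde\tau$ above is well defined on the quotient, since $\tau_0$ and $\tilde A$ each commute with the respective $\Z_2$-actions. I would check this directly on local trivializations.
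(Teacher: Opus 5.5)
Your proof is correct and is essentially the paper's construction, written globally. The paper cuts $X$ along $M=f^{-1}(z)$ and glues two copies of $P_{\mathfrak t}|_{X_M}$ crosswise, inserting an extra sign $-1$ on one of the two seams; that is exactly $p^\ast P_{\mathfrak t}$ twisted by your $\Z_2$-bundle $\hat f^\ast\xi$, whose transition function is $-1$ across one lift $\hat f^{-1}(w)$ of $M$ (with $w^2=z$), and the paper's sheet-swapping lift $[z,0]\mapsto[z,1]$, $[z,1]\mapsto[-z,0]$ plays the role of your $[\tau_0\times\hat f^\ast\tilde A]$ and squares to $-1$ for the same reason, while your fibre-product formulation simply avoids the transversality and cut-and-paste bookkeeping.
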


\begin{proof}
Choose a spin structure $\mathfrak t$ on $X$. Choose a smooth map
$f\colon X\to S^1$ representing $\alpha$ and a point $z\in S^1$ such
that both $f$ and $f\vert_{\partial X}$ are transverse to $z$. Then
$M \coloneqq f^{-1}(z)$ is a compact oriented smooth $3$-manifold properly
embedded in $X$, and
\[
\operatorname{PD}[M,\partial M]=\alpha.
\]
Let $X_M$ be the manifold obtained by cutting $X$ open along $M$, and
denote the resulting two copies of $M$ in $\partial X_M$ by $M_+$ and
$M_-$. Since the reduction of $\alpha$ modulo $2$ classifies $p$, the
manifold $\widehat X$ is obtained by gluing two copies of $X_M$
crosswise along $M_+$ and $M_-$.

Let $P_{\mathfrak t}\to X$ be the principal $\mathrm{Spin}(4)$-bundle
representing $\mathfrak t$, and let $P_M$ be its restriction to $X_M$.
The gluing that reconstructs $P_{\mathfrak t}$ from $P_M$ determines an
isomorphism
\[
\gamma\colon
P_M\vert_{M_+}
\xrightarrow{\cong}
P_M\vert_{M_-}.
\]
Define a principal $\mathrm{Spin}(4)$-bundle over $\widehat X$ by
\[
\widehat P \coloneqq (P_M\times\{0,1\})/\mathord{\sim},
\]
where
\[
(z,0)_+\sim(\gamma(z),1)_-,
\qquad
(z,1)_+\sim(-\gamma(z),0)_-.
\]
Here $-z \coloneqq z\cdot(-1)$, where $-1$ denotes the nontrivial element of the
kernel of $\mathrm{Spin}(4)\to\mathrm{SO}(4)$. Since $-1$ acts trivially
on the oriented frame bundle, $\widehat P$ defines a spin structure on
$\widehat X$.

The deck transformation $\tau$ lifts to $\widehat P$ by
\[
\widetilde{\tau}[z,0]=[z,1],
\qquad
\widetilde{\tau}[z,1]=[-z,0].
\]
These formulas are compatible with the gluing relations, and
\[
\widetilde{\tau}^{\,2}[z,i]=[-z,i].
\]
Thus $\widetilde{\tau}^{\,2}=-1$, so the spin structure represented by
$\widehat P$ is real with respect to $\tau$.
\end{proof}

\begin{lem}\label{lem: Z2-Kawauchi implies real spin}
Let $(W,\phi_W)$ be a smooth $\wt{H}_{\mathbb{F}_2}$-cobordism between
distinguished homology handles. Then, for every sufficiently large
integer $n>0$, $W^{2^{n+1}}$ admits a real spin structure with respect
to the deck transformation $\tau_{n+1}$ of the double covering
\[
p_{n+1}\colon W^{2^{n+1}}
\longrightarrow
W^{2^n}.
\]
\end{lem}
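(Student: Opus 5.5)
The plan is to combine the last assertion of \Cref{lem: Z2-Kawauchi implies spin covers} with \Cref{lem: spin double cover real spin}, applied to $X=W^{2^n}$ and a suitable primitive class $\alpha$. By \Cref{lem: Z2-Kawauchi implies spin covers} with $p=2$ and $F=\mathbb{F}_2$, there is $N>0$ such that $W^{2^n}$ is spin for every $n\geq N$. Fix such an $n$. Then $X\coloneqq W^{2^n}$ is a compact, connected, oriented smooth spin $4$-manifold, possibly with boundary. It is connected because $W$ is connected and $\phi_W$ restricts to a generator on a boundary handle, so $\phi_W$ is surjective onto $\Z$.

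The remaining point is to produce a primitive class $\alpha\in H^1(W^{2^n};\Z)$ whose mod $2$ reduction classifies $p_{n+1}$. The natural candidate is the class of the composite
\[
W^{2^n}=\wt W/2^n\Z\longrightarrow \R/2^n\Z\cong S^1,
\]
induced by the $\Z$-equivariant map $\wt W\to\R$ that lifts a map $W\to S^1$ representing $\phi_W$. Call this class $\alpha_n$.

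\begin{itemize}
\item \emph{Mod $2$ reduction.} On $\pi_1$, $\alpha_n$ is the homomorphism $\pi_1(W^{2^n})\to 2^n\Z\cong\Z$, i.e.\ $\phi_W/2^n$ restricted to the subgroup $\phi_W^{-1}(2^n\Z)$. Its reduction mod $2$ has kernel $\phi_W^{-1}(2^{n+1}\Z)$, which is exactly the subgroup defining $W^{2^{n+1}}$. So it classifies $p_{n+1}$.
\item \emph{Primitivity.} On a boundary component $Y^{2^n}$, a loop lifting $2^n$ times a curve $\gamma$ with $\phi(\gamma)=1$ has $\alpha_n$-value $1$. Hence $\alpha_n$ is surjective on $\pi_1$ and therefore primitive. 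If $W$ has no boundary, the same argument works using any loop in $W$ with $\phi_W$-value $1$, which exists since $\phi_W$ is onto.
\end{itemize}

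Applying \Cref{lem: spin double cover real spin} to $(X,\alpha)=(W^{2^n},\alpha_n)$ then gives a real spin structure on $\widehat X=W^{2^{n+1}}$ with respect to $\tau_{n+1}$. This holds for every $n\geq N$, as required.

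I expect the main care to lie in the identification step: checking that the cover $\widehat X$ built in \Cref{lem: spin double cover real spin} agrees with $p_{n+1}$ together with its deck involution $\tau_{n+1}$, rather than merely with some abstract double cover. This reduces to the mod $2$ computation above. The essential input is the spin property of $W^{2^n}$, which is already established, so I anticipate no further obstacle.
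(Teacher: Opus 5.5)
Your proposal is correct and matches the paper's proof: you get that $W^{2^n}$ is spin for large $n$ from \Cref{lem: Z2-Kawauchi implies spin covers}, then apply \Cref{lem: spin double cover real spin} to the primitive class on $W^{2^n}$ whose $2^n$-multiple is the pullback of $\phi_W$ and whose mod $2$ reduction classifies $p_{n+1}$. Your explicit checks of connectivity, primitivity, and the mod $2$ classification supply details that the paper only asserts.
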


\begin{proof}
Fix a sufficiently large integer $n>0$. By
\Cref{lem: Z2-Kawauchi implies spin covers}, $W^{2^n}$ is spin. Let
$\phi_{W,n}\in H^1(W^{2^n};\Z)$ be the primitive class determined by
the infinite cyclic covering $\wt W\to W^{2^n}$, with its sign chosen
so that the pullback of $\phi_W$ to $W^{2^n}$ is
$2^n\phi_{W,n}$. The reduction of $\phi_{W,n}$ modulo $2$ classifies
$p_{n+1}$. The result therefore follows from
\Cref{lem: spin double cover real spin}.
\end{proof}

\section{$\wt{H}_{\Q}$-cobordisms and real $\mathrm{spin}^c$ structures}

In this section, we study real $\mathrm{spin}^c$ structures on the finite cyclic covers associated with $\wt{H}_{\Q}$-cobordisms. Unlike in the $\wt{H}_{\mathbb{F}_2}$ setting, these covers need not be spin. Nevertheless, we show that sufficiently large $2$-power cyclic covers admit real $\mathrm{spin}^c$ structures whose restrictions to the boundary are the real spin structures, unique up to isomorphism and sign, on the corresponding boundary covers. This provides the geometric input for the real Seiberg--Witten theory developed in the next section.

\subsection{Real $\mathrm{spin}^c$ structures on double covers}

We begin with a general existence result for real $\mathrm{spin}^c$ structures on double covers of compact oriented smooth $4$-manifolds. Using the correspondence with $\mathrm{spin}^{c-}$ structures recalled in \Cref{rem: real spinc and spinc-minus}, the problem reduces to constructing a real vector bundle of rank $2$ with prescribed Stiefel--Whitney classes. The following two lemmas provide the obstruction-theoretic ingredients needed for this construction.

\begin{lem}\label{lem: realization lemma}
Let $X$ be a CW complex, let $\alpha\in H^1(X;\mathbb{F}_2)$ and
$\beta\in H^2(X;\mathbb{F}_2)$, and let $\ell$ denote the rank-one
$\Z$-local system determined by $\alpha$. There exists a rank-$2$ real
vector bundle $E\to X$ such that
\[
w_1(E)=\alpha,
\qquad
w_2(E)=\beta
\]
if and only if $\beta$ lifts to a class in $H^2(X;\ell)$ under
coefficient reduction $\ell\to\mathbb{F}_2$. Equivalently, $\beta$
lies in the kernel of the Bockstein homomorphism
\[
H^2(X;\mathbb{F}_2)
\longrightarrow
H^3(X;\ell)
\]
associated with the short exact sequence
\[
0\longrightarrow\ell
\overset{\cdot 2}{\longrightarrow}
\ell
\longrightarrow\mathbb{F}_2
\longrightarrow 0.
\]Moreover, for $\alpha \in H^1(X; \mathbb{F}_2)$ and every lift $\tilde{\beta} \in H^2(X; l)$ of $\beta$, $E$ can be chosen so that $w_1(E)=\alpha$ and $e_{\alpha}(E)=\tilde{\beta}$, where $e_{\alpha}(E)$ denotes the twisted Euler characteristic of $E$.
\end{lem}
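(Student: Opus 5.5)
The approach is to reduce the question to classifying spaces. Rank-$2$ real vector bundles on $X$ correspond to homotopy classes of maps $X\to BO(2)$. So I would study the fibration
\[
BSO(2)\longrightarrow BO(2)\xrightarrow{\;w_1\;} BO(1)=\R P^\infty,
\]
whose fiber is $BSO(2)=\C P^\infty=K(\Z,2)$. The monodromy of $\pi_1(BO(1))=\Z_2$ on $\pi_2(BSO(2))=\Z$ is by $-1$, since a reflection reverses orientation of the plane. Thus $BO(2)\to BO(1)$ is a twisted $K(\Z,2)$-fibration, namely the twisted Eilenberg--MacLane fibration $L_{\Z_-}(\Z,2)\to K(\Z_2,1)$ with a section. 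The section is given by $L\oplus\underline{\R}$.

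Fix $\alpha$, represented by $a\colon X\to\R P^\infty$. Lifts of $a$ to $BO(2)$, up to fiberwise homotopy, form a torsor over $H^2(X;\ell)$, with the section providing a basepoint. The twisted Euler class $e_\alpha\colon [X,BO(2)]_a\to H^2(X;\ell)$ is a bijection. It is the fundamental class of the fiber, extended using the section, since the fiber is an Eilenberg--MacLane space with the twisted coefficient system pulled back along $a$. This already gives the \emph{moreover} clause: every $\tilde\beta\in H^2(X;\ell)$ is $e_\alpha(E)$ for some $E$ with $w_1(E)=\alpha$. Here I would have to check that $e_\alpha(L\oplus\underline{\R})=0$, which holds because the bundle has a nowhere-zero section, and that the torsor structure is compatible with $e_\alpha$. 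Alternatively, I could argue directly by obstruction theory: the only obstruction to a homotopy between two lifts lies in $H^2(X;\pi_2)$ with twisted coefficients, and all higher homotopy of the fiber vanishes.

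Next, I would show that $w_2(E)$ is the mod $2$ reduction of $e_\alpha(E)$. It suffices to check this universally on $BO(2)$, where the twisted Euler class $e\in H^2(BO(2);\Z_-)$ reduces to $w_2$. This is the standard fact that the top Stiefel--Whitney class is the mod $2$ Euler class. I would verify it on the restriction to $BO(1)\times BO(1)$, where $e$ is the product of the twisted classes of the two line bundles and reduces to $w_1(L_1)w_1(L_2)=w_2$. Alternatively, I would use the identification through the Thom class.

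With these two facts, both directions follow. If $E$ exists, then $\beta=w_2(E)$ is the reduction of $e_\alpha(E)\in H^2(X;\ell)$. Conversely, given a lift $\tilde\beta$, choose $E$ with $e_\alpha(E)=\tilde\beta$; then $w_1(E)=\alpha$ and $w_2(E)=\tilde\beta \bmod 2=\beta$. The Bockstein reformulation is exactness of the long exact sequence in twisted cohomology
\[
H^2(X;\ell)\to H^2(X;\mathbb{F}_2)\to H^3(X;\ell)
\]
associated to $0\to\ell\to\ell\to\mathbb{F}_2\to0$. The main obstacle I expect is making the twisted obstruction theory precise: identifying the lifts over a fixed $a$ with $H^2(X;\ell)$ via $e_\alpha$, including the monodromy sign. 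I would handle this first by working cellwise with the twisted fibration and the section $L\oplus\underline{\R}$.
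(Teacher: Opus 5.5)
Your proposal is correct and follows essentially the same route as the paper: pull back $BSO(2)\to BO(2)\to B\Z_2$ along $\alpha$, and use the reflection section (your $L\oplus\underline{\R}$) to identify vertical homotopy classes of sections with $H^2(X;\ell)$. As in the paper, this identification is via the twisted Euler class, normalized to vanish on that section, and one concludes using that $e_\alpha$ reduces mod $2$ to $w_2$. You go slightly further than the paper in two places: you check the reduction $e \bmod 2 = w_2$ on $BO(1)\times BO(1)$, and you note that the Bockstein reformulation is just exactness of the coefficient sequence; the paper only asserts both.
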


\begin{proof}
Suppose first that $\beta$ admits a lift
$\widetilde{\beta}\in H^2(X;\ell)$. Consider the fibration
\[
BSO(2)\longrightarrow BO(2)
\xrightarrow{B\det}
B\Z_2.
\]
Since $[X,B\Z_2]\cong H^1(X;\mathbb{F}_2)$, the class $\alpha$ is
represented by a map $f_\alpha\colon X\to B\Z_2$. Form the pullback
diagram
\[
\xymatrix{
P_\alpha \ar[r]^{\widetilde f_\alpha} \ar[d]_{q_\alpha}
& BO(2) \ar[d]^{B\det} \\
X \ar[r]_{f_\alpha}
& B\Z_2.
}
\]
The fiber of $q_\alpha$ is $BSO(2)\simeq K(\Z,2)$. The induced
monodromy action on its second homotopy group is multiplication by
$-1$ along loops on which $\alpha$ is nonzero. Thus the corresponding
local coefficient system is precisely $\ell$.

A standard reflection of $\R^2$ determines a splitting
$\Z_2\to O(2)$ of the determinant homomorphism and hence a section of
$BO(2)\to B\Z_2$. Pulling this section back along $f_\alpha$ gives a
section $s_0$ of $q_\alpha$. Since the fiber is $K(\Z,2)$, obstruction
theory gives, after choosing $s_0$, an identification
\[
H^2(X;\ell)
\xrightarrow{\cong}
\pi_0\Gamma(q_\alpha);
\qquad
\delta\longmapsto[s_\delta],
\]
where $\Gamma(q_\alpha)$ denotes the space of sections of $q_\alpha$.

Let $E_{\alpha,\delta}\to X$ be the rank-$2$ real vector bundle
classified by
\[
\widetilde f_\alpha\circ s_\delta\colon X\to BO(2).
\]
Then $w_1(E_{\alpha,\delta})=\alpha$. Under the identification above,
$\delta$ is the twisted Euler class
\[
e_\alpha(E_{\alpha,\delta})=\delta\in H^2(X;\ell).
\]
Here the normalization is determined by $s_0$: the bundle associated
with the reflection splitting has a nowhere-zero section and hence
has vanishing twisted Euler class. Let
\[
r_2\colon H^2(X;\ell)
\longrightarrow
H^2(X;\mathbb{F}_2)
\]
denote coefficient reduction. Taking $\delta=\widetilde{\beta}$, we
obtain
\[
w_2(E_{\alpha,\widetilde{\beta}})
=
r_2(e_\alpha(E_{\alpha,\widetilde{\beta}}))
=
r_2(\widetilde{\beta})
=
\beta.
\]
Thus $E_{\alpha,\widetilde{\beta}}$ is the desired vector bundle.

Conversely, suppose that $E\to X$ is a rank-$2$ real vector bundle
satisfying $w_1(E)=\alpha$ and $w_2(E)=\beta$. Its twisted Euler class
\[
e_\alpha(E)\in H^2(X;\ell)
\]
reduces modulo $2$ to $w_2(E)=\beta$. Hence $e_\alpha(E)$ is a lift of
$\beta$, completing the proof.
\end{proof}

We record the following standard fact for completeness.

\begin{lem}\label{lem: square zero lemma}
Let $X$ be a CW complex and let $\alpha\in H^1(X;\mathbb{F}_2)$. If
$\alpha$ is the reduction modulo $2$ of a class in $H^1(X;\Z_4)$, then
\[
\alpha^2=0\in H^2(X;\mathbb{F}_2).
\]
\end{lem}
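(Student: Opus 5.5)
The plan is to reduce the statement to a comparison of Bockstein homomorphisms, using only the naturality of cohomology operations under coefficient homomorphisms. Let $\widetilde\alpha\in H^1(X;\Z_4)$ be a class whose reduction modulo $2$ is $\alpha$. The short exact sequence of coefficients
\[
0\longrightarrow\Z_2\overset{\cdot 2}{\longrightarrow}\Z_4\longrightarrow\Z_2\longrightarrow0
\]
has associated Bockstein $\beta_{4}\colon H^1(X;\mathbb{F}_2)\to H^2(X;\mathbb{F}_2)$, and the standard identification $\beta_4=\mathrm{Sq}^1$ will be used. Exactness of the long exact sequence shows that $\beta_4(\alpha)=0$, because $\alpha$ lies in the image of $H^1(X;\Z_4)\to H^1(X;\mathbb{F}_2)$.

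Next I will invoke the Cartan/instability axiom for Steenrod squares: for a degree-one class one has $\mathrm{Sq}^1(\alpha)=\alpha^2$. Combining this with the previous paragraph gives $\alpha^2=\mathrm{Sq}^1(\alpha)=\beta_4(\alpha)=0$, which is the claim.

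If one prefers to avoid Steenrod operations, I would give a direct argument. Represent $\alpha$ by a map $X\to B\Z_2=\R P^\infty$ and $\widetilde\alpha$ by a map $X\to B\Z_4$ lifting it along $B\Z_4\to B\Z_2$; this lift is induced by the surjection $\Z_4\to\Z_2$, and $[X,B\Z_4]=H^1(X;\Z_4)$. By naturality it then suffices to check that the generator $x\in H^1(B\Z_2;\mathbb{F}_2)$ satisfies $x^2=0$ after pulling back to $H^2(B\Z_4;\mathbb{F}_2)$. This holds because $H^\ast(B\Z_4;\mathbb{F}_2)=\Lambda(u)\otimes\mathbb{F}_2[v]$ with $|u|=1$ and $u^2=0$, and the pullback of $x$ is $u$.

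The only step that needs any care is identifying the $\Z_4$-Bockstein with $\mathrm{Sq}^1$, or equivalently the computation $u^2=0$ in $B\Z_4$. Both are textbook facts, so I expect no real obstacle.
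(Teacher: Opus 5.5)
Your proposal is correct and follows the same approach as the paper: identify $\mathrm{Sq}^1$ with the Bockstein of $0\to\Z_2\to\Z_4\to\Z_2\to0$, note by exactness that it vanishes on $\alpha$, and use $\mathrm{Sq}^1(\alpha)=\alpha^2$ in degree one. Your alternative argument, pulling back along $B\Z_4\to B\Z_2$ and using $u^2=0$ in $H^\ast(B\Z_4;\mathbb{F}_2)$, is also valid but not needed.
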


\begin{proof}
Recall that $\mathrm{Sq}^1$ is the Bockstein homomorphism associated
with the following short exact sequence of abelian groups:
\[
0\longrightarrow\Z_2
\overset{\cdot 2}{\longrightarrow}
\Z_4
\longrightarrow\Z_2
\longrightarrow 0.
\]
Since $\alpha$ lifts to a class in $H^1(X;\Z_4)$, exactness implies
that $\mathrm{Sq}^1(\alpha)=0$. Since $\alpha$ has degree $1$, we have
$\mathrm{Sq}^1(\alpha)=\alpha^2$. Therefore
\[
\alpha^2=\mathrm{Sq}^1(\alpha)=0,
\]
as desired.
\end{proof}

\begin{prop}\label{prop: relative existence spinc-minus}
Let $W$ be a compact connected oriented smooth $4$-manifold, possibly
with boundary, and let $\lambda\in H^1(W;\mathbb{F}_2)$. Let $\ell$ be
the rank-one $\Z$-local system determined by $\lambda$. Then there
exists a class $c\in H^2(W;\ell)$ whose reduction modulo $2$ is
$w_2(W)$.

If, in addition, $\lambda$ admits a lift to $H^1(W;\Z_4)$, then the
double cover $W_\lambda\to W$ classified by $\lambda$ admits a real
$\mathrm{spin}^c$ structure with respect to its deck transformation.
\end{prop}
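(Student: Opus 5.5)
We prove the two assertions in order: first the existence of a twisted lift of $w_2(W)$ by a Bockstein argument, then the real $\mathrm{spin}^c$ structure by building a rank-$2$ bundle $E$ and invoking Nakamura's criterion.

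\textbf{Twisted lift of $w_2(W)$.} By \Cref{lem: realization lemma}, it is enough to show that $w_2(W)$ lies in the kernel of the twisted Bockstein
\[
\beta_\ell\colon H^2(W;\mathbb{F}_2)\to H^3(W;\ell).
\]
Since $W$ is oriented, $w_2(W)$ is the reduction of the untwisted class $W_3$-obstruction-free integral lift; indeed $\beta_{\Z}(w_2(W))=W_3(W)=0$ for every oriented $4$-manifold, so $w_2(W)=\rho_2(c_0)$ for some $c_0\in H^2(W;\Z)$. This fact holds for compact manifolds with boundary, where every oriented $4$-manifold is $\mathrm{spin}^c$.

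To pass to twisted coefficients, compare the two Bocksteins. The twisted Bockstein, reduced modulo $2$, is $\rho_2\circ\beta_\ell(x)=\mathrm{Sq}^1x+\lambda x$. So $\rho_2\beta_\ell(w_2)=\mathrm{Sq}^1w_2+\lambda w_2=w_3+\lambda w_2$. Here $w_3=0$, and by Wu's formula $\lambda w_2(W)=\mathrm{Sq}^2\lambda+v_1^2\lambda=0$ on an oriented $4$-manifold. For $W$ with boundary, we use the relative Wu formula pairing against $H^1(W,\partial W)$. This shows only the mod-$2$ reduction of $\beta_\ell(w_2)$ vanishes.

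I expect the main obstacle to be the $2$-torsion of $H^3(W;\ell)$. My preferred fix avoids Bocksteins entirely. Take the rank-$3$ bundle $TW$ restricted to the $3$-skeleton; more cleanly, consider the oriented rank-$4$ bundle $TW\otimes$ nothing. A cleaner route is to set $E'=L_\lambda\oplus\underline{\R}$, where $L_\lambda$ is the line bundle of $\lambda$. Then seek a rank-$2$ bundle with $w_1=\lambda$ and $w_2=w_2(W)$ directly.

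Since $W$ has the homotopy type of a $3$-complex when $\partial W\neq\emptyset$ (and in the closed case we only need the $3$-skeleton plus the top cell, where the obstruction lies in $H^4(W;\pi_3)$-type groups that vanish for $BSO(2)$ fibers), we work on the $3$-skeleton. There, $TW$ splits off a trivial line, leaving an oriented rank-$3$ bundle $V$ with $w_2(V)=w_2(W)$. Twisting by $L_\lambda$ gives the bundle $V\otimes L_\lambda$, with $w_1=\lambda$ and $w_2=w_2+\lambda^2$. A further nonvanishing-section argument on the $3$-skeleton reduces this to rank $2$. This produces a rank-$2$ bundle with $w_1=\lambda$ and $w_2=w_2(W)+\lambda^2$, possibly with a correction. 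The converse direction of \Cref{lem: realization lemma} then supplies $c=e_\lambda(E)$, after adjusting by $\lambda^2$. I would commit to this bundle-theoretic route and treat the identity $\lambda^2=\rho_2(\text{twisted class})$ separately: $\lambda^2$ is the reduction of the twisted Euler class of $L_\lambda\oplus L_\lambda$, which has $w_1=0$. This needs care about which twisting is involved, and I would check it explicitly.

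\textbf{Real $\mathrm{spin}^c$ structure.} Assume now that $\lambda$ lifts to $H^1(W;\Z_4)$. By \Cref{lem: square zero lemma}, $\lambda^2=0$. Nakamura's criterion, via \Cref{rem: real spinc and spinc-minus}, requires a rank-$2$ bundle $E$ with $\det E\cong\ell$, i.e.\ $w_1(E)=\lambda$, and $w_2(E)+w_1(E)^2=w_2(W)$ (the condition for a $\mathrm{spin}^{c-}$ structure on $(W,E)$).

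Apply \Cref{lem: realization lemma} with $\alpha=\lambda$ and lift $\widetilde\beta=c$. This gives $E$ with $w_1(E)=\lambda$ and $w_2(E)=w_2(W)$. Since $\lambda^2=0$, we get $w_2(E)+w_1(E)^2=w_2(W)$. Nakamura's criterion then produces a $\mathrm{spin}^{c-}$ structure on $(W,E)$, which by \Cref{rem: real spinc and spinc-minus} is a real $\mathrm{spin}^c$ structure on $W_\lambda$. The one step to verify is the precise form of Nakamura's condition: whether it reads $w_2(TW)=w_2(E)+w_1(E)^2$ or $w_2(TW)=w_2(E)$. The $\Z_4$ hypothesis exactly makes both forms agree.
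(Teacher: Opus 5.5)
Your second half is the paper's argument. Given $c$, \Cref{lem: realization lemma} gives $E$ with $w_1(E)=\lambda$ and $w_2(E)=w_2(W)$. The $\Z_4$-lift gives $\lambda^2=0$, and Nakamura's criterion in the form $w_2(TW)=w_2(E)+w_1(E)^2$ finishes.

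The gap is the first assertion, which is stated for arbitrary $\lambda$ and on which the second half depends. Your Bockstein computation only gives $\rho_2\beta_\ell(w_2(W))=0$, i.e.\ $\beta_\ell(w_2(W))\in 2H^3(W;\ell)$, as you note. The bundle-theoretic detour does not get around this, for two reasons.
\begin{enumerate}
\item The obstruction to a nowhere-vanishing section of the rank-$3$ bundle $V\otimes L_\lambda$ on the $3$-skeleton is its twisted Euler class in $H^3(-;\ell)$. This is an integral class whose mod $2$ reduction is $w_3(V\otimes L_\lambda)=w_2(W)\lambda+\lambda^3$. Even when that reduction vanishes, deciding whether the integral class does is the same $2$-torsion problem.
\item The closing ``adjustment by $\lambda^2$'' needs a \emph{twisted} lift of $\lambda^2$. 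But $L_\lambda\oplus L_\lambda$ is orientable, so its Euler class gives only an untwisted lift.
\end{enumerate}
Both steps actually fail. Take $W=\mathbb{RP}^3\times I$, with $\lambda$ pulled back from the generator. Since $W$ is parallelizable, $V\otimes L_\lambda\cong L_\lambda^{\oplus 3}$, which has $w_3=\lambda^3\neq0$, so there is no nowhere-vanishing section. Moreover, no rank-$2$ bundle with $w_1=\lambda$ and $w_2=w_2(W)+\lambda^2=\lambda^2$ exists: $H^2(W;\ell)\cong H^2(\mathbb{RP}^3;\ell)=0$ while $\lambda^2\neq0$. The proposition still holds there with $c=0$, since $w_2(W)=0$. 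So your route aims at $w_2(W)+\lambda^2$, which need not admit a twisted lift even when $w_2(W)$ does.

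The missing ingredient is integral information, which the paper gets from the twisted universal coefficient theorem. Because $\ell^\vee\cong\ell$ and $\ell\otimes\mathbb{F}_2\cong\mathbb{F}_2$, both $H^2(W;\ell)$ and $H^2(W;\mathbb{F}_2)$ are extensions of $\operatorname{Hom}(H_2(W;\ell),-)$ by $\operatorname{Ext}(H_1(W;\ell),-)$, compatibly with reduction mod $2$.
\begin{itemize}
\item The Ext part lifts for free, since $\operatorname{Ext}(-,\Z)\to\operatorname{Ext}(-,\mathbb{F}_2)$ is surjective.
\item For the Hom part, take $\beta\in H_2(W;\ell)$ with Poincar\'e--Lefschetz dual $b$. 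The relative Wu formula identifies the value of $w_2(W)$ on $\beta$ with the reduction of $\langle j(b)\cup b,[W,\partial W]\rangle$. This integer lies in $H^4(W,\partial W;\ell\otimes\ell)\cong\Z$.
\item This functional is additive mod $2$. It vanishes on torsion, because before reduction it lands in the torsion-free group $\Z$. Hence it lifts to $\operatorname{Hom}(H_2(W;\ell),\Z)$ and is realized by some $c_0\in H^2(W;\ell)$.
\item Then $w_2(W)-\rho(c_0)$ lies in the Ext part and is corrected there.
\end{itemize}
The integrality of the twisted self-intersection is exactly what your mod-$2$ analysis cannot detect.
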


\begin{proof}
Let $\rho$ denote reduction modulo $2$. We adapt the
universal-coefficient argument in
\cite[Proof of Lemma~5.2]{NakamuraPinMinus} to the relative setting.
Since $\ell^\vee\cong\ell$ and
$\operatorname{Hom}(\ell,\mathbb{F}_2)\cong\mathbb{F}_2$, the universal
coefficient theorem gives a commutative diagram with exact rows
\[
\begin{CD}
0 @>>> \operatorname{Ext}_{\Z}(H_1(W;\ell),\Z)
@>{\iota}>> H^2(W;\ell)
@>{h_{\Z}}>> \operatorname{Hom}_{\Z}(H_2(W;\ell),\Z) @>>> 0\\
@. @V{\rho_0}VV @V{\rho}VV @V{\rho_2}VV @.\\
0 @>>> \operatorname{Ext}_{\Z}(H_1(W;\ell),\mathbb{F}_2)
@>{\overline{\iota}}>> H^2(W;\mathbb{F}_2)
@>{h_2}>> \operatorname{Hom}_{\Z}(H_2(W;\ell),\mathbb{F}_2) @>>> 0.
\end{CD}
\]

For $\beta\in H_2(W;\ell)$, let
$b\in H^2(W,\partial W;\ell)$ be its Poincar\'e--Lefschetz dual, and let
\[
j\colon H^2(W,\partial W;\ell)\to H^2(W;\ell)
\]
be the natural map. Define
\[
S(\beta)
 \coloneqq 
\langle j(b)\cup b,[W,\partial W]\rangle
\bmod 2.
\]
This pairing has untwisted coefficients because
$\ell\otimes_{\Z}\ell\cong\Z$. The map
\[
S\colon H_2(W;\ell)\to\mathbb{F}_2
\]
is a homomorphism: the two cross terms in $S(\beta+\gamma)$ agree by
graded commutativity and therefore cancel modulo $2$.

Moreover, $S$ vanishes on the torsion subgroup of $H_2(W;\ell)$. Indeed,
if $\beta$ is torsion, then its Poincar\'e--Lefschetz dual $b$ is
torsion, and hence $j(b)\cup b$ is a torsion element of
\[
H^4(W,\partial W;\ell\otimes_{\Z}\ell)
\cong
H^4(W,\partial W;\Z)
\cong
\Z.
\]
It must therefore vanish. Since
$H_2(W;\ell)/\operatorname{Tor}$ is free abelian, $S$ admits an integral
lift
\[
\widetilde S\in
\operatorname{Hom}_{\Z}(H_2(W;\ell),\Z).
\]
By the surjectivity of $h_{\Z}$, choose
$c_0\in H^2(W;\ell)$ such that
\[
h_{\Z}(c_0)=\widetilde S.
\]

Let $\overline b$ denote the reduction of $b$ modulo $2$. Since $W$ is
oriented, its second Wu class is $w_2(W)$. The relative Wu formula
\cite[Section~7]{Kervaire_relativeWu} gives
\[
\begin{aligned}
h_2(w_2(W))(\beta)
&=
\langle w_2(W)\cup\overline b,[W,\partial W]_2\rangle\\
&=
\langle \mathrm{Sq}^2(\overline b),[W,\partial W]_2\rangle\\
&=
\langle j(\overline b)\cup\overline b,[W,\partial W]_2\rangle\\
&=
S(\beta).
\end{aligned}
\]
Consequently,
\[
h_2(w_2(W)-\rho(c_0))=0.
\]
By exactness of the lower row, there exists
\[
\delta'\in
\operatorname{Ext}_{\Z}(H_1(W;\ell),\mathbb{F}_2)
\]
such that
\[
\overline{\iota}(\delta')=w_2(W)-\rho(c_0).
\]

The map $\rho_0$ is surjective. Indeed, this follows by applying
$\operatorname{Hom}_{\Z}(H_1(W;\ell),-)$ to the coefficient sequence
$0\to\Z\to\Z\to\mathbb{F}_2\to 0$ and using
$\operatorname{Ext}_{\Z}^2(-,\Z)=0$. Choose
\[
\delta\in
\operatorname{Ext}_{\Z}(H_1(W;\ell),\Z)
\]
such that $\rho_0(\delta)=\delta'$, and set
$c \coloneqq c_0+\iota(\delta)$. Commutativity of the diagram gives
\[
\rho(c)
=
\rho(c_0)+\overline{\iota}(\delta')
=
w_2(W).
\]
This proves the first assertion.

By \Cref{lem: realization lemma}, there is a rank-$2$ real vector bundle
$E\to W$ such that
\[
w_1(E)=\lambda,
\qquad
w_2(E)=w_2(W).
\]
Suppose now that $\lambda$ admits a lift to $H^1(W;\Z_4)$. By
\Cref{lem: square zero lemma}, we have $\lambda^2=0$, and hence
\[
w_2(E)+w_1(E)^2
=
w_2(W)+\lambda^2
=
w_2(TW).
\]
Therefore, by \cite[Proposition~3.4]{NakamuraPinMinus}, the pair $(W,E)$
admits a $\mathrm{spin}^{c-}$ structure. Since
$w_1(\det E)=w_1(E)=\lambda$, its associated double cover is
$W_\lambda\to W$. The correspondence recalled in
\Cref{rem: real spinc and spinc-minus} then gives a real
$\mathrm{spin}^c$ structure on $W_\lambda$ with respect to its deck
transformation.
\end{proof}

\subsection{Asymptotic existence and uniqueness of real $\mathrm{spin}^c$ structures}

We can now begin constructing real $\mathrm{spin}^c$ structures on cyclic covers of distinguished homology handles and $\wt{H}_{\mathbb{Q}}$-cobordisms. First, we record the following standard topological facts, whose proofs we omit.

\begin{lem}\label{lem: double cover and transfer}
Let $X$ be a path-connected CW complex and let $\alpha\in H^1(X;\mathbb{F}_2)$. Denote the double cover classified by $\alpha$ by
\[
p\colon X_\alpha\longrightarrow X.
\]
Then $X_\alpha$ is path-connected if and only if $\alpha\neq 0$. Moreover, the transfer exact sequence contains
\[
\cdots\longrightarrow H^0(X;\mathbb{F}_2)
\overset{\delta_0}{\longrightarrow} H^1(X;\mathbb{F}_2)
\overset{p^\ast}{\longrightarrow} H^1(X_\alpha;\mathbb{F}_2)
\overset{\operatorname{tr}}{\longrightarrow} H^1(X;\mathbb{F}_2)
\overset{\delta_1}{\longrightarrow}\cdots,
\]
where the connecting homomorphisms are given by cup product with $\alpha$. In particular, $\delta_0(1)=\alpha$.\qed
\end{lem}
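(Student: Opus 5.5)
The plan is to realize $p\colon X_\alpha\to X$ as the sphere bundle of the real line bundle $L_\alpha\to X$ classified by $\alpha$. Then the transfer exact sequence becomes the Gysin sequence of a $0$-sphere bundle, equivalently the long exact sequence of a short exact sequence of coefficient systems on $X$.

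\textbf{Connectivity.} Work at the level of $\pi_1$. The double cover corresponds to the kernel of the homomorphism $\pi_1(X)\to\Z_2$ determined by $\alpha$ (using $H^1(X;\mathbb{F}_2)\cong\operatorname{Hom}(\pi_1(X),\Z_2)$). Fix a basepoint $x$. The fiber over $x$ has two points, and the monodromy action of $\pi_1(X,x)$ on the fiber is through $\alpha$. By path-lifting, $X_\alpha$ is path-connected if and only if this action is transitive on the fiber, that is, if and only if $\alpha\neq 0$.

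\textbf{The exact sequence.} Consider the local system $p_\ast\mathbb{F}_2$ on $X$, whose stalk is $\mathbb{F}_2^2=\mathbb{F}_2[\Z_2]$ with monodromy via $\alpha$. Since $H^\ast(X;p_\ast\mathbb{F}_2)\cong H^\ast(X_\alpha;\mathbb{F}_2)$, the next step is to use the short exact sequence of $\mathbb{F}_2[\Z_2]$-modules
\[
0\longrightarrow\mathbb{F}_2\longrightarrow\mathbb{F}_2[\Z_2]\longrightarrow\mathbb{F}_2\longrightarrow0,
\]
where the first map is the norm $1\mapsto 1+g$ and the second is augmentation. The trivial-coefficient terms are legitimate because $\mathbb{F}_2$ with either $\Z_2$-action is trivial.

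The associated long exact sequence in cohomology has the following maps.
\begin{itemize}
\item The map induced by the norm inclusion $\mathbb{F}_2\to p_\ast\mathbb{F}_2$ is $p^\ast$.
\item The map induced by augmentation is the transfer $\operatorname{tr}$.
\end{itemize}
This gives the displayed sequence.

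\textbf{The connecting map (main obstacle).} It remains to identify the connecting homomorphism $\delta$ with cup product with $\alpha$. This holds because the extension class of the sequence lies in
\[
\operatorname{Ext}^1_{\mathbb{F}_2[\pi_1]}(\mathbb{F}_2,\mathbb{F}_2)=H^1(X;\mathbb{F}_2),
\]
and $\delta$ is Yoneda product with it, i.e.\ cup product with the extension class. I would compute the class directly from a $1$-cocycle. Lifting $1\in\mathbb{F}_2$ to $e\in\mathbb{F}_2[\Z_2]$, a loop $\gamma$ acts by
\[
\gamma e-e=
\begin{cases}
1+g & \text{if }\alpha(\gamma)=1,\\
0 & \text{otherwise}.
\end{cases}
\]
So the extension class is $\alpha$.

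\textbf{The value $\delta_0(1)=\alpha$.} This then follows either from the cup product formula or directly. The class $1\in H^0(X)$ is not in the image of $\operatorname{tr}$ when $\alpha\neq0$, since $H^0(X_\alpha)=\mathbb{F}_2$ and the transfer of $1$ is $2=0$. Hence $\delta_0(1)$ is the nonzero class killed by $p^\ast$. This is consistent with $\ker p^\ast=\{0,\alpha\}$ on $H^1$.
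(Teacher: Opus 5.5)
The paper gives no proof of this lemma: it is listed among ``standard topological facts, whose proofs we omit'' and closed with \qed, so there is no argument in the paper to compare against. Your proof is correct and is the standard one. The pushforward local system $p_\ast\mathbb{F}_2$, together with the norm/augmentation sequence, gives the short exact sequence $0\to C^\ast(X;\mathbb{F}_2)\xrightarrow{p^\ast}C^\ast(X_\alpha;\mathbb{F}_2)\xrightarrow{\operatorname{tr}}C^\ast(X;\mathbb{F}_2)\to 0$, already on cellular cochains with lifted cells. Your cocycle $\gamma\mapsto\gamma e-e$ then correctly identifies the extension class with $\alpha$.

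There are two minor points.

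First, the step ``connecting map $=$ cup product with the extension class'' uses that the coefficient sequence splits over the ground ring, so that $\operatorname{Ext}^1_{\mathbb{F}_2[\pi_1]}(\mathbb{F}_2,\mathbb{F}_2)\cong H^1(\pi_1;\mathbb{F}_2)$. This is automatic over the field $\mathbb{F}_2$, but it is worth stating. For the non-split sequence $0\to\Z_2\to\Z_4\to\Z_2\to0$ in \Cref{lem: square zero lemma}, the connecting map is $\mathrm{Sq}^1$ rather than a cup product.

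Second, your direct argument for $\delta_0(1)=\alpha$ only treats $\alpha\neq0$. There it is complete once you note that $p^\ast\alpha=0$ and $\dim\operatorname{Im}\delta_0\leq1$. For $\alpha=0$, use the cup-product formula, or note that $\operatorname{tr}$ is then onto $H^0(X;\mathbb{F}_2)$, so $\delta_0=0$.

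The Gysin-sequence route you mention first would be equally quick. The mod-$2$ Euler class of $L_\alpha$ is $w_1(L_\alpha)=\alpha$, and fiber integration over an $S^0$-bundle is the transfer. This yields the cup-product description without computing an extension class.
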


\begin{lem}\label{lem: double cover of Z2 homology handle}
Let $Y$ be a closed oriented $\mathbb{F}_2$-homology $S^1\times S^2$,
and let $\alpha\in H^1(Y;\mathbb{F}_2)\cong\mathbb{F}_2$ be the
generator. Suppose that $\alpha$ admits a primitive integral lift
$\phi\in H^1(Y;\Z)$, and let
\[
p\colon Y_\alpha\longrightarrow Y
\]
be the double cover classified by $\alpha$. Then $Y_\alpha$ is again
an $\mathbb{F}_2$-homology $S^1\times S^2$. Moreover, the infinite
cyclic cover determined by $\phi$ factors through $Y_\alpha$. Let
$\phi_\alpha\in H^1(Y_\alpha;\Z)$ denote the class determining the
resulting infinite cyclic cover $\wt Y\to Y_\alpha$. Equivalently,
$\phi_\alpha$ is characterized by
\[
p^\ast\phi=2\phi_\alpha.
\]
Then $\phi_\alpha$ is primitive, and its reduction modulo $2$ is
nonzero and hence generates $H^1(Y_\alpha;\mathbb{F}_2)$.
\end{lem}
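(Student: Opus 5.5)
The plan is to work throughout with $\mathbb{F}_2$-coefficients and the transfer (Gysin) sequence of \Cref{lem: double cover and transfer}, using the integral lift $\phi$ to control the connecting maps.

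\emph{Connectedness and $H_1$.} Since $\alpha\neq 0$, \Cref{lem: double cover and transfer} shows that $Y_\alpha$ is path-connected, so $H^0(Y_\alpha;\mathbb{F}_2)\cong\mathbb{F}_2$. In the transfer sequence, the connecting maps are cup product with $\alpha$. The class $\alpha$ is the reduction of the integral class $\phi$, so it lifts to $H^1(Y;\Z_4)$, and \Cref{lem: square zero lemma} gives $\alpha^2=0$. Hence $\delta_1\colon H^1(Y;\mathbb{F}_2)\to H^2(Y;\mathbb{F}_2)$ is zero. Together with $\delta_0(1)=\alpha$, this gives $\operatorname{coker}\delta_0=0$ and $\ker\delta_1=\mathbb{F}_2$. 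The sequence
\[
0\to \operatorname{coker}\delta_0\to H^1(Y_\alpha;\mathbb{F}_2)\to\ker\delta_1\to 0
\]
then shows that $H^1(Y_\alpha;\mathbb{F}_2)\cong\mathbb{F}_2$. Poincar\'e duality on the closed oriented $3$-manifold $Y_\alpha$ gives the remaining groups, so $Y_\alpha$ is an $\mathbb{F}_2$-homology $S^1\times S^2$.

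\emph{Factorization of the infinite cyclic cover.} Represent $\phi$ by a map $f\colon Y\to S^1$. Since $\pi_1(Y)\to\Z$ is surjective and $\alpha$ is its reduction modulo $2$, the double cover $Y_\alpha$ corresponds to the kernel of $\pi_1(Y)\to\Z\to\Z_2$. This subgroup contains $\ker(\pi_1(Y)\to\Z)$, so $\wt Y\to Y$ factors through $Y_\alpha$, and $Y_\alpha=Y^2$. The lift $f_\alpha\colon Y_\alpha\to \R/2\Z\cong S^1$ satisfies $p^\ast\phi=2\phi_\alpha$, where $\phi_\alpha=[f_\alpha]$. Moreover, $\pi_1(Y_\alpha)$ maps onto the index-two subgroup $2\Z\cong\Z$, so $\phi_\alpha\colon\pi_1(Y_\alpha)\to\Z$ is surjective, and $\phi_\alpha$ is primitive. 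The equation $p^\ast\phi=2\phi_\alpha$ determines $\phi_\alpha$ uniquely because $H^1(Y_\alpha;\Z)$ is torsion-free.

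\emph{Nonzero reduction.} A surjective homomorphism $\pi_1(Y_\alpha)\to\Z$ composed with $\Z\to\Z_2$ remains surjective. Therefore the reduction of $\phi_\alpha$ modulo $2$ is nonzero, and it generates the one-dimensional space $H^1(Y_\alpha;\mathbb{F}_2)$.

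The main obstacle is the vanishing $\delta_1=\alpha\cup-$ on $H^1$. This is exactly where the integral lift enters, through $\alpha^2=0$. Without it, the rank of $H^1(Y_\alpha;\mathbb{F}_2)$ would not be pinned down by the transfer sequence alone.
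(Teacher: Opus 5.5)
Your proof is correct, but you determine $\dim_{\mathbb{F}_2}H^1(Y_\alpha;\mathbb{F}_2)$ differently from the paper.

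\textbf{The paper's route.} It uses only that $\delta_0$ is an isomorphism. This forces $p^\ast=0$ and $\operatorname{tr}$ injective, which gives just the upper bound $\dim H^1(Y_\alpha;\mathbb{F}_2)\le 1$. The lower bound comes from the primitivity of $\phi_\alpha$, proved by lifting $\gamma^2$ for a loop $\gamma$ with $\phi([\gamma])=1$: the mod-$2$ reduction of $\phi_\alpha$ is then a nonzero class.

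\textbf{Your route.} You invoke \Cref{lem: square zero lemma} to get $\alpha^2=0$, hence $\delta_1=0$. The transfer sequence alone then gives $H^1(Y_\alpha;\mathbb{F}_2)\cong\mathbb{F}_2$.

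\textbf{Comparison.} Both arguments genuinely need the integral lift, just at different points: yours through $\mathrm{Sq}^1\alpha=0$, the paper's through the nonzero reduction of $\phi_\alpha$. Without the lift, $Y=\mathbb{RP}^3$ is a counterexample, with $\alpha^2\neq 0$ and $Y_\alpha=S^3$.

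The paper's version is slightly more economical. The nonzero reduction of $\phi_\alpha$ must be proved anyway for the final assertion, and it already supplies the missing lower bound. So your closing remark that the vanishing of $\delta_1$ is ``the main obstacle'' overstates matters. Your own third paragraph provides what is needed, and the square-zero lemma could be dropped.

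What your approach buys is that the $H^1$ computation becomes purely cohomological, independent of the covering-space construction of $\phi_\alpha$.
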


\begin{proof}
Since $\alpha\neq 0$, \Cref{lem: double cover and transfer} implies
that $Y_\alpha$ is path-connected. Because $p$ is classified by
$\alpha$, we have $p^\ast\alpha=0$. Since $\alpha$ is the reduction of
$\phi$ modulo $2$, it follows that $p^\ast\phi$ evaluates evenly on
every loop in $Y_\alpha$. Hence there is a unique class
$\phi_\alpha\in H^1(Y_\alpha;\Z)$ such that
\[
p^\ast\phi=2\phi_\alpha.
\]
The kernel of $\phi_\alpha$ agrees with the kernel of the restriction
of $\phi$ to $\pi_1(Y_\alpha)$, so $\phi_\alpha$ determines the
infinite cyclic cover $\wt Y\to Y_\alpha$.

Since $\phi$ is primitive, there is a loop $\gamma$ in $Y$ such that
$\phi([\gamma])=1$. The loop $\gamma^2$ lifts to a loop
$\widetilde{\gamma^2}$ in $Y_\alpha$, and the identity above gives
$\phi_\alpha([\widetilde{\gamma^2}])=1$. Thus $\phi_\alpha$ is
primitive, and its reduction modulo $2$ is nonzero.

Now consider the transfer exact sequence
\[
H^0(Y;\mathbb{F}_2)
\overset{\delta_0}{\longrightarrow}
H^1(Y;\mathbb{F}_2)
\overset{p^\ast}{\longrightarrow}
H^1(Y_\alpha;\mathbb{F}_2)
\overset{\operatorname{tr}}{\longrightarrow}
H^1(Y;\mathbb{F}_2).
\]
By \Cref{lem: double cover and transfer}, we have
$\delta_0(1)=\alpha$. Since $\alpha$ generates
$H^1(Y;\mathbb{F}_2)$, the map $\delta_0$ is an isomorphism. Exactness
therefore shows that $p^\ast=0$ and that $\operatorname{tr}$ is
injective. Consequently,
$\dim_{\mathbb{F}_2}H^1(Y_\alpha;\mathbb{F}_2)\leq 1$.
Since the reduction of $\phi_\alpha$ modulo $2$ is nonzero, we obtain
\[
H^1(Y_\alpha;\mathbb{F}_2)\cong\mathbb{F}_2.
\]
Finally, since $Y_\alpha$ is a closed connected oriented
$3$-manifold, Poincar\'e duality shows that its
$\mathbb{F}_2$-homology agrees with that of $S^1\times S^2$.
Therefore $Y_\alpha$ is an $\mathbb{F}_2$-homology
$S^1\times S^2$.
\end{proof}

We now turn to the existence of real $\mathrm{spin}^c$ structures on
sufficiently large $2$-power cyclic covers of
$\wt{H}_{\Q}$-cobordisms. We first record the following torsion
property of their boundary covers.

\begin{lem}\label{lem: no 2-torsion}
Let $(Y,\phi)$ be a distinguished homology handle. Then, for every
integer $n>0$, the group $H^2(Y^{2^n};\Z)$ has no nonzero torsion element whose
order is even.
\end{lem}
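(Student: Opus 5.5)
We reduce the torsion question to a computation with $\mathbb{F}_2$-coefficients on the finite cover. Since $Y^{2^n}$ is a closed oriented $3$-manifold, the universal coefficient theorem identifies the torsion subgroup of $H^2(Y^{2^n};\Z)$ with the torsion subgroup of $H_1(Y^{2^n};\Z)$. It therefore suffices to show that $H_1(Y^{2^n};\Z)$ has no $2$-torsion. Equivalently, we will show that
\[
\dim_{\mathbb{F}_2}H_1(Y^{2^n};\mathbb{F}_2)=\operatorname{rank}_{\Z}H_1(Y^{2^n};\Z),
\]
and that this common value is $1$.

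\textbf{Step 1: the $\mathbb{F}_2$-homology.} We argue by induction on $n$ using \Cref{lem: double cover of Z2 homology handle}. The base case is $Y^{1}=Y$, which is an integral, hence $\mathbb{F}_2$-, homology $S^1\times S^2$ with primitive class $\phi$. At each stage, $Y^{2^{n+1}}\to Y^{2^n}$ is the double cover classified by the mod $2$ reduction of the primitive class $\phi_{n}$ determined by $\wt Y\to Y^{2^n}$. The lemma then shows three things: $Y^{2^{n+1}}$ is again an $\mathbb{F}_2$-homology $S^1\times S^2$, the class $\phi_{n+1}$ is primitive, and $\phi_{n+1}$ determines the same infinite cyclic cover. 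Hence $H_1(Y^{2^n};\mathbb{F}_2)\cong\mathbb{F}_2$ for all $n$.

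\textbf{Step 2: the rank.} The primitive class $\phi_n\in H^1(Y^{2^n};\Z)$ is nonzero, so $\operatorname{rank}H_1(Y^{2^n};\Z)\ge 1$.

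\textbf{Step 3: conclusion.} By the universal coefficient theorem,
\[
H_1(Y^{2^n};\mathbb{F}_2)\cong H_1(Y^{2^n};\Z)\otimes\mathbb{F}_2
\]
has dimension equal to the rank plus the number of even-order cyclic summands of $H_1(Y^{2^n};\Z)$. Since this dimension is $1$ by Step 1 and the rank is at least $1$ by Step 2, the rank is exactly $1$ and there are no even-order summands. Therefore $H_1(Y^{2^n};\Z)$ has no $2$-torsion.

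The only delicate point is the final transfer to $H^2$. The torsion of $H^2(Y^{2^n};\Z)$ is $\operatorname{Ext}(H_1(Y^{2^n};\Z),\Z)$, which is isomorphic to the torsion of $H_1(Y^{2^n};\Z)$ and so has odd order. A torsion element of even order would produce an element of order $2$, which is impossible in a group of odd order. The main thing to check carefully is the induction in Step 1, specifically that the hypotheses of \Cref{lem: double cover of Z2 homology handle} (an integral primitive lift of the generator) are preserved at each stage. This is exactly what the lemma's primitivity conclusion provides.
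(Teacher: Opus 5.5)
Your proof is correct and follows essentially the paper's route: both iterate \Cref{lem: double cover of Z2 homology handle} to see that the degree-one mod-$2$ (co)homology of $Y^{2^n}$ is one-dimensional and spanned by the reduction of a primitive integral class, and then finish with standard homological algebra. The difference is only in packaging: the paper notes that surjectivity of $H^1(Y^{2^n};\Z)\to H^1(Y^{2^n};\mathbb{F}_2)$ kills the Bockstein into $H^2(Y^{2^n};\Z)$, so multiplication by $2$ on $H^2(Y^{2^n};\Z)$ is injective, while you count even-order summands of $H_1$ via the universal coefficient theorem and pass to $H^2$ through $\operatorname{Ext}$, which also shows $b_1(Y^{2^n})=1$ along the way.
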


\begin{proof}
Fix an integer $n>0$ and set $M \coloneqq Y^{2^n}$. By iterating
\Cref{lem: double cover of Z2 homology handle}, the manifold $M$ is an
$\mathbb{F}_2$-homology $S^1\times S^2$, and the induced infinite
cyclic cover $\wt Y\to M$ is determined by a primitive class
$\phi_n\in H^1(M;\Z)$. In particular, the reduction of $\phi_n$ modulo
$2$ is nonzero. Since $H^1(M;\mathbb{F}_2)\cong\mathbb{F}_2$, the
reduction map
\[
r_2\colon H^1(M;\Z)\longrightarrow H^1(M;\mathbb{F}_2)
\]
is therefore surjective.

Consider the portion
\[
H^1(M;\Z)
\overset{r_2}{\longrightarrow}
H^1(M;\mathbb{F}_2)
\overset{\beta}{\longrightarrow}
H^2(M;\Z)
\overset{\cdot 2}{\longrightarrow}
H^2(M;\Z)
\]
of the long exact sequence associated with
\[
0\longrightarrow\Z
\overset{\cdot 2}{\longrightarrow}
\Z
\longrightarrow\mathbb{F}_2
\longrightarrow 0.
\]
Since $r_2$ is surjective, we have $\beta=0$. Exactness then implies
that multiplication by $2$ on $H^2(M;\Z)$ is injective. Thus
$H^2(M;\Z)$ has no nonzero element of order $2$, as desired.
\end{proof}

% \begin{proof}
%     It suffices to show that $H^2(Y^{2^n};\Z)$ has no 2-torsion element. Recall from \Cref{lem: double cover of Z2 homology handle} that $Y^{2^n}$ is an $\mathbb{F}_2$-homology $S^1 \times S^2$, so $H^k(Y^{2^n};\mathbb{F}_2)\simeq \mathbb{F}_2$ for $0\le k\le 3$. Consider the integral class $\alpha\in H^1(Y^{2^n};\Z)$ corresponding to the connected infinite cyclic cover $\wt{W}\rightarrow Y^{2^n}$. Clearly the reduction mod 2 of $\alpha$ is nonzero, so the generator of $H^1(Y^{2^n};\mathbb{F}_2)$ admits an integral lift (which is $\alpha$). Hence $H^1(Y^{2^n};\Z)$ cannot have any 2-torsion. Applying Poincar\'{e} duality and universal coefficient theorem then also shows that the generator of $H^2(Y^{2^n};\mathbb{F}_2)$ also has an integral lift and thus $H^2(Y^{2^n};\Z)$ has no 2-torsion. The lemma follows.
% \end{proof}
\begin{lem}\label{lem: unique lift from spin c to real spin c}
Let $(Y,\phi)$ be a distinguished homology handle, let $n>0$ be an integer, and let $\s$ be a $\mathrm{spin}^c$ structure on $Y^{2^n}$. Denote by $\tau_n$ the nontrivial deck transformation of $Y^{2^n}\to Y^{2^{n-1}}$. Then, up to isomorphism, there exists at most one real $\mathrm{spin}^c$ structure on $Y^{2^n}$ with respect to $\tau_n$ whose underlying $\mathrm{spin}^c$ structure is $\s$.
\end{lem}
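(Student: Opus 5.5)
Write $X\coloneqq Y^{2^n}$, $\tau\coloneqq\tau_n$, and $\overline X\coloneqq Y^{2^{n-1}}=X/\tau$. The plan is to show that two real structures on the same $\spinc$ structure differ by a $\tau$-twisted circle-valued gauge datum, and that the resulting obstruction group vanishes, using \Cref{lem: no 2-torsion}.

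Let $(\s,I)$ and $(\s,I')$ be two real $\spinc$ structures with the same underlying $\s$. The composite $I'I^{-1}$ is a complex-linear isometry of the spinor bundle $S$ covering the identity and commuting with Clifford multiplication. It is therefore multiplication by a function $g\colon X\to S^1$. From $I^2=I'^2=\id$ and the anti-linearity of $I$ we get
\[
gI\,gI=g\,\overline{\tau^\ast g}\,I^2=\id ,
\]
so $\overline{\tau^\ast g}=g^{-1}$, that is, $\tau^\ast g=g$. Hence $g$ descends to a map $\overline g\colon\overline X\to S^1$.

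We now look for a gauge transformation $u\colon X\to S^1$ with $uIu^{-1}=I'$. Since $uIu^{-1}=u\,\overline{\tau^\ast u}^{\,-1}I=u\,(\tau^\ast u)\,I$, we need
\[
u\cdot\tau^\ast u=g .
\]
This is a twisted equation on $\overline X$. Choose a lift $f\colon X\to\R$ of $g$, which is possible whenever $[g]=0\in H^1(X;\Z)$. Then set $u\coloneqq e^{if/2}$ and correct $u$ by a $\tau$-anti-invariant function $v$, which satisfies $v\cdot\tau^\ast v=1$ automatically.

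In general, the classes of $\tau$-invariant maps $g$ modulo those of the form $u\cdot\tau^\ast u$ form a cokernel. This cokernel sits in the transfer/norm sequence for $H^1(\,\cdot\,;\Z)$ of the double cover $X\to\overline X$, twisted by $\ell$. More precisely, it is controlled by $H^1(\overline X;\Z)/\operatorname{tr}H^1(X;\Z)$ together with the $\pi_0$ contribution $\{\pm1\}$. The constant $-1$ only flips the sign of $I$, and I expect it is absorbed as the sign ambiguity. Note that $u=i$ gives $u\cdot\tau^\ast u=-1$, so constants are fine.

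The main obstacle is to show that the topological class $[\overline g]\in H^1(\overline X;\Z)$ lies in the image of the norm map. Equivalently, we need the relevant group
\[
H^1(\overline X;\Z)\big/\operatorname{tr}\bigl(H^1(X;\Z)\bigr)
\]
modulo the appropriate invariance constraint to vanish. The approach is as follows. Since $X$ and $\overline X$ are $\mathbb{F}_2$-homology $S^1\times S^2$'s (\Cref{lem: double cover of Z2 homology handle}), $H^1(\,\cdot\,;\Z)\cong\Z$, generated by the primitive classes $\phi_n$ and $\phi_{n-1}$ with $p^\ast\phi_{n-1}=2\phi_n$. The transfer is then computed from
\[
\operatorname{tr}\circ p^\ast=2 .
\]
This gives $\operatorname{tr}(\phi_n)=\phi_{n-1}$, so the transfer is surjective on $H^1$.

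The remaining obstruction is a $\Z_2$-valued class, an element of $H^2$-type twisted cohomology. The possible isomorphism classes of real lifts of $\s$ form a torsor over the $2$-torsion of $H^2(\overline X;\ell)$, or dually of $H^2(X;\Z)$. I would identify this torsor via the Bockstein sequence for $0\to\Z\to\Z\to\mathbb F_2\to0$ and then apply \Cref{lem: no 2-torsion}. That lemma says $H^2(X;\Z)$ has no $2$-torsion, and this kills the obstruction. Putting the steps together, any two real structures on $\s$ are conjugate by a gauge transformation $u$, which gives the uniqueness.
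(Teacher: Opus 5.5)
Your argument is correct in substance, and it follows a different and more elementary route than the paper.

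\textbf{How the paper argues.} The paper invokes Nakamura's classification to say that real lifts of $\s$ form a torsor over $K_\Z=\ker\bigl(p^\ast\colon H^2(Y^{2^{n-1}};\ell_{n-1})\to H^2(Y^{2^n};\Z)\bigr)$. It identifies $K_\Z\cong\operatorname{coker}(\operatorname{tr}\colon H^1(Y^{2^n};\Z)\to H^1(Y^{2^{n-1}};\Z))$ from the integral transfer sequence. It then kills $K_\Z$ indirectly: $2K_\Z=0$, $K_\Z$ injects into the mod-$2$ analogue $K_2$ by \Cref{lem: no 2-torsion}, and $K_2=0$ because $\alpha^2=0$.

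\textbf{How you argue.} You derive the same group from first principles. Two real structures on $\s$ differ by a $\tau$-invariant $g\colon X\to S^1$. They are isomorphic if and only if $g=u\cdot\tau^\ast u$ for some gauge transformation $u$. The image of this norm map contains the identity component of $C^\infty(\overline X,S^1)$, and on $\pi_0$ the norm map is the transfer on $H^1(\,\cdot\,;\Z)$. Your computation $\operatorname{tr}(\phi_n)=\phi_{n-1}$ follows from $\operatorname{tr}\circ p^\ast=2$, $p^\ast\phi_{n-1}=2\phi_n$, and torsion-freeness of $H^1(\overline X;\Z)\cong\Z$. It shows the cokernel vanishes outright.

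\textbf{Comparison.} Your route bypasses Nakamura's classification, \Cref{lem: no 2-torsion}, and the mod-$2$ transfer argument. Combined with the paper's own identification $K_\Z\cong\operatorname{coker}(\operatorname{tr})$, it would shorten the paper's proof to one line. The paper's formulation fits the torsor bookkeeping over $H^2(\,\cdot\,;\ell)$ that is used again in \Cref{rem: spinc structures in cyclic real excision}. Yours is self-contained and makes the vanishing transparent. You should state explicitly why $\pi_0$ of the norm map is the transfer, for instance by evaluating on loops in $\overline X$ and using the homology transfer.

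\textbf{The proof ends at the transfer computation.} Your final paragraph posits a \emph{remaining} $\Z_2$-valued obstruction and kills it with \Cref{lem: no 2-torsion}. That paragraph is superfluous, and as a standalone argument it does not work. The absence of $2$-torsion in $H^2(X;\Z)$ only shows that $\ker p^\ast$ coincides with the $2$-torsion subgroup of $H^2(\overline X;\ell)$. It does not show that this subgroup is zero; for that you need either the surjectivity of the transfer you already proved, or the paper's $\alpha^2=0$ argument.

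Similarly, there is no separate $\pi_0$ contribution $\{\pm1\}$ and no additional \emph{invariance constraint}. Constants lie in the identity component of $C^\infty(\overline X,S^1)$, as your choice $u=i$ already shows. So the cokernel of the norm map is exactly $H^1(\overline X;\Z)/\operatorname{tr}H^1(X;\Z)$, and you should delete the last paragraph rather than rely on it.
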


\begin{proof}
By \Cref{rem: real spinc and spinc-minus}, real $\mathrm{spin}^c$ structures correspond to $\mathrm{spin}^{c-}$ structures on the quotient with the prescribed double cover. The classification argument in \cite[Proof of Proposition~2.3(2)]{Nakamura:2015}, which applies equally in dimension three, shows that their isomorphism classes, if nonempty, form a torsor over $H^2(Y^{2^{n-1}};\ell_{n-1})$. Under this action, changing a structure by a class $a$ changes its underlying $\mathrm{spin}^c$ structure by $p^\ast a$. Consequently, the isomorphism classes with underlying structure $\s$, if nonempty, form a torsor over
\[
K_\Z \coloneqq \ker(H^2(Y^{2^{n-1}};\ell_{n-1})
\overset{p^\ast}{\longrightarrow}
H^2(Y^{2^n};\Z)).
\]
The integral transfer exact sequence gives
\[
K_\Z\cong\operatorname{coker}(H^1(Y^{2^n};\Z)
\overset{\operatorname{tr}}{\longrightarrow}
H^1(Y^{2^{n-1}};\Z)).
\]
Since $\operatorname{tr}\circ p^\ast=2\id$, we have $2K_\Z=0$.

Consider the mod-$2$ analogue
\[
K_2 \coloneqq \ker(H^2(Y^{2^{n-1}};\mathbb{F}_2)
\overset{p^\ast}{\longrightarrow}
H^2(Y^{2^n};\mathbb{F}_2)).
\]
Since $\ell_{n-1}$ reduces modulo $2$ to the constant local system $\mathbb{F}_2$, coefficient reduction induces a map $r_2\colon K_\Z\to K_2$. We claim that this map is injective. Suppose that $x\in K_\Z$ satisfies $r_2(x)=0$. By the coefficient exact sequence, there exists $y\in H^2(Y^{2^{n-1}};\ell_{n-1})$ such that $x=2y$. Thus
\[
0=p^\ast x=2p^\ast y.
\]
By \Cref{lem: no 2-torsion}, the group $H^2(Y^{2^n};\Z)$ has no nonzero element of order $2$, so $p^\ast y=0$. Hence $y\in K_\Z$, and therefore $x=2y=0$. This proves the claim. Consequently, it suffices to show that $K_2=0$.

By iterating \Cref{lem: double cover of Z2 homology handle}, both $Y^{2^{n-1}}$ and $Y^{2^n}$ are $\mathbb{F}_2$-homology $S^1\times S^2$'s. The nonzero class $\alpha\in H^1(Y^{2^{n-1}};\mathbb{F}_2)$ classifying $p$ is the reduction modulo $2$ of the primitive integral class determining the infinite cyclic cover $\wt Y\to Y^{2^{n-1}}$. In particular, $\alpha$ admits a $\Z_4$-coefficient lift, so $\alpha^2=0$ by \Cref{lem: square zero lemma}.

The mod-$2$ transfer exact sequence contains
\[
H^1(Y^{2^{n-1}};\mathbb{F}_2)
\overset{\cup\alpha}{\longrightarrow}
H^2(Y^{2^{n-1}};\mathbb{F}_2)
\overset{p^\ast}{\longrightarrow}
H^2(Y^{2^n};\mathbb{F}_2).
\]
Since $\alpha$ generates $H^1(Y^{2^{n-1}};\mathbb{F}_2)$ and $\alpha^2=0$, the first map is zero. Exactness therefore implies that $p^\ast$ is injective, so $K_2=0$. Thus $K_\Z=0$, proving the lemma.
\end{proof}

\begin{lem}\label{lem: adding difference to real spin c}
Let $W$ be a compact oriented smooth manifold equipped with a free orientation-preserving smooth involution $f$. Given a real $\mathrm{spin}^c$ structure $\s$ on $W$ with respect to $f$ and a class $\alpha\in H^2(W;\Z)$, there exists a real $\mathrm{spin}^c$ structure $\s_\alpha$ on $W$ with respect to $f$ such that
\[
c_1(\s_\alpha)=c_1(\s)+2(\alpha-f^\ast\alpha).
\]
\end{lem}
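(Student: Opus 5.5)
Given a real $\mathrm{spin}^c$ structure $(\s,I)$ with spinor bundle $S$, the plan is to twist by a complex line bundle that carries its own real structure. Let $L\to W$ be a complex line bundle with $c_1(L)=\alpha$. Consider
\[
\mathcal L \coloneqq L\otimes_{\C}\overline{f^\ast L},
\]
where $\overline{\,\cdot\,}$ denotes the complex conjugate bundle. Then
\[
c_1(\mathcal L)=\alpha-f^\ast\alpha .
\]
Set $S_\alpha\coloneqq S\otimes_{\C}\mathcal L$. This is the spinor bundle of a $\mathrm{spin}^c$ structure with determinant line $\det S\otimes\mathcal L^{2}$. Hence
\[
c_1(\s_\alpha)=c_1(\s)+2(\alpha-f^\ast\alpha),
\]
as required.

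The real structure on $\mathcal L$ comes from $L$ itself. Fix a Hermitian metric on $L$. The tautological map $f^\ast L\to L$ covering $f$ is complex linear. Define an anti-complex linear bundle map $J\colon\mathcal L\to\mathcal L$ covering $f$ by
\[
J(u\otimes\overline{v})=v\otimes\overline{u},
\]
where $u\in L_x$ and $v\in(f^\ast L)_x=L_{f(x)}$. The image lies in $L_{f(x)}\otimes\overline{L_{x}}=L_{f(x)}\otimes\overline{(f^\ast L)_{f(x)}}$, using $f^2=\id$, so $J$ does cover $f$. Moreover $J$ is conjugate linear, isometric, and satisfies $J^2=\id$ because the swap squares to the identity.

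Then set $I_\alpha\coloneqq I\otimes J$ on $S_\alpha$. This is well defined because a tensor product over $\C$ of two anti-linear maps is anti-linear: $(\lambda\Phi)\otimes w\mapsto\bar\lambda\,I\Phi\otimes Jw$, and the same holds for $\Phi\otimes\lambda w$. It covers $f$, squares to $\id$, and preserves the splitting $S^\pm$ in dimension four. It also intertwines Clifford multiplication, since Clifford multiplication acts only on the $S$ factor, where $I$ already intertwines $\rho(v)$ with $\rho(df(v))$.

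The main point to check is that $J$ is well defined as a bundle map over $W$ covering $f$, i.e.\ that the fiber identifications $(f^\ast L)_{f(x)}=L_{f^2(x)}=L_x$ are consistent. This holds because $f^2=\id$ exactly, so $(f^\ast f^\ast L)=L$ canonically. Once this is checked, $(\s_\alpha,I_\alpha)$ is the desired real $\mathrm{spin}^c$ structure. Freeness of $f$ is not needed in this argument.
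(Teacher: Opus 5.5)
Your argument is correct, and it takes a different route from the paper.

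\textbf{The paper's route.} The paper works on the quotient $W_0=W/\langle f\rangle$. It uses the transfer exact sequence to lift $\alpha-f^\ast\alpha$ to a twisted class $\widetilde\alpha\in H^2(W_0;\ell)$. It then uses the realization lemma to build a rank-$2$ bundle $E'$ with twisted Euler class $e_\lambda(E)+2\widetilde\alpha$, and observes that $w_2(E')=w_2(E)$. Finally, it invokes Nakamura's existence criterion for $\mathrm{spin}^{c-}$ structures together with the correspondence in \Cref{rem: real spinc and spinc-minus}.

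\textbf{Your route.} You work directly on $W$, tensoring the real spinor bundle with the Real line bundle $(L\otimes\overline{f^\ast L},J)$. This is the geometric realization of the torsor action by such a twisted class. When $f$ is free, the pair $(L\otimes\overline{f^\ast L},J)$ descends to an $\ell$-twisted line bundle on $W_0$. Its class in $H^2(W_0;\ell)$ pulls back to $c_1(L\otimes\overline{f^\ast L})=\alpha-f^\ast\alpha$, so it is a lift $\widetilde\alpha$ of exactly the kind the paper extracts from the transfer sequence.

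\textbf{Comparison.} Your version is explicit. It avoids both the obstruction-theoretic realization lemma and Nakamura's criterion, and, as you note, it needs neither freeness nor compactness. The paper's version stays inside the $\mathrm{spin}^{c-}$/twisted-cohomology framework it uses for existence and uniqueness elsewhere (cf.\ \Cref{lem: unique lift from spin c to real spin c} and \Cref{rem: spinc structures in cyclic real excision}). That keeps the modifying class $\widetilde\alpha\in H^2(W_0;\ell)$ visible, at the cost of relying on freeness to pass to the quotient.
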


\begin{proof}
Let
\[
p\colon W\longrightarrow W_0 \coloneqq W/\langle f\rangle
\]
be the quotient map, let $\lambda\in H^1(W_0;\mathbb{F}_2)$ be the class classifying this double cover, and let $\ell$ be the corresponding rank-one integral local system. The transfer exact sequence associated with $p$ contains
\[
\cdots\longrightarrow H^2(W_0;\ell)
\overset{p^\ast}{\longrightarrow} H^2(W;\Z)
\overset{\operatorname{tr}}{\longrightarrow} H^2(W_0;\Z)
\longrightarrow\cdots.
\]
Since the transfer is invariant under the deck transformation, we have $\operatorname{tr}\circ f^\ast=\operatorname{tr}$. Hence
\[
\operatorname{tr}(\alpha-f^\ast\alpha)
=\operatorname{tr}(\alpha)-\operatorname{tr}(f^\ast\alpha)
=0.
\]
By exactness, there exists a class $\widetilde\alpha\in H^2(W_0;\ell)$ such that
\[
p^\ast\widetilde\alpha=\alpha-f^\ast\alpha.
\]

Under the correspondence in \Cref{rem: real spinc and spinc-minus}, the real $\mathrm{spin}^c$ structure $\s$ descends to a $\mathrm{spin}^{c-}$ structure on a pair $(W_0,E)$, where $E\to W_0$ is a rank-$2$ real vector bundle satisfying $w_1(E)=\lambda$. The prescribed double cover identifies the orientation local system of $E$ with $\ell$. Let $c \coloneqq e_\lambda(E)\in H^2(W_0;\ell)$ be its twisted Euler class. Its pullback is the first Chern class of the underlying $\mathrm{spin}^c$ structure on $W$, so
\[
p^\ast c=c_1(\s).
\]

Set $c' \coloneqq c+2\widetilde\alpha$. By \Cref{lem: realization lemma}, there exists a rank-$2$ real vector bundle $E'\to W_0$, with its orientation local system identified with $\ell$, such that
\[
w_1(E')=\lambda,
\qquad
e_\lambda(E')=c'.
\]
Since $c'-c=2\widetilde\alpha$, the classes $c'$ and $c$ have the same reduction modulo $2$. The twisted Euler class reduces modulo $2$ to the second Stiefel--Whitney class, so
\[
w_2(E')=w_2(E).
\]
The existence of the original $\mathrm{spin}^{c-}$ structure therefore gives
\[
w_2(TW_0)=w_2(E)+w_1(E)^2=w_2(E')+w_1(E')^2.
\]
Thus, by \cite[Proposition~3.4]{NakamuraPinMinus}, the pair $(W_0,E')$ admits a $\mathrm{spin}^{c-}$ structure. Using the prescribed identification of its associated double cover with $W$, let $\s_\alpha$ be the corresponding real $\mathrm{spin}^c$ structure on $W$. Its first Chern class satisfies
\[
\begin{aligned}
c_1(\s_\alpha)
&=p^\ast c'\\
&=p^\ast c+2p^\ast\widetilde\alpha\\
&=c_1(\s)+2(\alpha-f^\ast\alpha),
\end{aligned}
\]
as required.
\end{proof}

\begin{lem}\label{lem: homology handle real spin}
Let $(Y,\phi)$ be a distinguished homology handle and let $n>0$ be an integer. Denote by
\[
p_n\colon Y^{2^n}\longrightarrow Y^{2^{n-1}}
\]
the double covering and by $\tau_n$ its nontrivial deck transformation. Then $Y^{2^n}$ admits a real spin structure with respect to $\tau_n$, unique up to isomorphism and sign. Moreover,
\[
H^\ast(Y^{2^n};\Q)^{-\tau_n^\ast}=0.
\]
\end{lem}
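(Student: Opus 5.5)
The proof has three parts: existence, uniqueness up to isomorphism and sign, and the vanishing of the anti-invariant rational cohomology. We take them in that order.

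\emph{Existence.} Every closed oriented $3$-manifold is spin, so $Y^{2^{n-1}}$ is spin. Let $\phi_{n-1}\in H^1(Y^{2^{n-1}};\Z)$ be the primitive class determining the infinite cyclic cover $\wt Y\to Y^{2^{n-1}}$; it exists by iterating \Cref{lem: double cover of Z2 homology handle}. Its reduction modulo $2$ classifies $p_n$. The cut-and-reglue construction in the proof of \Cref{lem: spin double cover real spin} never uses dimension four. We therefore apply it verbatim: cut $Y^{2^{n-1}}$ along a transverse preimage surface $\Sigma=f^{-1}(z)$ of a map $f$ representing $\phi_{n-1}$, and glue two copies of the restricted $\mathrm{Spin}(3)$-bundle with a twist by $-1$. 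This gives a spin structure on $Y^{2^n}$ together with a lift $\widetilde\tau$ of $\tau_n$ satisfying $\widetilde\tau^2=-1$.

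\emph{Uniqueness.} Let $(\mathfrak t,\widetilde\tau)$ and $(\mathfrak t',\widetilde\tau')$ be two real spin structures. First we show $\mathfrak t=\mathfrak t'$. Spin structures on $Y^{2^n}$ form a torsor over $H^1(Y^{2^n};\mathbb{F}_2)\cong\mathbb{F}_2$, by \Cref{lem: double cover of Z2 homology handle}. A spin structure that admits a real lift must be $\tau_n$-invariant, and both structures are. So if $\mathfrak t\neq\mathfrak t'$, the two candidates are the two spin structures on $Y^{2^n}$, and we must rule out that both admit odd lifts. The mod-$2$ transfer sequence in the proof of \Cref{lem: double cover of Z2 homology handle} shows $p_n^\ast=0$ on $H^1(\,\cdot\,;\mathbb{F}_2)$. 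Hence the generator $\alpha_n$ of $H^1(Y^{2^n};\mathbb{F}_2)$ is not pulled back from the quotient. Now compare the two odd lifts. An odd lift of $\mathfrak t$ descends to a $\mathrm{Pin}$-type structure on the quotient. Twisting $\mathfrak t$ by $\alpha_n$ would have to correspond to twisting by a class on $Y^{2^{n-1}}$ pulling back to $\alpha_n$, and no such class exists.

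This step is the main obstacle, and the twisting argument above is what I would try first. An alternative route is to pass to real $\mathrm{spin}^c$ structures. Both spin structures give the same underlying $\mathrm{spin}^c$ structure, namely the one with $c_1=0$, since $H^2(Y^{2^n};\Z)$ has no $2$-torsion by \Cref{lem: no 2-torsion}. Then \Cref{lem: unique lift from spin c to real spin c} shows that the induced real $\mathrm{spin}^c$ structures $(\s,j\widetilde\tau)$ and $(\s,j\widetilde\tau')$ are isomorphic. Conversely, given $\mathfrak t=\mathfrak t'$, two lifts $\widetilde\tau,\widetilde\tau'$ of $\tau_n$ to the same spin bundle differ by a gauge transformation valued in $\{\pm1\}$, because $Y^{2^n}$ is connected. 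Thus $\widetilde\tau'=\pm\widetilde\tau$, which is exactly uniqueness up to sign. To finish, I would pin down that the $\mathrm{spin}^c$ isomorphism can be taken to commute with $j$ up to sign, so that it descends to an isomorphism of the real spin structures.

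\emph{Vanishing.} By the eigenspace decomposition used in \Cref{cor: f-homology action},
\[
H^\ast(Y^{2^n};\Q)^{-\tau_n^\ast}\cong H^\ast(Y^{2^{n-1}};\ell_{n-1}\otimes_\Z\Q).
\]
Poincar\'e duality and the Euler characteristic reduce the computation to $H_1$. Consider the Wang sequence of $\wt Y\to Y^{2^n}$ with rational coefficients. The group $H_1(Y^{2^n};\Q)$ is
\[
\Q\oplus H_1(\wt Y;\Q)/(t^{2^n}-1).
\]
We have $t^{2^n}-1=(t-1)\prod_{r\le n}\Phi_{2^r}$, and \Cref{lem: boundary has no t-1 primary part} kills every one of these primary parts. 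Hence the quotient vanishes and $b_1(Y^{2^n})=1$; by duality $b_2(Y^{2^n})=1$ as well. The same holds for $Y^{2^{n-1}}$. So $p_n^\ast$ is an isomorphism on rational cohomology in every degree (degree $1$ because $p_n^\ast\phi_{n-1}=2\phi_n$, and the rest by duality). Therefore the $-1$-eigenspace vanishes.
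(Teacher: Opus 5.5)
Your proof is correct. The uniqueness step is the paper's argument, while existence and the vanishing statement follow different routes.

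\emph{Existence.} The paper notes that $\alpha_n$ is the reduction of $\phi_{n-1}$, so it has a $\Z_4$-lift and $\alpha_n^2=0$, and then cites \cite[Lemma~2.13]{miyazawa2025satellite}. You instead run the cut-and-reglue construction of \Cref{lem: spin double cover real spin} in dimension three. That construction indeed never uses $\dim=4$, so this is more self-contained. The price is that it needs an integral classifying class (a two-sided dual hypersurface) rather than just a square-zero one, but that class is available here.

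\emph{Uniqueness.} Your twisting argument is what the paper does via the same citation, and you rightly flag the descent claim as the key point. To make it airtight, say why the difference descends. Suppose $(\mathfrak t,\widetilde\tau)$ and $(\mathfrak t',\widetilde\tau')$ are both real. The $\{\pm1\}$-torsor of isomorphisms $P_{\mathfrak t}\to P_{\mathfrak t'}$ covering the identity on frames carries the map $\sigma\mapsto\widetilde\tau'\sigma\widetilde\tau^{-1}$ covering $\tau_n$. This squares to the identity precisely because both lifts are odd and $-1$ is central. Hence the torsor descends to $Y^{2^{n-1}}$, and its class lies in $\operatorname{Im}p_n^\ast=0$. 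Your observation that lifts on a fixed spin bundle differ by a constant sign then finishes the argument.

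With that in place, drop the $\mathrm{spin}^c$ detour. As written it is incomplete: both spin structures on $Y^{2^n}$ induce the same $\mathrm{spin}^c$ structure. So \Cref{lem: unique lift from spin c to real spin c} alone cannot distinguish them, and the step you postpone (compatibility with $j$) carries all the content.

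\emph{Vanishing.} The paper gets $b_1(Y^{2^m})=1$ from $1\le b_1\le\dim_{\mathbb{F}_2}H^1(Y^{2^m};\mathbb{F}_2)=1$. It then uses $\operatorname{tr}\circ p_n^\ast=2\,\mathrm{id}$ to see that $p_n^\ast$ is an isomorphism. Your Wang-sequence argument via \Cref{lem: boundary has no t-1 primary part} is valid but heavier, since that lemma rests on the surgery description of $Y$ and Gauss's lemma. Your opening reduction to twisted $H_1$ is superfluous once you show $p_n^\ast$ is an isomorphism, because that already forces $\tau_n^\ast=\mathrm{id}$.
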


\begin{proof}
Set $Y_m \coloneqq Y^{2^m}$ for $m\geq 0$. By iterating \Cref{lem: double cover of Z2 homology handle}, each $Y_m$ is an $\mathbb{F}_2$-homology $S^1\times S^2$. Moreover, the infinite cyclic cover of $Y_m$ induced from that of $Y$ determines a primitive class $\phi_m\in H^1(Y_m;\Z)$, so $b_1(Y_m)\geq 1$. Since $\dim_{\mathbb{F}_2}H^1(Y_m;\mathbb{F}_2)=1$, it follows that $b_1(Y_m)=1$. Thus $Y_m$ is also a rational homology $S^1\times S^2$.

Let $\alpha_n\in H^1(Y_{n-1};\mathbb{F}_2)$ be the class classifying the double covering $p_n$. Since $\alpha_n$ is the reduction modulo $2$ of $\phi_{n-1}$, it admits a $\Z_4$-coefficient lift. Hence $\alpha_n^2=0$ by \Cref{lem: square zero lemma}. Since $\tau_n$ is free, \cite[Lemma~2.13]{miyazawa2025satellite} implies that $Y_n$ admits a real spin structure with respect to $\tau_n$.

We next prove uniqueness. By \cite[Lemma~2.13]{miyazawa2025satellite}, any two real spin structures differ by tensoring with a real line bundle on $Y_n$ equipped with an order-two lift of $\tau_n$, together with a choice of sign. Such an equivariant line bundle descends to a real line bundle on $Y_{n-1}$. On the other hand, \Cref{lem: double cover of Z2 homology handle} implies that
\[
p_n^\ast\colon H^1(Y_{n-1};\mathbb{F}_2)
\longrightarrow H^1(Y_n;\mathbb{F}_2)
\]
is zero. Therefore every real line bundle on $Y_{n-1}$ pulls back to a trivial line bundle on $Y_n$. Thus all real spin structures on $Y_n$ have isomorphic underlying spin structures, and the remaining ambiguity is the choice of sign.

Finally, the rational pullback map
\[
p_n^\ast\colon H^\ast(Y_{n-1};\Q)\longrightarrow H^\ast(Y_n;\Q)
\]
is injective because $\operatorname{tr}\circ p_n^\ast=2\id$. Since both $Y_{n-1}$ and $Y_n$ are rational homology $S^1\times S^2$'s, the source and target have the same dimension in each degree. Hence $p_n^\ast$ is an isomorphism. Since $p_n\circ\tau_n=p_n$, every class in its image is fixed by $\tau_n^\ast$. Thus $\tau_n^\ast=\id$, and consequently
\[
H^\ast(Y_n;\Q)^{-\tau_n^\ast}=0,
\]
as required.
\end{proof}

\begin{lem}\label{thm: Q-Kawauchi implies real spin c}
Let $(W,\phi_W)$ be a smooth $\wt{H}_{\Q}$-cobordism from a distinguished homology handle $(Y,\phi)$ to another distinguished homology handle $(Y',\phi')$. Then, for every sufficiently large integer $n>0$, there exists a real $\mathrm{spin}^c$ structure on $W^{2^n}$ with respect to the nontrivial deck transformation of the double covering $W^{2^n}\to W^{2^{n-1}}$. Moreover, this structure can be chosen so that its restrictions to $Y^{2^n}$ and $(Y')^{2^n}$ are induced by the real spin structures on these boundary components, which are unique up to isomorphism and sign.
\end{lem}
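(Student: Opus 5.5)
We follow the outline in the introduction, working on the quotient $W_0 \coloneqq W^{2^{n-1}}$ with the double cover $p\colon W^{2^n}\to W_0$ classified by $\lambda\in H^1(W_0;\mathbb{F}_2)$. The plan has four steps: existence via \Cref{prop: relative existence spinc-minus}, showing $c_1$ is torsion for large $n$, killing its odd-order part, and identifying the boundary restrictions by uniqueness.

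\emph{Step 1: existence.} The class $\lambda$ is the mod $2$ reduction of the primitive integral class $\phi_{W,n-1}$ determined by $\wt W\to W_0$. In particular $\lambda$ lifts to $H^1(W_0;\Z_4)$. The second part of \Cref{prop: relative existence spinc-minus} therefore gives a real $\mathrm{spin}^c$ structure $\s$ on $W^{2^n}$ with respect to $\tau_n$, for every $n$.

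\emph{Step 2: $c_1(\s)$ is torsion.} By \Cref{rem: real spinc and spinc-minus}, $c_1(\s)=p^\ast e$ for a twisted Euler class $e\in H^2(W_0;\ell_{n-1})$. Hence $\tau_n^\ast c_1(\s)=-c_1(\s)$: the anti-complex real structure forces the first Chern class to be anti-invariant, which is consistent with $p^\ast$ landing in the $(-1)$-eigenspace rationally. By \Cref{cor: f-homology action} with $F=\Q$, for $n$ sufficiently large we have $H^2(W_0;\ell_{n-1}\otimes\Q)=0$. Equivalently, $\tau_n^\ast$ acts trivially on $H^2(W^{2^n};\Q)$, so the anti-invariant class $c_1(\s)\otimes\Q$ vanishes and $c_1(\s)$ is torsion.

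\emph{Step 3: removing the odd-order part.} Write $c_1(\s)=u+v$ with $u$ of odd order and $v$ $2$-primary. Since $u$ is anti-invariant and $2$ is invertible on the odd torsion, choose odd-torsion $\alpha$ with $4\alpha=-u$. Then
\[
2(\alpha-\tau_n^\ast\alpha)=4\alpha=-u,
\]
using $\tau_n^\ast\alpha=-\alpha$; we arrange this by taking $\alpha$ inside the anti-invariant odd-torsion part, which is $\tau_n^\ast$-stable. Applying \Cref{lem: adding difference to real spin c} yields a real $\mathrm{spin}^c$ structure $\s'$ with $c_1(\s')=v$, a $2$-primary torsion class.

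\emph{Step 4: boundary identification.} Restrict $\s'$ to $Y^{2^n}$. The restriction $c_1(\s'|_{Y^{2^n}})=v|_{Y^{2^n}}$ is $2$-primary torsion in $H^2(Y^{2^n};\Z)$, hence zero by \Cref{lem: no 2-torsion}. The underlying $\mathrm{spin}^c$ structure is thus one with trivial $c_1$. Since $H^2(Y^{2^n};\Z)$ has no $2$-torsion, $\mathrm{spin}^c$ structures are determined by $c_1$, so it agrees with the one induced by the real spin structure of \Cref{lem: homology handle real spin}. By \Cref{lem: unique lift from spin c to real spin c}, the real structures then coincide up to isomorphism, and in particular up to the sign ambiguity. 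The same argument applies at $(Y')^{2^n}$.

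The main obstacle is Step 2, together with the anti-invariance bookkeeping in Step 3. If $c_1(\s)$ turns out not to be exactly anti-invariant integrally, we instead use only that $c_1(\s)\in\operatorname{im}p^\ast$ from $H^2(W_0;\ell_{n-1})$, which is torsion for large $n$. For the odd part we then work directly on $W_0$, modifying $e$ by $2\widetilde\alpha$ with $\widetilde\alpha$ odd torsion in $H^2(W_0;\ell_{n-1})$, exactly as in the proof of \Cref{lem: adding difference to real spin c}.
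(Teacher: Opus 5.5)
Your proposal is correct and follows the paper's proof step for step. You get existence from \Cref{prop: relative existence spinc-minus} via the $\Z_4$-lift, torsionness of $c_1(\s)$ from its anti-invariance together with \Cref{cor: f-homology action}, removal of the odd-order part via \Cref{lem: adding difference to real spin c}, and the boundary identification from \Cref{lem: no 2-torsion} and \Cref{lem: unique lift from spin c to real spin c}; your choice $4\alpha=-u$ gives the same $\alpha$ as the paper's $\alpha=\alpha_0/2$, and it is automatically anti-invariant because quarters are unique in odd-order torsion, while your fallback paragraph is unnecessary since $c_1(\s)=p^\ast e$ with $e$ twisted is already integrally anti-invariant.
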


\begin{proof}
Fix a sufficiently large integer $n>0$, and let $\tau_n$ denote the nontrivial deck transformation of $W^{2^n}\to W^{2^{n-1}}$. The class in $H^1(W^{2^{n-1}};\mathbb{F}_2)$ classifying this double cover is the reduction modulo $2$ of the primitive integral class determined by the infinite cyclic cover $\wt W\to W^{2^{n-1}}$. In particular, it admits a lift to $H^1(W^{2^{n-1}};\Z_4)$. By \Cref{prop: relative existence spinc-minus}, the manifold $W^{2^n}$ therefore admits a real $\mathrm{spin}^c$ structure $\s$ with respect to $\tau_n$. We have
\[
-c_1(\s)=c_1(\bar\s)=c_1(\tau_n^\ast\s)=\tau_n^\ast c_1(\s).
\]
Since $\tau_n^\ast$ acts trivially on $H^\ast(W^{2^n};\Q)$ by \Cref{cor: f-homology action}, the image of $c_1(\s)$ in rational cohomology vanishes. Thus $c_1(\s)$ is torsion.

Consider the primary decomposition
\[
H^2(W^{2^n};\Z)_{\mathrm{tor}}\cong T_2\oplus T_{\mathrm{odd}},
\]
where $T_2$ and $T_{\mathrm{odd}}$ are the $2$-primary and odd-order torsion subgroups, respectively. These groups are finite because $W^{2^n}$ is compact. The automorphism $\tau_n^\ast$ preserves this decomposition. Write
\[
c_1(\s)=\alpha_2+\alpha_{\mathrm{odd}},
\qquad
\alpha_2\in T_2,
\qquad
\alpha_{\mathrm{odd}}\in T_{\mathrm{odd}}.
\]
The anti-invariance of $c_1(\s)$ implies that $\tau_n^\ast\alpha_{\mathrm{odd}}=-\alpha_{\mathrm{odd}}$. Since multiplication by $2$ is an automorphism of $T_{\mathrm{odd}}$, we have
\[
T_{\mathrm{odd}}^{-\tau_n^\ast}
=
\operatorname{Im}(1-\tau_n^\ast\colon T_{\mathrm{odd}}\longrightarrow T_{\mathrm{odd}}).
\]
Hence there exists $\alpha_0\in T_{\mathrm{odd}}$ such that
\[
-\alpha_{\mathrm{odd}}=\alpha_0-\tau_n^\ast\alpha_0.
\]
Choose $\alpha\in T_{\mathrm{odd}}$ such that $2\alpha=\alpha_0$. Then
\[
c_1(\s)+2(\alpha-\tau_n^\ast\alpha)
=
c_1(\s)-\alpha_{\mathrm{odd}}
=
\alpha_2.
\]
By \Cref{lem: adding difference to real spin c}, we may therefore replace $\s$ with a real $\mathrm{spin}^c$ structure whose first Chern class has order a power of $2$.

By \Cref{lem: no 2-torsion}, the group $H^2(Y^{2^n};\Z)$ has no nonzero element of $2$-power order. Consequently,
\[
c_1(\s\vert_{Y^{2^n}})=0.
\]
Moreover, the absence of $2$-torsion implies that a $\mathrm{spin}^c$ structure on $Y^{2^n}$ is determined by its first Chern class. Thus the underlying $\mathrm{spin}^c$ structure of $\s\vert_{Y^{2^n}}$ is isomorphic to that induced by the real spin structure supplied by \Cref{lem: homology handle real spin}, whose first Chern class is zero. By \Cref{lem: unique lift from spin c to real spin c}, these structures are also isomorphic as real $\mathrm{spin}^c$ structures. The same argument applies to $(Y')^{2^n}$, completing the proof.
\end{proof}

\begin{rem}
An $\wt{H}_{\Q}$-cobordism need not admit a spin finite cyclic cover associated with its distinguished cohomology class. For an explicit example, consider
\[
Z \coloneqq (S^2\times S^2)/\langle\iota\rangle,
\qquad
\iota(x,y)=(-x,-y).
\]
This is a closed oriented smooth rational homology $4$-sphere. The diagonal in $S^2\times S^2$ descends to an embedded $\mathbb{RP}^2\subset Z$ with self-intersection $1$ modulo $2$, so $Z$ is nonspin.

Set
\[
W \coloneqq Z\#(S^1\times S^2\times I),
\]
where the connected sum is taken in the interior, and let $\phi_W\in H^1(W;\Z)$ be the generator induced by the $S^1$-factor. Then $(W,\phi_W)$ is a distinguished rational homology cobordism and hence an $\wt{H}_{\Q}$-cobordism by \Cref{lem: homology cobordism implies Kawauchi}. For every integer $k>0$, its $k$-fold cyclic cover associated with $\phi_W$ is
\[
W^k\cong Z^{\# k}\#(S^1\times S^2\times I).
\]
Each such cover contains a punctured copy of $Z$ and is therefore nonspin. Thus the real $\mathrm{spin}^c$ structures constructed in this subsection cannot, in general, be replaced by real spin structures.
\end{rem}

\section{Real Seiberg--Witten theory for distinguished homology handles}\label{sec: real SWF for handles}

We recall the real Seiberg--Witten Floer homotopy type and apply it to the $2$-power cyclic covers of distinguished homology handles. We distinguish the $\Z_4$-equivariant theory in the real spin case from the $\Z_2$-equivariant theory in the real $\mathrm{spin}^c$ case.

\subsection{Defining the real Floer homotopy type}

We briefly recall the real Seiberg--Witten Floer homotopy type that will be used below; see \cite{KMT21,miyazawa2025satellite} for more details. Let $Y$ be a closed connected oriented smooth $3$-manifold equipped with an orientation-preserving smooth involution $\tau$, and let $(\s,I)$ be a real $\mathrm{spin}^c$ structure on $(Y,\tau)$ with spinor bundle $S$. We assume that
\[
H^1(Y;\R)^{-\tau^\ast}=0.
\]

Fix a $\tau$-invariant Riemannian metric $g$ and an $I$-invariant reference $\mathrm{spin}^c$ connection $B_0$. In Coulomb gauge, the formal gradient of the Chern--Simons--Dirac functional has the form $l+c$, where
\[
l=(*d,D_{B_0})
\]
is self-adjoint first order elliptic and $c$ is compact from Sobolev order $k$ to order $k-1$ for sufficiently large $k$. The real structure acts on the Coulomb slice by
\[
\mathcal I(a,\Phi)=(-\tau^\ast a,I\Phi).
\]
The Seiberg--Witten vector field is $\mathcal I$-equivariant, so its flow preserves the fixed-point slice
\[
\mathcal V_R=(i\ker d^\ast)^{-\tau^\ast}\oplus\Gamma(S)^I.
\]
Since $H^1(Y;\R)^{-\tau^\ast}=0$, this slice has no harmonic $1$-form directions, and the usual finite-dimensional approximation can be carried out as in the rational homology sphere case.

We first specify the stable homotopy categories in which we regard the resulting Conley indices. If $(\s,I)$ is induced from a real spin structure, the quaternionic symmetry of the spinor bundle gives a $\Z_4$-action. In this case, we use the universe
\[
\mathcal U_{\Z_4}
= \widetilde{\R}^{\,\infty}\oplus\C^\infty.
\]
If we retain only the real $\mathrm{spin}^c$ symmetry, we obtain a $\Z_2$-action and use
\[
\mathcal U_{\Z_2}
=
\R^\infty\oplus\widetilde{\R}^{\,\infty}.
\]
Thus the two cases considered below are
\[
(G,H)=(\Z_4,\Z_2)
\qquad\text{and}\qquad
(G,H)=(\Z_2,\Z_2).
\]

We briefly recall the corresponding stable category $\mathfrak C_G$. An object of $\mathfrak C_G$ is represented by a triple $(X,m,n)$, where $X$ is a pointed finite $G$-CW complex, $m\in\Z$, and $n\in\Q$. We require that $X^H$ be $G$-homotopy equivalent to a representation sphere and that $G$ act freely on $X\smallsetminus X^H$. For $G=\Z_4$, stabilization is taken with respect to $\widetilde{\R}$ and $\C$; for $G=\Z_2$, it is taken with respect to the corresponding real representations.

A morphism from $(X,m,n)$ to $(X',m',n')$ is represented by a pointed $G$-equivariant map after sufficiently many suspensions by finite-dimensional subrepresentations of the chosen universe. More precisely, when $G=\Z_4$, it is represented by a map
\[
\Sigma^V\Sigma^{m\widetilde{\R}+n\C}X
\longrightarrow
\Sigma^V\Sigma^{m'\widetilde{\R}+n'\C}X'
\]
for sufficiently large finite-dimensional subrepresentation $V\subset\mathcal U_G$, modulo further suspension and pointed $G$-equivariant homotopy. We denote these two stable categories by $\mathfrak C_{\Z_4}$ and $\mathfrak C_{\Z_2}$.\footnote{Sometime we will allow suspensions by rational powers of $\C$; these are formal, and we only define stable maps between them if rational powers of $\C$ on its domain and codomain differ by integers.}

For real numbers $\lambda<0<\mu$, let $\mathcal V^\mu_\lambda(g)\subset\mathcal V_R$ be the direct sum of the eigenspaces of $l\vert_{\mathcal V_R}$ whose eigenvalues lie in $(\lambda,\mu]$. Projecting $l+c$ to $\mathcal V^\mu_\lambda(g)$ and applying the usual cut-off gives a finite-dimensional equivariant flow. For sufficiently large $\mu$ and $-\lambda$, the relevant invariant set is contained in a compact equivariant isolating neighborhood $N\subset\mathcal V^\mu_\lambda(g)$.

An equivariant index pair is a pair $L\subset N$ such that $N\smallsetminus L$ contains the isolated invariant set in its interior, $L$ is positively invariant relative to $N$, and every positive trajectory leaving $N$ passes through $L$. The corresponding equivariant Conley index is
\[
I^\mu_\lambda(Y,\tau,\s,I,g) \coloneqq N/L.
\]
Its equivariant homotopy type is independent of the choice of index pair.

Let $\mathcal V^0_\lambda(g)$ denote the sum of the eigenspaces of $l\vert_{\mathcal V_R}$ with eigenvalues in $(\lambda,0]$. We define the metric-dependent real Seiberg--Witten Floer homotopy type by
\[
SWF_R(Y,\tau,\s,I,g)
 \coloneqq 
\Sigma^{-\mathcal V^0_\lambda(g)}
I^\mu_\lambda(Y,\tau,\s,I,g).
\]
Here the desuspension is taken in the stable category determined by the universe above. This desuspension compensates for changes in $\lambda$ and $\mu$, so the resulting stable homotopy type is independent of the finite-dimensional approximation once $g$ is fixed.

If $g_t$ is a path of $\tau$-invariant metrics, continuation identifies the corresponding Conley indices up to the representation suspension determined by the spectral flow of the real linearized Seiberg--Witten operators. With the assumption $H^1(Y; \R)^{-\tau^*}=0$, the corresponding spectral flow can be written as just half of the spectral flow of the Dirac operator with respect to a 1-parameter family of $\tau$-invariant metrics. 
Incorporating the corresponding real Atiyah--Patodi--Singer correction as a formal grading gives a metric-independent object
\[
SWF_R(Y,\tau,\s,I)
\]
in $\mathfrak C_{\Z_4}$ in the real spin case and in $\mathfrak C_{\Z_2}$ in the real $\mathrm{spin}^c$ case. See \cite{KMT21, miyazawa2025satellite} for the precise conventions. 

Let $(Y,\phi)$ be a distinguished homology handle and let $n>0$ be an integer. By \Cref{lem: homology handle real spin}, $Y^{2^n}$ admits a real spin structure with respect to the deck transformation $\tau_n$ of $Y^{2^n}\to Y^{2^{n-1}}$, unique up to isomorphism and sign. We denote the induced real $\mathrm{spin}^c$ structure by $(\s_{Y,\phi}^n,I_{Y,\phi}^n)$, where $I_{Y,\phi}^n$ is the real involution on the spinor bundle associated with $\s_{Y,\phi}^n$.

\begin{defn}
We define the finite $\Z_4$-spectrum
\[
SWF_R^n(Y,\phi)
 \coloneqq 
SWF_R(Y^{2^n},\tau_n,\s_{Y,\phi}^n,I_{Y,\phi}^n)
\in\mathfrak C_{\Z_4}.
\]
Restricting the action to $\Z_2\subset\Z_4$ gives a finite $\Z_2$-spectrum, which we denote by
\[
SWF_{R,c}^n(Y,\phi)\in\mathfrak C_{\Z_2}.
\]
\end{defn}

\begin{rem}
The spectra $SWF_R^n(Y,\phi)$ and $SWF_{R,c}^n(Y,\phi)$ are independent of the choice of sign of $\phi$. Indeed, replacing $\phi$ by $-\phi$ does not change the corresponding $2^n$-fold cyclic cover. If $t$ denotes the generator of its deck transformation group determined by $\phi$, then replacing $\phi$ by $-\phi$ replaces $t$ by $t^{-1}$. The deck involution of $Y^{2^n}\to Y^{2^{n-1}}$ is given by $t^{2^{n-1}}$, and $t^{-2^{n-1}}=t^{2^{n-1}}$. Thus the involution $\tau_n$ is unchanged. Since the real spin structure with respect to $\tau_n$ is unique up to sign, the resulting real Seiberg--Witten Floer spectra are canonically identified. In particular,
\[
SWF_R^n(Y,\phi)=SWF_R^n(Y,-\phi),
\qquad
SWF_{R,c}^n(Y,\phi)=SWF_{R,c}^n(Y,-\phi).
\]
\end{rem}

Our invariants lie in the groups $\mathcal{LE}_{\Z_4}$ and $\mathcal{LE}_{\Z_2}$, which we now define. A morphism
\[
f\colon (X,m,n)\longrightarrow (X',m',n')
\]
in $\mathfrak C_G$ is called a \emph{local map} if, after sufficiently many suspensions by finite-dimensional subrepresentations of the chosen universe, it has a representative of the form $\widetilde f\colon V^+\wedge X\to W^+\wedge X'$ such that the induced map on $H$-fixed point sets
\[
\widetilde f^{\,H}\colon (V^H)^+\wedge X^H
\longrightarrow
(W^H)^+\wedge (X')^H
\]
is a $G/H$-equivariant homotopy equivalence. This condition is independent of the choice of stabilized representative of $f$.

Two objects $(X,m,n)$ and $(X',m',n')$ of $\mathfrak C_G$ are said to be \emph{locally equivalent} if there exist local maps
\[
(X,m,n)\longrightarrow (X',m',n')
\qquad\text{ and }\qquad
(X',m',n')\longrightarrow (X,m,n).
\]
We denote the local equivalence class of $(X,m,n)$ by $[(X,m,n)]_{\operatorname{loc}}$.

Let $\mathcal{LE}_G$ denote the set of local equivalence classes. The smash product gives $\mathcal{LE}_G$ the structure of an abelian group:
\[
[(X,m,n)]_{\operatorname{loc}}
+
[(X',m',n')]_{\operatorname{loc}}
 \coloneqq 
[(X\wedge X',m+m',n+n')]_{\operatorname{loc}}.
\]
The identity element is $[(S^0,0,0)]_{\operatorname{loc}}$, and the inverse is given by the Spanier--Whitehead dual together with the grading change $(-m,-n)$. For $G=H=\Z_2$, the analogous definitions use $\R$ and $\widetilde{\R}$ as the stabilizing representations. Restricting the action to the subgroup $\Z_2\subset\Z_4$ gives a natural homomorphism
\[
\mathcal{LE}_{\Z_4}\longrightarrow\mathcal{LE}_{\Z_2}.
\]

\begin{defn}
We may regard representatives of local equivalence classes in $\mathcal{LE}_{\Z_4}$ as finite $\Z_4$-spectra $X$ such that $X^{\Z_2}$ is equivalent to the sphere spectrum up to suspension. We call such $X$ \emph{SWF-like $\Z_4$-spectra}. Restricting the action to $\Z_2\subset\Z_4$ gives spectra that we call \emph{SWF-like $\Z_2$-spectra}.
\end{defn}

\begin{rem}
For any distinguished homology handle $(Y,\phi)$ and any integer $n>0$, we have
\[
[SWF_R^n(Y,\phi)]_{\operatorname{loc}}\in\mathcal{LE}_{\Z_4};
\qquad
[SWF_{R,c}^n(Y,\phi)]_{\operatorname{loc}}\in\mathcal{LE}_{\Z_2}.
\]
We will use these classes extensively in \Cref{sec: additive invariants}.
\end{rem}

\subsection{Real Bauer--Furuta invariants}

We next recall the real Bauer--Furuta invariant for a cobordism. We describe only the part of the construction used in this paper; see \cite{KMT21,miyazawa2025satellite} for details. Let $(W,\tau,\s,I)$ be a compact oriented real $\mathrm{spin}^c$ cobordism from $(Y_0,\tau_0,\s_0,I_0)$ to $(Y_1,\tau_1,\s_1,I_1)$. Thus $\partial W=-Y_0\sqcup Y_1$, the involution $\tau$ preserves the orientation and each boundary component, and $(\s,I)$ restricts to the prescribed real $\mathrm{spin}^c$ structures on the boundary. We assume that
\[
\tau_i^\ast=\id\text{ on }H^1(Y_i;\R)\quad(i=0,1)
\qquad\text{ and }\qquad
\tau^\ast=\id\text{ on }H^1(W;\R).
\]

Choose a $\tau$-invariant Riemannian metric $g$ on $W$ and an $I$-invariant reference $\mathrm{spin}^c$ connection $A_0$, both of product form near the boundary. We use the double Coulomb slice
\[
(i\Omega^1(W)_{\mathrm{CC}})^{-\tau^\ast}
=
\{a\in i\Omega^1(W)\mid d^\ast a=0,\ d_\partial^\ast(a\vert_{\partial W})=0,\ \tau^\ast a=-a\},
\]
where $d_\partial^\ast$ is the $L^2$-formal adjoint of $d$ on $\partial W$. \footnote{The usual componentwise normalization
$\int_C\mathbf t_C(*a)=0$ for each connected component
$C\subset\partial W$ is automatic on the anti-invariant slice, since
$\tau$ preserves the orientation of each boundary component.}

After taking suitable Sobolev completions, the real Seiberg--Witten map together with boundary restriction gives a map
\begin{equation}\label{eq: real monopole map with boundary}
\mathcal F_W\colon (i\Omega^1(W)_{\mathrm{CC}})^{-\tau^\ast}\oplus\Gamma(S^+)^I \longrightarrow (i\Omega^+(W))^{-\tau^\ast}\oplus\Gamma(S^-)^I\oplus\mathcal V_R(\partial W).
\end{equation}
Here $S^\pm$ denote the positive and negative spinor bundles, and $\mathcal V_R(\partial W)$ denotes the real Coulomb slice on the boundary introduced above. More explicitly, the first two components of \eqref{eq: real monopole map with boundary} are given by
\[
(a,\Phi)\longmapsto
(d^+a-q(\Phi) + F_{A_0}^+,D_{A_0+a}\Phi),
\]
and the last component is obtained by restricting $(a,\Phi)$ to the boundary. As usual, after choosing the reference connection $A_0$, we write $\mathcal F_W=L_W+C_W$, where $L_W$ is the linearization and $C_W$ is compact on bounded subsets.

We briefly explain how the relative invariant is obtained from this map. Choose sufficiently large spectral cutoffs on $Y_0$ and $Y_1$, and let $(N_i,L_i)$ be index pairs for the corresponding finite-dimensional approximations of the real Seiberg--Witten flows. Choose a sufficiently large finite-dimensional subspace of the target of \eqref{eq: real monopole map with boundary}, and take its inverse image under the linear part $L_W$. The projection of $\mathcal F_W$ then gives, after one-point compactification, an equivariant map of the form
\begin{equation}\label{eq: finite dimensional real BF}
\psi\colon
U^+\wedge(N_0/L_0)
\longrightarrow
(U')^+\wedge(N_1/L_1).
\end{equation}
The properness needed to extend the finite-dimensional approximation to the one-point compactifications follows from the compactness statement used in the construction of the relative Bauer--Furuta invariant. We do not review this analytic part here.

The difference between the representations $U$ and $U'$ is determined by the index of the linearized Seiberg--Witten operator. The spinor part contributes the real APS index of the Dirac operator, while the $1$-form part contributes the positive part of the real intersection form. We denote these by $\ind^R_{\mathrm{APS}}(D_{W,g})$ and $H_R^+(W)$, respectively. Thus, after desuspending the boundary Conley indices as in the definition of the metric-dependent real Floer homotopy types, \eqref{eq: finite dimensional real BF} determines a stable map
\begin{equation}\label{eq: metric dependent real BF}
BF^{R,c}_{W,\s,I,g}\colon \Sigma^{\ind^R_{\mathrm{APS}}(D_{W,g})}SWF_R(Y_0,\tau_0,\s_0,I_0,g_0) \longrightarrow \Sigma^{H_R^+(W)}SWF_R(Y_1,\tau_1,\s_1,I_1,g_1).
\end{equation}
Here $g_i=g\vert_{Y_i}$, and $H_R^+(W)$ can equivalently be described as the cokernel contribution of the real $d^+$-operator. Its dimension is the positive rank of the anti-invariant part of the intersection form.

As in the three-dimensional construction, \eqref{eq: metric dependent real BF} is initially metric-dependent. If the metric varies through $\tau$-invariant metrics, the changes in the boundary Floer spectra are measured by the corresponding spectral flows, while the change in the APS index is given by the same spectral-flow terms with the opposite sign. Consequently, after incorporating the correction terms used in the definition of the metric-independent real Floer homotopy types, these changes cancel. We therefore obtain a metric-independent stable map, which we write as
\begin{equation}\label{eq: metric independent real BF}
BF^{R,c}_{W,\s,I}\colon \Sigma^{\ind^R_t(D_W)}SWF_R(Y_0,\tau_0,\s_0,I_0) \longrightarrow \Sigma^{H_R^+(W)}SWF_R(Y_1,\tau_1,\s_1,I_1).
\end{equation}
Here $\ind^R_t(D_W)$ denotes the metric-independent topological real Dirac index obtained from the APS index together with the boundary correction terms.  If $(\s,I)$ is induced from a real spin structure, the additional quaternionic symmetry of the spinor bundles makes the construction $\Z_4$-equivariant. 

% We suppress the representation suspensions in \eqref{eq: metric independent real BF} whenever they are absorbed into the gradings of the Floer spectra.

The main property of the real Bauer--Furuta invariant needed below concerns its restriction to the fixed-point set. Suppose that $\tau$ is free, let $\overline W=W/\tau$, and let $\ell$ be the rank-one $\Z$-local system on $\overline W$ associated with the double covering $W\to\overline W$. Anti-invariant differential forms on $W$ are naturally identified with differential forms on $\overline W$ with coefficients in the associated flat real line bundle $\ell\otimes_{\Z}\R$. In particular, the anti-invariant part of the de Rham complex on $W$ is identified with the $\ell\otimes_{\Z}\R$-twisted de Rham complex on $\overline W$.

Let $H\subset G$ be the subgroup used in the definition of a local map. Thus $G=H=\Z_2$ in the real $\mathrm{spin}^c$ case, while $G=\Z_4$ and $H=\langle j^2\rangle\cong\Z_2$ in the real spin case. The nontrivial element of $H$ acts by $-1$ on the spinor summands and trivially on the $1$-form summands. Hence all spinor coordinates vanish on the $H$-fixed-point set, and the restriction of the real Bauer--Furuta map to this set is determined entirely by the linear $1$-form part of the monopole map. This gives the following consequence of the relative real Bauer--Furuta construction.

\begin{prop}\label{prop: real BF gives local map}
Let $(W,\tau,\s,I)$ be as above and suppose that $\tau$ is free. Set $\overline W=W/\tau$, and let $\ell$ be the rank-one $\Z$-local system associated with the double covering $W\to\overline W$. Suppose that
\begin{equation}\label{eq: local BF hypotheses}
H^1(\overline W;\ell\otimes_{\Z}\R)=0
\qquad\text{ and }\qquad
b^+_\ell(\overline W)=0 
\end{equation}
where $b^+_\ell(\overline W)$ denotes the positive rank of the intersection form with coefficients in $\ell\otimes_{\Z}\R$. Then the stable map 
\[
BF^{R,c}_{W,\s,I}\colon \Sigma^{\ind^R_t(D_W)}SWF_R(Y_0,\tau_0,\s_0,I_0) \longrightarrow SWF_R(Y_1,\tau_1,\s_1,I_1)
\]
is a local map. If $(\s,I)$ is induced from a real spin structure, then the $\Z_4$-equivariant map $BF^R_{W,\s,I}$ is also a local map.
\end{prop}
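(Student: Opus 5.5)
The plan is to reduce locality to a statement about the $H$-fixed-point restriction of the finite-dimensional approximation \eqref{eq: finite dimensional real BF}, and to show that this restriction is a linear isomorphism between representation spheres. The nontrivial element of $H$ acts by $-1$ on all spinor coordinates, both on $W$ and on the boundary slices. On $H$-fixed points, $\Gamma(S^\pm)^I$ and the spinor parts of $\mathcal V_R(Y_i)$ therefore drop out. On the reducible locus $\Phi=0$, the map is
\[
a\longmapsto \bigl(d^+a+F^+_{A_0},\ a\vert_{\partial W}\bigr),
\]
which is affine in $a$. After a homotopy scaling the constant term $F^+_{A_0}$ to zero, it becomes the linear operator
\[
L^{\tau}\colon (i\Omega^1(W)_{\mathrm{CC}})^{-\tau^\ast}\longrightarrow (i\Omega^+(W))^{-\tau^\ast}\oplus (i\ker d^\ast_{\partial W})^{-\tau^\ast}.
\]
This scaling is harmless, since the reducible fixed-point set has no compactness issue: the compactness of the full map is not needed on the fixed locus. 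The boundary Conley indices $(N_i/L_i)^H$ are likewise the Conley indices of the linear flow $\ast d$ on the anti-invariant coclosed $1$-forms. Because $H^1(Y_i;\R)^{-\tau_i^\ast}=0$, these are the spheres of the negative eigenspaces, and the desuspension by $\mathcal V^0_\lambda$ normalizes them to $S^0$.

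Next I would identify the fixed-point map with the finite-dimensional approximation of the APS-type boundary value problem for $d^+\oplus d^\ast$ on anti-invariant forms. The projection to the negative spectral part of the boundary restriction is $\Pi^{\le 0}$ on $\mathcal V_R(\partial W)$. By the identification of anti-invariant forms with $\ell\otimes_{\Z}\R$-twisted forms on $\overline W$, its kernel and cokernel are computed by twisted cohomology. Following the standard computation for the ordinary relative Bauer--Furuta invariant, as in Khandhawit and Manolescu, the kernel is a subspace of $H^1(\overline W;\ell\otimes\R)$ modulo boundary contributions, and the cokernel is $H^+_\ell(\overline W)$ together with $H^0$ terms. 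The $H^0$ terms vanish because twisted constants are zero for a nontrivial local system, and the double Coulomb condition removes gauge. The hypotheses \eqref{eq: local BF hypotheses} then make the operator an isomorphism between the source and target after we account for the suspension $H_R^+(W)$, which here is zero. The suspension $\ind^R_t(D_W)$ is purely complex (spinorial), so it does not affect $H$-fixed points.

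A linear isomorphism of finite-dimensional representations induces a homotopy equivalence of one-point compactifications. This equivalence is $G/H$-equivariant because the linear operator commutes with the residual $\Z_4/\Z_2$ action, which acts on forms through $\widetilde{\R}$ compatibly. Hence $\widetilde f^{\,H}$ is a $G/H$-homotopy equivalence, and $BF^{R,c}$ is local. The $\Z_4$ statement in the real spin case follows by the same argument, since $H=\langle j^2\rangle$ acts by $-1$ on spinors.

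The main obstacle is the kernel/cokernel computation. We must check that the boundary terms, namely the images of $H^\ast(Y_i;\R)^{-\tau_i^\ast}$ in the long exact sequence of the pair, vanish. That vanishing needs $\tau_i^\ast=\id$ on $H^1(Y_i;\R)$, which is assumed, and also on $H^0$ and $H^2$ via duality. For $H^1$ this is exactly where the assumption enters.
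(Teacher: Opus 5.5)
Your proposal is correct and follows essentially the same route as the paper: restrict to the $H$-fixed locus, where the spinor coordinates and all nonlinear terms vanish, and identify what remains with the $\ell\otimes_{\Z}\R$-twisted double Coulomb operator on $\overline W$ together with the boundary spectral projections; then the Hodge-theoretic computation (injectivity from $H^1(\overline W;\ell\otimes_{\Z}\R)=0$, cokernel $H^+_\ell(\overline W)$, which is zero since $b^+_\ell(\overline W)=0$) shows the fixed-point restriction is a $G/H$-equivariant homotopy equivalence. Your extra remarks (homotoping away the constant term $F^+_{A_0}$, identifying the fixed-point boundary Conley indices, and noting that the Dirac-index suspension has no $H$-fixed part) make explicit what the paper leaves implicit; one small correction is that in the real $\mathrm{spin}^c$ case this suspension is a multiple of $\widetilde{\R}$ rather than a complex representation, but $\widetilde{\R}$ also has trivial $H$-fixed points, so your argument is unaffected.
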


\begin{proof}
We first consider the real $\mathrm{spin}^c$ case, so that $G=H=\Z_2$. Choose a finite-dimensional approximation $\psi$ as in \eqref{eq: finite dimensional real BF}. On the $H$-fixed-point set, the spinor coordinates vanish. Hence the quadratic term $q(\Phi)$ and the other nonlinear terms vanish, and the restriction $\psi^H$ is obtained from the finite-dimensional approximation of the linear $1$-form part of the monopole map.

Since $\tau$ is free, anti-invariant forms on $W$ are identified with forms on $\overline W$ with coefficients in $\ell\otimes_{\Z}\R$. Thus the relevant fixed-point operator is the twisted double Coulomb operator on $\overline W$, together with the boundary spectral projections determined by the chosen cutoffs. The standard Hodge-theoretic calculation for this boundary problem shows that the assumption $H^1(\overline W;\ell\otimes_{\Z}\R)=0$ makes this operator injective and identifies its cokernel with the space $H^+_\ell(\overline W)$ of twisted self-dual harmonic $2$-forms. In particular, $\dim H^+_\ell(\overline W)=b^+_\ell(\overline W)$.

After cancelling the boundary eigenspaces appearing in the desuspensions defining the two Floer spectra, the restriction of a sufficiently stabilized representative of $BF^{R,c}_{W,\s,I}$ to the $H$-fixed-point sets is therefore stably homotopic to the one-point compactification of
\[
0\longrightarrow H^+_\ell(\overline W).
\]
By \eqref{eq: local BF hypotheses}, the target of this inclusion is zero. Hence the fixed-point restriction is a pointed homotopy equivalence. This is precisely the condition that $BF^{R,c}_{W,\s,I}$ be a local map.

Suppose now that $(\s,I)$ is induced from a real spin structure. The same finite-dimensional approximation is $\Z_4$-equivariant, with $H=\langle j^2\rangle$. Its restriction to the $H$-fixed-point set is the same linear $1$-form map considered above, now retaining the action of $G/H\cong\Z_2$. The preceding one-point compactification is therefore a $G/H$-equivariant homotopy equivalence. Thus $BF^R_{W,\s,I}$ is also a local map.
\end{proof}

We now apply the local-map criterion to sufficiently large cyclic covers of $\wt{H}_{\Q}$-cobordisms. The resulting maps will be used to establish cobordism invariance in the next section.

\begin{thm}\label{thm: cyclic covers give local maps}
Let $(W,\phi_W)$ be an $\wt{H}_{\Q}$-cobordism from $(Y,\phi)$ to $(Y',\phi')$, and let $\tau_n^W$ denote the deck transformation of the double covering $W^{2^n}\to W^{2^{n-1}}$. For all sufficiently large integers $n>0$, every real $\mathrm{spin}^c$ structure $(\s_W^n,I_W^n)$ on $W^{2^n}$ with respect to $\tau_n^W$ extending the prescribed boundary structures induces a $\Z_2$-equivariant local map
\[
BF^{R,c}_{W^{2^n},\s_W^n,I_W^n}\colon SWF_{R,c}^n(Y,\phi)\longrightarrow SWF_{R,c}^n(Y',\phi').
\]
If $(\s_W^n,I_W^n)$ is induced from a real spin structure extending the chosen boundary real spin structures, then the map
\[
BF^R_{W^{2^n},\s_W^n,I_W^n}:SWF^n_R(Y,\phi)\rightarrow SWF^n_R(Y',\phi')
\]
is also a local map.
\end{thm}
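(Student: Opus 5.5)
The plan is to verify the hypotheses of \Cref{prop: real BF gives local map} for the free involution $\tau_n^W$ on $W^{2^n}$ with quotient $\overline W=W^{2^{n-1}}$. We also check that the grading shift $\ind^R_t(D_{W^{2^n}})$ vanishes, so that the resulting local map goes from $SWF^n_{R,c}(Y,\phi)$ to $SWF^n_{R,c}(Y',\phi')$ without suspension. The standing hypotheses for the relative construction, namely that $\tau^\ast=\id$ on $H^1(W^{2^n};\R)$ and on the boundary $H^1$, must also hold. On the boundary these follow from \Cref{lem: homology handle real spin}, which gives $H^\ast(Y^{2^n};\Q)^{-\tau_n^\ast}=0$. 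On $W^{2^n}$ they follow from \Cref{cor: f-homology action} applied with $F=\Q$ (or $F=\R$, using \Cref{lem: kawauchi depends on char}), which holds for all sufficiently large $n$.

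\textbf{Step 1: twisted cohomology.} By \Cref{cor: f-homology action} with $F=\R$, for $n-1$ large we have $H^\ast(W^{2^{n-1}};\ell_{n-1}\otimes_\Z\R)=0$. This single vanishing yields both hypotheses in \eqref{eq: local BF hypotheses}. First, $H^1(\overline W;\ell\otimes\R)=0$ directly. Second, $H^2(\overline W;\ell\otimes\R)=0$. The twisted intersection form is defined on the image of $H^2_c\to H^2$, that is, on $H^2(\overline W,\partial\overline W;\ell\otimes\R)\to H^2(\overline W;\ell\otimes\R)$, so it is identically zero and $b^+_\ell(\overline W)=0$. Equivalently, the anti-invariant part of $H^\ast(W^{2^n};\R)$ vanishes.

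\textbf{Step 2: the index term.} We need $\ind^R_t(D_{W^{2^n}})=0$, or at least that it vanishes in the normalized grading. In the real setting this index is half of the complex topological Dirac index on $W^{2^n}$. The complex index is $\frac{c_1(\s)^2-\sigma(W^{2^n})}{8}$, corrected by the boundary terms built into the metric-independent gradings.

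For $c_1^2$: by the argument in \Cref{thm: Q-Kawauchi implies real spin c}, $c_1(\s_W^n)$ is anti-invariant under $\tau_n^\ast$. The involution acts trivially on $H^\ast(W^{2^n};\Q)$, so $c_1$ is rationally zero, hence torsion, and $c_1^2=0$ rationally. This uses that the boundary restriction of $c_1$ is torsion, so that the square is well defined. For the signature: \Cref{lem: vanishing of intersection form} gives $\sigma(W^{2^n})=0$ for all $n$.

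The boundary correction terms are exactly the ones absorbed into $SWF^n_{R,c}(Y,\phi)$ and $SWF^n_{R,c}(Y',\phi')$, so the normalized shift is zero. The main obstacle is this bookkeeping: matching the rational grading conventions of \cite{KMT21,miyazawa2025satellite}. In particular one must check that the real index equals half the complex one, since the spinor space $\Gamma(S)^I$ is a real form, and that the boundary $\eta$- and spectral-flow terms cancel.

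\textbf{Step 3: conclusion.} With these inputs, \Cref{prop: real BF gives local map} gives that $BF^{R,c}_{W^{2^n},\s_W^n,I_W^n}$ is a $\Z_2$-equivariant local map between the stated spectra. Here the boundary structures are identified with $(\s^n_{Y,\phi},I^n_{Y,\phi})$ by hypothesis, with the uniqueness from \Cref{lem: unique lift from spin c to real spin c} and \Cref{lem: homology handle real spin}. In the real spin case the same finite-dimensional approximation is $\Z_4$-equivariant, and the second clause of \Cref{prop: real BF gives local map} yields the local map $BF^R$. Any sign ambiguity of the boundary real spin structures is harmless, since the spectra are identified canonically.
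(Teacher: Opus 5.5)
Your proposal is correct and follows the paper's proof essentially step for step. The vanishing of $H^\ast(W^{2^{n-1}};\ell_{n-1}\otimes_{\Z}\R)$ from \Cref{cor: f-homology action} gives both hypotheses of \Cref{prop: real BF gives local map}, with the boundary condition supplied by \Cref{lem: homology handle real spin}, and the normalized index shift vanishes because $c_1(\s_W^n)$ is torsion and $\sigma(W^{2^n})=0$ by \Cref{lem: vanishing of intersection form}; since that index is zero, the exact factor relating the real and complex Dirac indices that you flag as the main obstacle does not affect the conclusion.
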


\begin{proof}
Let $\ell_{n-1}$ be the rank-one $\Z$-local system on $W^{2^{n-1}}$ associated with the double covering $W^{2^n}\to W^{2^{n-1}}$. By \Cref{cor: f-homology action}, for all sufficiently large integers $n>0$, we have
\[
H^\ast(W^{2^{n-1}};\ell_{n-1}\otimes_{\Z}\R)=0.
\]
Identifying these groups with the anti-invariant cohomology of $W^{2^n}$, we obtain
\[
H^1(W^{2^n};\R)^{-(\tau_n^W)^\ast}=0
\qquad\text{ and }\qquad
H_R^+(W^{2^n})=0.
\]
The corresponding vanishing of anti-invariant first cohomology on the boundary follows from \Cref{lem: homology handle real spin}. Thus the hypotheses of \Cref{prop: real BF gives local map} are satisfied.

It remains to show that the normalized Dirac-index suspension vanishes. Since $(\s_W^n,I_W^n)$ is a real $\mathrm{spin}^c$ structure, we have
\[
(\tau_n^W)^\ast c_1(\s_W^n)=-c_1(\s_W^n).
\]
On the other hand, \Cref{cor: f-homology action} implies that $(\tau_n^W)^\ast$ acts trivially on $H^2(W^{2^n};\Q)$ for sufficiently large $n$. Hence $c_1(\s_W^n)$ is torsion, so $c_1(\s_W^n)^2=0$. Moreover, \Cref{lem: vanishing of intersection form} gives $\sigma(W^{2^n})=0$. By the APS index formula, after incorporating the boundary correction terms used to normalize the Floer spectra, the real dimension of the Dirac-index grading is
\[
\frac{c_1(\s_W^n)^2-\sigma(W^{2^n})}{8}=0.
\]
The Dirac-index grading is a multiple of the sign representation $\widetilde{\R}$ in the real $\mathrm{spin}^c$ case and of the real rotation representation $\C$ in the real spin case. Thus the normalized index suspension vanishes in both cases.

Applying \Cref{prop: real BF gives local map}, with both the Dirac-index suspension and the positive-form suspension equal to zero, gives the stated $\Z_2$-equivariant local map. When the structure is induced from a real spin structure compatible with the chosen boundary structures, the same proposition gives the stated $\Z_4$-equivariant local map.
\end{proof}

The following proposition is the precise form of real excision used
in this paper.  It is a local-equivalence version of
\cite[Theorem~5.1 and Section~5.2]{miyazawa2025satellite}, in which
the boundary components are allowed to be homology
$S^1\times S^2$'s rather than homology spheres. 
Since the proofs are identical to the original one and just based on computations of (local coefficient) cohomology, we omit them. 

\begin{prop}
\label{prop: cyclic real excision}
Let $(Y,\phi)$ and $(Y',\phi')$ be distinguished homology handles.
Choose oriented simple closed curves $\gamma\subset Y$ and
$\gamma'\subset Y'$ satisfying
$\phi([\gamma])=\phi'([\gamma'])=1$, and let $N_\gamma$ and
$N_{\gamma'}$ be their tubular neighborhoods.  Using the gluing
identification appearing in the definition of the circle sum, set
\[
Z\coloneqq
(Y\smallsetminus\operatorname{int}N_\gamma)
\cup
(Y'\smallsetminus\operatorname{int}N_{\gamma'}),
\qquad
Z'\coloneqq N_\gamma\cup N_{\gamma'}.
\]
Let $\psi$ and $\psi'$ be the induced distinguished classes.  Thus
$(Z,\psi)$ represents the circle sum of $(Y,\phi)$ and
$(Y',\phi')$, while
$(Z',\psi')\cong(S^1\times S^2,\xi)$.

Let
\[
X\colon Y\sqcup Y'\longrightarrow Z\sqcup Z'
\]
be the standard excision cobordism induced by the given splicings of $Y$ and $Y'$ (and thus also of $Z$ and $Z'$).  It carries a primitive
class $\phi_X\in H^1(X;\Z)$ restricting to the distinguished classes
on all four boundary components.  For $n\geq0$, let
$X_n\coloneqq X^{2^n}$ be the cyclic cover determined by $\phi_X$,
and, for $n>0$, let $
p_n\colon X_n\longrightarrow X_{n-1}$
be the resulting double covering with deck transformation $\tau_n$.
Denote the restrictions of $\tau_n$ to the boundary components by
the same symbol.

Then the following hold for every $n>0$.

\begin{enumerate}
\item
The four boundary components of $X_n$ are the corresponding cyclic
covers of $Y$, $Y'$, $Z$, and $Z'$.  Each is an
$\mathbb F_2$-homology $S^1\times S^2$ and admits a real spin
structure with respect to $\tau_n$, unique up to isomorphism and
sign.

\item
The manifold $X_n$ is spin, and $(X_n,\tau_n)$ admits a real spin
structure $
(\mathfrak t_n,\widetilde\tau_n)$,
unique up to isomorphism and sign.  Its restriction to each boundary
component agrees, after choosing the sign, with the real spin
structure in (1).  

\item
We have $
H^\ast(X_n;\R)^{-\tau_n^\ast}=0, 
\sigma(X_n)=0$.
Consequently, the real Bauer--Furuta maps associated with $X_n$ and
its reverse,
\[
\begin{aligned}
BF^R_{X_n}\colon\;&
SWF_R^n(Y,\phi)\wedge SWF_R^n(Y',\phi')
   \longrightarrow
SWF_R^n(Z,\psi)\wedge SWF_R^n(Z',\psi'),\\
BF^R_{-X_n}\colon\;&
SWF_R^n(Z,\psi)\wedge SWF_R^n(Z',\psi')
   \longrightarrow
SWF_R^n(Y,\phi)\wedge SWF_R^n(Y',\phi'),
\end{aligned}
\]
are $\Z_4$-equivariant local maps.  In particular,
\[
SWF_R^n(Y,\phi)\wedge SWF_R^n(Y',\phi')
\quad\text{and}\quad
SWF_R^n(Z,\psi)\wedge SWF_R^n(Z',\psi')
\]
are locally equivalent.  Restricting the symmetry from $\Z_4$ to
$\Z_2$ gives the analogous local equivalence for
$SWF_{R,c}^n$.
\end{enumerate}

More generally, any real $\spinc$ structure on $(X_n,\tau_n)$ gives
$\Z_2$-equivariant local maps in both directions between the real
$\spinc$ Floer spectra associated with its boundary restrictions. \qed
\end{prop}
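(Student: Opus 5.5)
The plan is to carry out the three assertions in order, following the structure of the proof of \cite[Theorem~5.1]{miyazawa2025satellite}. The real input is a cohomology computation for the excision cobordism $X$ and its $2$-power covers. We first describe $X$ explicitly. It is obtained from $(Y\sqcup Y')\times I$ by attaching a round $1$-handle, or equivalently from the product of the common torus-complement pieces with $I$, glued to $(N_\gamma\sqcup N_{\gamma'})\times I$ along $T^2\times I$. The class $\phi_X$ restricts to a primitive class whose $S^1$-direction is the core circle of the round handle. As in \Cref{lem: circle sum is well def over any field}, this gives a relative chain model
\[
C_\ast(\wt X,(\wt Y\sqcup\wt Y')\times I)\simeq[\Z[t,t^{-1}]\xrightarrow{t-1}\Z[t,t^{-1}]]
\]
in degrees $2$ and $1$.

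\emph{Assertion (1).} Each boundary component of $X_n$ is the $2^n$-fold cyclic cover of a distinguished homology handle. The claims then follow directly from iterating \Cref{lem: double cover of Z2 homology handle}, together with \Cref{lem: homology handle real spin}.

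\emph{Assertion (3), cohomological part.} Reducing the chain model above modulo $t^{2^n}-1$ shows that the relative homology of $X_n$ with respect to $Y^{2^n}\sqcup (Y')^{2^n}$ is that of $\bigl[R_n\xrightarrow{t-1}R_n\bigr]$, where $R_n\coloneqq \Z[t]/(t^{2^n}-1)$. Rationally, the only $\tau_n$-anti-invariant eigenvalues of $t$ are roots of unity $\zeta$ with $\zeta^{2^{n-1}}=-1$. On these eigenspaces $t-1$ is invertible, so the anti-invariant relative homology vanishes. The boundary covers have $\tau_n$-trivial rational cohomology by \Cref{lem: homology handle real spin}. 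The long exact sequence of the pair, taken on $-1$-eigenspaces, therefore gives $H^\ast(X_n;\R)^{-\tau_n^\ast}=0$.

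For the signature, the same model shows that $H_2(X_n;\Q)$ is generated by images of boundary classes and the round-handle class. The intersection form therefore vanishes on the image in relative homology, so $\sigma(X_n)=0$. Alternatively, we can apply Novikov additivity to the decomposition of $X$ into product pieces.

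\emph{Assertion (2).} The plan is to show that $X$ itself is spin and then apply \Cref{lem: spin double cover real spin} inductively to $\phi_{X,n-1}$, as in \Cref{lem: Z2-Kawauchi implies real spin}. Spinness of $X$ comes from its construction: it is a union of products $(\text{3-manifold})\times I$ glued along $T^2\times I$, and every oriented $3$-manifold is spin. The only issue is compatibility across $T^2$. We handle this by choosing spin structures on the pieces that restrict to the spin structure on $T^2$ which is bounding in the meridian direction and Lie in the $\phi$-direction, or vice versa. On $Y\smallsetminus N_\gamma$ such spin structures extend over the whole of $Y$. This yields spin structures on $X$ restricting to the boundary spin structures.

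Uniqueness up to sign follows as in \Cref{lem: homology handle real spin}. Any two real spin structures differ by a real line bundle on $X_{n-1}$, and the argument then requires the following vanishing:
\[
p_n^\ast\colon H^1(X_{n-1};\mathbb F_2)\to H^1(X_n;\mathbb F_2)\ \text{is zero}.
\]
To prove this, we first compute $H^1(X_{n-1};\mathbb F_2)\cong\mathbb F_2$, generated by $\phi_{X,n-1}\bmod 2$. This computation uses the mod-$2$ chain model, where $t^{2^{n-1}}-1=(t-1)^{2^{n-1}}$, together with the fact that the boundary handles contribute no further $H^1$. The generator dies on pullback. Since the real spin structure on $X_n$ is unique, its restriction to each boundary component agrees with that of (1) up to sign.

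\emph{Conclusion of (3).} With $H^1$ and $H^+$ anti-invariant parts vanishing and $\sigma=0$, we have $c_1^2=0$. The argument of \Cref{thm: cyclic covers give local maps}, using \Cref{prop: real BF gives local map} for both $X_n$ and $-X_n$, then gives $\Z_4$-equivariant local maps in both directions, and restriction yields the $\Z_2$ statement. The final sentence about an arbitrary real $\spinc$ structure holds because the argument only needs the cohomological vanishing, together with $c_1$ being torsion. The latter follows from anti-invariance of $c_1$ and triviality of $\tau_n^\ast$ on $H^2(X_n;\Q)$.

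The main obstacle I expect is the uniqueness of the real spin structure and its boundary compatibility. This needs the exact mod-$2$ computation of $H^1(X_{n-1};\mathbb F_2)$, including the contributions of the four boundary components. I would attack it first via the Mayer--Vietoris sequence for the decomposition along $T^2\times I$, lifted to covers.
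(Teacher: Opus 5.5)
Your overall strategy is the one the paper has in mind: it omits this proof as a cohomology computation, following \cite{miyazawa2025satellite}, fed into the local-map criterion. However, the model of $X$ on which you base (3), and implicitly (2), describes the wrong manifold.

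Attaching a round $1$-handle to $(Y\sqcup Y')\times I$ along $N_\gamma\times\{1\}\sqcup N_{\gamma'}\times\{1\}$ gives a cobordism $W$ from $Y\sqcup Y'$ to $Z$ alone; it has no $Z'$ end. Your relative model $[\Z[t^{\pm1}]\xrightarrow{t-1}\Z[t^{\pm1}]]$ is the model for this $W$, as in \Cref{lem: circle sum is well def over any field}. The excision cobordism is instead $X\cong W\smallsetminus\nu(c)$, where $c$ is the core circle of the round handle and $\partial\nu(c)\cong Z'$. Equivalently, $X=\bigcup_i Y_i\times I\cup T^2\times U$ with $U$ the saddle surface. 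Hence $X\simeq Y\cup_{T^2}Y'$, glued along $\partial N_\gamma\cong\partial N_{\gamma'}$, and
\[ H_\ast\bigl(X_n,Y^{2^n}\sqcup(Y')^{2^n}\bigr)\cong H_{\ast-1}(T_n), \]
where $T_n$ is the connected torus covering $\partial N_\gamma$. This is $\Z,\Z^2,\Z$ in degrees $1,2,3$, not the homology of $[R_n\xrightarrow{t-1}R_n]$. So your eigenvalue argument computes the wrong group, and your description of $H_2(X_n;\Q)$ in the signature step is likewise based on the wrong manifold.

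The conclusions survive once you use the correct model.

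\emph{Anti-invariant cohomology.} The involution $\tau_n$ acts on $T_n$ by a longitudinal translation, so it acts trivially on $H_\ast(T_n;\Q)$. It acts trivially on the rational homology of the boundary covers by \Cref{lem: homology handle real spin}. Taking $-1$-eigenspaces in the Mayer--Vietoris sequence for $X_n\simeq Y^{2^n}\cup_{T_n}(Y')^{2^n}$ then gives $H^\ast(X_n;\R)^{-\tau_n^\ast}=0$.

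\emph{Signature.} Mayer--Vietoris gives $b_1(X_n)=1$ and $b_2(X_n)=3$. The exact sequence of $(X_n,\partial X_n)$ shows that the images of the four boundary classes span $H_2(X_n;\Q)$, with a single relation coming from $H_3(X_n,\partial X_n;\Q)\cong H^1(X_n;\Q)\cong\Q$. Hence the rational intersection form vanishes identically, and $\sigma(X_n)=0$. Your Novikov-additivity alternative does not apply as stated, because the pieces meet along $T^2\times I$, which has boundary, so Wall's correction term would have to be controlled.

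\emph{Uniqueness.} The same Mayer--Vietoris sequence with $\mathbb F_2$-coefficients gives $H^1(X_{n-1};\mathbb F_2)\cong\mathbb F_2$, spanned by $\phi_{X,n-1}\bmod 2$. So the uniqueness step you flagged as the main obstacle is immediate.

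\emph{Spin structure.} The spin structure on the torus must bound in the meridian direction to extend over $N_\gamma$; the only freedom is in the $\phi$-direction. Both choices arise as restrictions from $Y$ because $\phi(\gamma)=1$, which is what lets you match across $T\cong T'$. You also need the piece $T^2\times U$, whose spin structures are determined by their restriction to $T^2$.
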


\begin{rem}\label{rem: spinc structures in cyclic real excision}
Real $\spinc$ extensions need not be unique.  Let $\ell_{n-1}$ be the
rank-one integral local system on $X_{n-1}$ associated with $p_n$.
After fixing one real $\spinc$ structure
$(\mathfrak s_n^0,I_n^0)$ on $(X_n,\tau_n)$, the isomorphism classes
of real $\spinc$ structures form a torsor over
\[
H^2(X_{n-1};\ell_{n-1}).
\]
Under this action, changing the real $\spinc$ structure by
$a\in H^2(X_{n-1};\ell_{n-1})$ changes its underlying $\spinc$
structure by $p_n^\ast a$.  The boundary restriction map is affine
over
\[
\operatorname{res}\colon
H^2(X_{n-1};\ell_{n-1})
\longrightarrow
H^2(\partial X_{n-1};
          \ell_{n-1}|_{\partial X_{n-1}}).
\]
Consequently, a prescribed tuple of boundary real $\spinc$ structures
extends if and only if its difference from the restriction of
$(\mathfrak s_n^0,I_n^0)$ lies in the image of
$\operatorname{res}$.  When it extends, the set of extensions is a
torsor over $\ker(\operatorname{res})$.  If the underlying $\spinc$
structure on $X_n$ is also fixed, the possible differences are
further restricted to
\[
\ker(\operatorname{res})\cap\ker(p_n^\ast).
\]

For the application in this paper, we take
$(\mathfrak s_n^0,I_n^0)$ to be the real $\spinc$ structure induced
by the real spin structure
$(\mathfrak t_n,\widetilde\tau_n)$.  Its boundary restrictions are
precisely the real $\spinc$ structures induced by the canonical
boundary real spin structures, so the required tuple always extends. 
\end{rem}

Finally, we record a fact that will not be used in this paper but can be potentially useful for developing and computing a parallel theory using real Heegaard Floer homology, which is a Heegaard Floer version of real Seiberg--Witten theory defined in a work of Guth--Manolescu \cite{guth2025real}. This is because, currently, real Heegaard Floer homology is not yet defined for free involutions; see \cite[Theorem 1]{guth2025real} for a precise description of a setting where it is well-defined.

\begin{prop}
    For any distinguished homology handle $(Y, \phi)$ and $n \geq 1$, we have  
    \[
    [SWF_{R}^n(Y, \phi) ]_{\mathrm{loc}} = [SWF_R^{n} (Z, K)]_{\mathrm{loc}}
    \] 
    for an oriented homology 3-sphere $Z$ and oriented knot $K$ in $Z$, where $
SWF_R^{n} (Z, K)$ is a $\Z_4$-equivariant spectrum obtained by applying the $\Z_4$-equivariant real Floer homotopy type \cite{KMT21} to the $2^n$-fold branched cover along $K$ equipped with the unique odd spin structure with respect to $\tau$ and with the $\Z_2$-subgroup of $\Z_{2^n}$.

\end{prop}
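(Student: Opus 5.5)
The plan is to realize $(Y,\phi)$ as zero-surgery on a knot and compare the unbranched cover of the surgered manifold with the branched cover of the knot, via an explicit cobordism to which \Cref{prop: real BF gives local map} applies in both directions. By \Cref{rem: common refinement}, every distinguished homology handle is $Y=Z_0(K)$ for some knot $K$ in an integral homology sphere $Z$, with $\phi$ evaluating to $1$ on the meridian. The trace of the zero-surgery gives a cobordism $V$ from $Z$ to $Y$, namely a single $2$-handle attached along $K$. Capping the meridian direction is awkward, so I would use the standard alternative. Let $D\subset V$ be the cocore disk union the product annulus $K\times I$. This gives an embedded disk $\Sigma$ in $V$ (the core of the $2$-handle pushed in) with $\partial\Sigma=K\subset Z$, and $\phi$ on $V\smallsetminus\Sigma$ restricts to the meridian class. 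Then take the $2^n$-fold cyclic cover of $V$ branched along $\Sigma$. Its boundary is $\Sigma_{2^n}(Z,K)\sqcup Y^{2^n}$, and it carries the deck involution $\tau=t^{2^{n-1}}$, which restricts to the $\Z_2\subset\Z_{2^n}$ action on each end. The involution is free on the $Y^{2^n}$ end and has fixed set the lift of $K$ on the branched end. This matches the mixed free/branched setting of \cite{KMT21} and \cite{miyazawa2025satellite}.

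Concretely, the branched cover $V_n$ is obtained from $\Sigma_{2^n}(Z,K)\times I$ by attaching $2^n$ copies of a $2$-handle permuted by $t$, attached along the lifts of the zero-framed longitude. First I would compute the relevant cohomology. Since $\Delta_K(1)=\pm1$, the rational homology of the branched cover $\Sigma_{2^n}(Z,K)$ vanishes, and \Cref{lem: homology handle real spin} shows that $\tau_n^\ast$ is trivial on $H^\ast(Y^{2^n};\Q)$. Next I would check that $\tau^\ast$ acts trivially on $H^\ast(V_n;\R)$, so that the anti-invariant parts $H^1(V_n;\R)^{-\tau^\ast}$ and $H^+_R(V_n)$ vanish. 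I would get this from the Mayer--Vietoris sequence of the handle decomposition. The attaching circles form a single $t$-orbit, and the $-1$-eigenspace of $\tau$ on the span of the handles is killed because the corresponding longitudes are $\Q$-independent in $H_1(\Sigma_{2^n})=$ torsion. So the relevant classes come from the $+1$ part, which matches $H_1(Y^{2^n};\Q)$. The zero-framing makes the intersection form on these classes vanish, so $\sigma(V_n)=0$.

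Next I would construct a real spin structure on $(V_n,\tau)$ restricting to the unique odd spin structures on the two ends. On the branched end, the required structure is the one from \cite{KMT21}. On the free end, I would use \Cref{lem: homology handle real spin}, which gives the structure up to sign. Existence would follow as in \Cref{lem: spin double cover real spin}, cutting along a lift of the Seifert surface pushed into $V$, as in \cite{miyazawa2025satellite}. The Dirac index term is $(c_1^2-\sigma)/8=0$. An analogue of \Cref{prop: real BF gives local map} for involutions with codimension-two fixed locus, whose fixed-point part is again only the linear form part, then gives a local map $SWF^n_R(Z,K)\to SWF^n_R(Y,\phi)$. Reversing $V_n$ gives a local map in the other direction.

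The main obstacle is this last analytic step: the relative real Bauer--Furuta invariant for a cobordism whose involution is free at one end and has fixed points at the other. For that I would quote the branched-cover constructions of \cite{KMT21} and \cite{miyazawa2025satellite}, noting that the localness argument only uses the spinor-free fixed-point part. A secondary point to verify carefully is that the sign choices of the odd spin structures at the two ends are compatible.
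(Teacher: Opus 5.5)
Your strategy matches the paper's. You write $(Y,\phi)\cong(Z_0(K),\phi_K)$ and take the $2^n$-fold cover $V_n$ of the zero-surgery trace, branched along the core of the $2$-handle extended by $K\times I$. (It is the core, not the cocore as your first sentence says.) You then equip $V_n$ with real spin structures and use the real Bauer--Furuta maps of $V_n$ and $-V_n$ as local maps in both directions. The paper only asserts that ``one can check'' this last step. So the extension of \Cref{prop: real BF gives local map} to an involution with fixed points, and the sign compatibility you flag, are left at the same level of detail there.

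There is, however, a concrete error in your description of $V_n$, and the cohomology argument you build on it fails as written. The core $D^2\times\{0\}$ is the branch locus, so the handle $D^2\times D^2$ lifts to a single $2$-handle via $(x,z)\mapsto(x,z^{2^n})$. It is attached to $\Sigma_{2^n}(Z,K)\times I$ along the fixed knot $\widetilde K$, with the framing induced by a lifted Seifert surface. There are not $2^n$ handles permuted by $t$; Riemann--Hurwitz gives $\chi(V_n)=2^n\cdot 1-(2^n-1)\cdot 1=1$, not $2^n$.

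Your picture would actually break the vanishing you need. With $2^n$ handles attached along rationally null-homologous curves in the rational homology sphere $\Sigma_{2^n}(Z,K)$, $H_2(V_n;\Q)$ would be the regular representation of $\Z_{2^n}$. Its $(-1)$-eigenspace for $\tau=t^{2^{n-1}}$ has dimension $2^{n-1}$, so the claimed vanishing would be false. The phrase ``$\Q$-independent in a torsion group'' cannot repair this.

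With the correct picture the check is immediate:
\begin{itemize}
\item $H_1(V_n;\Q)=0$.
\item $H_2(V_n;\Q)\to H_2(V_n,\Sigma_{2^n}(Z,K)\times I;\Q)\cong\Q$ is a $t$-equivariant isomorphism onto the class of the core. Since $t$ fixes the core pointwise, $\tau^\ast$ acts trivially in degrees at most $2$, and the anti-invariant $H^1$ and $H^+$ vanish.
\item The generator of $H_2(V_n;\Q)$ is a lifted Seifert surface capped by the core. Its self-intersection is the framing $0$, so $\sigma(V_n)=0$.
\end{itemize}
With this correction, the rest of your outline matches the paper's sketch.
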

\begin{proof}
   For any distinguished homology handle $(Y, \phi)$, we can take a pair $  (Z, K)$ of an oriented homology 3-sphere $Z$ and an oriented knot $K \subset Z$ such that its $0$-surgery satisfies $(Z_0(K), \phi_K) \cong (Y, \phi)$, where $\phi_K$ is a generator of $H^1(Z_0(K))$ comes from the orientation of $K$. 
Therefore, we have 
\[
[SWF_{R}^n(Z_0(K), \phi_K)]_{\mathrm{loc}} = [SWF_{R}^n(Y, \phi) ]_{\mathrm{loc}}.
\]
On the other hand, we have the zero-surgery trace cobordism $ X_0(K)$ from $ Z$ to $Z_0(K)$. By considering the branched $2^n$-cover $X_0(K)^{2^n}$ along the 2-handle core of $X_0(K)$, we get a $\mathbb{Z}_{2^n}$-equivariant coborsism from $\Sigma_{2^n}(Z,K)$ to $Z_0(K)^{2^n}$, where $\Sigma_{2^n}(Z,K)$ denotes the $2^n$-fold branched cover along $K$ and $Z_0(K)^{2^n}$ denotes the $2^n$-cover associated to $\phi_K$.
One can check $X_0(K)^{2^n}$ and $-X_0(K)^{2^n}$ equipped with real spin structures give a local equivalence 
\[
[SWF_R^{n} (Z, K)]_{\mathrm{loc}} =[SWF_{R,c}^n(Z_0(K), \phi_K)]_{\mathrm{loc}} . 
\]
\end{proof}

% We finally record how this applies to the cyclic covers considered in this paper. Let $(W,\phi_W)$ be an $\wt{H}_{\Q}$-cobordism from $(Y,\phi)$ to $(Y',\phi')$, and let $\tau_n^W$ be the deck transformation of the double covering $W^{2^n}\to W^{2^{n-1}}$. Denote by $\ell_{n-1}$ the associated rank-one $\Z$-local system on $W^{2^{n-1}}$. For all sufficiently large integers $n>0$, \Cref{cor: f-homology action} gives $H^\ast(W^{2^{n-1}};\ell_{n-1}\otimes_{\Z}\R)=0$. In particular, the vanishing in degree $2$ implies that the twisted intersection form has no positive part. Thus \Cref{prop: real BF gives local map} applies, and every real $\mathrm{spin}^c$ structure $(\s_W^n,I_W^n)$ on $W^{2^n}$ with respect to $\tau_n^W$ extending the prescribed boundary structures gives a local map
% \[
% BF^{R,c}_{W^{2^n},\s_W^n,I_W^n}\colon SWF_{R,c}^n(Y,\phi)\longrightarrow SWF_{R,c}^n(Y',\phi').
% \]
% If $(\s_W^n,I_W^n)$ is induced from a real spin structure, the same construction gives a $\Z_4$-equivariant local map
% \[
% BF^R_{W^{2^n},\s_W^n,I_W^n}\colon SWF_R^n(Y,\phi)\longrightarrow SWF_R^n(Y',\phi').
% \]
% These are the cobordism maps used below.

\section{Additive $\wt{H}_{\Q}$-cobordism invariants from real Seiberg--Witten theory}\label{sec: additive invariants}

In this section, we construct additive invariants of distinguished homology handles from the real Floer spectra of their cyclic covers. We first establish cobordism invariance and then prove additivity under the group operation.

% an oriented homology $S^1\times S^2$ and a positive integer $k$

%\textcolor{red}{Probably explain local equivalences in both $\Z_4$ and $\Z_2$ settings? Also the corresponding local equivalence groups $\mathcal{LE}_{\Z_4}$ and $\mathcal{LE}_{\Z_2}$? For their construction, let's use the following terminology: we say that a finite $\Z_4$-spectrum is \emph{SWF-like} if its $\Z_2$-fixed locus is $\Z_4$-equivariantly stably homotopy equivalent to a representation sphere.}
\subsection{Constructing the invariants $SWF_R^\infty$ and $SWF_{R,c}^\infty$}

We first establish the following cobordism invariance property of the spectra constructed in \Cref{sec: real SWF for handles}.

\begin{lem}\label{lem: local equivalence lemma}
Let $(Y,\phi)$ and $(Y',\phi')$ be distinguished homology handles. If $(Y,\phi)\sim_{\mathbb{F}_2}(Y',\phi')$, then $SWF_R^n(Y,\phi)$ and $SWF_R^n(Y',\phi')$ are locally equivalent for all sufficiently large integers $n>0$. If $(Y,\phi)\sim_{\Q}(Y',\phi')$, then $SWF_{R,c}^n(Y,\phi)$ and $SWF_{R,c}^n(Y',\phi')$ are locally equivalent for all sufficiently large integers $n>0$.
\end{lem}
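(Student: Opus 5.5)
The plan is to take a smooth cobordism $(W,\phi_W)$ realizing the relation, pass to its $2^n$-fold cyclic covers, and use \Cref{thm: cyclic covers give local maps} in both directions. Before that, one must check that the reversed cobordism $\overline{W}$ (orientation reversed, viewed from $(Y',\phi')$ to $(Y,\phi)$) is again an $\wt{H}_F$-cobordism. This holds because the infinite cyclic cover, and hence its $F$-homology, is unchanged by reversing orientation. The class $\phi_W$ still restricts correctly, and \Cref{rem: SWF sign} shows that the sign of $\phi$ is irrelevant.

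For the $\Q$ case, apply \Cref{thm: Q-Kawauchi implies real spin c} to $W$ and to $\overline{W}$. This gives, for all $n\ge N_0$, real $\mathrm{spin}^c$ structures on $W^{2^n}$ and $\overline{W}^{2^n}$ with respect to $\tau_n^W$ whose boundary restrictions are the real $\mathrm{spin}^c$ structures induced by the boundary real spin structures. The sign ambiguity there is harmless: the structures induced by $\pm\widetilde\tau$ give $\pm I$, which are isomorphic via multiplication by $i$ on the spinor bundle. Enlarging $N_0$ so that \Cref{thm: cyclic covers give local maps} also applies yields $\Z_2$-equivariant local maps
\[
SWF_{R,c}^n(Y,\phi)\longrightarrow SWF_{R,c}^n(Y',\phi')
\quad\text{and}\quad
SWF_{R,c}^n(Y',\phi')\longrightarrow SWF_{R,c}^n(Y,\phi).
\]
Hence the two spectra are locally equivalent for all $n\ge N_0$.

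For the $\mathbb F_2$ case, \Cref{lem: F-kawauchi implies Q-kawauchi} shows that $W$ is also an $\wt{H}_\Q$-cobordism, so \Cref{thm: cyclic covers give local maps} applies for large $n$. What remains is a real spin structure on $(W^{2^n},\tau_n^W)$ that restricts on each boundary cover to the chosen boundary real spin structure, up to sign. \Cref{lem: Z2-Kawauchi implies real spin} supplies some real spin structure for large $n$. By the uniqueness part of \Cref{lem: homology handle real spin}, its restriction to each boundary component agrees with the canonical one up to isomorphism and sign. Changing the global sign of $\widetilde\tau$ flips the sign on both ends simultaneously, so it cannot fix a mismatch at only one end. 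However, the $\Z_4$-spectrum defined with $-\widetilde\tau$ is canonically identified with the one for $\widetilde\tau$, by the same identification that makes $SWF_R^n$ well defined (the remark that the structure is unique up to sign). Composing the Bauer--Furuta map with these identifications at each end therefore gives a local map between $SWF_R^n(Y,\phi)$ and $SWF_R^n(Y',\phi')$, and the same argument for $\overline{W}$ gives the reverse map.

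I expect the main obstacle to be this compatibility step: verifying that the boundary real spin structures produced by \Cref{lem: spin double cover real spin} agree with the canonical ones, and that the sign choices are genuinely absorbed into canonical identifications of the $\Z_4$-spectra, so that the resulting maps remain $\Z_4$-equivariant local maps. If the sign identification turns out to be problematic, I would instead use the $\mathbb F_2$ isotropic-splitting argument of \Cref{lem: Z2-Kawauchi implies spin covers}, applied to the quotient, to show that the real spin structures on $W^{2^n}$ form a torsor over a group surjecting onto the boundary ambiguities. Then I would choose the structure so that its restriction matches the canonical one at each end.
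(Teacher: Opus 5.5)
Your proposal is correct and follows the paper's proof. Realize the relation by a smooth cobordism $W$, use \Cref{lem: Z2-Kawauchi implies real spin} (resp.\ \Cref{thm: Q-Kawauchi implies real spin c}) to obtain real spin (resp.\ real $\mathrm{spin}^c$) structures on the high covers, and apply \Cref{thm: cyclic covers give local maps} to $W$ and its reverse. Your extra bookkeeping makes explicit what the paper's two-line proof leaves implicit: orientation reversal preserves the $\wt{H}_F$ condition, \Cref{lem: F-kawauchi implies Q-kawauchi} is needed in the $\mathbb{F}_2$ case, and boundary sign mismatches are absorbed via the sign-independence of $SWF_R^n$ asserted in the paper. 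Your fallback is therefore unnecessary, and it would fail as stated: for $W=Y\times I$ the ambiguity group $H^1(W^{2^{n-1}};\mathbb{F}_2)$ restricts only diagonally to the two ends, so the sign can never be changed at one end alone.
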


\begin{proof}
Choose a cobordism realizing the given relation. For all sufficiently large $n$, \Cref{lem: Z2-Kawauchi implies real spin,thm: Q-Kawauchi implies real spin c} provide the required real spin and real $\mathrm{spin}^c$ structures, respectively. Applying \Cref{thm: cyclic covers give local maps} to the cobordism and its reverse gives local maps in both directions, proving the desired local equivalences.
\end{proof}

Our aim is to construct homomorphisms from $\Omega_{\mathrm{spin}}(S^1\times S^2)$ and $\Omega(S^1\times S^2)$ using the Floer spectra constructed above. We first pass to reduced products of local equivalence groups, which identify sequences that agree for all sufficiently large indices. By \Cref{lem: local equivalence lemma}, this gives well-defined maps on the cobordism groups; we will prove their additivity below.

\begin{defn}
Given a collection $\{A_i\}_{i\in I}$ of abelian groups, we define their \emph{reduced product} to be the quotient group
\[
\wt\prod_{i\in I}A_i \coloneqq \prod_{i\in I}A_i/\bigoplus_{i\in I}A_i,
\]
where $\bigoplus_{i\in I}A_i\subset\prod_{i\in I}A_i$ consists of tuples with only finitely many nonzero entries. If each $A_i$ is torsion-free, then so is $\wt\prod_{i\in I}A_i$.
\end{defn}

\begin{lem}\label{lem: product of local equivalence groups}
The maps
\[
\begin{aligned}
SWF_R^\infty\colon \Omega_{\mathrm{spin}}(S^1\times S^2)&\longrightarrow\wt\prod_{n>0}\mathcal{LE}_{\Z_4};\qquad [(Y,\phi)]\longmapsto[([SWF_R^n(Y,\phi)]_{\operatorname{loc}})_{n>0}],\\
SWF_{R,c}^\infty\colon \Omega(S^1\times S^2)&\longrightarrow\wt\prod_{n>0}\mathcal{LE}_{\Z_2};\qquad [(Y,\phi)]\longmapsto[([SWF_{R,c}^n(Y,\phi)]_{\operatorname{loc}})_{n>0}],
\end{aligned}
\]
are well-defined functions.
\end{lem}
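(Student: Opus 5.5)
To prove that $SWF_R^\infty$ and $SWF_{R,c}^\infty$ are well defined, we have to check two things. First, the assignment must not depend on the representative of the diffeomorphism class of $(Y,\phi)$, or on the auxiliary choices made in defining each $SWF_R^n(Y,\phi)$. Second, it must not depend on the representative of the $\wt{H}_{\mathbb{F}_2}$-cobordism class (respectively the $\wt{H}_{\Q}$-cobordism class). We will handle these in that order and then pass to the reduced product.

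\emph{Step 1: invariance at a fixed level.} For a fixed $n$, the spectrum $SWF_R^n(Y,\phi)$ is built from the real spin structure on $Y^{2^n}$ with respect to $\tau_n$. By \Cref{lem: homology handle real spin}, this structure is unique up to isomorphism and sign. Changing the sign replaces $I$ by $-I$. This is induced by the bundle automorphism $j$ (or equivalently by the constant gauge transformation by $i$), which intertwines the two real structures. It therefore identifies the fixed-point slices $\mathcal V_R$ equivariantly, so it gives an isomorphism in $\mathfrak C_{\Z_4}$, and in $\mathfrak C_{\Z_2}$ after restriction. Metric independence is built into the definition of $SWF_R$.

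Now let $f\colon(Y,\phi)\to(Y',\phi')$ be a diffeomorphism of distinguished homology handles. Since $\phi=f^\ast\phi'$, the map $f$ lifts to the $2^n$-fold covers, commuting with the deck transformations, so it intertwines $\tau_n$ with $\tau_n'$. It pulls back the real spin structure to a real spin structure, which by uniqueness agrees up to sign with the chosen one. Pulling back all the data then identifies the Conley indices. Hence $[SWF_R^n]_{\operatorname{loc}}$, and therefore $[SWF_{R,c}^n]_{\operatorname{loc}}$, depends only on the diffeomorphism class of $(Y,\phi)$.

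\emph{Step 2: cobordism invariance.} Suppose $(Y,\phi)\sim_{\mathbb{F}_2}(Y',\phi')$. By \Cref{lem: local equivalence lemma}, there is an $N$ such that $[SWF_R^n(Y,\phi)]_{\operatorname{loc}}=[SWF_R^n(Y',\phi')]_{\operatorname{loc}}$ for all $n\ge N$. The difference of the two sequences in $\prod_{n>0}\mathcal{LE}_{\Z_4}$ is therefore supported on the finitely many indices $n<N$. This means it lies in $\bigoplus_{n>0}\mathcal{LE}_{\Z_4}$, so the two classes in $\wt\prod_{n>0}\mathcal{LE}_{\Z_4}$ coincide. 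The same argument with $\sim_{\Q}$ and the second half of \Cref{lem: local equivalence lemma} handles $SWF_{R,c}^\infty$. Since $\sim_F$ is an equivalence relation generated by such cobordisms (it is itself the cobordism relation), this proves well-definedness on $\Omega_{\mathrm{spin}}(S^1\times S^2)$ and on $\Omega(S^1\times S^2)$.

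\emph{Main obstacle.} Step 2 is formal once \Cref{lem: local equivalence lemma} is available. The only genuinely delicate point is the sign and isomorphism ambiguity in Step 1, that is, confirming that replacing $\widetilde\tau$ by $-\widetilde\tau$ gives a $\Z_4$-equivariantly equivalent spectrum rather than one that differs by a twist of the action. I would address this by checking explicitly that multiplication by the unit quaternion $j$ commutes with Clifford multiplication and with $j$ itself, and conjugates $I$ to $-I$. If necessary, I would alternatively note that only local equivalence classes matter: the identity map of $Y^{2^n}\times I$, equipped with the two sign choices, gives local maps in both directions by \Cref{prop: real BF gives local map}.
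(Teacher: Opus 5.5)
Your Step 2 is exactly the paper's proof. \Cref{lem: local equivalence lemma} gives agreement of the local classes for all $n\gg0$, so the two sequences differ by an element of $\bigoplus_{n>0}\mathcal{LE}_{\Z_4}$ (respectively $\bigoplus_{n>0}\mathcal{LE}_{\Z_2}$) and coincide in the reduced product. The paper does not reprove level-wise well-definedness inside this lemma. It treats $SWF_R^n(Y,\phi)$ as already well defined, via the remark following its definition, so Step 2 alone suffices for the lemma as the paper frames it.

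Your Step 1, however, argues sign independence incorrectly in the $\Z_4$ case.
\begin{itemize}
\item The quaternionic structure $j$ commutes with $\widetilde\tau$ and with itself, hence with $I=j\widetilde\tau$. It therefore cannot conjugate $I$ to $-I$, contrary to what you assert in your last paragraph.
\item The constant gauge transformation $i$ does conjugate $I$ to $-I$, because $I$ is anti-linear. For the same reason, $i j i^{-1}=-j=j^{-1}$. So $\Phi\mapsto i\Phi$ identifies the $(-I)$-theory with the $I$-theory only after twisting the $\Z_4$-action by the automorphism $j\mapsto j^{-1}$. It is an honest isomorphism in $\mathfrak C_{\Z_2}$, since $i$ commutes with $j^2=-1$. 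This settles $SWF_{R,c}^n$ but not $SWF_R^n$.
\item Your fallback also fails. Lifts of $\tau_n$ to the spin bundle form the discrete set $\{\pm\widetilde\tau\}$, so any real spin structure on $Y^{2^n}\times I$ induces the same lift at both ends. The product cobordism therefore never relates $\widetilde\tau$ to $-\widetilde\tau$ $\Z_4$-equivariantly. \Cref{prop: real BF gives local map} only produces maps between the boundary structures actually induced. Using real $\mathrm{spin}^c$ structures instead just recovers the $\Z_2$ statement.
\end{itemize}

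The diffeomorphism-invariance part of Step 1 inherits the same sign issue. To keep Step 1, you would need either an argument that the $j\mapsto j^{-1}$-twisted spectrum is locally equivalent to the original, or a coherent choice of sign. Otherwise, drop it and rely, as the paper does, on the level-$n$ invariants being given.
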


\begin{proof}
By \Cref{lem: local equivalence lemma}, cobordant representatives determine the same local equivalence classes for all sufficiently large $n$. Their sequences therefore differ in only finitely many entries and define the same class in the reduced product.
\end{proof}

We call $SWF_R^\infty$ and $SWF_{R,c}^\infty$ the \emph{asymptotic real spin} and \emph{asymptotic real $\mathrm{spin}^c$ Seiberg--Witten Floer invariants}, respectively. The next step is to show that $SWF_R^\infty$ and $SWF_{R,c}^\infty$ are group homomorphisms.

\begin{lem}\label{lem: identity element}
Let $(S^1\times S^2,\phi)$ be the distinguished homology handle with $\phi$ a generator of $H^1(S^1\times S^2;\Z)$. Then, for every integer $n>0$, we have
\[
SWF_R^n(S^1\times S^2,\phi)\simeq S^0.
\]
\end{lem}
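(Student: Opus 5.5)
The plan is to compute the spectrum directly from a positive scalar curvature metric. First, identify the covers. For $(S^1\times S^2,\phi)$, the cover $Y^{2^n}$ is again $S^1\times S^2$, with the $S^1$ factor unwrapped $2^n$ times. The deck involution $\tau_n$ is rotation by $\pi$ in the $S^1$ factor, which is free and orientation preserving. Take the product metric of a round $S^2$ with a circle of length $2^n$. It is $\tau_n$-invariant and has positive scalar curvature.

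By \Cref{lem: homology handle real spin} there is a real spin structure, unique up to isomorphism and sign, and $H^1(Y^{2^n};\R)^{-\tau_n^\ast}=0$. In this case the claim is also visible directly: $H^1$ is generated by the pullback of $d\theta$, which $\tau_n$ fixes. The underlying spin structure is the one whose restriction to $S^1$ makes the spinor bundle trivial or twisted in a way compatible with $\widetilde\tau_n^2=-1$. Its $\spinc$ structure is torsion, in fact $c_1=0$, so the flat reference connection is available.

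Next, show that the real Seiberg--Witten flow on the slice $\mathcal V_R$ has a single reducible critical point and no irreducible ones. By the Weitzenböck formula with positive scalar curvature, any solution has $\Phi=0$. The connection is then flat, and on $(i\ker d^\ast)^{-\tau^\ast}$ the only flat connection is $a=0$, because there are no anti-invariant harmonic $1$-forms. The reducible is nondegenerate for the same reason: $\ker D_{B_0}=0$ by the Lichnerowicz formula, and the anti-invariant $\ker(\ast d)$ vanishes. Also there are no flow lines, since the functional has a unique critical point. The Conley index is therefore the sphere of the negative eigenspace $\mathcal V^0_\lambda$, and after desuspension we get $S^0$ shifted by the correction term.

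The main step, and the expected obstacle, is showing that the grading correction (the real APS/eta-type term, $n(Y,\s,g)$ in the convention of \cite{KMT21,miyazawa2025satellite}) vanishes. I would try two routes, in this order.

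\begin{enumerate}
\item Use the orientation-reversing isometry of $S^1\times S^2$ given by reflection on $S^2$. It commutes with $\tau_n$ and lifts to the real spin structure, and it forces the eta invariants of the real Dirac and signature operators to vanish. Together with $\ker D=0$, the correction is then zero.
\item Apply \Cref{thm: cyclic covers give local maps} to the product cobordism, or, more directly, compare with the $4$-manifold $S^1\times B^3$ with free involution. Since $\sigma=0$ and $c_1=0$, the index formula from the proof of \Cref{thm: cyclic covers give local maps} gives zero suspension. Combined with the positive scalar curvature computation, which makes the relative Bauer--Furuta map an honest equivalence, this pins the grading at $0$.
\end{enumerate}

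Finally, all identifications above are $\Z_4$-equivariant, since the quaternionic $j$ preserves everything. This gives $SWF_R^n(S^1\times S^2,\phi)\simeq S^0$ in $\mathfrak C_{\Z_4}$.
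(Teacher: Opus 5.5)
Your proposal is correct and follows the paper's proof. The paper uses a $\tau_n$-invariant product metric of positive scalar curvature, observes that there is a unique nondegenerate real reducible and no irreducibles, so the metric-dependent spectrum is $S^0$ up to a shift, and then removes the grading correction using the spin filling $S^1\times D^3$. The paper uses exactly your second route, giving the filling a positive scalar curvature metric that is a product near the boundary, so that the real APS index vanishes while $\frac{c_1^2-\sigma}{8}=0$; your eta-symmetry route is a valid alternative but not needed, and the product-cobordism variant alone would not determine the grading.
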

\begin{proof}
The $2^n$-fold cyclic cover of $S^1\times S^2$ determined by $\phi$ is given by
\[
S^1\times S^2\longrightarrow S^1\times S^2;\qquad (z,w)\longmapsto(z^{2^n},w).
\]
Its deck involution $(z,w)\mapsto(-z,w)$ preserves the product metric of positive scalar curvature. For this metric, the unique real reducible is nondegenerate, and there are no irreducible solutions. Hence the metric-dependent real Floer homotopy type is $S^0$ with a metric-dependent suspension. The APS grading correction also vanishes for this product metric, as can be seen using the spin filling $S^1\times D^3$ with a positive scalar curvature metric that is a product near the boundary. Therefore $SWF_R^n(S^1\times S^2,\phi)\simeq S^0$, as desired.
\end{proof}

\begin{lem}\label{lem: additivity}
The maps $SWF_R^\infty$ and $SWF_{R,c}^\infty$ are group homomorphisms.
\end{lem}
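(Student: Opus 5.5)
We need to show that $SWF_R^\infty(Y\bigcirc Y')=SWF_R^\infty(Y)+SWF_R^\infty(Y')$ and likewise for $SWF_{R,c}^\infty$. Since \Cref{lem: product of local equivalence groups} already shows that these maps are well defined, it suffices to check additivity entrywise for every $n>0$. Concretely, for a circle sum $(Z,\psi)$ of $(Y,\phi)$ and $(Y',\phi')$, we aim to prove
\[
[SWF_R^n(Z,\psi)]_{\operatorname{loc}}=[SWF_R^n(Y,\phi)]_{\operatorname{loc}}+[SWF_R^n(Y',\phi')]_{\operatorname{loc}}.
\]
The circle sum is only well defined up to cobordism, but this causes no difficulty: by \Cref{lem: circle sum is well def over any field}, any two choices are smoothly $\wt{H}_F$-cobordant for every field $F$. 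By \Cref{lem: local equivalence lemma}, the resulting sequences then agree for all large $n$, so they define the same element of the reduced product. We may therefore work with the specific splicing used in \Cref{prop: cyclic real excision}.

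The main tool is \Cref{prop: cyclic real excision}(3). It shows that
\[
SWF_R^n(Y,\phi)\wedge SWF_R^n(Y',\phi')
\quad\text{and}\quad
SWF_R^n(Z,\psi)\wedge SWF_R^n(Z',\psi')
\]
are $\Z_4$-locally equivalent for every $n>0$, with the analogous $\Z_2$-statement for $SWF_{R,c}^n$. Here $(Z',\psi')\cong(S^1\times S^2,\xi)$. By \Cref{lem: identity element}, $SWF_R^n(Z',\psi')\simeq S^0$, and hence its restriction to $\Z_2$ is also $S^0$. Since the smash product induces the group operation on $\mathcal{LE}_G$ with identity $[S^0]_{\operatorname{loc}}$, the right-hand side has local class $[SWF_R^n(Z,\psi)]_{\operatorname{loc}}$. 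This gives the displayed identity for every $n$, and therefore additivity in $\wt\prod_{n>0}\mathcal{LE}_{\Z_4}$ and in $\wt\prod_{n>0}\mathcal{LE}_{\Z_2}$.

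It remains to note that the identity and inverses are respected. This follows formally from additivity, since any additive map between groups is a homomorphism. Alternatively, \Cref{lem: identity element} shows directly that $[(S^1\times S^2,\phi)]$ is sent to $0$.

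The main obstacle is to make sure the real structures used in the excision statement match the canonical ones defining $SWF_R^n$ on each of the four boundary components. For the $\Z_4$ case, \Cref{prop: cyclic real excision}(2) provides a real spin structure on $X_n$ that restricts to the boundary real spin structures, which are unique up to sign; the sign ambiguity does not affect the Floer spectra. For the $\Z_2$ case, \Cref{rem: spinc structures in cyclic real excision} shows that the induced real $\spinc$ structure restricts to the canonical boundary structures. One must also check that the orientation conventions of $X$ and $-X$ produce maps in both directions between the same smash products. This is asserted in \Cref{prop: cyclic real excision}, so the check reduces to bookkeeping.
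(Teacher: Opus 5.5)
Your proposal is correct and follows the paper's own proof: the paper derives additivity directly from \Cref{prop: cyclic real excision}, which gives the local equivalence of the two smash products, together with \Cref{lem: identity element}, which shows the $S^1\times S^2$ factor contributes the identity. The extra bookkeeping you add is consistent with the paper's setup: independence of the choice of circle sum via \Cref{lem: circle sum is well def over any field} and \Cref{lem: local equivalence lemma}, and the matching of boundary real structures via \Cref{prop: cyclic real excision}(2) and \Cref{rem: spinc structures in cyclic real excision}.
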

\begin{proof}
This follows from \cref{prop: cyclic real excision} and \cref{lem: identity element}. 
\end{proof}

\begin{rem}
Although \Cref{thm:main} only detects a subgroup isomorphic to $\Z$ in the kernel of the smooth-to-topological homomorphism, the image of $SWF_{R,c}^\infty$ has infinite rank. To see this, consider
\[
K_p \coloneqq T(2,2^p-1),\qquad p\geq 2.
\]
By \cite[Theorem~1.6]{fukumoto2026real}, the sequence of local equivalence classes associated with $K_p$, and hence the sequence obtained by applying any of the real Fr{\o}yshov invariants introduced below, is eventually periodic in the covering exponent. In this case the period is exactly $p$, since the order of $2$ modulo $2^p-1$ is $p$. Moreover, \cite[Lemma~5.15]{fukumoto2026real} gives
\[
\delta_R^{(1)}(K_p)=\frac{2^{p-1}-1}{8}, \qquad \delta_R^{(k)}(K_p)=-\frac18 \quad (2\leq k\leq p-1),
\qquad
\delta_R^{(p)}(K_p)=\frac{2^p-1}{8},
\]
so this periodic tail is nonconstant.
From this computation, we can see
\[
SWF_{R,c}^\infty([S^3_0(K_p)]),
\]
are linearly independent, and therefore the image of $SWF_{R,c}^\infty$ has infinite rank. Notice, however, that this does not directly strengthen \Cref{thm:main}: the manifolds $S^3_0(K_p)$ are already nontrivial in the topological Kawauchi cobordism group. Indeed, the signatures
\[
\sigma(K_p)=-(2^p-2)
\]
are nonzero, so the knots $K_p$ represent nontrivial algebraic concordance classes, which are detected by the natural map from the topological Kawauchi cobordism group to the algebraic concordance group.
\end{rem}

\subsection{SWF-like $\Z_4$-spectra and almost $j$-complexes}

We regard $\mathbb{F}_2[v]$ as a dg algebra with $v$ of degree $1$ and zero differential. We say that a dg module $C$ over $\mathbb{F}_2[v]$ is \emph{SWF-like} if it is perfect and $C\otimes_{\mathbb{F}_2[v]}\mathbb{F}_2[v,v^{-1}]$ is homotopy equivalent to $\mathbb{F}_2[v,v^{-1}]$ up to a degree shift. Throughout this subsection, we represent perfect modules by finite free dg models and allow overall rational grading shifts. Given SWF-like complexes $C$ and $C'$, we say that a dg $\mathbb{F}_2[v]$-module homomorphism $f\colon C\to C'$ is \emph{local} if the localized map
\[
f\otimes\id\colon C\otimes_{\mathbb{F}_2[v]}\mathbb{F}_2[v,v^{-1}]
\longrightarrow C'\otimes_{\mathbb{F}_2[v]}\mathbb{F}_2[v,v^{-1}]
\]
is a homotopy equivalence.

\begin{defn}
Given a dg module $C$ over $\mathbb{F}_2[v]$, we define its \emph{hat-flavored truncation} by
\[
\hat{C}\coloneqq C\otimes^L_{\mathbb{F}_2[v]}\mathbb{F}_2.
\]
Similarly, given a dg $\mathbb{F}_2[v]$-module homomorphism $f\colon C\to D$, we define its \emph{hat-flavored truncation} by
\[
\hat{f}\coloneqq f\otimes\id\colon
C\otimes^L_{\mathbb{F}_2[v]}\mathbb{F}_2
\longrightarrow D\otimes^L_{\mathbb{F}_2[v]}\mathbb{F}_2.
\]
\end{defn}

\begin{defn}
An \emph{almost $j$-complex} is a pair $(C,j)$, where $C$ is an SWF-like dg module over $\mathbb{F}_2[v]$ and $j$ is an endomorphism class of $\hat{C}$ satisfying $j^2\sim\id_{\hat{C}}+\hat{\Phi}_v$, where $\Phi_v$ denotes the derived endomorphism class of $C$ defined in \Cref{rem: Phi v for Z2 is well defined}. Given two almost $j$-complexes $(C,j)$ and $(C',j')$, a dg $\mathbb{F}_2[v]$-module homomorphism $f\colon C\to C'$ is called an \emph{isomorphism}, a \emph{homotopy equivalence}, or an \emph{almost $j$-local map}, respectively, if it is an isomorphism, a homotopy equivalence, or a local map of the underlying dg modules and satisfies $\hat{f}\circ j\sim j'\circ\hat{f}$. We say that $(C,j)$ and $(C',j')$ are \emph{almost $j$-locally equivalent} if they admit almost $j$-local maps in both directions.
\end{defn}

Such almost $j$-complexes arise naturally in real Seiberg--Witten theory. Let $(Y,\phi)$ be a distinguished homology handle and let $n>0$ be an integer. Then $SWF_R^n(Y,\phi)$ is a finite $\Z_4$-spectrum whose $\Z_2$-fixed-point spectrum is $\Z_4$-equivariantly stably homotopy equivalent to a representation sphere. We use a reduced Borel cochain model $C^\ast_{\Z_2}(SWF_R^n(Y,\phi);\mathbb{F}_2)$ over $\mathbb{F}_2[v]$. This module is perfect, and the Borel localization theorem implies that it is SWF-like. The action of the generator $j\in\Z_4$ induces a dg $\mathbb{F}_2[v]$-module endomorphism $j^\ast$ satisfying $(j^\ast)^2\sim\id+\Phi_v$ by \Cref{lem: t squares to v and v squares to 0,lem: extracting j from eqv cochain}. Taking its hat-flavored truncation, we obtain an almost $j$-complex
\[
(C^\ast_{\Z_2}(SWF_R^n(Y,\phi);\mathbb{F}_2),\widehat{j^\ast}).
\]

\begin{defn}
The \emph{$n$th almost $j$-complex} of $(Y,\phi)$, denoted by $\mathcal{C}_R^n(Y,\phi)$, is the homotopy equivalence class of
$(C^\ast_{\Z_2}(SWF_R^n(Y,\phi);\mathbb{F}_2),\widehat{j^\ast})$.
\end{defn}

We next define the almost local $j$-equivalence group. The group axioms can be verified by adapting the corresponding arguments in \cite{dai2023infinite}.

\begin{defn}\label{defn: first phi v defn}
We say that a dg $\mathbb{F}_2[v]$-module $M$ is \emph{clean} if it is semifree and admits a homogeneous free basis for which the degree-$0$ dg endomorphism
\[
\Phi_v\colon M\longrightarrow M;
\qquad
x\longmapsto d_M\left(\frac{\partial x}{\partial v}\right)
+\frac{\partial(d_Mx)}{\partial v}
\]
satisfies $\Phi_v^2=0$. Here $d_M$ denotes the differential of $M$, and differentiation is taken coefficientwise with respect to the chosen basis. We require $\Phi_v^2$ to vanish, not merely to be nullhomotopic.
\end{defn}

\begin{lem}\label{lem: clean resolution}
Any perfect dg $\mathbb{F}_2[v]$-module is homotopy equivalent to a clean module.
\end{lem}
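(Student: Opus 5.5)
The plan is to reduce every perfect dg $\mathbb{F}_2[v]$-module to a standard normal form, a direct sum of very simple elementary pieces, and then check cleanliness on each piece by hand.

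First, we replace the given perfect module by a finite semifree model $M$. Such a model is a finitely generated free graded $\mathbb{F}_2[v]$-module $\bigoplus_i \mathbb{F}_2[v]\,e_i$ with homogeneous basis $e_i$ and a differential $d_M$ satisfying $d_M^2=0$. We then simplify $M$ by Gaussian elimination. Suppose some matrix entry of $d_M$ with respect to the basis has a nonzero constant term (a unit). Then we can cancel the corresponding pair of basis elements: we change basis so that this pair spans an acyclic two-generator subcomplex, and split it off up to homotopy equivalence. Iterating this, we arrive at a homotopy equivalent finite free model whose differential is congruent to $0$ modulo $v$.

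Next we use that $\mathbb{F}_2[v]$, with $v$ in degree $1$, is a graded PID in which every homogeneous element is of the form $v^k$. We apply graded Smith normal form to the matrix of the differential. We first split off the cycles: choose a homogeneous basis of the free submodule $\ker d_M$ adapted to the image $d_M(M)$, which is possible by the graded elementary divisor theorem. This should yield a homogeneous basis of $M$ of the form
\[
\{z_a\}\cup\{x_b,y_b\},\qquad d z_a=0,\quad d y_b=0,\quad d x_b=v^{k_b}y_b \ (k_b\ge 1).
\]
Equivalently, $M$ decomposes as a direct sum of copies of $\mathbb{F}_2[v]$ (shifted) and of two-generator complexes $[\,x\mapsto v^k y\,]$. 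I expect this splitting step to be the main obstacle. The concern is making sure the adapted basis of $\ker d_M$ extends to a basis of $M$ compatible with the grading, and that the complement maps isomorphically onto a free submodule. I would handle it by first noting that $M/\ker d_M\cong d_M(M)$ is a submodule of a free module over a graded PID, hence free. So $\ker d_M$ is a direct summand, and the graded Smith form of $d_M\colon M/\ker d_M\to\ker d_M$ gives the bases $x_b$, $y_b$.

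Finally we verify cleanliness for this basis. One should observe that $\Phi_v=[d_M,\partial_v]$ is $\mathbb{F}_2[v]$-linear. The reason is that $[\partial_v,v]=1$ and $[d_M,1]=0$, so
\[
[\Phi_v,v]=[d_M,[\partial_v,v]]=0.
\]
Hence it suffices to compute $\Phi_v$ on basis elements, where $\partial_v$ vanishes. We find $\Phi_v(z_a)=0$, $\Phi_v(y_b)=0$, and $\Phi_v(x_b)=\partial_v(v^{k_b}y_b)=k_bv^{k_b-1}y_b$. Thus $\Phi_v$ maps every basis element into the span of the $y_b$, which $\Phi_v$ kills. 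Therefore $\Phi_v^2=0$ on the nose, so $M$ is clean, and it is homotopy equivalent to the original module.
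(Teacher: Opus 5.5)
Your proposal is correct and follows essentially the same route as the paper: use that $\mathbb{F}_2[v]$ is a graded PID to reduce to a direct sum of shifted free summands with zero differential and elementary complexes $[x\mapsto v^k y]$, then check $\Phi_v^2=0$ on the standard basis. You give more detail than the paper does, namely the Smith normal form argument for the splitting. Your Gaussian elimination step is harmless but unnecessary, because summands with $k=0$ are contractible and also clean.
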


\begin{proof}
Since $\mathbb{F}_2[v]$ is a PID and $\deg(v)>0$, any perfect dg $\mathbb{F}_2[v]$-module is homotopy equivalent to a finite direct sum of grading shifts of free summands with zero differential and elementary complexes of the form
\[
E_n\coloneqq
\left[\mathbb{F}_2[v]\overset{v^n}{\longrightarrow}\mathbb{F}_2[v]\right],
\qquad n>0,
\]
where the generators of the source and target have degrees $n-1$ and $0$, respectively. The standard basis makes each $E_n$ clean, and free summands are also clean. Since cleanness is preserved under grading shifts and direct sums, the lemma follows.
\end{proof}

Given dg $\mathbb{F}_2[v]$-modules $M$ and $N$, we call $N$ a \emph{clean resolution} of $M$ if $N$ is clean and $M\simeq N$. By \Cref{lem: clean resolution}, every perfect dg $\mathbb{F}_2[v]$-module admits a clean resolution. When convenient, we replace a perfect dg $\mathbb{F}_2[v]$-module by a clean resolution.

\begin{defn}\label{defn: almost local j eqv group}
We define the \emph{almost local $j$-equivalence group} $\hat{\mathfrak{J}}$ to be the set
\[
\hat{\mathfrak{J}}\coloneqq
\{\text{almost }j\text{-complexes}\}/(\text{almost }j\text{-local equivalence}),
\]
equipped with the tensor product operation
\[
(C,j)\otimes(C',j')
\coloneqq
((\wt{C}\otimes_{\mathbb{F}_2[v]}\wt{C}',d_\otimes),j\otimes j').
\]
Here $\otimes_{\mathbb{F}_2[v]}$ denotes the ordinary tensor product, and $\wt{C}$ and $\wt{C}'$ are clean resolutions of $C$ and $C'$, respectively, with chosen bases witnessing cleanness. We transport $j$ and $j'$ to the hat-flavored truncations of these resolutions and retain the same notation. The differential is
\[
d_\otimes
\coloneqq
d_{\wt{C}}\otimes\id_{\wt{C}'}
+\id_{\wt{C}}\otimes d_{\wt{C}'}
+v\Phi_v\otimes\Phi_v.
\]
Since $\wt{C}$ and $\wt{C}'$ are clean, we have $d_\otimes^2=0$. Well-definedness is verified below. Associativity is proved in \Cref{lem: associativity}, and symmetry of the formula gives commutativity. The identity and inverses are described below.
\end{defn}

We now verify that the homotopy equivalence class of $(C,j)\otimes(C',j')$ is independent of the clean resolutions and their chosen bases. Let $C,C',D$ be clean dg $\mathbb{F}_2[v]$-modules, and let $f\colon C\to C'$ be a dg module homomorphism. Differentiating the chain map condition $d_{C'}f+fd_C=0$ with respect to $v$ gives
\[
\Phi_v f+f\Phi_v
=
d_{C'}\frac{\partial f}{\partial v}
+\frac{\partial f}{\partial v}d_C,
\]
where
\[
\frac{\partial f}{\partial v}(x)
\coloneqq
f\left(\frac{\partial x}{\partial v}\right)
+\frac{\partial(f(x))}{\partial v}.
\]
Here each occurrence of $\Phi_v$ denotes the endomorphism of the corresponding module. Define
\[
F\coloneqq
f\otimes\id_D
+v\frac{\partial f}{\partial v}\otimes\Phi_v.
\]
Let $d_\otimes$ and $d'_\otimes$ denote the differentials on the tensor products with underlying modules $C\otimes_{\mathbb{F}_2[v]}D$ and $C'\otimes_{\mathbb{F}_2[v]}D$, respectively. Since $\Phi_v$ commutes with the differentials and $\Phi_v^2=0$, we obtain
\[
\begin{aligned}
d'_\otimes F+Fd_\otimes
&=
(d_{C'}\otimes\id+\id\otimes d_D+v\Phi_v\otimes\Phi_v)
\left(f\otimes\id_D+v\frac{\partial f}{\partial v}\otimes\Phi_v\right)\\
&\quad+
\left(f\otimes\id_D+v\frac{\partial f}{\partial v}\otimes\Phi_v\right)
(d_C\otimes\id+\id\otimes d_D+v\Phi_v\otimes\Phi_v)\\
&=
v\left(
\Phi_v f+f\Phi_v
+d_{C'}\frac{\partial f}{\partial v}
+\frac{\partial f}{\partial v}d_C
\right)\otimes\Phi_v\\
&=0.
\end{aligned}
\]
Thus $F$ is a chain map. The construction respects composition and chain homotopies, and the same construction applies after inverting $v$. Moreover,
\[
\hat{F}=\hat{f}\otimes\id_{\hat{D}}.
\]
Consequently, the map
\[
F\colon
(C,j_C)\otimes(D,j_D)
\longrightarrow
(C',j_{C'})\otimes(D,j_D)
\]
has the following properties:
\begin{itemize}
\item $F$ is an isomorphism if $f$ is an isomorphism;
\item $F$ is a homotopy equivalence if $f$ is a homotopy equivalence;
\item $\hat{F}\circ(j_C\otimes j_D)\sim(j_{C'}\otimes j_D)\circ\hat{F}$ if $\hat{f}\circ j_C\sim j_{C'}\circ\hat{f}$;
\item $F$ is local if $f$ is local.
\end{itemize}
The same properties hold for
\[
F'\coloneqq
\id_D\otimes f
+v\Phi_v\otimes\frac{\partial f}{\partial v}.
\]
We have therefore proved the following proposition.

\begin{prop}\label{prop: invariance and partial order}
The homotopy equivalence class and the almost $j$-local equivalence class of $(C,j)\otimes(C',j')$ depend only on the corresponding classes of $(C,j)$ and $(C',j')$. Furthermore, if there exists an almost $j$-local map from $(C,j)$ to $(C',j')$, then, for every almost $j$-complex $(D,j_D)$, there exist almost $j$-local maps
\[
(C,j)\otimes(D,j_D)\longrightarrow(C',j')\otimes(D,j_D)
\qquad\text{ and }\qquad
(D,j_D)\otimes(C,j)\longrightarrow(D,j_D)\otimes(C',j').
\]
\end{prop}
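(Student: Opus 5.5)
The plan is to derive the proposition from the computation carried out just before it. That computation attaches to every dg homomorphism $f\colon C\to C'$ between clean modules, and every clean $D$, the map $F=f\otimes\id_D+v\frac{\partial f}{\partial v}\otimes\Phi_v$. This $F$ is a chain map for the twisted differentials $d_\otimes$, and its hat-truncation is $\hat F=\hat f\otimes\id_{\hat D}$. The symmetric map $F'$ handles the second tensor factor.

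\emph{Step 1: the construction is functorial.} First I would check that $f\mapsto F$ respects composition and homotopies. For composition, the Leibniz rule $\frac{\partial(gf)}{\partial v}=\frac{\partial g}{\partial v}f+g\frac{\partial f}{\partial v}$ gives
\[
G\circ F=(gf)\otimes\id+v\frac{\partial(gf)}{\partial v}\otimes\Phi_v+v^2\frac{\partial g}{\partial v}\frac{\partial f}{\partial v}\otimes\Phi_v^2 .
\]
The last term vanishes because $\Phi_v^2=0$ on a clean module. For homotopies, if $f-f'=dh+hd$, I would set $H=h\otimes\id+v\frac{\partial h}{\partial v}\otimes\Phi_v$ and verify $F-F'=d_\otimes H+Hd_\otimes$ by the same differentiation identity used for $F$. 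Once both hold, a homotopy equivalence $f$ with inverse $g$ produces $F$ and $G$ that are mutually inverse up to homotopy.

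\emph{Step 2: independence of the clean resolution.} Suppose $\wt C$ and $\wt C_1$ are two clean resolutions of $C$, possibly with different bases. I would take a homotopy equivalence $f\colon\wt C\to\wt C_1$ compatible with the transported $j$. By Step 1, $F$ is a homotopy equivalence of the twisted tensor products, and it intertwines $j\otimes j'$ because $\hat F=\hat f\otimes\id$. Applying the same argument in the other factor through $F'$ shows that the homotopy class of $(C,j)\otimes(C',j')$ is well defined.

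\emph{Step 3: local maps and local equivalence.} Given an almost $j$-local map $f\colon(C,j)\to(C',j')$, the maps $F$ and $F'$ are the required almost $j$-local maps $(C,j)\otimes(D,j_D)\to(C',j')\otimes(D,j_D)$ and $(D,j_D)\otimes(C,j)\to(D,j_D)\otimes(C',j')$. To see this I would use the listed properties. Locality holds because after inverting $v$ the map $F$ is still built by the same formula, and Step 1 applies in the localized setting. The $j$-compatibility holds because $\hat F=\hat f\otimes\id$. Applying this to local maps in both directions, and in each factor, shows that almost $j$-local equivalence classes are preserved.

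I expect the main obstacle to be the localized statement in Step 3. One has to confirm that $f\otimes\id+v\frac{\partial f}{\partial v}\otimes\Phi_v$ is a homotopy equivalence after inverting $v$ whenever $f\otimes\id_{\mathbb{F}_2[v,v^{-1}]}$ is. My first attempt would be to note that $\Phi_v$ is nullhomotopic on $\mathbb{F}_2[v,v^{-1}]$ up to shift. Then the twisted differential is isomorphic to the untwisted one through a map $\id+v\,(\text{homotopy})\otimes\Phi_v$, which is invertible since its correction term squares to zero. Under this isomorphism $F$ reduces to the ordinary tensor product of homotopy equivalences.
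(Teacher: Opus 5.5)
Your proposal is correct and follows the paper's argument: the paper likewise deduces the proposition from $F=f\otimes\id_D+v\frac{\partial f}{\partial v}\otimes\Phi_v$ and its mirror $F'$, noting that the construction respects composition and chain homotopies, applies verbatim after inverting $v$, and satisfies $\hat F=\hat f\otimes\id_{\hat D}$. Your Step 1 supplies the Leibniz and homotopy verifications that the paper leaves implicit, and the locality concern you flag is already settled by rerunning Step 1 over $\mathbb{F}_2[v,v^{-1}]$, so the alternative through a null-homotopic $\Phi_v$ is not needed.
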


\begin{proof}
This follows from the preceding argument.
\end{proof}

The identity element of $\hat{\mathfrak{J}}$ is represented by $(\mathbb{F}_2[v],\id)$, and the inverse of $[(C,j)]$ is represented by $(C^\vee,(j^3)^\vee)$, where
\[
C^\vee\coloneqq\operatorname{RHom}_{\mathbb{F}_2[v]}(C,\mathbb{F}_2[v]).
\]
Here the dual action is viewed on $\widehat{C^\vee}$ via its natural identification with the $\mathbb{F}_2$-linear dual of $\hat C$.

\begin{rem}
By \Cref{lem: phi v over F2 v is 1-torsion}, we have $v\Phi_v\sim0$ for every perfect dg $\mathbb{F}_2[v]$-module. Consequently, the correction term $v\Phi_v\otimes\Phi_v$ does not change the underlying dg-module homotopy type of the tensor product.

To see this, replace $C$ and $C'$ by clean resolutions and retain the same notation. Choose a null-homotopy $H$ of $v\Phi_v$ on $C$, so that $d_CH+Hd_C=v\Phi_v$, and define
\[
F\coloneqq\id_C\otimes\id_{C'}+H\otimes\Phi_v.
\]
Since $\Phi_v^2=0$ on $C'$, we have $F^2=\id$. Moreover,
\[
\begin{aligned}
(d_C\otimes\id+\id\otimes d_{C'})F+Fd_\otimes
&=
(d_C\otimes\id+\id\otimes d_{C'})(\id\otimes\id+H\otimes\Phi_v)\\
&\quad+
(\id\otimes\id+H\otimes\Phi_v)
(d_C\otimes\id+\id\otimes d_{C'}+v\Phi_v\otimes\Phi_v)\\
&=
(d_CH+Hd_C)\otimes\Phi_v
+H\otimes(d_{C'}\Phi_v+\Phi_vd_{C'})
+v\Phi_v\otimes\Phi_v\\
&=0,
\end{aligned}
\]
where we have again used $\Phi_v^2=0$. Thus $F$ is a chain isomorphism, and
\[
\begin{aligned}
(C\otimes_{\mathbb{F}_2[v]}C',d_\otimes)
&\cong
(C\otimes_{\mathbb{F}_2[v]}C',
d_C\otimes\id_{C'}+\id_C\otimes d_{C'})\\
&\simeq C\otimes^L_{\mathbb{F}_2[v]}C'.
\end{aligned}
\]
In other words, the underlying dg $\mathbb{F}_2[v]$-module of a tensor product of two almost $j$-complexes is homotopy equivalent to the derived tensor product of their underlying dg $\mathbb{F}_2[v]$-modules.
\end{rem}

We now verify associativity of the tensor product defined in \Cref{defn: almost local j eqv group}.

\begin{lem}\label{lem: associativity}
For any three almost $j$-complexes $(C_i,j_i)$, $i=1,2,3$, there exists a homotopy equivalence
\[
\bigl((C_1,j_1)\otimes(C_2,j_2)\bigr)\otimes(C_3,j_3)
\simeq
(C_1,j_1)\otimes\bigl((C_2,j_2)\otimes(C_3,j_3)\bigr).
\]
\end{lem}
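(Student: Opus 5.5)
We will compare the two iterated tensor products after fixing clean resolutions $C_1,C_2,C_3$ with chosen bases, so that both sides have the same underlying module $C_1\otimes_{\mathbb{F}_2[v]}C_2\otimes_{\mathbb{F}_2[v]}C_3$ with the product basis. The first step is to check that $C_1\otimes C_2$ with the twisted differential $d_{12}=d_1+d_2+v\Phi_1\Phi_2$ (suppressing tensor signs, which vanish in characteristic $2$) is itself clean with respect to the product basis. Its $\Phi_v$ is computed coefficientwise: differentiating $d_{12}$ gives $\Phi_{12}=\Phi_1+\Phi_2+\Phi_1\Phi_2$. Here the derivative of $v\Phi_1\Phi_2$ contributes $\Phi_1\Phi_2$ plus $v\,\partial(\Phi_1\Phi_2)/\partial v$, and we need the latter to vanish. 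This holds for the standard bases of the elementary complexes $E_n$ and free summands, where $\Phi_v$ has constant coefficients. So we will first replace each $C_i$ by a direct sum of such summands as in \Cref{lem: clean resolution}, on which $\Phi_i$ has $\mathbb{F}_2$-coefficients. Then $\Phi_{12}^2=\Phi_1^2+\Phi_2^2+(\Phi_1\Phi_2)^2+2(\cdots)=0$, using commutativity and $\Phi_i^2=0$, so the product basis witnesses cleanness.

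Next we write out both differentials explicitly. On the left-hand side,
\[
d_L=d_1+d_2+d_3+v\Phi_1\Phi_2+v(\Phi_1+\Phi_2+\Phi_1\Phi_2)\Phi_3 .
\]
On the right-hand side,
\[
d_R=d_1+d_2+d_3+v\Phi_2\Phi_3+v\Phi_1(\Phi_2+\Phi_3+\Phi_2\Phi_3).
\]
Both expand to the symmetric expression
\[
d_1+d_2+d_3+v(\Phi_1\Phi_2+\Phi_1\Phi_3+\Phi_2\Phi_3+\Phi_1\Phi_2\Phi_3),
\]
so $d_L=d_R$ and the identity map is a chain isomorphism. The $j$-maps agree as well, since both are $j_1\otimes j_2\otimes j_3$ on the hat truncation $\hat C_1\otimes\hat C_2\otimes\hat C_3$.

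Finally, well-definedness (\Cref{prop: invariance and partial order}) lets us use any clean resolution of $C_1\otimes C_2$, in particular the product one just constructed. This step is the main obstacle: \Cref{prop: invariance and partial order} compares resolutions through maps $F=f\otimes\id+v\,\partial f/\partial v\otimes\Phi_v$, so we must confirm that the product-basis resolution is a legitimate clean resolution of the class $(C_1,j_1)\otimes(C_2,j_2)$. It is, because its underlying module equals the tensor product itself. If general clean bases (with non-constant $\Phi_v$) were required, we would first move to the elementary-summand bases via the homotopy equivalences and the maps $F$, $F'$ from \Cref{prop: invariance and partial order}, which commute with $j$ on hat truncations.
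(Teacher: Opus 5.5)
Your proposal follows essentially the same route as the paper: identify both sides with $C_1\otimes C_2\otimes C_3$ in the product basis, check that the intermediate tensor product is clean with $\Phi_v=\Phi_1\otimes\id+\id\otimes\Phi_2+\Phi_1\otimes\Phi_2$, and expand both differentials to the same symmetric expression, with $j_1\otimes j_2\otimes j_3$ on both hat truncations. One correction: on $E_n$ with $n\geq 3$ odd, $\Phi_v(x)=v^{n-1}y$ does not have constant coefficients, so your stated justification is false there. The term $v\,\partial(\Phi_1\otimes\Phi_2)/\partial v$ still vanishes for every basis, because the matrix of $\Phi_i$ is the coefficientwise $v$-derivative of the matrix of $d_i$ and second formal derivatives vanish in characteristic $2$; this is exactly the paper's justification, and it makes your reduction to elementary-summand bases unnecessary.
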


\begin{proof}
By \Cref{prop: invariance and partial order}, we may assume that $C_1,C_2,C_3$ are clean. Since the second formal derivative with respect to $v$ vanishes in characteristic $2$, the $\Phi_v$-operator on a tensor product is
\[
\Phi_v\otimes\id+\id\otimes\Phi_v+\Phi_v\otimes\Phi_v.
\]
Its square is zero. Together with the preceding remark, this shows that the intermediate tensor products are again clean.

Under the canonical identification of the underlying graded modules, it remains to check that the differentials coincide: the endomorphism on the hat-flavored truncation is $j_1\otimes j_2\otimes j_3$ on both sides. Write $d_i$ for the differential of $C_i$. The differential $d_\otimes$ on the left-hand side is
\[
\begin{aligned}
d_\otimes
&=
(d_1\otimes\id+\id\otimes d_2+v\Phi_v\otimes\Phi_v)\otimes\id
+\id\otimes\id\otimes d_3\\
&\quad+
v(\Phi_v\otimes\id+\id\otimes\Phi_v+\Phi_v\otimes\Phi_v)\otimes\Phi_v\\
&=
d_1\otimes\id\otimes\id
+\id\otimes d_2\otimes\id
+\id\otimes\id\otimes d_3\\
&\quad+
v(\Phi_v\otimes\Phi_v\otimes\id
+\Phi_v\otimes\id\otimes\Phi_v
+\id\otimes\Phi_v\otimes\Phi_v
+\Phi_v\otimes\Phi_v\otimes\Phi_v).
\end{aligned}
\]
Similarly, the differential $d'_\otimes$ on the right-hand side is
\[
\begin{aligned}
d'_\otimes
&=
d_1\otimes\id\otimes\id
+\id\otimes(d_2\otimes\id+\id\otimes d_3+v\Phi_v\otimes\Phi_v)\\
&\quad+
v\Phi_v\otimes(\Phi_v\otimes\id+\id\otimes\Phi_v+\Phi_v\otimes\Phi_v)\\
&=
d_1\otimes\id\otimes\id
+\id\otimes d_2\otimes\id
+\id\otimes\id\otimes d_3\\
&\quad+
v(\Phi_v\otimes\Phi_v\otimes\id
+\Phi_v\otimes\id\otimes\Phi_v
+\id\otimes\Phi_v\otimes\Phi_v
+\Phi_v\otimes\Phi_v\otimes\Phi_v).
\end{aligned}
\]
Thus $d_\otimes=d'_\otimes$, proving the lemma.
\end{proof}

We also need an analogue of the connected-complex construction underlying \emph{connected Heegaard Floer homology} in \cite{hendricks2021applications}. Given an almost $j$-complex $(C,j)$, consider the set
\[
\mathrm{ALoc}(C,j)\coloneqq
\{\text{almost }j\text{-local maps }C\to C\},
\]
which is nonempty since $\id_C\in\mathrm{ALoc}(C,j)$. Choose $f\in\mathrm{ALoc}(C,j)$ such that, whenever $g\in\mathrm{ALoc}(C,j)$ satisfies $\ker f\subseteq\ker g$, we have $\ker f=\ker g$. Such an $f$ exists: with the finite free graded models fixed above, there are only finitely many grading-preserving $\mathbb{F}_2[v]$-module endomorphisms of $C$, so $\mathrm{ALoc}(C,j)$ is finite. We call such an $f$ a \emph{maximal almost $j$-local self-equivalence}. Adapting the proofs of \cite[Lemmas~3.4 and~3.5]{hendricks2021applications} gives the following:
\begin{itemize}
\item If $f\in\mathrm{ALoc}(C,j)$ is maximal, then $f|_{\operatorname{Im}f}$ is a chain isomorphism, and $f$ induces a splitting of the underlying chain complexes over $\mathbb{F}_2[v]$:
\[
C\cong\operatorname{Im}f\oplus\ker f.
\]
\item If $f,g\in\mathrm{ALoc}(C,j)$ are maximal, then $f|_{\operatorname{Im}g}\colon\operatorname{Im}g\to\operatorname{Im}f$ is a chain isomorphism.
\end{itemize}

For a maximal $f$, the splitting above induces natural identifications
\[
\operatorname{Im}\hat f\cong\widehat{\operatorname{Im}f},
\qquad
\ker\hat f\cong\widehat{\ker f}.
\]
Using these identifications and the natural inclusions into $\hat C$, define
\[
\begin{aligned}
j_f
&\coloneqq
\hat f\circ j\circ
(\hat f|_{\operatorname{Im}\hat f})^{-1}
\in\operatorname{End}(\widehat{\operatorname{Im}f}),\\
j_f^\perp
&\coloneqq
\bigl(\id_{\hat C}
+(\hat f|_{\operatorname{Im}\hat f})^{-1}\circ\hat f\bigr)
\circ j|_{\ker\hat f}
\in\operatorname{End}(\widehat{\ker f}).
\end{aligned}
\]
The relation $\hat f\circ j\sim j\circ\hat f$ implies that the off-diagonal components of $j$ with respect to the induced splitting of $\hat C$ are nullhomotopic. Consequently, the argument of \cite[Lemma~3.6]{hendricks2021applications} gives a homotopy equivalence
\[
(C,j)\simeq
(\operatorname{Im}f,j_f)\oplus(\ker f,j_f^\perp)
\]
of the underlying dg modules, intertwining the endomorphisms on their hat-flavored truncations up to homotopy. Since $\Phi_v$ commutes up to homotopy with dg $\mathbb{F}_2[v]$-module homomorphisms, we also have
\[
j_f^2\sim\id_{\widehat{\operatorname{Im}f}}+\hat{\Phi}_v,
\qquad
(j_f^\perp)^2\sim\id_{\widehat{\ker f}}+\hat{\Phi}_v,
\]
where $\Phi_v$ denotes the operator on the corresponding summand. Since $f$ is local, $\ker f$ becomes contractible after inverting $v$. Thus $(\operatorname{Im}f,j_f)$ is an almost $j$-complex, and the projection and inclusion give an almost $j$-local equivalence between $(C,j)$ and $(\operatorname{Im}f,j_f)$.

Similarly, adapting the proofs of \cite[Lemmas~3.7--3.8 and Proposition~3.10]{hendricks2021applications} shows that, if $(C,j)$ and $(C',j')$ are almost $j$-locally equivalent and $f\in\mathrm{ALoc}(C,j)$ and $g\in\mathrm{ALoc}(C',j')$ are maximal, then $(\operatorname{Im}f,j_f)$ and $(\operatorname{Im}g,j_g)$ are homotopy equivalent as almost $j$-complexes. This leads to the following definition, analogous to \cite[Definition~3.9]{hendricks2021applications}.

\begin{defn}
Given an almost $j$-local equivalence class $[(C,j)]\in\hat{\mathfrak{J}}$, choose a representative $(C,j)$ and a maximal $f\in\mathrm{ALoc}(C,j)$. We define its \emph{core} to be the homotopy equivalence class of the almost $j$-complex $(\operatorname{Im}f,j_f)$ and denote it by $\mathrm{Core}(C,j)$.
\end{defn}

We next relate the almost local $j$-equivalence group $\hat{\mathfrak{J}}$ to $\mathcal{LE}_{\Z_4}$.

\begin{lem}\label{lem: LE to J homomorphism}
The map
\[
\hat{\mathcal{J}}\colon\mathcal{LE}_{\Z_4}\longrightarrow\hat{\mathfrak{J}};
\qquad
[X]\longmapsto
[(C^\ast_{\langle j^2\rangle}(X;\mathbb{F}_2),\widehat{j^\ast})]
\]
is a well-defined group homomorphism. Consequently, the map
\[
\mathcal{C}_R^\infty\colon
\Omega_{\mathrm{spin}}(S^1\times S^2)
\longrightarrow\wt\prod_{n>0}\hat{\mathfrak{J}};
\qquad
[(Y,\phi)]\longmapsto
\left[([\mathcal{C}_R^n(Y,\phi)])_{n>0}\right]
\]
is also a well-defined group homomorphism.
\end{lem}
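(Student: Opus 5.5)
The argument has three parts: well-definedness on local equivalence classes, compatibility with the group operations, and the consequence for $\mathcal{C}_R^\infty$.

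\emph{Well-definedness.} Let $X$ be an SWF-like $\Z_4$-spectrum. By the Borel localization theorem, the reduced Borel cochains $C^\ast_{\langle j^2\rangle}(X;\mathbb{F}_2)$ form a perfect, SWF-like dg $\mathbb{F}_2[v]$-module. By \Cref{lem: t squares to v and v squares to 0,lem: extracting j from eqv cochain}, the residual action of $j$ gives an endomorphism $j^\ast$ with $(j^\ast)^2\sim\id+\Phi_v$. Its hat truncation therefore makes the pair an almost $j$-complex.

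Now let $f\colon X\to X'$ be a $\Z_4$-equivariant local map, represented by a stabilized map $\Sigma^V X\to\Sigma^{W}X'$. The suspension isomorphisms are given by Thom classes. Using them, the pullback $f^\ast$ on Borel cochains is a dg $\mathbb{F}_2[v]$-module map, up to a grading shift, between the cochain complexes of $X'$ and $X$. Because $f$ is $j$-equivariant, $f^\ast$ commutes with $j^\ast$ up to homotopy at the chain level, so $\widehat{f^\ast}$ commutes with the truncated $j$-actions.

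To see that $f^\ast$ is local, I will use localization again. After inverting $v$, the Borel cohomology of $X$ is identified with that of the fixed-point set $X^{\langle j^2\rangle}$. Since $f$ is local, it restricts to a homotopy equivalence on fixed points, so $f^\ast\otimes\mathbb{F}_2[v,v^{-1}]$ is a homotopy equivalence.

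Local maps in both directions between $X$ and $X'$ thus give almost $j$-local maps in both directions, contravariantly. The direction is reversed, but local equivalence is symmetric, so this causes no problem. Hence $\hat{\mathcal J}$ is well-defined on $\mathcal{LE}_{\Z_4}$.

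\emph{Homomorphism property.} This is the main obstacle. The unit is easy: $S^0$ maps to $(\mathbb{F}_2[v],\id)$. The real content is additivity, namely identifying the Borel cochains of a smash product $X\wedge X'$, with the diagonal $\Z_4$-action, with the twisted tensor product of \Cref{defn: almost local j eqv group}. The plan is to apply the equivariant Künneth statement \Cref{cor: equivariant chain tensor product}. It should give an identification of $C^\ast_{\Z_2}(X\wedge X')$ with the tensor product of the two cochain complexes over $C^\ast(B\Z_2)$, as an $E_\infty$-type object.

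I expect the correction term $v\Phi_v\otimes\Phi_v$ to come from the failure of $C^\ast(B\Z_2;\mathbb{F}_2)$ to be strictly formal as a Hopf algebra. Concretely, it should match the first Steenrod-type correction to the coproduct on clean models. Since $v\Phi_v\sim 0$, as noted in the remark after \Cref{prop: invariance and partial order}, it is enough to check two things. First, the underlying dg modules agree up to homotopy, which is the derived tensor product. Second, on the hat truncation, where $v$ acts by zero, the action of $j$ on $X\wedge X'$ corresponds to $\hat j\otimes\hat j'$; this follows from naturality of the Künneth map under the diagonal action.

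The delicate part is verifying that the homotopy equivalences are compatible with the truncated $j$ up to homotopy. I would first try to reduce this to the cellular level by working with $\Z_4$-CW models and an explicit Alexander--Whitney map. For inverses, I would use the fact that both sides carry group structures: additivity together with the unit implies that Spanier--Whitehead duals go to the inverses.

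\emph{Consequence for $\mathcal{C}_R^\infty$.} The map $\mathcal{C}_R^\infty$ is the composite of $SWF_R^\infty$, which is a homomorphism by \Cref{lem: additivity}, with the coordinatewise map $\hat{\mathcal J}$ on $\prod_{n>0}$. The coordinatewise map preserves $\bigoplus_{n>0}$, so it descends to the reduced products. Hence $\mathcal{C}_R^\infty$ is a well-defined group homomorphism.
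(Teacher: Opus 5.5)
Your well-definedness argument (Borel localization plus naturality, with the direction reversed) and your final step for $\mathcal{C}_R^\infty$ match the paper. The gap is in additivity. You flag it as the main obstacle but do not close it, and the shortcut you propose there fails. A homotopy equivalence of almost $j$-complexes is a \emph{single} dg $\mathbb{F}_2[v]$-equivalence whose hat truncation intertwines the $j$'s. So ``the underlying modules agree'' and ``the hat-level actions correspond'' cannot be checked along unrelated identifications, and $v\Phi_v\sim0$ does not decouple them. Indeed, the remark's isomorphism $\id+H\otimes\Phi_v$ between the twisted and untwisted tensor products has hat truncation $\id+\hat H\otimes\hat\Phi_v\neq\id$.

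Worse, the untwisted product is incompatible with $\hat j\otimes\hat j'$. With differential $d_C\otimes\id+\id\otimes d_{C'}$ on clean models, the well-defined class $\hat\Phi_v$ of the product is $\hat\Phi_v\otimes\id+\id\otimes\hat\Phi_v$. On the other hand, $(\hat j\otimes\hat j')^2\sim\id+\hat\Phi_v\otimes\id+\id\otimes\hat\Phi_v+\hat\Phi_v\otimes\hat\Phi_v$. For example, take $X=X'=S^0\vee\Sigma^\infty\Z_{4+}$, where everything sits in degree $0$ and $\hat\Phi_v=0\oplus(\id+j^2)$. There the extra term $\hat\Phi_v\otimes\hat\Phi_v$ is nonzero, hence not nullhomotopic. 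So no equivalence from $C^\ast_{\Z_2}(X\wedge X')$ to the untwisted product can truncate to the K\"unneth map. What is missing is the identification of the $E_\infty$-tensor product over $C^\ast(B\Z_2)$ with the \emph{twisted} tensor product, compatibly with $j$, which you leave as an expectation.

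The paper supplies this as \Cref{prop: tensor product formula}, used together with \Cref{cor: equivariant chain tensor product} for $G=\Z_4$. Let $C$ be a bounded complex of $\mathbb{F}_2[\Z_4]$-modules. Its truncated Borel model $(C\otimes_{\mathbb{F}_2}\mathbb{F}_2[v],\,d_C+v(\id+j_C^2))$ is clean with $\Phi_v=\id+j_C^2$, and its hat-level endomorphism is $j_C$ on the nose. In characteristic $2$,
\[
\id+j_C^2\otimes j_{C'}^2=\Phi_v\otimes\id+\id\otimes\Phi_v+\Phi_v\otimes\Phi_v .
\]
Hence the twisted tensor product of the models of $C$ and $C'$ is literally the Borel model of $C\otimes_{\mathbb{F}_2}C'$ with the diagonal action, and its hat-level endomorphism is $j_C\otimes j_{C'}$. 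The K\"unneth corollary identifies the $\Z_4$-equivariant cochains of $X\wedge X'$ with the derived tensor product of the cochain modules, and hence with this model. No separately chosen equivalences ever need to be compared. In this picture, the correction term $v\Phi_v\otimes\Phi_v$ is just the group-like diagonal $g\mapsto g\otimes g$ written in terms of $\id+g$.
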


\begin{proof}
Since $X$ is finite, the Borel localization theorem implies that the restriction map
\[
C^\ast_{\langle j^2\rangle}(X;\mathbb{F}_2)
\otimes_{\mathbb{F}_2[v]}\mathbb{F}_2[v,v^{-1}]
\longrightarrow
C^\ast_{\langle j^2\rangle}(X^{\langle j^2\rangle};\mathbb{F}_2)
\otimes_{\mathbb{F}_2[v]}\mathbb{F}_2[v,v^{-1}]
\]
is a quasi-isomorphism. By naturality, a local map between SWF-like $\Z_4$-spectra induces an almost $j$-local map between their cochain models in the opposite direction. Thus locally equivalent spectra give almost $j$-locally equivalent complexes, proving that $\hat{\mathcal{J}}$ is well-defined. Its additivity follows from \Cref{cor: equivariant chain tensor product,prop: tensor product formula}.

Finally, $\mathcal{C}_R^\infty$ is the composition
\[
\Omega_{\mathrm{spin}}(S^1\times S^2)
\xrightarrow{\hspace{1.5em}SWF_R^\infty\hspace{1.5em}}
\wt{\prod}_{n>0}\mathcal{LE}_{\Z_4}
\xrightarrow{\hspace{1.5em}\wt{\prod}_{n>0}\hat{\mathcal{J}}\hspace{1.5em}}
\wt{\prod}_{n>0}\hat{\mathfrak{J}}.
\]
Since both maps are group homomorphisms, so is $\mathcal{C}_R^\infty$.
\end{proof}

The following lemma gives a criterion for constructing a subgroup isomorphic to $\Z^\infty$ in $\Omega_{\mathrm{spin}}(S^1\times S^2)$.

\begin{lem}\label{lem: how to find Z infty in Omega spin}
Let $\{(Y_k,\phi_k)\}_{k>0}$ be a sequence of distinguished homology handles. For every pair of positive integers $k,s$, suppose that there exists a nonnegative integer $C$ such that $v^C$ annihilates the $\mathbb{F}_2[v]$-torsion submodule of
\[
H^\ast\mathrm{Core}(\mathcal{C}_R^n(Y_k,\phi_k)^{\otimes s})
\]
for all sufficiently large integers $n>0$, and let $C_{k,s}$ be the least such integer. Suppose that $C_{k,s}$ is independent of $s$, and write $C_k\coloneqq C_{k,s}$. If $\limsup_{k\to\infty}C_k=\infty$, then the subgroup of $\Omega_{\mathrm{spin}}(S^1\times S^2)$ generated by the $\wt{H}_{\mathbb{F}_2}$-cobordism classes $[(Y_k,\phi_k)]$ contains a subgroup isomorphic to $\Z^\infty$.
\end{lem}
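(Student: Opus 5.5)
We will obtain the subgroup from a subsequence of the $(Y_k,\phi_k)$ on which the torsion exponents strictly increase, using the group homomorphism $\mathcal{C}_R^\infty$ of \Cref{lem: LE to J homomorphism}. Since $\limsup_k C_k=\infty$, we first pass to a subsequence $k_1<k_2<\cdots$ with $C_{k_1}<C_{k_2}<\cdots$ and relabel it as $(Y_i,\phi_i)$ with exponents $C_i$. We then claim that the classes $[(Y_i,\phi_i)]$ are linearly independent over $\Z$ in $\Omega_{\mathrm{spin}}(S^1\times S^2)$. This gives the desired $\Z^\infty$.

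Suppose, for contradiction, that there is a nontrivial relation $\sum_i a_i[(Y_i,\phi_i)]=0$. Let $m$ be the largest index with $a_m\neq0$. Since inversion in $\hat{\mathfrak J}$ is given by duality, we may replace the relation by $\pm$ and move terms across. This gives an identity in $\wt\prod_n\hat{\mathfrak J}$ of the form
\[
[\mathcal{C}_R^n(Y_m,\phi_m)^{\otimes s}]\otimes[P_n]=[Q_n]
\]
for all sufficiently large $n$. Here $s=|a_m|>0$, and $P_n$ and $Q_n$ are tensor products of $\mathcal{C}_R^n(Y_i,\phi_i)^{\otimes|a_i|}$, or of their duals, for indices $i<m$. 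Applying \Cref{lem: LE to J homomorphism}, the two sides agree in $\hat{\mathfrak J}$ for all large $n$.

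The key step is to show that the $\mathbb{F}_2[v]$-torsion exponent of $H^\ast$ of the core is a monotone quantity. We will use the following property. If $(C,j)\to(C',j')$ is an almost $j$-local map, then the torsion exponent of $\mathrm{Core}(C')$ is at least that of $\mathrm{Core}(C)$. We also need subadditivity under tensor products: the exponent of $\mathrm{Core}(A\otimes B)$ is at most $\max$ of the exponents for $A$ and $B$. If $v^a$ kills the torsion of $A$ and $v^b$ kills the torsion of $B$, then Künneth over the PID $\mathbb{F}_2[v]$ (via the Remark identifying the underlying module with the derived tensor product) shows that the torsion is killed by $v^{\max(a,b)}$. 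Passing to cores can only lower the exponent, because the core is a homotopy summand.

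With these properties, the left-hand side of the displayed identity contains $\mathcal{C}_R^n(Y_m)^{\otimes s}$ as a factor. We tensor both sides with the inverse of $P_n$ and compare cores. The core of $\mathcal{C}_R^n(Y_m)^{\otimes s}$ is then identified with the core of a tensor product of complexes built from indices $i<m$ and their duals. By subadditivity, that core has exponent at most $\max_{i<m}C_i$, using that duals have the same exponent. However, the hypothesis says its exponent is exactly $C_{m,s}=C_m$, which is strictly larger. This is a contradiction.

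The main obstacle is the monotonicity and subadditivity under duals and cores. In particular, it must be shown that the core is an invariant of the almost $j$-local class, and that its torsion exponent cannot be lowered by tensoring with something locally trivial. The intended route for this is to use the uniqueness of cores from the adapted Hendricks–Hom–Lidman arguments quoted above.
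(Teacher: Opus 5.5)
Your route is the paper's: you pass to a subsequence with strictly increasing exponents, use $\mathcal{C}_R^\infty$ to turn a relation into an equality in $\hat{\mathfrak J}$ for all large $n$, identify $\mathrm{Core}(\mathcal{C}_R^n(Y_m,\phi_m)^{\otimes s})$ with the core of a tensor product of lower-index factors and duals, and bound its torsion exponent using the elementary-complex decomposition over $\mathbb{F}_2[v]$. However, the ``key step'' you name is false as stated. Take the paper's complex $P_n$ (the one defining local $n$-simplicity, not your tensor product $P_n$). The map $P_n\to P_0$, $x,y\mapsto w$, $z\mapsto 0$, is almost $j$-local. Yet $\mathrm{Core}(P_0)=P_0$ has no torsion, while by \Cref{lem: powers of locally simple complexes} the torsion of $H^\ast\mathrm{Core}(P_n)$ is annihilated by $v^n$ and by no smaller power. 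So the exponent is not monotone under one-sided local maps. Your contradiction never actually uses this property. What it uses, and all that is needed, is that the core, and hence its exponent, is an invariant of the almost $j$-local equivalence class; this is the adapted Hendricks--Hom--Lidman uniqueness of cores. You should drop monotonicity and state invariance instead.

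Two further repairs are needed. First, your subadditivity argument bounds the exponent of $\mathrm{Core}(A\otimes B)$ by the torsion exponents of $H^\ast A$ and $H^\ast B$. The hypothesis, however, only controls the cores of the factors, not $H^\ast\mathcal{C}_R^n(Y_i,\phi_i)^{\otimes s}$ itself. As in the paper, first replace each factor by a finite free model of its core, and each dual factor by the dual of such a model. This does not change the class in $\hat{\mathfrak J}$ by \Cref{prop: invariance and partial order}. Then apply the K\"unneth/elementary-complex computation, and finally use that the core of the result is a chain summand. Second, you must arrange $C_{k_1}>0$. If the first exponent of your subsequence is $0$ and the relation involves only that index, your bound is $\max\emptyset=0$ and no contradiction arises. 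For example, $(S^1\times S^2,\phi)$ has all $C_{k,s}=0$ and represents zero. This is why the paper takes $0<C_1<C_2<\cdots$ and sets $D=\max(\{0\}\cup\cdots)$.
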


\begin{proof}
Since $\limsup_{k\to\infty}C_k=\infty$, after passing to a subsequence and relabeling, we may assume that
\[
0<C_1<C_2<\cdots.
\]
It suffices to show that the classes $[(Y_k,\phi_k)]$ are linearly independent.

Suppose otherwise. After cancelling common terms, there is a nontrivial relation
\[
\bigcirc_{a=1}^r[(Y_{i_a},\phi_{i_a})]
=
\bigcirc_{b=1}^s[(Y_{j_b},\phi_{j_b})],
\]
where the two sets of indices are disjoint. We allow an empty circle sum, which denotes the identity. After reordering the indices and interchanging the two sides if necessary, assume that $i_1\leq\cdots\leq i_r$ and that the largest index appearing in the relation is $m=i_r$. Let $t>0$ be its multiplicity, so that the last $t$ indices on the left equal $m$ and all remaining indices are smaller than $m$.

By \Cref{lem: local equivalence lemma,lem: additivity}, for all sufficiently large integers $n>0$, the spectra
\[
\bigwedge_{a=1}^r SWF_R^n(Y_{i_a},\phi_{i_a})
\qquad\text{ and }\qquad
\bigwedge_{b=1}^s SWF_R^n(Y_{j_b},\phi_{j_b})
\]
are locally equivalent. Passing to the associated almost $j$-complexes via \Cref{lem: LE to J homomorphism}, we obtain
\[
[\mathcal{C}_R^n(Y_m,\phi_m)^{\otimes t}]
=
\left[
\bigotimes_{b=1}^s\mathcal{C}_R^n(Y_{j_b},\phi_{j_b})
\otimes
\bigotimes_{a=1}^{r-t}\mathcal{C}_R^n(Y_{i_a},\phi_{i_a})^{-1}
\right]
\quad\text{in }\hat{\mathfrak{J}}.
\]

Set
\[
D\coloneqq
\max\bigl(
\{0\}\cup
\{C_{i_a}:1\leq a\leq r-t\}\cup
\{C_{j_b}:1\leq b\leq s\}
\bigr).
\]
Then $D<C_m$. Replace each non-inverse factor on the right by a finite free model of its core, and each inverse factor by the dual of a finite free model of the corresponding core. These replacements do not change the class in $\hat{\mathfrak{J}}$.

By the definition of the bounds $C_k$, for all sufficiently large $n$, multiplication by $v^D$ annihilates the torsion submodule of the cohomology of each of these cores. As shown above, the underlying dg module of the tensor product of almost $j$-complexes is homotopy equivalent to the derived tensor product of their underlying dg modules. By the graded PID decomposition, the underlying complexes of these cores are homotopy equivalent to direct sums of grading shifts of free summands with zero differential and elementary complexes whose differential is multiplication by $v^a$, with $a\leq D$. Duality preserves the exponent $a$, while tensoring elementary complexes with exponents $a$ and $b$ gives two shifted elementary complexes with exponent $\min\{a,b\}$. Thus $v^D$ annihilates the torsion submodule of the cohomology of the resulting tensor product. Since taking the core gives a chain direct summand, the same holds for the cohomology of its core.

The equality in $\hat{\mathfrak{J}}$ therefore implies that $v^D$ annihilates the torsion submodule of
\[
H^\ast\mathrm{Core}(\mathcal{C}_R^n(Y_m,\phi_m)^{\otimes t})
\]
for all sufficiently large $n$. By the minimality of $C_{m,t}$, we obtain
\[
C_m=C_{m,t}\leq D<C_m,
\]
a contradiction. Hence the chosen classes are linearly independent, proving the lemma.
\end{proof}

The same argument gives the following corollary.

\begin{cor}\label{cor: linear independence}
Under the assumptions of \Cref{lem: how to find Z infty in Omega spin}, if the sequence $\{C_k\}_{k>0}$ is positive and strictly increasing, then the $\wt{H}_{\mathbb{F}_2}$-cobordism classes $[(Y_k,\phi_k)]$ are linearly independent in $\Omega_{\mathrm{spin}}(S^1\times S^2)$.
\end{cor}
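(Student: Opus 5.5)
The plan is to rerun the argument of \Cref{lem: how to find Z infty in Omega spin} without passing to a subsequence. Since $\{C_k\}$ is positive and strictly increasing, no relabeling is needed. It then suffices to show that no nontrivial integer relation $\sum_k a_k[(Y_k,\phi_k)]=0$ holds in $\Omega_{\mathrm{spin}}(S^1\times S^2)$.

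Suppose a nontrivial relation holds. Move the negative coefficients to the other side, so that it becomes an equality of two circle sums with disjoint index sets. Let $m$ be the largest index with $a_m\neq0$, and swap the two sides if necessary so that $Y_m$ appears on the left with multiplicity $t=|a_m|>0$. By \Cref{lem: local equivalence lemma,lem: additivity}, the corresponding smash products of $SWF_R^n$ are locally equivalent for all sufficiently large $n$. Applying the homomorphism $\hat{\mathcal{J}}$ of \Cref{lem: LE to J homomorphism} then expresses $[\mathcal{C}_R^n(Y_m,\phi_m)^{\otimes t}]$ in $\hat{\mathfrak{J}}$ as a tensor product of the classes $\mathcal{C}_R^n(Y_k,\phi_k)^{\pm1}$ with $k<m$.

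Next, set $D$ to be the maximum of $0$ and the $C_k$ over the indices $k<m$ that occur. Strict monotonicity gives $D<C_m$; this is the only point where the new hypothesis is used, in place of the subsequence extraction. Replace each factor by its core, and each inverse factor by the dual of its core; this does not change the class in $\hat{\mathfrak{J}}$. For $n$ large, $v^D$ kills the torsion in the cohomology of each of these cores. By the PID decomposition used in the lemma, duals and tensor products of elementary complexes have exponents bounded by $D$, so $v^D$ kills the torsion of the tensor product. Since the core is a direct summand, $v^D$ also kills the torsion of its core.

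Finally, uniqueness of cores up to homotopy equivalence within a local class transfers this bound to $H^\ast\mathrm{Core}(\mathcal{C}_R^n(Y_m,\phi_m)^{\otimes t})$. This gives $C_m=C_{m,t}\le D<C_m$, a contradiction. The delicate step, exactly as in the lemma, is controlling the torsion exponents under duality and tensor products and passing the bound to cores. Here I would rely verbatim on the argument given in the proof of \Cref{lem: how to find Z infty in Omega spin}.
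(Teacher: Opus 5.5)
Your proposal is correct and matches the paper: the paper proves this corollary by running the linear-independence argument from the proof of \Cref{lem: how to find Z infty in Omega spin} on the entire sequence, with the hypothesis taking the place of the subsequence extraction. One small precision is that $D<C_m$ uses positivity as well as strict monotonicity, since when no smaller index occurs (for instance, when $m=1$) you have $D=0$ and need $C_m>0$.
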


\begin{proof}
Apply the linear-independence argument in the proof of \Cref{lem: how to find Z infty in Omega spin} to the entire sequence.
\end{proof}

\subsection{SWF-like $\Z_2$-spectra and real Fr{\o}yshov invariants}

We review the definition and basic properties of the real Fr{\o}yshov invariant $\delta_R$, introduced in \cite{konno2024involutions}. Let $X$ be an SWF-like $\Z_2$-spectrum, so that $X$ is a finite $\Z_2$-spectrum and $X^{\Z_2}$ is stably homotopy equivalent to the sphere spectrum up to suspension. By the Borel localization theorem,
\[
H^\ast_{\Z_2}(X;\mathbb{F}_2)\otimes_{\mathbb{F}_2[v]}\mathbb{F}_2[v,v^{-1}]
\]
is free of rank $1$ over $\mathbb{F}_2[v,v^{-1}]$, where $H^\ast_{\Z_2}$ denotes reduced Borel cohomology. This leads to the following definition.

\begin{defn}
We say that a homogeneous element $\alpha\in H^\ast_{\Z_2}(X;\mathbb{F}_2)$ is a \emph{tower generator} if it is nontorsion, or equivalently, if it does not vanish after inverting $v$. We define the \emph{real Fr{\o}yshov invariant} of $X$ by
\[
\delta_R(X) \coloneqq \frac{1}{2}\min\left\{\deg\alpha\mid\alpha\in H^\ast_{\Z_2}(X;\mathbb{F}_2)\text{ is a tower generator}\right\}\in\Q.
\]
\end{defn}

Note that the following lemma was observed in \cite{konno2024involutions}. 

\begin{lem}[Basically {\cite[Lemma 3.8 and Lemma 3.10]{konno2024involutions}}]\label{lem: Froyshov of SWF-like Z2 spectra}
The map
\[
\delta_R\colon\mathcal{LE}_{\Z_2}\longrightarrow\Q;
\qquad [X]\longmapsto\delta_R(X)
\]
is a group homomorphism.
\end{lem}

\begin{comment}
\begin{proof}
Let $f\colon X\to Y$ be a local map between SWF-like $\Z_2$-spectra. The induced pullback
\[
f^\ast\colon H^\ast_{\Z_2}(Y;\mathbb{F}_2)\longrightarrow H^\ast_{\Z_2}(X;\mathbb{F}_2)
\]
is a degree-preserving $\mathbb{F}_2[v]$-module homomorphism that becomes an isomorphism after inverting $v$. It therefore sends tower generators to tower generators, so $\delta_R(X)\leq\delta_R(Y)$. If $X$ and $Y$ are locally equivalent, applying this argument in both directions gives $\delta_R(X)=\delta_R(Y)$. Thus $\delta_R$ is well-defined on $\mathcal{LE}_{\Z_2}$.

It remains to show that $\delta_R$ is additive. By \Cref{cor: equivariant chain tensor product} and the fact that $\mathbb{F}_2[v]$ is a PID (and thus all perfect dg modules over it are formal), we have
\[
H^\ast_{\Z_2}(X\wedge Y;\mathbb{F}_2)\simeq H^\ast\left( H^\ast_{\Z_2}(X;\mathbb{F}_2)\otimes^L_{\mathbb{F}_2[v]} H^\ast_{\Z_2}(Y;\mathbb{F}_2) \right).
\]
Applying the universal coefficient theorem then gives
\[
H^\ast_{\Z_2}(X\wedge Y;\mathbb{F}_2)/\mathrm{tor}\simeq \left[H^\ast_{\Z_2}(X;\mathbb{F}_2)/\mathrm{tor}\right]\otimes\left[H^\ast_{\Z_2}(Y;\mathbb{F}_2)/\mathrm{tor}\right].
\]
Therefore we deduce that $\delta_R(X\wedge Y)=\delta_R(X)+\delta_R(Y)$, as required.
\end{proof}
\end{comment}

Combining \Cref{lem: additivity,lem: Froyshov of SWF-like Z2 spectra}, we obtain a group homomorphism from $\Omega(S^1\times S^2)$ by taking the composition
\[
\Omega(S^1\times S^2)
\xrightarrow{\hspace{1.5em}SWF_{R,c}^\infty\hspace{1.5em}}
\wt{\prod}_{n>0}\mathcal{LE}_{\Z_2}
\xrightarrow{\hspace{1.5em}\wt{\prod}_{n>0}\delta_R\hspace{1.5em}}
\wt{\prod}_{n>0}\Q.
\]
This sends an $\wt{H}_{\Q}$-cobordism class $[(Y,\phi)]$ to the equivalence class of the sequence $\{\delta_R(SWF_{R,c}^n(Y,\phi))\}_{n>0}$.

\begin{defn}
We call the above homomorphism obtained as the composition the \emph{asymptotic real Fr{\o}yshov invariant} and denote it by $\delta_R^\infty$.
\end{defn}

\subsection{The connected sum formula and computations for $\wt{H}$-cobordisms}

We focus on the connected sum homomorphisms defined in \Cref{cor: connected sum homomorphism}:
\[
\Sigma_\Q\colon\Theta^3_\Z\longrightarrow\Omega(S^1\times S^2)
\qquad\text{ and }\qquad
\Sigma_{\mathbb{F}_2}\colon\Theta^3_\Z\longrightarrow\Omega_{\mathrm{spin}}(S^1\times S^2).
\]
We start by studying their relation with real Seiberg--Witten theory.

\begin{lem}\label{lem: SWF of connected sum map}
Let $Y$ be an oriented integral homology $3$-sphere, and consider the distinguished homology handle $(Y\# S^1\times S^2,\phi_Y)$, where $\phi_Y$ is a generator of $H^1(Y\# S^1\times S^2;\Z)$. Let $SWF(Y)$ denote the Seiberg--Witten Floer spectrum of $Y$ with respect to its unique spin structure, regarded as a finite $\mathrm{Pin}(2)$-spectrum. Then, for every integer $n>0$,
\[
SWF_R^n(Y\# S^1\times S^2,\phi_Y)
\quad\text{is locally equivalent to}\quad
SWF(Y)^{\wedge 2^{n-1}}.
\]
Here the generator of $\Z_4$ acts by $j^{\wedge 2^{n-1}}$, where $j\in\mathrm{Pin}(2)$ acts on each $SWF(Y)$ factor.
\end{lem}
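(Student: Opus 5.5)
First, I will identify the cyclic covers explicitly. The $2^n$-fold cyclic cover of $Y\#(S^1\times S^2)$ determined by $\phi_Y$ is
\[
Y^{\#2^n}\#(S^1\times S^2).
\]
It is obtained by cutting $S^1\times S^2$ along $\{pt\}\times S^2$, taking $2^n$ copies, and attaching a copy of $Y$ to each. Here $t$ cyclically permutes the $2^n$ copies of $Y$. The deck involution $\tau_n=t^{2^{n-1}}$ therefore swaps the copies in $2^{n-1}$ pairs and acts on the $S^1\times S^2$ summand by the rotation $z\mapsto -z$. Thus $(Y^{2^n},\tau_n)$ is an equivariant connected sum of two pieces:
\begin{itemize}
\item $2^{n-1}$ copies of the swap involution on $Y\#Y$;
\item the standard free rotation on $S^1\times S^2$.
\end{itemize}
These pieces are joined along free orbits of points.

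Second, I will build a real spin excision cobordism that splits off the $S^1\times S^2$ factor. Take $Y^{2^n}\times I$ and attach $\tau_n$-equivariant pairs of $3$-handles along the pairs of separating spheres. The resulting equivariant cobordism runs to the disjoint union
\[
(S^1\times S^2,\text{rotation})\ \sqcup\ \bigsqcup_{2^{n-1}}(Y\sqcup Y,\text{swap}).
\]
Equivalently, one can use the $S^0$-sum cobordisms of \cite{KMT21}. Each component of the result is a homology sphere or $S^1\times S^2$ with involution, so the anti-invariant rational cohomology of the cobordism vanishes. This holds because $H_1$ of the cobordism equals that of $Y^{2^n}$, and \Cref{lem: homology handle real spin} shows it is $\tau_n$-invariant. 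The signature is $0$ and the real spin structure is unique up to sign, so \Cref{prop: real BF gives local map} gives local maps in both directions; the reversed cobordism is handled the same way. Combined with \Cref{lem: identity element}, this reduces the claim to computing the real Floer type of the swap $(Y\sqcup Y,\mathrm{swap})$, or equivalently of $Y\#Y$ with the swap involution, smashed $2^{n-1}$ times.

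Third, I will compute the swap factor. For the swap involution on $Y\sqcup Y$, the real Coulomb slice consists of pairs $(a,\Phi)$ on the first copy, with the second copy determined by $I$. So the $I$-invariant configuration space is identified with the full configuration space of one copy of $Y$. Under this identification, the real involution $I=j\widetilde\tau$ restricts to the action of $j$ on one factor, so the $\Z_4=\langle j\rangle$ action matches the restriction of the $\mathrm{Pin}(2)$-action on $SWF(Y)$. Hence the real Floer spectrum of $(Y\sqcup Y,\mathrm{swap})$ is $SWF(Y)$ with its $\langle j\rangle$-action. Smashing the $2^{n-1}$ factors and using the $S^0$ factor from the $S^1\times S^2$ piece gives $SWF(Y)^{\wedge 2^{n-1}}$, with generator acting by $j^{\wedge 2^{n-1}}$.

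The main obstacle is the gradings. I must check that the formal suspensions agree, namely that the APS correction for the swap factor equals the ordinary correction $n(Y,g)$ of $SWF(Y)$. I would handle this by taking a spin filling $X$ of $Y$ and using the doubled filling $X\sqcup X$ with the swap involution; its real index is the ordinary index on $X$. I would also need to confirm that the local maps built from $3$-handle cobordisms respect the $\Z_4$-structure, which requires checking that the real spin structure extends compatibly over the handles.
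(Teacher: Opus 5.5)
This is a genuinely different route from the paper's, and it can be made to work, but Step~2 as written does not go through.

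\textbf{How the two routes differ.} The paper never splits the free involution directly. It views $\Sigma(Y)^{2^n}$ as the double cover of $(Y^{\#2^{n-1}})_0(U_n)$ for the local unknot $U_n$. It then uses \cite[Theorem~4.10]{miyazawa2025satellite} to pass to the branched double cover $Y^{\#2^n}$ with its non-free swap involution. That object is computed with the $\mathrm{Pin}(2)$ connected sum formula \cite[Theorem~3.19]{Dai-Sasahira-Stoffregen:2026}: one writes $SWF(Y^{\#2^n})\simeq A\wedge A$ with $A=SWF(Y)^{\wedge 2^{n-1}}$ and $\widetilde\tau(x\wedge y)=y\wedge(-x)$, and identifies $(A\wedge A)^I$ with $A$ via $x\mapsto x\wedge(-jx)$.

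You instead split the cover into $(S^1\times S^2,\text{rotation})$ and $2^{n-1}$ copies of $(Y\sqcup Y,\mathrm{swap})$, using real Bauer--Furuta maps of an equivariant $3$-handle cobordism and its reverse. The fixed-point identification then becomes tautological, since the $I$-invariant slice is the graph of $I$; this is the configuration-space version of the paper's $\Delta_{-j}$. Your route avoids both external theorems and the assertion that the DSS equivalence can be chosen swap-equivariantly. Handle cobordisms suffice because only local equivalence is claimed. The price is that you leave the paper's framework: its real Floer spectra are for connected manifolds, and \Cref{prop: real BF gives local map} assumes $\tau$ maps each boundary component to itself.

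\textbf{What must be repaired in Step~2.} Your claim that all anti-invariant rational cohomology of the cobordism $W$ vanishes is false. Indeed $W\simeq(S^1\times S^2)\vee\bigvee_{2^n}Y$, so $H^3(W;\Q)\cong\Q^{1+2^n}$, and the pairwise permutation of the $Y$-summands gives a $2^{n-1}$-dimensional anti-invariant part. What is true, and what locality actually uses, is $H^1(W;\R)^{-\tau^\ast}=0$ together with vanishing anti-invariant $b^+$. The latter holds because $H_2(W;\Q)$ is spanned by the invariant class of $\{\mathrm{pt}\}\times S^2$, which has square zero.

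The extra anti-invariant classes reflect that the componentwise normalization in the paper's footnote is no longer automatic. For a swapped pair $C,\tau C$, anti-invariance only gives $\int_{\tau C}\mathbf t(*a)=-\int_C\mathbf t(*a)$. Suppose you do not impose $\int_C\mathbf t(*a)=0$ on one member of each pair. Take $f$ harmonic, equal to $c$ on $C$, $-c$ on $\tau C$, and $0$ on the invariant components. Then $e^{if}$ is a real gauge transformation preserving the slice, and $df$ lies in the kernel of the linearization on the fixed-point set. Hence properness fails.

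With the normalization imposed, the fixed-point operator is an isomorphism after cancelling boundary eigenspaces. The resulting maps are equivariant only for the diagonal $\langle j\rangle$, because each swapped end has residual real gauge group $\{(u,\bar u)\}\cong S^1$. This still gives the local maps in $\mathfrak C_{\Z_4}$ that you want.

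\textbf{A correction to Step~1.} The pieces attached to $(S^1\times S^2,\text{rotation})$ along free orbits are copies of $(Y\sqcup Y,\mathrm{swap})$, not $Y\#Y$. The swap on $Y\#Y$ has a fixed circle, and relating it to $(Y\sqcup Y,\mathrm{swap})$ is exactly the branched-cover step the paper gets from \cite[Theorem~4.10]{miyazawa2025satellite}. Your ``or equivalently'' should therefore be dropped; you do not need it.
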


\begin{proof}
For simplicity, denote the distinguished homology handle $(Y\# S^1\times S^2,\phi_Y)$ by $\Sigma(Y)$. For every integer $n>0$, we have
\[
\Sigma(Y)^{2^n}\cong Y^{\# 2^{n-1}}\# S^1\times S^2\# Y^{\# 2^{n-1}}.
\]
The deck involution of $\Sigma(Y)^{2^n}\to\Sigma(Y)^{2^{n-1}}$ rotates the $S^1$ factor and exchanges the two $Y^{\# 2^{n-1}}$ summands. Consider the local unknot
\[
(Y^{\# 2^{n-1}},U_n) \coloneqq Y^{\# 2^{n-1}}\#(S^3,U),
\]
where $U\subset S^3$ is the unknot. We denote by $(Y^{\# 2^{n-1}})_0(U_n)$ the manifold obtained by zero-framed surgery on $U_n$.

In the notation of \cite{miyazawa2025satellite}, the spectrum $SWF_R^n(\Sigma(Y))$ is denoted by $SWF_R((Y^{\# 2^{n-1}})_0(U_n))$. By \cite[Theorem~4.10]{miyazawa2025satellite}, this is locally equivalent to $SWF_R(U_n)$, defined using the double cover of $Y^{\# 2^{n-1}}$ branched along $U_n$, which is diffeomorphic to $Y^{\# 2^n}$. Hence
\[
[SWF_R^n(\Sigma(Y))]=[SWF_R(Y^{\# 2^n})]\in\mathcal{LE}_{\Z_4},
\]
where $Y^{\# 2^n}$ is equipped with the branched-covering involution exchanging the two $Y^{\# 2^{n-1}}$ summands.

To compute $SWF_R(Y^{\# 2^n})$, set $A \coloneqq SWF(Y)^{\wedge 2^{n-1}}$. By the $\mathrm{Pin}(2)$-equivariant connected sum formula \cite[Theorem~3.19]{Dai-Sasahira-Stoffregen:2026}, we have
\[
SWF(Y^{\# 2^n})\simeq A\wedge A.
\]
The equivalence is induced by the Bauer--Furuta map of the standard $1$-handle cobordism and, by naturality, may be chosen equivariantly with respect to the involution exchanging the two factors. After fixing the sign of its odd lift, its action on a pointed model $X\wedge X$ for $A\wedge A$ is given by
\[
\widetilde{\tau}(x\wedge y)=y\wedge(-x),
\]
where $-x$ denotes the action of $-1\in S^1\subset\mathrm{Pin}(2)$ on $x$. Then $\widetilde{\tau}^2=-1$, as required. The real involution is $I=j\widetilde{\tau}$, so
\[
I(x\wedge y)=jy\wedge(-jx).
\]
It follows that the map
\[
\Delta_{-j}\colon X\longrightarrow(X\wedge X)^I;
\qquad x\longmapsto x\wedge(-jx)
\]
is an identification. Moreover, $j\Delta_{-j}(x)=jx\wedge x=\Delta_{-j}(jx)$, so this identification intertwines the $\Z_4$-action on $(X\wedge X)^I$ induced by $j$ with the diagonal $j$-action on $A$. The same fixed-point identification for the representation suspensions gives precisely the grading of $A$. Hence, by the fixed-point construction of the real Floer spectrum,
\[
SWF_R(Y^{\# 2^n})
\simeq SWF(Y^{\# 2^n})^I
\simeq SWF(Y)^{\wedge 2^{n-1}}
\]
as $\Z_4$-spectra. The lemma follows.
\end{proof}

For an oriented integral homology $3$-sphere $Y$, let $\delta(Y)$ denote the monopole Fr{\o}yshov invariant \cite{Froyshov2010MonopoleFloer}. It is one-half the least degree of a homogeneous nontorsion class in the reduced $S^1$-equivariant cohomology of $SWF(Y)$, viewed as a module over $\mathbb{F}_2[U]$ with $\deg U=2$. We use the convention $\delta(\Sigma(2,3,5))=1$, where $\Sigma(2,3,5)$ has the boundary orientation of the negative-definite $E_8$ plumbing.

\begin{lem}\label{lem: real Froyshov of connected sum map}
Let $Y$ be an oriented integral homology $3$-sphere. Then
\[
\delta_R^\infty(\Sigma_\Q(Y))
=[\{2^{n-1}\delta(Y)\}_{n>0}]
\in\wt\prod_{n>0}\Q.
\]
\end{lem}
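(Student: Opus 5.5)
We compute the $n$th term directly. By definition, $\delta_R^\infty(\Sigma_\Q(Y))$ is the class of the sequence $\{\delta_R(SWF_{R,c}^n(Y\# S^1\times S^2,\phi_Y))\}_{n>0}$. Here $SWF_{R,c}^n$ is the restriction of $SWF_R^n$ to $\langle j^2\rangle\cong\Z_2\subset\Z_4$, and restriction of the symmetry gives a homomorphism $\mathcal{LE}_{\Z_4}\to\mathcal{LE}_{\Z_2}$. Therefore \Cref{lem: SWF of connected sum map} shows that $SWF_{R,c}^n(\Sigma(Y))$ is locally equivalent, as an SWF-like $\Z_2$-spectrum, to $SWF(Y)^{\wedge 2^{n-1}}$. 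The $\Z_2$-action on this smash product is the diagonal action of $j^2=-1\in S^1\subset\mathrm{Pin}(2)$ on each factor.

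Since $\delta_R$ is a homomorphism on $\mathcal{LE}_{\Z_2}$ by \Cref{lem: Froyshov of SWF-like Z2 spectra}, we get
\[
\delta_R(SWF_{R,c}^n(\Sigma(Y)))=2^{n-1}\,\delta_R\bigl(SWF(Y)|_{\Z_2}\bigr),
\]
where $SWF(Y)|_{\Z_2}$ denotes $SWF(Y)$ with the action restricted to $\{\pm1\}\subset S^1$. This uses that the diagonal action on the smash product is the smash of the restricted actions. The problem therefore reduces to the single identity
\[
\delta_R\bigl(SWF(Y)|_{\{\pm1\}}\bigr)=\delta(Y).
\]
The remaining task is to compare $\mathbb{F}_2[v]$-tower degrees with $\mathbb{F}_2[U]$-tower degrees.

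For this comparison, note that $\{\pm1\}\subset S^1$ acts freely off the $S^1$-fixed set, and the $\{\pm1\}$-fixed set of $SWF(Y)$ equals its $S^1$-fixed set, namely a representation sphere $(\R^s)^+$. The spectrum is SWF-like in both senses. Restriction along $B\Z_2\to BS^1$ gives a map $H^\ast_{S^1}(SWF(Y);\mathbb{F}_2)\to H^\ast_{\Z_2}(SWF(Y);\mathbb{F}_2)$ sending $U\mapsto v^2$. By Borel localization on both sides this map is compatible with the restriction to fixed points, so a nontorsion class maps to a nontorsion class of the same degree. This gives $\delta_R\le\delta$, using the convention that $\delta$ is half the minimal degree.

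For the reverse inequality I would use the Gysin/fibration sequence $S^1/\{\pm1\}\to B\Z_2\to BS^1$. For a finite $S^1$-spectrum $X$ this gives
\[
H^\ast_{\Z_2}(X)\cong H^\ast_{S^1}(X)\otimes_{\mathbb{F}_2[U]}\mathbb{F}_2[v]
\]
at the cochain level, since $\mathbb{F}_2[v]$ is free over $\mathbb{F}_2[U]$ on $1,v$. Tensoring the $\mathbb{F}_2[U]$ decomposition of $H^\ast_{S^1}$ into a tower starting in degree $2\delta(Y)$ plus torsion shows that the $v$-tower starts in the same degree. Here one needs the degree-$1$ generator $v$ not to produce a lower nontorsion class. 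This holds because the new classes are $v$ times old ones, which raises degree. Alternatively, I would run the argument at the level of chain models (SWF-like $\mathbb{F}_2[U]$-complexes), where formality over the PID makes the extension of scalars explicit.

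Making this extension-of-scalars identification rigorous is the main obstacle. It requires that the Borel cochains over $\Z_2$ be the base change of those over $S^1$. This follows from the fibration $ES^1\times_{\Z_2}X\to ES^1\times_{S^1}X$ with fiber $\R P^\infty$, whose Serre spectral sequence degenerates since $v$ restricts to the generator of the fiber. Grading normalizations must also agree: the rational suspensions in $SWF(Y)$ and in $SWF_R^n$ must match. This is supplied by the grading identification in \Cref{lem: SWF of connected sum map}. Taking the reduced-product class of the sequence $\{2^{n-1}\delta(Y)\}_{n>0}$ then gives the statement.
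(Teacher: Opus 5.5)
Your proposal is correct and follows the paper's proof. You reduce via \Cref{lem: SWF of connected sum map} (restricted to $\langle j^2\rangle$) and additivity of $\delta_R$ to the identity $\delta_R(SWF(Y)|_{\{\pm1\}})=\delta(Y)$, which the paper likewise obtains from $U\mapsto v^2$ and the freeness of $\mathbb{F}_2[v]$ over $\mathbb{F}_2[U]$ (its ``standard change-of-groups formula''), though in your justification the fiber of $ES^1\times_{\Z_2}X\to ES^1\times_{S^1}X$ is $S^1/\{\pm1\}\cong S^1$, not $\R P^\infty$; it is this circle fiber, with $v$ restricting to its $H^1$ generator, that lets Leray--Hirsch present $H^\ast_{\Z_2}(X)$ as a free $H^\ast_{S^1}(X)$-module on $1,v$.
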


\begin{proof}
By \Cref{lem: SWF of connected sum map},
\[
SWF_{R,c}^n(Y\# S^1\times S^2,\phi_Y)
\quad\text{is locally equivalent to}\quad
SWF(Y)^{\wedge 2^{n-1}},
\]
where the generator of $\Z_2$ acts by $-1$ on each $SWF(Y)$ factor. This action is the restriction of the diagonal $S^1$-action to $\Z_2\subset S^1$.

The induced map
\[
H^\ast(BS^1;\mathbb{F}_2)\longrightarrow H^\ast(B\Z_2;\mathbb{F}_2)
\]
sends $U$ to $v^2$. Since $\mathbb{F}_2[v]$ is free over $\mathbb{F}_2[U]$ under this map, the standard change-of-groups formula shows that restriction preserves the lowest degree of a tower generator. Thus $\delta_R(SWF(Y))=\delta(Y)$. By \Cref{lem: Froyshov of SWF-like Z2 spectra}, we obtain
\[
\delta_R(SWF_{R,c}^n(Y\# S^1\times S^2,\phi_Y))
=2^{n-1}\delta_R(SWF(Y))
=2^{n-1}\delta(Y).
\]
The lemma follows.
\end{proof}

We are now ready to prove \Cref{thm:main,cor: nonextendable cyclic action,cor:main}, whose statements we recall for convenience.

\mainkerneltheorem*

\begin{proof}
Let $Y=\Sigma(2,3,5)$ be the Poincar\'e homology $3$-sphere, with the orientation fixed above. By \Cref{cor: connected sum homomorphism},
\[
\Sigma_\Q(Y)\in\ker(\Omega(S^1\times S^2)\longrightarrow\Omega^{\mathrm{top}}(S^1\times S^2)).
\]
Since $\delta(Y)=1$, \Cref{lem: real Froyshov of connected sum map} gives
\[
\delta_R^\infty(\Sigma_\Q(Y))
=[\{2^{n-1}\delta(Y)\}_{n>0}]
=[\{2^{n-1}\}_{n>0}]
\in\wt\prod_{n>0}\Q.
\]
This class has infinite order, since no nonzero integer multiple of the sequence is eventually zero. Since $\delta_R^\infty$ is a group homomorphism, $\Sigma_\Q(Y)$ also has infinite order. The theorem follows.
\end{proof}

\nonextendablecor*

\begin{proof}
Attach a $1$-handle to a contractible topological filling of $\Sigma(2,3,5)$ provided by \cite[Theorem~1.4$^\prime$]{Freedman:1982}. The resulting manifold fills $N=\Sigma(2,3,5)\#(S^1\times S^2)$ and is homotopy equivalent to $S^1$. Its universal cover is therefore contractible and has boundary $M$ with the prescribed deck action. Thus it gives the required contractible free $\Z$-equivariant topological filling of $M$. By \cite[Theorem~8.2]{FQ90}, this universal cover admits a smooth structure extending the prescribed boundary smoothing, giving a nonequivariant smooth contractible filling of $M$.

Suppose that $M$ admitted a free $\Z$-equivariant smooth filling $D$ with finite-dimensional total rational homology. The quotient $D/\Z$ is a compact smooth filling of $N$. Removing the interior of a tubular neighborhood of an embedded interior loop on which the covering class evaluates to $1$ gives a distinguished cobordism between $N$ and $S^1\times S^2$. Its infinite cyclic cover is obtained from $D$ by removing the interior of the lifted tubular neighborhood $\R\times B^3$. Since the common boundary is $\R\times S^2$, the Mayer--Vietoris sequence shows that this cover still has finite-dimensional total rational homology. The resulting cobordism is therefore a smooth $\wt{H}_{\Q}$-cobordism, contradicting \Cref{thm:main}.
\end{proof}

\Fkerneltheorem*

\begin{proof}
Consider the example $Y=\Sigma(2,3,5)$ used in the proof of \Cref{thm:main}. By \Cref{cor: connected sum homomorphism}, $\Sigma_F(Y)$ lies in the kernel of the natural map $\Omega_F(S^1\times S^2)\to\Omega_F^{\mathrm{top}}(S^1\times S^2)$.

We claim that $\Sigma_F(Y)$ has infinite order. Otherwise, there exists an integer $n>0$ and a smooth $\wt{H}_F$-cobordism $W$ from $Y^{\# n}\#(S^1\times S^2)$ to $S^1\times S^2$. By \Cref{lem: F-kawauchi implies Q-kawauchi}, $W$ is also an $\wt{H}_{\Q}$-cobordism. This implies that $n\Sigma_\Q(Y)=0$ in $\Omega(S^1\times S^2)$, contradicting the infinite order of $\Sigma_\Q(Y)$ established in the proof of \Cref{thm:main}. Thus $\Sigma_F(Y)$ generates the required infinite cyclic subgroup.
\end{proof}

The following corollary is a refinement of \Cref{cor: nonextendable cyclic action}.

\begin{cor}\label{cor: cyclic cover Betti growth}
Let $M$ be the $\Z$-periodic homology $\R\times S^2$ constructed in the proof of \Cref{cor: nonextendable cyclic action}. Every free $\Z$-equivariant smooth filling $W$ of $M$ satisfies
\[
b_2(W/d\Z;\Q)\geq d
\qquad\text{for every integer }d>0,
\]
where $W/d\Z$ denotes the quotient by the subgroup $d\Z\subset\Z$.
This bound is sharp: there exists such a filling $W_0$ with $b_2(W_0/d\Z;\Q)=d$ for every integer $d>0$. In contrast, $M$ admits a free $\Z$-equivariant topological filling $V$ satisfying
\[
b_2(V/d\Z;\Q)=0
\qquad\text{for every integer }d>0.
\]
\end{cor}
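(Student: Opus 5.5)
The plan has three parts: the topological filling, the sharp smooth filling, and the lower bound; only the last is substantial. Write $Y=\Sigma(2,3,5)$ and $N=Y\#(S^1\times S^2)$, so $M$ is the infinite cyclic cover of $N$. For any free $\Z$-equivariant filling $W$ of $M$ and any $d>0$, the quotient $W/d\Z$ is a compact filling of $M/d\Z=N^d$. As in the proof of \Cref{lem: SWF of connected sum map}, we have $N^d\cong Y^{\# d}\#(S^1\times S^2)$.

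For the topological filling I will take $V$ to be the universal cover of $C\natural(S^1\times B^3)$, where $C$ is Freedman's contractible filling of $Y$; this is the filling already used in the proof of \Cref{cor: nonextendable cyclic action}. Then $V/d\Z$ is a $d$-fold cover of a space homotopy equivalent to $S^1$, so it is itself homotopy equivalent to $S^1$, and hence $b_2(V/d\Z;\Q)=0$.

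For sharpness I will use the surgery description $Y=S^3_{-1}(T)$, where $T$ is the appropriately oriented trefoil, so that $Y$ bounds the trace $X_{-1}(T)$ with $b_2=1$ and $b_1=0$. Set $P\coloneqq X_{-1}(T)\natural(S^1\times B^3)$, and let $W_0$ be its infinite cyclic cover determined by the $S^1$ factor. The quotient $W_0/d\Z$ is a boundary sum of $d$ copies of $X_{-1}(T)$ with $S^1\times B^3$, since the $\natural$ decomposition lifts. Hence $b_2(W_0/d\Z;\Q)=d$, and the boundary is $N^d$ with the correct deck action.

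For the lower bound, set $X_d\coloneqq W/d\Z$. I will first cap off the $S^1\times S^2$ summand. Attach a $0$-framed $2$-handle along the $S^1$-factor circle in $N^d$, obtaining $X_d'$ with $\partial X_d'=Y^{\# d}$. Using the Mayer--Vietoris sequence, I will check that $b_2(X_d')\leq b_2(X_d)+1$, with the extra class absorbed when the attaching circle is rationally nontrivial in $X_d$. If that accounting is lossy, the fallback is to instead remove a loop representing the distinguished class, as in the proof of \Cref{cor: nonextendable cyclic action}, which changes $b_2$ by at most $1$ in the opposite direction. I then want a gauge-theoretic inequality forcing $b^-(X_d')\geq d$ for every smooth filling of $Y^{\# d}$. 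Reversing orientation, $-X_d'$ fills $-Y^{\# d}$ and $\delta(-Y^{\# d})=-d$. The inequality I would invoke is the Frøyshov-type bound with $b^+$ correction: for a filling $Z$ of an integral homology sphere $Y'$, one should have $b^+(Z)\geq -\delta(Y')$ up to the characteristic-vector term, which is handled by Elkies' theorem. To make this precise I would blow up at points, or do $\pm1$ surgeries on a basis of positive classes, reducing to the negative definite case, where Frøyshov's inequality contradicts $\delta<0$. The key fact to check is that each such step changes $\delta$ by at most one.

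I expect the main obstacle to be obtaining the full linear bound $d$ rather than merely $b_2\geq 1$, since the plain Frøyshov inequality only obstructs definiteness. I would therefore try two routes. The first is the refined inequality $\delta(Y')\leq b^+(Z)+\delta(\text{definite part})$ from monopole Floer theory, using the $U$-tower relative grading: each positive direction lowers the tower by at most $U^1$, i.e.\ one unit of $\delta$. The second is, for $d$ a power of $2$, to exploit \Cref{lem: real Froyshov of connected sum map} directly, since growth $2^{n-1}$ matches $d/2$. Whichever route works, I must still control the final off-by-one coming from the $S^1\times S^2$ cap.
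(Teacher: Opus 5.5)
Your constructions of the topological filling $V$ and the sharp filling $W_0$ are the paper's, and they are fine. The gap is in the lower bound. You treat each $W/d\Z$ as an arbitrary smooth filling of $N^d$ and try to force $b_2\geq d$ from $\delta(\Sigma(2,3,5)^{\#d})=d$, using an inequality of the form $b^+(Z)\geq-\delta(\partial Z)$ (up to a nonnegative characteristic-vector term). No such inequality holds. By $d(S^3_{+1}(K))=-2V_0(K)$, one has $\delta(S^3_{+1}(T_{2,7}))=-V_0(T_{2,7})=-2$. Yet $S^3_{+1}(T_{2,7})$ bounds the trace $X_{+1}(T_{2,7})$, which has $b^+=1$ and $b^-=0$. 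More generally, $\delta(S^3_{+1}(T_{p,q}))$ is unbounded while these traces all have $b_2=1$.

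Your proposed mechanism fails for the same reason. Once $b^+>0$, the cobordism map carries no tower information: the fixed-point part is a null inclusion of trivial representation spheres, so the localized map is zero. There is therefore no rule of ``one unit of $\delta$ per positive direction''. Blow-ups do not lower $b^+$, and positive classes cannot in general be surgered away. Fr\o yshov's inequality together with Elkies' theorem only gives $b^-\geq 1$. The real Fr\o yshov route has the same defect: when $b^+_\ell>0$, the fixed-point restriction of the real Bauer--Furuta map is the null inclusion $S^0\to S^{H^+_\ell}$. In any case it would only treat $d=2^n$, with a bound of order $d/2$.

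The missing idea is that the linear growth is purely algebraic and comes from the $\Z$-action you discarded. Let $\rho$ be the rank of $H_2(W;\Q)$ over $\Q[t,t^{-1}]$, equivalently $\rho=\dim_{\Q(t)}H_2(W/\Z;\Q(t))$. By the universal coefficient theorem, $H_2(W;\Q)/(t^d-1)$ injects into $H_2(W/d\Z;\Q)$. Each free summand contributes $\Q[t,t^{-1}]/(t^d-1)\cong\Q^d$, so $b_2(W/d\Z;\Q)\geq d\rho$.

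Gauge theory is needed only once, to show $\rho\geq1$. Suppose $\rho=0$, and surger loops in the kernel of the covering class that represent a $\Q(t)$-basis of $H_1(W/\Z;\Q(t))$. Since $N$ is $\Q(t)$-acyclic, Poincar\'e--Lefschetz duality shows that each surgery lowers $\dim H_1(-;\Q(t))$ by one and keeps $H_2(-;\Q(t))=0$. One ends with a $\Q(t)$-acyclic quotient, i.e.\ a free $\Z$-equivariant smooth filling of $M$ with finite-dimensional total rational homology. This contradicts \Cref{cor: nonextendable cyclic action}, and hence ultimately \Cref{thm:main}. This route also avoids the off-by-one problem from capping off the $S^1\times S^2$ summand, since no capping is needed.
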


\begin{proof}
Set $\overline W\coloneqq W/\Z$ and $K\coloneqq\Q(t)$, with local coefficients determined by the covering class. We first claim that $H_2(\overline W;K)\neq0$. Suppose otherwise. Perform interior surgeries on embedded loops in the kernel of the covering class representing nonzero classes in $H_1(-;K)$. Since the boundary is $K$-acyclic, Poincar\'e--Lefschetz duality and the surgery exact sequences show that each such surgery decreases the dimension of $H_1(-;K)$ by one without changing $H_2(-;K)$. These surgeries preserve the boundary and extend the covering class. After finitely many surgeries, $H_1(-;K)$ vanishes, and duality implies that the resulting filling is $K$-acyclic. Its infinite cyclic cover therefore has finite-dimensional total rational homology, contradicting \Cref{cor: nonextendable cyclic action}.

Write $\rho\coloneqq\dim_K H_2(\overline W;K)\geq1$. Since $H_2(W;\Q)$ has rank $\rho$ over $\Q[t,t^{-1}]$, the universal coefficient theorem for cyclic covers gives
\[
b_2(W/d\Z;\Q)\geq d\rho\geq d.
\]

For sharpness, the surgery trace of $(-1)$-surgery on the left-handed trefoil gives a smooth filling of $\Sigma(2,3,5)$ homotopy equivalent to $S^2$. Attaching a $1$-handle gives a filling of $N$ homotopy equivalent to $S^1\vee S^2$. Its infinite cyclic cover $W_0$ is a free $\Z$-equivariant smooth filling of $M$, and each quotient $W_0/d\Z$ has second Betti number $d$.

Finally, take the contractible free $\Z$-equivariant topological filling $V$ constructed in the proof of \Cref{cor: nonextendable cyclic action}. Each quotient $V/d\Z$ is homotopy equivalent to $S^1$, so its second Betti number is zero.
\end{proof}

\subsection{Computations for $\wt{H}_{\mathbb{F}_2}$-cobordisms}

Finally, we study the map $\Sigma_{\mathbb{F}_2}\colon\Theta^3_\Z\to\Omega_{\mathrm{spin}}(S^1\times S^2)$.

\begin{defn}
For any positive even integer $n$, let $P_n$ be the almost $j$-complex whose underlying dg $\mathbb{F}_2[v]$-module is freely generated by $x,y,z$ of degrees $0,0,n-1$, respectively, with differential and $j$-action given by
\[
dx=dy=0,\quad dz=v^n x+v^n y,\quad j(x)=y,\quad j(y)=x,\quad j(z)=z.
\]
Here, in the formulas for $j$, we use the same symbols for the images of $x,y,z$ in $\hat P_n$.

We say that an almost $j$-complex is \emph{locally $n$-simple} if it is almost $j$-locally equivalent to $P_n$ up to a degree shift.
\end{defn}

\begin{lem}\label{lem: powers of locally simple complexes}
Let $(C,j)$ be a locally $n$-simple almost $j$-complex. Then, for every integer $k>0$, the torsion submodule of $H^\ast\mathrm{Core}((C,j)^{\otimes k})$ is a nonzero direct sum of degree-shifted copies of $\mathbb{F}_2[v]/(v^n)$.
\end{lem}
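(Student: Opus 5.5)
We first reduce to the model complex. The core depends only on the almost $j$-local equivalence class (by the core discussion adapted from \cite{hendricks2021applications}), and tensor products respect that class by \Cref{prop: invariance and partial order}. Degree shifts only shift gradings. Hence it suffices to treat $(C,j)=(P_n,j)$ and to compute the torsion of $H^\ast\mathrm{Core}(P_n^{\otimes k})$.

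\emph{Step 1: the underlying module of $P_n^{\otimes k}$.} The basis $x,y,z$ makes $P_n$ clean: $\partial(dz)/\partial v=nv^{n-1}(x+y)=0$ since $n$ is even, so $\Phi_v=0$ on $P_n$. The correction term $v\Phi_v\otimes\Phi_v$ therefore vanishes, and $P_n^{\otimes k}$ is the ordinary tensor product with $j^{\otimes k}$ on the hat truncation.

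Change basis in $P_n$ to $x$, $w\coloneqq x+y$, $z$. Then $P_n\cong\mathbb{F}_2[v]x\oplus E$, where $E=[z\mapsto v^nw]$ is the elementary complex $E_n$. Its cohomology is $\mathbb{F}_2[v]\oplus\mathbb{F}_2[v]/(v^n)$. Using the rule that $E_a\otimes E_b$ splits as two shifted copies of $E_{\min(a,b)}$, we get
\[
P_n^{\otimes k}\simeq \mathbb{F}_2[v]\oplus\bigoplus(\text{shifted copies of }E_n).
\]
So every torsion summand of the cohomology is $\mathbb{F}_2[v]/(v^n)$. The core is a chain direct summand, so its torsion is also a direct sum of shifted copies of $\mathbb{F}_2[v]/(v^n)$, via Krull--Schmidt over the graded PID. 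This proves the shape of the torsion.

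\emph{Step 2: nonvanishing (the main obstacle).} We must rule out the possibility that the core is just $\mathbb{F}_2[v]$ with some $j$. Suppose the torsion of $H^\ast\mathrm{Core}(P_n^{\otimes k})$ vanished. Then the core is a rank-one free module $\mathbb{F}_2[v]g$, and on $\hat{}=\mathbb{F}_2 g$ the relation $j^2\sim\id+\hat\Phi_v=\id$ forces $j=\id$.

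There would then be almost $j$-local maps $f\colon P_n^{\otimes k}\to\mathbb{F}_2[v]$ and $g\colon\mathbb{F}_2[v]\to P_n^{\otimes k}$, both $j$-equivariant on hat truncations up to homotopy. Consider $g$. Its image $g(1)$ must be a cycle that is nonzero after inverting $v$, so modulo $v$ it has a nonzero coefficient on the tower class. The localized cohomology of $P_n$ is generated by the class of $x$, which localizes equal to $y$. The tower class of $P_n^{\otimes k}$ is $x^{\otimes k}$, and it is congruent to any tensor word in $x,y$ only modulo the image of $v^n$.

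In the hat truncation, $d$ vanishes mod $v$, so $\hat{}$ is the free $\mathbb{F}_2$-span of words in $x,y,z$. The cycle $\hat g(1)$ must be a $j$-fixed element whose image under the hat-level tower projection is nonzero. Take the projection $\varepsilon$ sending $x,y\mapsto 1$ and $z\mapsto 0$, tensored $k$ times. A $j$-fixed combination of words in $x,y$ lies in the span of orbit sums $w+j(w)$, each with $\varepsilon=0$, because $j$ acts freely on words in $x,y$ (it swaps all letters). So $\varepsilon\hat g(1)=0$.

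On the other hand, locality forces $\varepsilon\hat g(1)\neq0$. The reason is that any cycle in $P_n^{\otimes k}$ lifting the tower has $x,y$-part with $\varepsilon=1$ mod $v$, since $v^n$-boundaries are divisible by $v$. This contradiction shows the torsion is nonzero. The care needed is in making the homotopy version of ``$j$-fixed'' precise: a homotopy on $\hat{}$ has zero differential, so homotopic maps are equal. This makes the equality $\hat g\circ\id=j\circ\hat g$ strict.
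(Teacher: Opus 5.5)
Your proof is correct. Step 1 matches the paper, and your observation that $\Phi_v=0$ on $P_n$ because $n$ is even is a useful shortcut: it makes $P_n^{\otimes k}$ literally the ordinary tensor product with hat-level action $j^{\otimes k}$, whereas the paper appeals to the remark that the corrected tensor product is the derived one.

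For nonvanishing you take a different route from the paper. The paper only shows that there is no almost $j$-local map $P_0\to P_n$: a $j$-invariant degree-zero cycle of $P_n$ is $0$ or $x+y$, and both are torsion. Combined with the map $P_n\to P_0$, this gives $[P_n]<0$. Translation invariance of the partial order (\Cref{prop: invariance and partial order}) then gives $k[P_n]<0$, so a torsion-free core, which would be $(P_0,\id)$, is impossible. You instead rule out an almost $j$-local map $(\mathbb{F}_2[v],\id)\to P_n^{\otimes k}$ directly, using that $j^{\otimes k}$ permutes the words in $x,y$ freely. Your argument is more elementary and avoids the order-theoretic machinery, but it depends on the explicit word basis of $P_n^{\otimes k}$. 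The paper's argument gives the stronger, reusable fact that $[P_n]$ is strictly negative, hence of infinite order in $\hat{\mathfrak{J}}$.

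One point needs to be made explicit. Your claim that a cycle ``lifting the tower'' has $\varepsilon=1$ modulo $v$ holds only in degree $0$; in degree $d>0$, the element $v^d x^{\otimes k}$ is a counterexample. So you need $\deg g(1)=0$. This follows from graded Krull--Schmidt, since the free summand of $P_n^{\otimes k}$ is generated by $x^{\otimes k}$ in degree $0$; it also follows from having local maps in both directions.

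In degree $0$ the argument is then immediate. That graded piece is the $\mathbb{F}_2$-span of words in $x,y$, all of which are cycles, with no $v$ or $z$ appearing. Compose $g$ with the local chain map $\epsilon^{\otimes k}$, where $\epsilon$ sends $x,y\mapsto 1$ and $z\mapsto 0$. Locality then means $g(1)$ is a sum of an odd number of words, while $j$-invariance forces an even number. This should replace your looser justification that ``$v^n$-boundaries are divisible by $v$''.
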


\begin{proof}
Since degree shifts do not affect the assertion and cores are invariant under almost $j$-local equivalence, it suffices to consider $(C,j)=P_n$. The core of $P_n^{\otimes k}$ is represented by a chain direct summand of $P_n^{\otimes k}$. As a dg $\mathbb{F}_2[v]$-module, $P_n$ splits as a free summand generated by $x$ and a two-generator complex on $x+y$ and $z$, whose differential is multiplication by $v^n$. Tensoring these elementary complexes shows that the torsion submodule of $H^\ast(P_n^{\otimes k})$ is a direct sum of degree-shifted copies of $\mathbb{F}_2[v]/(v^n)$, while its torsion-free quotient is a single copy of $\mathbb{F}_2[v]$ generated in degree zero. The same description of the torsion summands therefore holds for $H^\ast\mathrm{Core}(P_n^{\otimes k})$. It remains to show that this torsion submodule is nonzero.

Define a partial order on $\hat{\mathfrak{J}}$ by declaring that $[(C,j)]\leq[(C',j')]$ if there exists an almost $j$-local map from $(C,j)$ to $(C',j')$. This order is well-defined and translation-invariant by \Cref{prop: invariance and partial order}. Let $P_0$ be the free rank-one $\mathbb{F}_2[v]$-module generated by $w$ in degree zero, with zero differential and $j=\id$, so that $[P_0]=0$ in $\hat{\mathfrak{J}}$. The chain map
\[
P_n\longrightarrow P_0;\qquad x,y\longmapsto w,\quad z\longmapsto 0
\]
is almost $j$-local, so $[P_n]\leq 0$.

Conversely, suppose that $f\colon P_0\to P_n$ is an almost $j$-local map. Since $P_n$ has no elements of negative degree, the homotopy between $j\circ \hat{f}$ and $\hat{f}$ vanishes on $w$. Thus $f(w)$ is a $j$-invariant cycle of degree zero, so $\hat{f}(w)\in\{0,x+y\}$, which then implies $f(w)\in \{0,x+y\}$ as well. Both choices represent torsion classes, contradicting the locality of $f$. Hence $0\not\leq[P_n]$, and therefore $[P_n]<0$. Translation invariance gives
\[
0>[P_n]>2[P_n]>\cdots>k[P_n]
\]
for every integer $k>0$. In particular, $[P_n]$ has infinite order.

If $H^\ast\mathrm{Core}(P_n^{\otimes k})$ were torsion-free, it would be a single copy of $\mathbb{F}_2[v]$ generated in degree zero, with the identity induced $j$-action. The core would then be homotopy equivalent to $P_0$ as an almost $j$-complex, giving $k[P_n]=0$, a contradiction. Thus its torsion submodule is nonzero, completing the proof.
\end{proof}

We are now ready to prove \Cref{thm: main2}, whose statement we recall for convenience.

\mainspinkerneltheorem*

\begin{proof}
For each integer $k>0$, consider the oriented Brieskorn sphere
\[
Y_k=-\Sigma(2k+1,4k+1,4k+3),
\]
and set
\[
A_k \coloneqq (C^\ast_{\Z_2}(SWF(Y_k);\mathbb{F}_2),j^\ast).
\]
The computation in \cite[Theorems~1.8 and~5.6]{Sto172} (see \cite[Section 5.2]{Stoffregen20} for a more detailed discussion), together with orientation-reversal duality, shows that $A_k$ is locally $2k$-simple.

By \Cref{lem: SWF of connected sum map} and the smash-to-tensor comparison, for all integers $n,s>0$, we have
\[
H^\ast\mathrm{Core}(\mathcal{C}_R^n(Y_k\# S^1\times S^2,\phi_{Y_k})^{\otimes s})
\cong H^\ast\mathrm{Core}(A_k^{\otimes(2^{n-1}s)}).
\]
By \Cref{lem: powers of locally simple complexes}, the torsion submodule of this cohomology is nonzero, and its least annihilating power of $v$ is $v^{2k}$. Thus the bounds in \Cref{lem: how to find Z infty in Omega spin} satisfy $C_{k,s}=2k$, which is positive, strictly increasing in $k$, and independent of $s$.

By \Cref{cor: linear independence}, the elements $\Sigma_{\mathbb{F}_2}(Y_k)\in\Omega_{\mathrm{spin}}(S^1\times S^2)$ are linearly independent. By \Cref{cor: connected sum homomorphism}, these elements lie in the kernel of the smooth-to-topological homomorphism, proving the theorem.
\end{proof}

\appendix

\section{Parametrized K\"{u}nneth formula and $\Z_4$-equivariant cochain complexes}

This appendix provides the algebraic input for the passage from $\Z_4$-equivariant spectra to almost $j$-complexes. We first establish a parametrized K\"{u}nneth formula and then describe the associated almost $j$-structures and their behavior under tensor products.

\subsection{A parametrized K\"{u}nneth formula over a connected base} \label{subsec: appendix kunneth}

We prove a K\"{u}nneth formula for parametrized spectra under suitable finiteness hypotheses. For completeness and clarity, we include precise statements and proofs, together with relevant references. We will be following the arguments used in \cite{RamziEMSS} extremely closely, where a version of \Cref{prop: tensor product for connected base} is proven for pullback squares of spaces, under a very similar set of conditions on the base and the monodromy action.

\begin{prop}\label{prop: tensor product for connected base}
Let $B$ be a connected CW complex, let $S$ be a commutative Noetherian ring of finite global dimension, let $R$ be a commutative $S$-algebra, and let $\mathcal{X},\mathcal{Y}$ be parametrized spectra over $B$. Suppose that the fiber $\mathcal{X}_p$ of $\mathcal{X}$ at $p$ is a finite spectrum for every $p\in B$. Fix a basepoint $b\in B$ and suppose that the monodromy representation
\[
\rho_b\colon\pi_1(B,b)\longrightarrow
\mathrm{Aut}_S(H^\ast(\mathcal{X}_b;S))
\]
is unipotent, meaning that $H^\ast(\mathcal{X}_b;S)$ admits a $\pi_1(B,b)$-invariant filtration by graded $S$-submodules
\[
0=F_0\subset\cdots\subset F_n=H^\ast(\mathcal{X}_b;S)
\]
such that the action on $F_k/F_{k-1}$ is trivial for each $k=1,\ldots,n$. Then the natural map
\[
C^\ast(\mathcal{X};R)\otimes^L_{C^\ast(B;R)}
C^\ast(\mathcal{Y};R)
\longrightarrow
C^\ast(\mathcal{X}\wedge_B\mathcal{Y};R)
\]
is a quasi-isomorphism of $C^\ast(B;R)$-modules.
\end{prop}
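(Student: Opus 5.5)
The plan is the standard "thick subcategory plus cellular induction" argument, modeled on the Eilenberg--Moore type statements in \cite{RamziEMSS}. Fix $\mathcal{Y}$ and regard both sides as exact functors of $\mathcal{X}$ from parametrized spectra over $B$ to $C^\ast(B;R)$-modules. The left side is exact because derived tensor products preserve cofiber sequences. The right side is exact because $-\wedge_B\mathcal{Y}$ preserves cofiber sequences and $C^\ast(-;R)$ turns them into fiber sequences. The comparison map is natural, so the class of $\mathcal{X}$ for which it is a quasi-isomorphism is closed under cofibers, shifts and retracts. It therefore suffices to show that every $\mathcal{X}$ satisfying the hypotheses lies in the thick subcategory generated by one object for which the map is evidently an equivalence.

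That object is the trivial parametrized spectrum $B\times\mathbb{S}$, i.e.\ $\Sigma^\infty_B(B_+)$. For it, $C^\ast(\mathcal{X};R)=C^\ast(B;R)$ and $\mathcal{X}\wedge_B\mathcal{Y}=\mathcal{Y}$, so the map is the identity $C^\ast(B;R)\otimes^L_{C^\ast(B;R)}C^\ast(\mathcal{Y};R)\to C^\ast(\mathcal{Y};R)$. Shifts of this object are handled the same way.

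The main step, and the one I expect to be hardest, is showing that $\mathcal{X}$ lies in this thick subcategory, at least after applying $S$- or $R$-linearization. Work with $R\wedge\mathcal{X}$, equivalently with the local system of $R$-module spectra $x\mapsto R\wedge\mathcal{X}_x$; both sides only depend on it. Since $\mathcal{X}_b$ is finite and $S$ is Noetherian, the Postnikov/cellular filtration of $S\wedge\mathcal{X}_b$ has finitely many stages, with homotopy groups finitely generated graded $S$-modules. Use the unipotent filtration $F_\bullet$ on $H^\ast(\mathcal{X}_b;S)$ to build a finite tower of parametrized $S$-module spectra over $B$. Each successive quotient is a trivial local system with fiber a finitely generated $S$-module $M$ concentrated in a single degree. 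Finite global dimension of $S$ gives each such $M$ a finite projective, hence after adding complements finite free, resolution. Combined with triviality of the monodromy on the quotient, this places $B\times HM$ in the thick subcategory generated by $B\times HS$. Base change along $S\to R$ then places the local system $R\wedge\mathcal{X}$ in the thick subcategory generated by $B\times HR$, whose comparison map is the identity as above.

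The delicate points are two. First, the unipotent filtration is stated on cohomology, so it must be lifted to a filtration of the local system of module spectra; I would first dualize via finiteness, then lift degree by degree using the Postnikov tower, mirroring \cite{RamziEMSS}. Second, connectedness of $B$ is needed so that "trivial action" means genuinely equivalent to a constant local system.

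Finally, the maps appearing in the thick-subcategory argument are $C^\ast(B;R)$-linear because everything is built from module maps over $B$. The resulting quasi-isomorphism is therefore one of $C^\ast(B;R)$-modules.
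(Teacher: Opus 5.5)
Your proposal is correct and follows essentially the same route as the paper. The paper also puts the dual cohomology local system $F_B(\mathcal{X},\underline{HS}_B)$ in the thick subcategory generated by the unit, using the finite Postnikov tower, the unipotent filtration and finite global dimension. It then base-changes to $R$ using dualizability of $\mathcal{X}$, and packages your exact-functor/retract step as the statement that the lax monoidal comparison for $\mathrm{Hom}(\underline{HR}_B,-)$ is an equivalence on that thick subcategory. Make explicit, as you already anticipate, that the thick-subcategory induction is run in parametrized $HR$-modules, with both sides extended to functors of an arbitrary such local system. This matters because $\mathcal{X}$ itself need not lie in the thick subcategory of $\mathrm{Sp}_B$ generated by $\Sigma^\infty_B B_+$.
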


\begin{proof}
We use the following notation throughout the proof. Let
$\mathrm{Sp}_B\coloneqq\mathrm{Fun}(B,\mathrm{Sp})$
denote the $\infty$-category of parametrized spectra over $B$. For a spectrum $\mathcal{S}$, let $\underline{\mathcal{S}}_B$ denote the corresponding constant family over $B$. We also write
$\mathcal{L}_R(B)\coloneqq\mathrm{Fun}(B,\mathrm{Mod}_{HR})$,
which can be viewed as the $\infty$-category of parametrized $HR$-modules over $B$. Tensor products (and in general, any natural operations) in these $\infty$-categories are understood to be derived.

For a parametrized spectrum $\mathcal{Z}$ and a commutative ring $T$, set
\[
\mathcal{M}_T(\mathcal{Z})
\coloneqq F_B(\mathcal{Z},\underline{HT}_B).
\]
The monodromy action on
\[
\pi_{-\ast}(\mathcal{M}_S(\mathcal{X})_b)
\cong H^\ast(\mathcal{X}_b;S)
\]
is given by $\rho_b$. Since $\mathcal{X}_b$ is finite, its $S$-cohomology is bounded and finitely generated. Moreover, since $S$ is Noetherian and has finite global dimension, each successive quotient in the given filtration admits a finite resolution by finitely generated projective $S$-modules. The finite Postnikov tower of $\mathcal{M}_S(\mathcal{X})$ therefore shows that it lies in the thick subcategory of $\mathcal{L}_S(B)$ generated by the unit $\underline{HS}_B$.

By \cite[Lemma~4.2]{ABG18}, the finiteness of the fibers implies that $\mathcal{X}$ is dualizable in $\mathrm{Sp}_B$. Hence its dual
$\mathcal{X}^\vee\coloneqq F_B(\mathcal{X},\underline{\mathbb{S}}_B)$
satisfies
\[
F_B(\mathcal{X},\mathcal{Z})
\simeq\mathcal{X}^\vee\wedge_B\mathcal{Z}
\]
naturally in $\mathcal{Z}$. In particular,
\[
\begin{aligned}
\mathcal{M}_S(\mathcal{X})
\otimes_{\underline{HS}_B}\underline{HR}_B
&\simeq
(\mathcal{X}^\vee\wedge_B\underline{HS}_B)
\otimes_{\underline{HS}_B}\underline{HR}_B\\
&\simeq\mathcal{X}^\vee\wedge_B\underline{HR}_B\\
&\simeq\mathcal{M}_R(\mathcal{X}).
\end{aligned}
\]
Since base change preserves finite colimits and retracts, it follows that $\mathcal{M}_R(\mathcal{X})$ lies in the thick subcategory of $\mathcal{L}_R(B)$ generated by the unit $\underline{HR}_B$.

The endomorphism algebra of this unit is
\[
\mathrm{End}_{\mathcal{L}_R(B)}(\underline{HR}_B)
\simeq C^\ast(B;R).
\]
By \cite[Equation~7.4]{MNN17} and the discussion following it, there is an adjunction
\[
-\otimes_{C^\ast(B;R)}\underline{HR}_B
\colon
\mathrm{Mod}_{C^\ast(B;R)}
\;\leftrightharpoons\;
\mathcal{L}_R(B)
\colon
\mathrm{Hom}_{\mathcal{L}_R(B)}(\underline{HR}_B,-),
\]
which canonically makes the left adjoint $\mathrm{Hom}_{\mathcal{L}_R(B)}(\underline{HR}_B,-)$ symmetric monoidal. This adjunction restricts to an equivalence between perfect $C^\ast(B;R)$-modules and the thick subcategory generated by $\underline{HR}_B$. Set
\[
\Gamma_B\coloneqq
\mathrm{Hom}_{\mathcal{L}_R(B)}(\underline{HR}_B,-).
\]
For any $N\in\mathcal{L}_R(B)$, the natural comparison
\[
\Gamma_B(M)\otimes_{C^\ast(B;R)}\Gamma_B(N)
\longrightarrow
\Gamma_B(M\otimes_{\underline{HR}_B}N)
\]
is an equivalence when $M=\underline{HR}_B$. Since both sides are exact in $M$ and preserve retracts, it is an equivalence for every $M$ in the thick subcategory generated by the unit. In particular,
\[
\Gamma_B(\mathcal{M}_R(\mathcal{X}))
\otimes_{C^\ast(B;R)}\Gamma_B(N)
\longrightarrow
\Gamma_B(\mathcal{M}_R(\mathcal{X})
\otimes_{\underline{HR}_B}N)
\]
is an equivalence.

Furthermore, dualizability gives
\[
\begin{aligned}
\mathcal{M}_R(\mathcal{X})
\otimes_{\underline{HR}_B}
F_B(\mathcal{Y},\underline{HR}_B)
&\simeq
\mathcal{X}^\vee\wedge_B
F_B(\mathcal{Y},\underline{HR}_B)\\
&\simeq
F_B(\mathcal{X},F_B(\mathcal{Y},\underline{HR}_B))\\
&\simeq
F_B(\mathcal{X}\wedge_B\mathcal{Y},\underline{HR}_B).
\end{aligned}
\]
Taking $N=\mathcal{M}_R(\mathcal{Y})$ therefore yields
\[
\Gamma_B(\mathcal{M}_R(\mathcal{X}))
\otimes_{C^\ast(B;R)}
\Gamma_B(\mathcal{M}_R(\mathcal{Y}))
\simeq
\Gamma_B(\mathcal{M}_R(\mathcal{X}\wedge_B\mathcal{Y})).
\]

Finally, let $f\colon B\to\ast$ be the projection. Then
$f^\ast HR=\underline{HR}_B$, and the underlying $HR$-module of $\Gamma_B(M)$ is $f_\ast M$. Thus we have natural equivalences
\[
\begin{aligned}
\Gamma_B(\mathcal{M}_R(\mathcal{Z}))
&\simeq f_\ast F_B(\mathcal{Z},f^\ast HR)\\
&\simeq F(f_!\mathcal{Z},HR)\\
&\simeq C^\ast(\mathcal{Z};R).
\end{aligned}
\]
Here the second equivalence follows from
\cite[Proposition~6.8(5)]{ABG18}, and the last is the definition of cochains on a parametrized spectrum. Consequently, the natural comparison identifies with
\[
\begin{aligned}
C^\ast(\mathcal{X};R)
\otimes^L_{C^\ast(B;R)}C^\ast(\mathcal{Y};R)
&\simeq
\Gamma_B(\mathcal{M}_R(\mathcal{X}))
\otimes_{C^\ast(B;R)}
\Gamma_B(\mathcal{M}_R(\mathcal{Y}))\\
&\simeq
\Gamma_B(\mathcal{M}_R(\mathcal{X}\wedge_B\mathcal{Y}))\\
&\simeq C^\ast(\mathcal{X}\wedge_B\mathcal{Y};R).
\end{aligned}
\]
The proposition follows.
\end{proof}

Applying \Cref{prop: tensor product for connected base} gives the following corollary.

\begin{cor}\label{cor: equivariant chain tensor product}
Let $G$ be a topological group, let $R$ be a commutative ring, and let $X,Y$ be $G$-spectra. Suppose that the underlying nonequivariant spectrum of $X$ is finite and that at least one of the following conditions holds:
\begin{itemize}
\item $G$ is a finite $p$-group and $R$ is a commutative $\mathbb{F}_p$-algebra for some prime $p$.
\item $G$ is path-connected.
\end{itemize}
Then the natural map
\[
C^\ast_G(X;R)\otimes^L_{C^\ast(BG;R)}C^\ast_G(Y;R)
\longrightarrow C^\ast_G(X\wedge Y;R)
\]
is a quasi-isomorphism of $C^\ast(BG;R)$-modules.
\end{cor}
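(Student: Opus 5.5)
The plan is to reduce the corollary to \Cref{prop: tensor product for connected base} via the Borel construction. Take $B\coloneqq BG$, which is connected in both cases. Form the parametrized spectra
\[
\mathcal{X}\coloneqq EG_+\wedge_G X,\qquad \mathcal{Y}\coloneqq EG_+\wedge_G Y
\]
over $BG$. Their fibers are the underlying spectra of $X$ and $Y$, and by definition of Borel cochains we have $C^\ast(\mathcal{X};R)\simeq C^\ast_G(X;R)$, $C^\ast(\mathcal{Y};R)\simeq C^\ast_G(Y;R)$, and $C^\ast(B;R)=C^\ast(BG;R)$. The fiberwise smash product is $\mathcal{X}\wedge_B\mathcal{Y}\simeq EG_+\wedge_G(X\wedge Y)$ with the diagonal action, so its cochains are $C^\ast_G(X\wedge Y;R)$. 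Under these identifications the natural comparison map of the corollary is the map in the proposition. Since $X$ is finite, every fiber of $\mathcal{X}$ is a finite spectrum. This leaves two hypotheses to check: the choice of Noetherian ring $S$ of finite global dimension with $R$ an $S$-algebra, and unipotence of the monodromy on $H^\ast(X;S)$.

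In the path-connected case, take $S=\Z$, which is Noetherian of global dimension $1$; every commutative ring is a $\Z$-algebra. Here $\pi_1(BG)\cong\pi_0(G)$ is trivial, so the monodromy is trivial and the one-step filtration $0\subset H^\ast(X;\Z)$ witnesses unipotence.

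In the $p$-group case, take $S=\mathbb{F}_p$, which is a field (global dimension $0$), and $R$ is an $\mathbb{F}_p$-algebra by hypothesis. The monodromy is the action of $\pi_1(BG)=G$ on $V\coloneqq H^\ast(X;\mathbb{F}_p)$, which is a finite-dimensional graded $\mathbb{F}_p$-vector space because $X$ is finite. The standard fact to invoke is that a finite $p$-group acting on a nonzero finite-dimensional $\mathbb{F}_p$-vector space has a nonzero fixed vector: orbits outside $V^G$ have size divisible by $p$, and $|V|$ is a power of $p$. Apply this degreewise to obtain a nonzero homogeneous invariant vector $v_1$. Set $F_1=\mathbb{F}_p v_1$, pass to the $G$-representation $V/F_1$, and induct on dimension. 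This produces a $G$-invariant filtration by graded subspaces with trivial action on the successive quotients, which is exactly the unipotence hypothesis of the proposition.

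The main point to be careful about is the first step: that the Borel construction really yields a parametrized spectrum over $BG$ whose fiberwise smash product models $X\wedge Y$ with the diagonal action, and that under this model the parametrized cochain comparison agrees with the natural map $C^\ast_G(X)\otimes C^\ast_G(Y)\to C^\ast_G(X\wedge Y)$. I would handle this by the standard equivalence $\mathrm{Sp}^{BG}\simeq\mathrm{Fun}(BG,\mathrm{Sp})$, which sends a spectrum with (Borel) $G$-action to the corresponding local system on $BG$. Under this equivalence $f_!$ is the homotopy orbits, and the pointwise smash product corresponds to the diagonal action. The remaining checks are routine.
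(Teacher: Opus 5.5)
Your proposal is correct and matches the paper's proof: the Borel construction over $BG$ reduces the statement to \Cref{prop: tensor product for connected base}, with $S=\Z$ and trivial monodromy when $G$ is path-connected, and $S=\mathbb{F}_p$ when $G$ is a finite $p$-group. The only difference is how you get unipotence in the $p$-group case: you iterate the fixed-vector lemma, while the paper filters by powers of the nilpotent augmentation ideal of $\mathbb{F}_p[G]$, and these are equivalent standard facts.
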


\begin{proof}
Consider the associated parametrized spectra
\[
\mathcal{X}\coloneqq X\times_G EG,
\qquad
\mathcal{Y}\coloneqq Y\times_G EG,
\qquad
B\coloneqq BG.
\]
Suppose first that $G$ is a finite $p$-group and $R$ is an $\mathbb{F}_p$-algebra. The augmentation ideal of $\mathbb{F}_p[G]$ is nilpotent. Its powers therefore give a finite filtration of $H^\ast(X;\mathbb{F}_p)$ with trivial-action successive quotients. Hence the monodromy representation
\[
\rho\colon\pi_1(BG,b)\cong\pi_0(G)
\longrightarrow
\mathrm{Aut}_{\mathbb{F}_p}(H^\ast(X;\mathbb{F}_p))
\]
is unipotent. Applying \Cref{prop: tensor product for connected base} with $S=\mathbb{F}_p$ shows that
\[
C^\ast_G(X;R)\otimes^L_{C^\ast(BG;R)}C^\ast_G(Y;R)
\longrightarrow C^\ast_G(X\wedge Y;R)
\]
is a quasi-isomorphism.

If $G$ is path-connected, then $\pi_1(BG)\cong\pi_0(G)=0$, so the monodromy action is trivial. Applying \Cref{prop: tensor product for connected base} with $S=\Z$ gives the same conclusion.
\end{proof}

The following example shows that the hypotheses of
\Cref{cor: equivariant chain tensor product} cannot be omitted in general.

\begin{ex}
Consider
\[
G=\Z_2,
\qquad
k=\mathbb{F}_3,
\qquad
X=\Sigma^\infty G_+,
\]
where $G$ acts on itself by translation and fixes the basepoint. Since $|G|$ is invertible in $k$, the positive-degree cohomology of $BG$ with coefficients in $k$ vanishes. Thus
\[
C^\ast(BG;k)\simeq k.
\]
The actions on $G$ and on $G\times G$, with the latter equipped with the diagonal action, are free. Consequently,
\[
\begin{aligned}
C^\ast_G(X;k)
&\simeq C^\ast(\Sigma^\infty(G/G)_+;k)
\simeq k,\\
C^\ast_G(X\wedge X;k)
&\simeq C^\ast(\Sigma^\infty((G\times G)/G)_+;k)
\simeq k^2.
\end{aligned}
\]
It follows that
\[
C^\ast_G(X;k)\otimes^L_{C^\ast(BG;k)}C^\ast_G(X;k)
\simeq k
\not\simeq k^2
\simeq C^\ast_G(X\wedge X;k).
\]
Thus the characteristic condition in
\Cref{cor: equivariant chain tensor product} cannot be dropped in general. This example also shows that the unipotence hypothesis in
\Cref{prop: tensor product for connected base} cannot be omitted: the monodromy representation is generated by
\[
\begin{pmatrix}
0&1\\
1&0
\end{pmatrix}
\in\mathrm{GL}_2(\mathbb{F}_3),
\]
which is not unipotent because it has eigenvalue $-1$.
\end{ex}

\subsection{$\Z_4$-equivariant cochains and almost $j$-complexes}

We now turn to $\Z_4$-equivariant cochains.

\begin{lem} \label{lem: BZ4 computation}
    We have a homotopy equivalence
    \[
    C^\ast(B\Z_4;\mathbb{F}_2) \simeq \mathcal{A} \coloneqq (\mathbb{F}_2[t,v],dv=t^2)
    \]
    of dgas over $\mathbb{F}_2$.
\end{lem}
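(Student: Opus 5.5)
The plan is to realize $C^\ast(B\Z_4;\mathbb{F}_2)$ as a dga by an explicit model and then match it with $\mathcal{A}$. The first step is to record the cohomology ring:
\[
H^\ast(B\Z_4;\mathbb{F}_2)\cong\Lambda(t)\otimes\mathbb{F}_2[u],\qquad |t|=1,\ |u|=2.
\]
Here $t^2=0$, because $\mathrm{Sq}^1t=t^2$ is the mod-$2$ Bockstein and $t$ lifts to $\Z_4$ coefficients (\Cref{lem: square zero lemma}). The class $u$ is the mod-$4$ Bockstein of $t$.

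I would also note that $\mathcal{A}=(\mathbb{F}_2[t,v],dv=t^2)$ has the same cohomology once degrees are fixed: $|t|=1$ and $|v|=1$ in the cohomological convention, with $v$ of degree one, matching the $\mathbb{F}_2[v]$ of $B\Z_2$ after restriction. Indeed, $d(v^2)=2vt^2=0$, so $v^2$ is a cycle playing the role of $u$. The cycles of $\mathcal{A}$ are generated by $t$ and $v^2$, with $t^2$ a boundary. Hence
\[
H^\ast(\mathcal{A})=\Lambda(t)\otimes\mathbb{F}_2[v^2],
\]
with the correct Poincaré series.

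The second step is to produce an actual dga map $\mathcal{A}\to C^\ast(B\Z_4;\mathbb{F}_2)$, or a zigzag between them, that induces this isomorphism. Since $\mathcal{A}$ is free as a graded-commutative algebra on $t,v$ with a single relation-killing differential, it is a cofibrant (Sullivan-type) dga in characteristic $2$, at least after taking $E_\infty$ issues into account. A map out of it is therefore determined by choosing a cocycle $\tau$ representing $t$ and a cochain $\nu$ with $\delta\nu=\tau\smile\tau$. Such a $\nu$ exists because $t^2=0$ in cohomology.

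One must then check that the induced map on cohomology sends $v^2$ to the generator $u$. For this I would use the Steenrod cup-$1$ product: $\nu\smile\nu+\nu\smile_1\delta\nu$-type formulas identify the class of $v^2$ with the Massey/Bockstein description of $u$ (the mod-$4$ Bockstein of $t$ equals $\langle t,t,\dots\rangle$-style secondary operation). Alternatively, I would restrict to $B\Z_2\to B\Z_4$, or use the Serre spectral sequence of $B\Z_2\to B\Z_4\to B\Z_2$, where $u$ restricts to a nonzero class.

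The main obstacle is the commutativity issue. $C^\ast(B\Z_4;\mathbb{F}_2)$ is only $E_\infty$, not strictly commutative, so a map from a commutative free algebra is not automatic. The statement asks only for equivalence as dgas, i.e. $A_\infty$/associative, so I would avoid commutativity. I would take instead the tensor algebra model $T(t,v)$ with $dv=t^2$ and impose the commutation $tv=vt$ up to a homotopy. More cleanly, I would compute directly from the standard periodic resolution of $\mathbb{F}_2$ over $\mathbb{F}_2[\Z_4]=\mathbb{F}_2[g]/(g-1)^4$. The minimal resolution has $\mathbb{F}_2$-cochains with zero differential, but the Yoneda $A_\infty$-structure has $m_2(t,t)=0$ and a nontrivial $m_4(t,t,t,t)=u$ (Kadeishvili). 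Separately, $\mathcal{A}$ has a minimal model with exactly the same $m_4$: by homological perturbation, transferring along $v\mapsto$ contraction gives $m_4(t,t,t,t)=v^2$. Since $A_\infty$-structures on $\Lambda(t)\otimes\mathbb{F}_2[u]$ of this form are classified by this $m_4$ (the Hochschild-obstruction step), the two dgas are quasi-isomorphic. Verifying that $m_4$ computation on both sides is where I expect the real work.
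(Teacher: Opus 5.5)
Your proposal has a genuine gap: the final step, which you treat as formal, is unjustified, and the naive version of it is false. Your two $m_4$ computations are fine. On $\mathcal{A}$, take the contraction $h(t^{i+2}v^{2k})=t^iv^{2k+1}$ and run the tree formula. You get $m_3(t,t,t)=p(tv+vt)=0$, and $m_4(t,t,t,t)=p(v\cdot v)=v^2$, with only the tree $(tt)(tt)$ contributing. What is missing is any argument that minimal $A_\infty$-structures on $E=\Lambda(t)\otimes\mathbb{F}_2[u]$ are determined by $m_4(t,t,t,t)$.

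The Hochschild obstruction groups do not vanish. Use the periodic bimodule resolution of $\mathbb{F}_2[t]/(t^2)$, whose Hom-complex has zero differential in characteristic $2$, together with the Koszul resolution of $\mathbb{F}_2[u]$ and K\"unneth. This gives $HH^{n,2-n}(E,E)\neq 0$ (in fact two-dimensional) for every $n\geq 3$.

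These classes are realized by inequivalent structures. For each $N\geq 3$, $\mathrm{Ext}_{\mathbb{F}_2[x]/(x^N)}(\mathbb{F}_2,\mathbb{F}_2)$ is this same graded algebra $E$, with $m_N(t,\ldots,t)=u$ and all shorter Massey powers of $t$ vanishing. These structures are pairwise inequivalent, since their Koszul duals $\mathbb{F}_2[x]/(x^N)$ differ. So matching one value of $m_4$ proves nothing by itself. You would still need to show that, once the full Hochschild class of $m_4$ is fixed, every higher obstruction (including those with inputs $u$) can be gauged away. That is the real work, not the $m_4$ computations.

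Your first suggestion also fails, for the reason you noticed. $\mathcal{A}$ is not cofibrant as an associative dga, so a map out of it needs strictly commuting images of $t$ and $v$.

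The route you mentioned and then dropped is exactly the paper's, and it avoids all of this. Dualize the bar construction of $\mathbb{F}_2[\Z_4]=\mathbb{F}_2[x]/(x^4)$, whose augmentation ideal is spanned by $x,x^2,x^3$. This gives an honest dga model $\mathbb{F}_2\langle a,b,c\rangle$ of $C^\ast(B\Z_4;\mathbb{F}_2)$, with $da=0$, $db=a^2$, $dc=ab+ba$. The generator $c$, dual to $x^3$, is precisely the homotopy imposing $tv=vt$. The assignment $a\mapsto t$, $b\mapsto v$, $c\mapsto 0$ is then a strict dga map, because $tv+vt=0$ in $\mathcal{A}$. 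It sends the cohomology generators $a$ and $b^2+ac+ca$ to $t$ and $v^2$. Both cohomologies are one-dimensional in each degree, so it is a quasi-isomorphism. Your $m_4$ computation is, in effect, the check that $b^2+ac+ca$ is the nonzero class $u$. That element is exactly the representative of the fourfold Massey power $\langle t,t,t,t\rangle$ built from the defining system $b,c$.
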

\begin{proof}
    We follow the computations in \cite[Appendix B]{kang2025exotic}. Consider the augmentation ideal
    \[
    I = (x) \subset \mathbb{F}_2[x]/(x^4) \simeq \mathbb{F}_2[\Z_4].
    \]
    Then \cite[Lemma A.17]{MME19} gives 
    \[
    C_\ast(B\Z_4;\mathbb{F}_2)\simeq B\mathbb{F}_2[\Z_4]\simeq  \left( \bigoplus_{n=0}^\infty \underbrace{I[1]\otimes_{\mathbb{F}_2}\cdots\otimes_{\mathbb{F}_2}I[1]}_n,\partial_B \right)
    \]
    as $\mathbb{F}_2$-codgas, where the comultiplication is given by deconcatenation and the differential $\partial_B$ is given by
    \[
    \partial_B(r_1\otimes\cdots\otimes r_n) = \sum_{k=1}^{n-1} r_1 \otimes \cdots\otimes r_{i-1}\otimes r_i r_{r+1}\otimes r_{i+2}\otimes\cdots\otimes r_n. 
    \]
    Dualizing over $\mathbb{F}_2$ and renaming $x,x^2,x^3$ by $a,b,c$, respectively, gives
    \[
    C^\ast(B\Z_4;\mathbb{F}_2)\simeq (\mathbb{F}_2\langle a,b,c \rangle,d),
    \]
    where $da=0$, $db=a^2$, and $dc=ab+ba$; note that $a,b,c$ are noncommuting variables. Its cohomology is clearly generated by $a$ and $b^2+ac+ca$. Hence the dga morphism
    \[
        C^\ast(B\Z_4;\mathbb{F}_2) \rightarrow \mathcal{A},\quad a \mapsto t,\quad  b \mapsto v,\quad c \mapsto 0
    \]
    is a quasi-isomorphism and thus a homotopy equivalence. Note that its homotopy inverse cannot be realized as a strict dga morphism under these models, but rather as an $A_\infty$-morphism that involves nontrivial higher components.
\end{proof}

\begin{rem}
    While no detailed computation of $C^\ast(B\Z_4;\mathbb{F}_2)$ was previously done, \Cref{lem: BZ4 computation} itself was known for a long time. For example, it also appears in the discussions in \cite[Section 10]{HMZ18}.
\end{rem}

\begin{rem}
Observe that \Cref{lem: BZ4 computation} identifies the underlying associative dg algebras, but does not identify the $E_\infty$-structure on cochains with that arising from the strictly commutative multiplication on $\mathcal{A}$. Throughout this subsection, tensor products of cochain modules are taken using the $E_\infty$-structure on $C^\ast(B\Z_4;\mathbb{F}_2)$. When we abbreviate this operation by $\otimes^L_\mathcal{A}$, we mean the transported tensor product, not the ordinary derived tensor product over the commutative dg algebra $\mathcal{A}$.
\end{rem}

For a bounded cochain complex $(C,d_C)$ of $\mathbb{F}_2[\Z_4]$-modules, not necessarily free, applying a bar construction, similar to the one used in the proof of \Cref{lem: BZ4 computation}, gives the Borel model
\[
\mathrm{RHom}_{\mathbb{F}_2[\Z_4]}(\mathbb{F}_2,C)
\simeq
(C\otimes_{\mathbb{F}_2}\mathcal{A},d_C^{\Z_4}),
\]
where
\[
d_C^{\Z_4}
=
\id_C\otimes d_\mathcal{A}
+d_C\otimes\id_\mathcal{A}
+t(\id_C+j_C)\otimes\id_\mathcal{A}
+v(\id_C+j_C^2)\otimes\id_\mathcal{A}.
\]
Here $\Z_4=\langle j\rangle$, and $j_C$ denotes the action of $j$ on $C$. These models are semifree over $\mathcal{A}$: this follows from the nilpotent augmentation-ideal filtration of $\mathbb{F}_2[\Z_4]$ together with the bounded cochain-degree filtration of $C$.

The restriction map induced by the inclusion $\Z_2\hookrightarrow\Z_4$ is represented by the dg algebra morphism
\[
(B(\Z_2\hookrightarrow\Z_4))^\ast
\colon\mathcal{A}\longrightarrow\mathbb{F}_2[v];
\qquad
t\longmapsto0,\quad v\longmapsto v.
\]
We also note that
\[
\frac{\partial}{\partial t},
\frac{\partial}{\partial v}
\colon\mathcal{A}\longrightarrow\mathcal{A}
\]
are dg derivations of degree $-1$.

Given a dg $\mathcal{A}$-module $M$, choose a semifree replacement $\wt{M}\to M$ and a homogeneous free basis $B$. With respect to this basis, write
\[
d_{\wt{M}}=d_\mathcal{A}+Q,
\]
where $Q$ represents a degree-one $\mathcal{A}$-linear endomorphism. Define
\[
\Phi_t\coloneqq
\left[d_{\wt{M}},\frac{\partial}{\partial t}\right],
\qquad
\Phi_v\coloneqq
\left[d_{\wt{M}},\frac{\partial}{\partial v}\right].
\]
These are degree-zero $\mathcal{A}$-linear maps satisfying
\[
[d_{\wt{M}},\Phi_t]=[d_{\wt{M}},\Phi_v]=0,
\]
and hence are dg endomorphisms of $\wt{M}$. Adapting the arguments in
\cite[Proofs of Lemma~2.8 and Corollary~2.9]{Zemke19connectedsum}
shows that their homotopy classes are independent of $B$. Moreover, for any dg morphism $f\colon\wt{M}\to\wt{N}$ between semifree dg $\mathcal{A}$-modules, we have
\[
\Phi_t f\sim f\Phi_t
\qquad\text{ and }\qquad
\Phi_v f\sim f\Phi_v.
\]
This gives the following lemma.

\begin{lem}\label{lem: Phi t and Phi v are well defined}
For any dg $\mathcal{A}$-module $M$, the derived endomorphism classes $\Phi_t$ and $\Phi_v$ are independent of the choices of the homogeneous free basis and the semifree replacement. Furthermore, for any dg morphism $f\colon M\to N$, we have
\[
\Phi_t f\sim f\Phi_t
\qquad\text{ and }\qquad
\Phi_v f\sim f\Phi_v
\]
as derived morphisms from $M$ to $N$.
\end{lem}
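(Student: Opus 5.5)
The plan is to follow the Zemke-style argument: show that changing the free basis or the semifree replacement changes $\Phi_t$ and $\Phi_v$ only by an explicit chain homotopy, and that every dg morphism commutes with them up to an explicit homotopy built from its formal derivative. Throughout, $\mathcal{A}=\mathbb{F}_2[t,v]$ with $dv=t^2$. The key structural facts are that $\partial/\partial t$ and $\partial/\partial v$ are derivations of $\mathcal{A}$ and that they commute with $d_\mathcal{A}$ up to explicit terms. Indeed, $[d_\mathcal{A},\partial_v]=0$, since $\partial_v(t^2)=0$. Likewise $[d_\mathcal{A},\partial_t]$ is the derivation sending $v\mapsto \partial_t(t^2)=2t=0$ in characteristic $2$, so it also vanishes. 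Consequently, for a semifree $\wt M$ with basis $B$, the operators $\Phi_t=[d_{\wt M},\partial_t]$ and $\Phi_v=[d_{\wt M},\partial_v]$ are $\mathcal{A}$-linear: the Leibniz terms cancel, leaving the coefficientwise derivative of $Q$.

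\textbf{Step 1: the morphism statement.} Let $f\colon\wt M\to\wt N$ be a dg morphism between semifree modules with chosen bases. Define its formal derivative $f_t\coloneqq[f,\partial_t]=f\partial_t+\partial_t f$, and similarly $f_v$. These are $\mathcal{A}$-linear maps of degree $-1$. Apply $[-,\partial_t]$ to the relation $d_{\wt N}f+fd_{\wt M}=0$ and use the graded Jacobi identity. This yields
\[
\Phi_t f+f\Phi_t=d_{\wt N}f_t+f_td_{\wt M},
\]
which is exactly the computation already displayed for $\mathbb{F}_2[v]$ before \Cref{prop: invariance and partial order}. Thus $\Phi_t f\sim f\Phi_t$, and the same argument gives $\Phi_v f\sim f\Phi_v$.

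\textbf{Step 2: independence of the basis.} Take two homogeneous bases $B,B'$ of the same semifree $\wt M$. Apply Step 1 with $f=\id$, where the source carries the derivation taken with respect to $B$ and the target the one taken with respect to $B'$. Then $\partial_t^{B'}-\partial_t^{B}$ is an $\mathcal{A}$-linear map $h$, namely the formal derivative of the change-of-basis matrix. It follows that $\Phi_t^{B'}-\Phi_t^{B}=[d_{\wt M},h]$, which is nullhomotopic.

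\textbf{Step 3: independence of the replacement, and derived morphisms.} Given two semifree replacements $\wt M\to M$ and $\wt M'\to M$, lift to a homotopy equivalence $g\colon\wt M\to\wt M'$ with homotopy inverse $g'$. By Step 1, $g\Phi_t g'\sim\Phi_t' g g'\sim\Phi_t'$. Hence the two classes correspond under the canonical identification of derived endomorphisms. A general derived morphism $M\to N$ is represented by a dg morphism between semifree replacements, so Step 1 gives the commutation statement. We also need to check that replacing $f$ by a homotopic map $f+[d,H]$ preserves these relations; the correction to $f_t$ is $[[d,H],\partial_t]$, which is itself a commutator with $d$ up to the term $[H,\Phi_t]$, and this is harmless.

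\textbf{Main obstacle.} The delicate point is sign and degree bookkeeping. We must confirm that $[d_{\wt M},\partial_t]$ is genuinely $\mathcal{A}$-linear, which uses characteristic $2$ and $dv=t^2$ as shown above. We must also confirm that the formal derivative $f_t$ of a merely $\mathcal{A}$-linear map is again $\mathcal{A}$-linear. This holds because $\partial_t$ is a derivation, so the commutator with an $\mathcal{A}$-linear map is $\mathcal{A}$-linear.
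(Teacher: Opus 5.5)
Your proposal is correct and follows the paper's route. The paper likewise gets basis-independence and the relations $\Phi_t f\sim f\Phi_t$, $\Phi_v f\sim f\Phi_v$ from the Zemke-style formal-derivative argument, which you make explicit via $f_t=[f,\partial_t]$ and $\Phi_t f+f\Phi_t=d f_t+f_t d$, and then deduces independence of the semifree replacement by conjugating with the comparison homotopy equivalence (your extra check for homotopic $f$ is unnecessary, since composition already respects homotopy).
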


\begin{proof}
It remains to explain independence of the semifree replacement. Given two semifree replacements $\wt{M}_1,\wt{M}_2\to M$, the identity of $M$ induces a homotopy equivalence
$f\colon\wt{M}_1\to\wt{M}_2$, well-defined up to homotopy. The relations
$\Phi_t f\sim f\Phi_t$  and $\Phi_v f\sim f\Phi_v$ show that this equivalence identifies the endomorphism classes obtained from the two replacements.
\end{proof}

\begin{rem}\label{rem: Phi v for Z2 is well defined}
The same argument shows that, for any dg $\mathbb{F}_2[v]$-module $M$, the map $\Phi_v$ determines a well-defined derived endomorphism class of $M$.
\end{rem}

For perfect dg $\mathbb{F}_2[v]$-modules, $\Phi_v$ satisfies the following torsion property.

\begin{lem}\label{lem: phi v over F2 v is 1-torsion}
For any perfect dg $\mathbb{F}_2[v]$-module $C$, we have
\[
v\Phi_v\sim0.
\]
\end{lem}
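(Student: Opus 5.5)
Since $\Phi_v$ is well defined up to homotopy and commutes up to homotopy with dg morphisms (\Cref{rem: Phi v for Z2 is well defined} together with the argument of \Cref{lem: Phi t and Phi v are well defined}), the class of $v\Phi_v$ is invariant under homotopy equivalence. We may therefore replace $C$ by any convenient model. Following the proof of \Cref{lem: clean resolution}, $\mathbb{F}_2[v]$ is a graded PID with $\deg v>0$, so $C$ is homotopy equivalent to a finite direct sum of shifted free summands $\mathbb{F}_2[v]$ with zero differential and elementary complexes $E_n=[\mathbb{F}_2[v]\xrightarrow{v^n}\mathbb{F}_2[v]]$. The operator $\Phi_v=[d,\partial/\partial v]$ computed in a homogeneous basis is compatible with direct sums. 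Hence it suffices to verify $v\Phi_v\sim0$ on each summand separately.

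On a free summand with zero differential, $\Phi_v=0$ and there is nothing to prove. On $E_n$, write the generators as $a$ (source) and $b$ (target), so that $da=v^nb$ and $db=0$. Since $\partial a/\partial v=\partial b/\partial v=0$, we have
\[
\Phi_v(a)=\frac{\partial(v^n b)}{\partial v}=nv^{n-1}b,
\qquad
\Phi_v(b)=0.
\]
Both sides are $\mathbb{F}_2[v]$-linear, and $\Phi_v$ is the degree-zero module map $a\mapsto nv^{n-1}b$, $b\mapsto0$. Therefore $v\Phi_v$ sends $a\mapsto nv^nb=n\,da$ and $b\mapsto 0$.

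Define the degree $-1$ module map $H\colon E_n\to E_n$ by $H(b)=n a$ and $H(a)=0$. Then
\[
(dH+Hd)(a)=H(v^nb)=nv^na,
\]
and this is not what we want, since $v\Phi_v(a)=n v^n b$. We therefore instead compare $v\Phi_v$ directly with the null-homotopic map. Take $H'\colon E_n\to E_n$ with $H'(a)=0$ and $H'(b)=0$, and observe that $v\Phi_v$ can be factored through $d$: it equals $n\cdot(x\mapsto d(\pi_a x))$, where $\pi_a$ is the projection onto the $a$-coefficient.

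The cleanest nullhomotopy is $H(a)=0$, $H(b)=na$, i.e. $H=n\,\iota$ with $\iota(b)=a$. Checking this on $b$ gives $(dH+Hd)(b)=d(na)=nv^nb$, whereas $v\Phi_v(b)=0$. So the correct map is $v\Phi_v=n\,(d\circ\pi)$ for a non-chain projection, and a homotopy must be sought with care. The main obstacle is precisely this bookkeeping on $E_n$.

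To resolve it, use that $\Phi_v$ on $E_n$ is $n\,\partial_v(d)$, together with the Euler-type identity $v\,\partial_v(v^n)=n v^n$. Let $\theta$ be the grading derivation $\theta(p(v)x)=(v\partial_v p)x$ on polynomial coefficients in the fixed basis. Then $v\Phi_v=[d,\theta]=d\theta+\theta d$, since $\theta$ kills basis elements. This exhibits $v\Phi_v$ as a commutator with the degree-zero map $\theta$.

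It remains to correct $\theta$ into an honest nullhomotopy. Since $\theta$ is not $\mathbb{F}_2[v]$-linear, set instead $H=\theta'$, the $\mathbb{F}_2[v]$-linear map on $E_n$ with $H(b)=0$ and $H(a)=0$, and absorb the discrepancy using the relation $n v^n b=n\,da$. Concretely, I would define the linear degree $-1$ map $K(b)=0$, $K(v^k b)$ given through the identification of $v^nb$ with $da$. A short check on basis elements (doing cases $n$ even, where $v\Phi_v=0$ outright, and $n$ odd) should give $v\Phi_v=dK+Kd$.

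If this direct case check becomes messy, the fallback is more conceptual. Show that the derived class $v\Phi_v$ lives in $\mathrm{Ext}^0(E_n,E_n)$, which is isomorphic to $\mathbb{F}_2[v]/(v^n)$ in the relevant degree, and that there it is represented by $v\cdot nv^{n-1}=nv^n\equiv0$. This is the step I expect to require the most care.
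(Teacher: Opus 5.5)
Your reduction to shifted free summands and elementary complexes $E_n$ is the same as the paper's. Your computation on $E_n$ is also correct, including the key formula $v\Phi_v=n\,d\circ\pi_a$. But you never produce the nullhomotopy, and the construction you finally commit to cannot work.

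Since $\deg v=1$ and $\Phi_v$ has degree $0$, the map $v\Phi_v$ has degree $+1$. A nullhomotopy is therefore an $\mathbb{F}_2[v]$-linear map $H$ of degree $0$ with $dH+Hd=v\Phi_v$. A linear map $K$ of degree $-1$ makes $dK+Kd$ a map of degree $0$, which cannot equal $v\Phi_v\neq0$ when $n$ is odd. Concretely, with $K(b)=0$ you are forced to take $K(a)\in\mathbb{F}_2\,v^{n-2}b$, and then $(dK+Kd)(a)=0\neq nv^nb$. The Euler-derivation identity $v\Phi_v=[d,\theta]$ is true but does not help, since $\theta$ is not $\mathbb{F}_2[v]$-linear. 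The ``absorb the discrepancy'' step is not an actual construction.

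The missing observation is one line. On $E_n$ the differential lands in $\mathbb{F}_2[v]\,b$ and $db=0$, so $\pi_a\circ d=0$. Hence $H\coloneqq n\,\pi_a$, that is, $H(a)=na$ and $H(b)=0$, is a degree-$0$ module map with $dH+Hd=n\,d\pi_a=v\Phi_v$. This is exactly the paper's nullhomotopy. The projection $\pi_a$ does not need to be a chain map for this to work.

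Your fallback is also misindexed. The class $v\Phi_v$ has degree $1$, so it lives in $H^1\mathrm{Hom}_{\mathbb{F}_2[v]}(E_n,E_n)$, not in degree $0$. Moreover, $v\Phi_v$ already induces zero on $H^\ast(E_n)\cong\mathbb{F}_2[v]/(v^n)$, so vanishing in $\mathrm{Hom}(H^\ast E_n,H^\ast E_n)$ proves nothing. The real content is the $\mathrm{Ext}^1$-type component, where the class of the map $a\mapsto v^nb$, $b\mapsto 0$ vanishes precisely because this map equals $[d,\pi_a]$. That is the same identity again.
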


\begin{proof}
Since $\mathbb{F}_2[v]$ is a PID and $\deg(v)>0$, the perfect module $C$ is represented, up to homotopy equivalence, by a finite direct sum $C_0$ of grading shifts of free summands with zero differential and elementary complexes
\[
E_m=
\left[
\mathbb{F}_2[v]\cdot x
\xrightarrow{v^m}
\mathbb{F}_2[v]\cdot y
\right],
\qquad m>0.
\]
The map $\Phi_v$ vanishes on the free summands. On $E_m$, it is given by
\[
\Phi_v(x)=mv^{m-1}y,
\qquad
\Phi_v(y)=0.
\]
The $\mathbb{F}_2[v]$-linear map $H$ defined by
\[
H(x)=mx,
\qquad
H(y)=0
\]
satisfies $dH+Hd=v\Phi_v$. Thus $v\Phi_v$ is nullhomotopic on each summand, and hence on $C_0$. Naturality of $\Phi_v$ under the homotopy equivalence $C_0\simeq C$ gives the result.
\end{proof}

We next explain how to obtain an almost $j$-complex from a module over $C^\ast(B\Z_4;\mathbb{F}_2)$.

\begin{defn}
A perfect dg $C^\ast(B\Z_4;\mathbb{F}_2)$-module, represented by a dg $\mathcal{A}$-module $M$, is called \emph{SWF-like} if its derived truncation
\[
M_{\Z_2}\coloneqq
M\otimes^L_\mathcal{A}\mathbb{F}_2[v]
\]
is SWF-like as a dg $\mathbb{F}_2[v]$-module.
\end{defn}

Here the truncation is extension of scalars along the restriction map $\mathcal{A}\to\mathbb{F}_2[v]$. Since this map commutes with differentiation with respect to $v$, extension of scalars carries $\Phi_v$ on $M$ to $\Phi_v$ on $M_{\Z_2}$.

\begin{lem}\label{lem: t squares to v and v squares to 0}
Let $M$ be an SWF-like dg $\mathcal{A}$-module. Then the derived endomorphism
$j_M\coloneqq\id_M+\Phi_t$ satisfies
\[
j_M^2\sim\id_M+\Phi_v.
\]
Furthermore, $\Phi_v^2\sim0$.
\end{lem}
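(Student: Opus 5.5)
We work on a semifree model $\wt{M}$ of $M$ over $\mathcal{A}=(\mathbb{F}_2[t,v],dv=t^2)$ with a chosen homogeneous basis, and write $d=d_{\wt M}=d_\mathcal{A}+Q$, $\partial_t=\partial/\partial t$, $\partial_v=\partial/\partial v$, so that $\Phi_t=[d,\partial_t]$ and $\Phi_v=[d,\partial_v]$. The plan is to prove both identities by explicit commutator calculus, using only that $\partial_t,\partial_v$ are degree $-1$ derivations of $\mathcal{A}$ (extended to $\wt M$ via the basis) together with $d^2=0$.

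For the first identity, the key observation is that in characteristic $2$ we have $\partial_t^2=0$ on $\mathcal{A}$ (since $\partial_t^2 t^k=k(k-1)t^{k-2}$ is always even), and hence $\partial_t^2=0$ on $\wt M$ coefficientwise. We also need the commutator of derivations $[d_\mathcal{A},\partial_t]$: since $d_\mathcal{A}v=t^2$ and $\partial_t(t^2)=0$, this is again a derivation, determined on generators by $t\mapsto 0$ and $v\mapsto \partial_t(t^2)-d(\partial_t v)=0$, so it vanishes. Then I would compute
\[
\Phi_t^2=[d,\partial_t]^2=[d,[d,\partial_t]\partial_t]\text{-type expression},
\]
more precisely using the standard identity $[d,\partial_t]^2=\tfrac12[d,[\partial_t,[d,\partial_t]]]$ is unavailable in characteristic $2$, so instead I will expand directly: $\Phi_t^2=d\partial_t d\partial_t+\partial_t d\partial_t d+d\partial_t^2d+\partial_t d^2\partial_t$, and the last two terms vanish. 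The remaining term $d\partial_t d\partial_t+\partial_t d\partial_t d$ should be identified with $\Phi_v$ plus a commutator $[d,K]$ for an explicit $\mathcal{A}$-linear $K$. The guiding heuristic is the Leibniz rule: $\partial_t d_\mathcal{A}\partial_t$ applied to $v$ produces $\partial_t(t^2)$ correction terms, and the relation $dv=t^2$ means that $(\partial_t d)^2$ on $\mathcal{A}$ ``sees'' $\partial_v$. Concretely, I will check on generators that the operator $D\coloneqq(\partial_t d_\mathcal{A})^2+(d_\mathcal{A}\partial_t)^2$ agrees with $[d_\mathcal{A},\partial_v]$ up to a null-homotopic derivation-like correction, and then extend to $\wt M$ by writing $d=d_\mathcal{A}+Q$ with $Q$ $\mathcal{A}$-linear matrices, so that the cross terms become $[Q,\partial_t]$-expressions which are matrices of $t$-derivatives of $Q$'s entries. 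Then $(\id+\Phi_t)^2=\id+\Phi_t^2$ (cross term $2\Phi_t=0$), giving $j_M^2\sim\id+\Phi_v$.

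For $\Phi_v^2\sim0$, I would not compute on $\mathcal{A}$ directly; instead I would argue that the claim can be detected after using SWF-likeness: base change to $\mathbb{F}_2[v]$ carries $\Phi_v$ to $\Phi_v$, and by \Cref{lem: clean resolution} the truncation has a clean model where $\Phi_v^2=0$ strictly. To lift back to $M$, I would mimic the clean-resolution argument over $\mathcal{A}$: $\partial_v^2=0$ in characteristic $2$ (same parity reason), so $\Phi_v^2=d\partial_v d\partial_v+\partial_v d\partial_v d$, and one shows this equals $[d,\partial_v d\partial_v]$-type commutator, i.e.\ $\Phi_v^2=[d,\partial_v\Phi_v]$ up to terms involving $\partial_v^2=0$. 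Indeed $[d,\partial_v\Phi_v]=d\partial_v\Phi_v+\partial_v\Phi_v d=d\partial_v\Phi_v+\partial_v d\Phi_v=\Phi_v^2$ using $[d,\Phi_v]=0$; here $\partial_v\Phi_v$ is $\mathcal{A}$-linear up to $\partial_v$-Leibniz terms, which is what must be verified.

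The main obstacle is the first identity: identifying $\Phi_t^2$ with $\Phi_v$ up to an explicit homotopy, since it depends delicately on $dv=t^2$ and on the non-$\mathcal{A}$-linearity of $\partial_t$ (its Leibniz corrections). I would first verify it on the rank-one free module $\mathcal{A}$ and on the Borel models $(C\otimes\mathcal{A},d_C^{\Z_4})$, where $\Phi_t=\id+j_C$ and $\Phi_v=\id+j_C^2$ directly from the displayed formula for $d_C^{\Z_4}$, making $j_M=j_C$ and the identity $j_M^2=\id+\Phi_v$ literally true; then reduce a general SWF-like $M$ to this case via basis-independence from \Cref{lem: Phi t and Phi v are well defined}.
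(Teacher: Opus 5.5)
\paragraph{The commutator route has a genuine gap.} Your calculus cannot produce an $\mathcal{A}$-linear homotopy. Your leftover term is already an exact commutator: since $\partial_t^2=0$ and $d^2=0$, one has $\Phi_t^2=d\partial_t d\partial_t+\partial_t d\partial_t d=[d,\partial_t d\partial_t]$. The trouble is that $\partial_t d\partial_t$ is not $\mathcal{A}$-linear. Using $[d_\mathcal{A},\partial_t]=0$ and $\partial_t^2=0$, one computes $\partial_t d\partial_t(am)=a\,\partial_t d\partial_t(m)+\partial_t(a)\Phi_t(m)$.

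Your argument for $\Phi_v^2$ has the same defect. We have $\partial_v\Phi_v(am)=a\,\partial_v\Phi_v(m)+\partial_v(a)\Phi_v(m)$, so $\partial_v\Phi_v$ is not a homotopy, and this cannot be ``verified'' away. The identical formal manipulation with $\Phi_v$ replaced by $\id$ would give $\Phi_v=[d,\partial_v]\sim0$. That is false already for $E_1=[\mathbb{F}_2[v]\xrightarrow{v}\mathbb{F}_2[v]]$: both generators lie in degree $0$, so every degree $-1$ linear map vanishes, while $\Phi_v\neq0$. Checking $\Phi_v^2\sim0$ after base change to $\mathbb{F}_2[v]$ does not help either, since extension of scalars along $\mathcal{A}\to\mathbb{F}_2[v]$ is not faithful on derived endomorphisms (it kills $t$, for instance).

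The missing idea is that in characteristic $2$ the relevant operator is the \emph{divided} square of $\partial_t$, namely the second Hasse derivative $D_t^{(2)}$. It is not a polynomial in $\partial_t$, and it satisfies $D_t^{(2)}(t^2)=1$; this is exactly how the relation $dv=t^2$ enters. Writing $d=d_\mathcal{A}+Q$, the equation $d^2=0$ reads $t^2\,\partial_vQ+Q^2=0$. Applying $D_t^{(2)}$, respectively $D_v^{(2)}$, entrywise yields
\[
[d,D_t^{(2)}Q]=\Phi_t^2+\Phi_v,
\qquad
[d,D_v^{(2)}Q]=\Phi_v^2,
\]
where $D_t^{(2)}Q$ and $D_v^{(2)}Q$ are genuinely $\mathcal{A}$-linear of degree $-1$. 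This is the paper's argument, and it needs no hypothesis on $M$ beyond choosing a semifree model.

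\paragraph{The Borel-model fallback needs an unstated input.} This route is much closer to a proof. On $(C\otimes_{\mathbb{F}_2}\mathcal{A},d_C^{\Z_4})$ one reads off $\Phi_t=\id+j_C$ and $\Phi_v=\id+j_C^2$. Hence $\Phi_t^2=\Phi_v$ and $\Phi_v^2=\id+j_C^4=0$ hold on the nose. The naturality statement in \Cref{lem: Phi t and Phi v are well defined} then transports both relations along any homotopy equivalence.

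However, basis-independence does not supply the reduction. You need every SWF-like (in particular perfect) dg $\mathcal{A}$-module to be quasi-isomorphic to such a Borel model. Equivalently, $\mathrm{RHom}_{\mathbb{F}_2[\Z_4]}(\mathbb{F}_2,-)$ must carry $D^b(\mathrm{mod}\,\mathbb{F}_2[\Z_4])$ essentially onto the perfect modules over $C^\ast(B\Z_4;\mathbb{F}_2)\simeq\mathcal{A}$. Here $D^b(\mathrm{mod}\,\mathbb{F}_2[\Z_4])$ is the thick subcategory generated by the trivial module, because $\mathbb{F}_2[\Z_4]$ is local. This is a real Koszul-duality input that you neither state nor justify; the paper only invokes it later, in the proof of \Cref{prop: tensor product formula}.

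With that input supplied, your alternative route is valid, but it depends essentially on perfectness. The Hasse-derivative computation is elementary and works for arbitrary dg $\mathcal{A}$-modules.
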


\begin{proof}
We may assume that $M$ is semifree. With respect to a homogeneous free basis, write $d_M=d_\mathcal{A}+Q$, where $Q$ represents a degree-one $\mathcal{A}$-linear endomorphism. Then
\[
\Phi_t=\frac{\partial Q}{\partial t},
\qquad
\Phi_v=\frac{\partial Q}{\partial v}.
\]
Since $d_\mathcal{A}=t^2\frac{\partial}{\partial v}$, the identity $d_M^2=0$ gives
\[
t^2\frac{\partial Q}{\partial v}+Q^2=0.
\]

For each integer $k\geq0$, let $D_t^{(k)}$ denote the $k$th Hasse derivative with respect to $t$. Thus
\[
D_t^{(k)}(t^mv^n)=\binom{m}{k}t^{m-k}v^n,
\qquad
D_t^{(k)}(xy)=\sum_{i=0}^kD_t^{(i)}(x)D_t^{(k-i)}(y).
\]
Note that $D^{(1)}_t = \frac{\partial}{\partial t}$ and $D^{(0)}_t = \mathrm{Id}$. Applying $D_t^{(2)}$ to the preceding identity gives
\[
\begin{aligned}
0
&=
t^2D_t^{(2)}\left(\frac{\partial Q}{\partial v}\right)
+\frac{\partial Q}{\partial v}
+(D_t^{(2)}Q)Q
+\left(\frac{\partial Q}{\partial t}\right)^2
+Q(D_t^{(2)}Q)\\
&=
t^2\frac{\partial(D_t^{(2)}Q)}{\partial v}
+(D_t^{(2)}Q)Q+Q(D_t^{(2)}Q)
+\Phi_t^2+\Phi_v.
\end{aligned}
\]
For any homogeneous $\mathcal{A}$-linear matrix $P$, the commutator with $d_\mathcal{A}$ is represented by
$t^2\frac{\partial P}{\partial v}$. Therefore the map $H$ represented by $D_t^{(2)}Q$ satisfies
\[
d_MH+Hd_M=\Phi_t^2+\Phi_v.
\]
It follows that $\Phi_t^2\sim\Phi_v$, and hence
\[
j_M^2=(\id_M+\Phi_t)^2
\sim\id_M+\Phi_v.
\]
Similarly, applying the second Hasse derivative with respect to $v$ shows that the map $H_v$ represented by $D_v^{(2)}Q$ satisfies
\[
d_MH_v+H_vd_M=\Phi_v^2.
\]
Thus $\Phi_v^2\sim0$.
\end{proof}

The proofs of \Cref{lem: Phi t and Phi v are well defined,lem: t squares to v and v squares to 0} also have intrinsic interpretations in terms of Hochschild cohomology. The maps $\Phi_t$ and $\Phi_v$ are the images of the degree $-1$ derivation classes
\[
\left[\frac{\partial}{\partial t}\right],
\left[\frac{\partial}{\partial v}\right]
\in H^{-1}\mathrm{Der}_{\mathbb{F}_2}(\mathcal{A})
\]
under the natural map\footnote{A closed derivation of degree $k$ determines a Hochschild cocycle of total degree $k+1$.}
$$
H^{-1}\mathrm{Der}_{\mathbb{F}_2}(\mathcal{A})
\xrightarrow{\hspace{.5em}\phantom{-\otimes^L_{\mathcal{A}}M}\hspace{.5em}}
HH^0(\mathcal{A},\mathcal{A})\cong
H^0\mathrm{RHom}_{\mathcal{A}\otimes_{\mathbb{F}_2}\mathcal{A}^{\mathrm{op}}}
(\mathcal{A},\mathcal{A})
\xrightarrow{\hspace{.5em}-\otimes^L_{\mathcal{A}}M\hspace{.5em}}
H^0\mathrm{RHom}_{\mathcal{A}}(M,M);
$$
note that this fact already recovers \Cref{lem: Phi t and Phi v are well defined}. Observe that the characteristic map
\[
HH^\ast(\mathcal{A},\mathcal{A})
\cong
H^\ast\mathrm{RHom}_{\mathcal{A}\otimes_{\mathbb{F}_2}\mathcal{A}^{\mathrm{op}}}
(\mathcal{A},\mathcal{A})
\xrightarrow{\hspace{.5em}-\otimes^L_\mathcal{A}M\hspace{.5em}}
H^\ast\mathrm{RHom}_\mathcal{A}(M,M)
\]
takes Hochschild cup products to compositions of derived endomorphisms. Since we have
\[
HH^\ast(C^\ast(BG,k),C^\ast(BG,k))\simeq HH^\ast(k[G],k[G])\simeq k[G]\otimes_k H^\ast(BG;k),
\]
for any prime $p$, finite abelian $p$-group $G$, and field $k$ of characteristic $p$ by \cite[Theorem 3.14]{briggs2017infinity} and \cite[Theorem 2.1]{cibils1997hochschild}, we see that
\[
HH^\ast(\mathcal{A},\mathcal{A})
\cong
\mathbb{F}_2[\Z_4]\otimes_{\mathbb{F}_2} H^\ast(B\Z_4;\mathbb{F}_2) 
\cong
\mathbb{F}_2[\epsilon,R,S]/(\epsilon^4,R^2),
\]
where
\[
\deg(\epsilon)=0,
\qquad
\deg(R)=1,
\qquad
\deg(S)=2.
\]
The derivation classes
$\left[\frac{\partial}{\partial t}\right]$ and
$\left[\frac{\partial}{\partial v}\right]$
map to $\epsilon$ and $\epsilon^2$, respectively. Their relations therefore recover
\Cref{lem: t squares to v and v squares to 0}.

\begin{lem}\label{lem: extracting j from eqv cochain}
Let $(C,d_C)$ be a bounded cochain complex of $\mathbb{F}_2[\Z_4]$-modules, and let
\[
M\coloneqq
\mathrm{RHom}_{\mathbb{F}_2[\Z_4]}(\mathbb{F}_2,C).
\]
Then the endomorphism $j^\ast$ of $M$ induced by the action of the generator $j\in\Z_4$ is homotopic to $j_M$.
\end{lem}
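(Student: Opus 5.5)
We work directly with the explicit Borel model
\[
M\simeq(C\otimes_{\mathbb{F}_2}\mathcal{A},d_C^{\Z_4}),\qquad
d_C^{\Z_4}=\id_C\otimes d_\mathcal{A}+d_C\otimes\id_\mathcal{A}+t(\id_C+j_C)\otimes\id_\mathcal{A}+v(\id_C+j_C^2)\otimes\id_\mathcal{A}.
\]
This model is semifree over $\mathcal{A}$. Any $\mathbb{F}_2$-basis of $C$ gives a homogeneous $\mathcal{A}$-basis, with respect to which $d_M=d_\mathcal{A}+Q$ and
\[
Q=d_C+t(\id+j_C)+v(\id+j_C^2).
\]
By \Cref{lem: Phi t and Phi v are well defined}, we may compute $\Phi_t$ in this basis. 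Differentiating with respect to $t$ gives
\[
\Phi_t=\frac{\partial Q}{\partial t}=(\id_C+j_C)\otimes\id_\mathcal{A}.
\]
Hence $j_M=\id_M+\Phi_t=j_C\otimes\id_\mathcal{A}$ on the nose, and so $j_M$ is a strict dg endomorphism of this model.

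The remaining task is to identify the map $j^\ast$ induced on $\mathrm{RHom}_{\mathbb{F}_2[\Z_4]}(\mathbb{F}_2,C)$ by the action of the generator with $j_C\otimes\id_\mathcal{A}$ under the bar-construction equivalence. The induced action is given by postcomposition with $j_C$, which is $\mathbb{F}_2[\Z_4]$-linear because $\Z_4$ is abelian, and leaves the $\mathrm{RHom}$ variable untouched. We would therefore argue that the bar model is natural in the $\mathbb{F}_2[\Z_4]$-module $C$, so that postcomposition by an equivariant chain map $g\colon C\to C'$ corresponds to $g\otimes\id_\mathcal{A}$. This requires two checks:
\begin{itemize}
\item the formula for $d_C^{\Z_4}$ is natural in $C$, which holds because $j_C$ commutes with $g$;
\item the comparison quasi-isomorphism from the cobar complex $\operatorname{Hom}(B\mathbb{F}_2[\Z_4],C)$ to $C\otimes\mathcal{A}$ (obtained as in \Cref{lem: BZ4 computation}, via $a\mapsto t$, $b\mapsto v$, $c\mapsto0$) is natural in $C$.
\end{itemize}
Applying this to $g=j_C$ identifies $j^\ast$ with $j_C\otimes\id$, and hence with $j_M$.

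The main obstacle is the naturality of the comparison. The comparison map only respects the $\mathcal{A}$-structure up to $A_\infty$ corrections, so one must check that these higher components are built from $d_\mathcal{A}$ and the module structure, and hence commute with postcomposition by any equivariant map. Our first approach would be to note that the cobar side $\operatorname{Hom}(\bigoplus I[1]^{\otimes n},C)$ is visibly functorial in $C$, and that the transfer to $C\otimes\mathcal{A}$ uses only data from $\mathbb{F}_2[\Z_4]$, namely the contraction of the bar complex. Functoriality in $C$ is then preserved, up to homotopy, by the homological perturbation lemma. If only a homotopy is obtained rather than strict equality, this still suffices, since the statement concerns homotopy classes.
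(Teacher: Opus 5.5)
Your proposal is correct and follows the paper's proof: in the Borel model, $\Phi_t=\partial Q/\partial t=(\id_C+j_C)\otimes\id_\mathcal{A}$, so $j_M=j_C\otimes\id_\mathcal{A}$, and the paper then simply identifies this with the map induced by $j$. The naturality check you add, which the paper leaves implicit, holds strictly and needs no perturbation argument: the comparison from the cobar model is $\id_C$ tensored with the strict dga map $a\mapsto t$, $b\mapsto v$, $c\mapsto 0$, which sends the twisting term $a\,x+b\,x^2+c\,x^3$ (with $x=\id_C+j_C$) to $t(\id_C+j_C)+v(\id_C+j_C^2)$, so it commutes on the nose with $g\otimes\id$ for every equivariant chain map $g$.
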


\begin{proof}
As above, we may use the semifree Borel model
\[
M\simeq
(C\otimes_{\mathbb{F}_2}\mathcal{A},d_C^{\Z_4}).
\]
In this model, differentiation of the differential gives
\[
\left[d_C^{\Z_4},\frac{\partial}{\partial t}\right]
=
(\id_C+j_C)\otimes\id_\mathcal{A}.
\]
Consequently, $j_M$ is represented by
\[
\id_{C\otimes_{\mathbb{F}_2}\mathcal{A}}
+
\left[d_C^{\Z_4},\frac{\partial}{\partial t}\right]
=
j_C\otimes\id_\mathcal{A},
\]
which is the endomorphism induced by the action of $j$. The lemma follows.
\end{proof}

Motivated by \Cref{lem: extracting j from eqv cochain}, for a dg $\mathcal{A}$-module $M$ we set
\[
j_M^{\Z_2}\coloneqq j_M\otimes\id
\in
H^0\mathrm{RHom}_{\mathbb{F}_2[v]}(M_{\Z_2},M_{\Z_2})
\]
and define
\[
\mathrm{AjCplx}(M)
\coloneqq
(M_{\Z_2},\widehat{j_M^{\Z_2}}).
\]
By \Cref{lem: t squares to v and v squares to 0}, if $M$ is SWF-like, then $\mathrm{AjCplx}(M)$ is an almost $j$-complex.

\begin{prop}\label{prop: tensor product formula}
Let $M$ and $N$ be SWF-like dg $C^\ast(B\Z_4;\mathbb{F}_2)$-modules. Then there is a homotopy equivalence of almost $j$-complexes
\[
\mathrm{AjCplx}(M)\otimes\mathrm{AjCplx}(N)
\simeq
\mathrm{AjCplx}
\left(
M\otimes^L_{C^\ast(B\Z_4;\mathbb{F}_2)}N
\right).
\]
\end{prop}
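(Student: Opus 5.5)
The strategy is to reduce to explicit semifree models over $\mathcal{A}$, where the transported tensor product can be written down, and then compare the result with the twisted tensor product used in \Cref{defn: almost local j eqv group}. Replace $M$ and $N$ by semifree dg $\mathcal{A}$-modules with homogeneous bases, and write $d_M=d_\mathcal{A}+Q_M$ and $d_N=d_\mathcal{A}+Q_N$. The first task is to produce a semifree model of $M\otimes^L_{C^\ast(B\Z_4;\mathbb{F}_2)}N$ using the $E_\infty$-structure transported along \Cref{lem: BZ4 computation}. Because $\mathcal{A}$ is the cochain algebra of $B\Z_4$, I would realize the tensor product through the diagonal $B\Z_4\to B\Z_4\times B\Z_4$. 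Concretely, I would compute the coproduct $\Delta\colon\mathcal{A}\to\mathcal{A}\otimes\mathcal{A}$ as an $A_\infty$/$E_\infty$ map, up to homotopy. Since the cup-$1$ structure on $B\Z_4$ is nontrivial, this should take the form
\[
t\mapsto t\otimes1+1\otimes t,\qquad v\mapsto v\otimes1+1\otimes v+t\otimes t.
\]
This correction term is the expected source of the $v\Phi_v\otimes\Phi_v$-type twist. The model for the derived tensor product is then $M\otimes_{\mathbb{F}_2}N$, regarded as an $\mathcal{A}$-module via $\Delta$, with differential $Q_M\otimes\id+\id\otimes Q_N$ rewritten in $\mathcal{A}$-linear form. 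Equivalently, it is a semifree module on the product basis, and its differential $Q$ has $\partial Q/\partial t$ and $\partial Q/\partial v$ computed from the Leibniz rule together with the correction in $\Delta(v)$. As a sanity check I would run this on Borel models $C\otimes\mathcal{A}$ from \Cref{lem: extracting j from eqv cochain}, using \Cref{cor: equivariant chain tensor product}. In that case the tensor product must be the Borel model of $C\otimes C'$ with diagonal $j$-action, and the terms $t(\id+j)$ and $v(\id+j^2)$ then pin down the formula.

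With this model in hand, I would apply the truncation $t\mapsto0$. Set $P_M\coloneqq Q_M|_{t=0}$ and $P_N\coloneqq Q_N|_{t=0}$. The truncated module $(M\otimes^L N)_{\Z_2}$ then becomes $M_{\Z_2}\otimes_{\mathbb{F}_2[v]}N_{\Z_2}$ with differential
\[
P_M\otimes\id+\id\otimes P_N+(\text{terms coming from }t\otimes t).
\]
The $t\otimes t$ correction in $\Delta(v)$ acts via $\partial_t\otimes\partial_t$ and contributes $v\,\Phi_v\otimes\Phi_v$ up to homotopy; this uses $\Phi_t^2\sim\Phi_v$ from \Cref{lem: t squares to v and v squares to 0}. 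Up to this homotopy the result should match $d_\otimes$ from \Cref{defn: almost local j eqv group}. First, though, I would pass to clean resolutions via \Cref{lem: clean resolution} and transport along \Cref{prop: invariance and partial order}, so that the $\Phi_v$'s are strict square-zero maps. The preceding remark shows $v\Phi_v\sim0$, so the precise twist affects only the identification of $j$, not the underlying homotopy type; that comparison is therefore essentially free.

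Next comes the $j$-action. By definition, $j_{M\otimes N}=\id+\Phi_t^{M\otimes N}$, and Leibniz in characteristic $2$ gives
\[
\Phi_t^{M\otimes N}=\Phi_t\otimes\id+\id\otimes\Phi_t+(\text{cross term}).
\]
I would show that after hat truncation ($v=0$) the cross term equals $\hat\Phi_t\otimes\hat\Phi_t$, giving
\[
\widehat{j}=\id+\hat\Phi_t\otimes1+1\otimes\hat\Phi_t+\hat\Phi_t\otimes\hat\Phi_t=\hat j_M\otimes\hat j_N.
\]
This is exactly the endomorphism $j\otimes j'$ required on the tensor of almost $j$-complexes. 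The identification must hold only up to homotopy on $\hat C$, so the homotopies arising from replacing $Q_M$ by clean models can be absorbed.

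The main obstacle is the first step: pinning down the transported $E_\infty$ tensor product over $C^\ast(B\Z_4;\mathbb{F}_2)$ in a way compatible with the strict model $\mathcal{A}$. As the earlier remark warns, $\mathcal{A}$'s commutative product is not the right one, and the quasi-isomorphism of \Cref{lem: BZ4 computation} has no strict inverse. I would first try to sidestep this by working only with modules of Borel form $\mathrm{RHom}_{\mathbb{F}_2[\Z_4]}(\mathbb{F}_2,C)$, where \Cref{cor: equivariant chain tensor product} identifies the tensor product with the Borel model of $C\otimes C'$ carrying the diagonal action. There everything is explicit, and the cross terms come directly from $j\otimes j$ and $j^2\otimes j^2$. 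The remaining step would be to argue that every SWF-like module arising in the paper is of this form, which suffices for \Cref{lem: LE to J homomorphism}.
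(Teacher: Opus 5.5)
\textbf{Gap 1: the coproduct route does not work as written.} In the paper's conventions $\deg t=\deg v=1$; this is forced by $dv=t^2$ and by $\deg v=1$ in $\mathbb{F}_2[v]$. So $t\otimes t$ has degree $2$ and cannot appear in $\Delta(v)$.

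More fundamentally, $M\otimes^L_{C^\ast(B\Z_4;\mathbb{F}_2)}N$ is base change along the \emph{multiplication} of the cochain algebra. This is what the diagonal $B\Z_4\to B\Z_4\times B\Z_4$ induces. Restricting $M\otimes_{\mathbb{F}_2}N$ along a coproduct on $\mathcal{A}$ computes something else: already for $M=N=\mathcal{A}$ it is far too large.

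The cross terms you want come instead from the group-like coproduct $j\mapsto j\otimes j$ on the Koszul-dual side. In Borel models, $\id+j_C\otimes j_{C'}$ and $\id+j_C^2\otimes j_{C'}^2$ expand as $\Phi\otimes\id+\id\otimes\Phi+\Phi\otimes\Phi$, with $\Phi_t=\id+j$ and $\Phi_v=\id+j^2$. So the term $v\Phi_v\otimes\Phi_v$ appears on the nose, not via a $t\otimes t$ term combined with $\Phi_t^2\sim\Phi_v$.

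\textbf{Gap 2: the Borel fallback is restricted in scope.} The fallback through Borel models is the right route, and it is exactly what the paper does. As proposed, though, it only treats modules already of the form $\mathrm{RHom}_{\mathbb{F}_2[\Z_4]}(\mathbb{F}_2,C)$. You only plan to verify this form for the cochains of spectra that occur in the paper. That would suffice for \Cref{lem: LE to J homomorphism}, but it does not prove the proposition for arbitrary SWF-like modules.

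The missing idea is that perfectness alone forces Borel form. As in the proof of \Cref{prop: tensor product for connected base}, the functor $\mathrm{RHom}_{\mathbb{F}_2[\Z_4]}(\mathbb{F}_2,-)$ restricts to an equivalence from the thick subcategory generated by the trivial module onto perfect $C^\ast(B\Z_4;\mathbb{F}_2)$-modules, and it is compatible with tensor products there. Since the augmentation ideal of $\mathbb{F}_2[\Z_4]$ is nilpotent, this subcategory consists of bounded complexes of finite-dimensional modules. This gives $M\simeq\mathrm{RHom}(\mathbb{F}_2,C)$, $N\simeq\mathrm{RHom}(\mathbb{F}_2,C')$, and $M\otimes^L N\simeq\mathrm{RHom}(\mathbb{F}_2,C\otimes_{\mathbb{F}_2}C')$ with the diagonal action. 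For general $C$ this is the algebraic thick-unit argument, not \Cref{cor: equivariant chain tensor product}, which concerns spectra.

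\textbf{Completing the argument.} After this step the computation is immediate and needs no clean resolutions.
\begin{enumerate}
\item The truncations $(C\otimes\mathbb{F}_2[v],\,d_C+v(\id+j_C^2))$ are already strictly clean, because $(\id+j_C^2)^2=\id+j_C^4=0$.
\item The twisted differential of \Cref{defn: almost local j eqv group} then simplifies to $d_C\otimes\id+\id\otimes d_{C'}+v(\id+j_C^2\otimes j_{C'}^2)$. This is precisely the truncated Borel differential of $C\otimes_{\mathbb{F}_2}C'$.
\item By \Cref{lem: extracting j from eqv cochain}, both hat-truncated actions equal $j_C\otimes j_{C'}$.
\end{enumerate}
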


\begin{proof}
By the perfectness of $M$ and $M'$ and a thick-unit equivalence argument used in the proof of
\Cref{prop: tensor product for connected base}, there exist bounded complexes $C,C'$ of finite-dimensional $\mathbb{F}_2[\Z_4]$-modules such that
\[
M\simeq
\mathrm{RHom}_{\mathbb{F}_2[\Z_4]}(\mathbb{F}_2,C)
\qquad\text{ and }\qquad
N\simeq
\mathrm{RHom}_{\mathbb{F}_2[\Z_4]}(\mathbb{F}_2,C').
\]
Write $j_C$ and $j_{C'}$ for the respective actions of the generator. Using the Borel models, we may write
\[
\begin{aligned}
d_M
&=
\id_C\otimes d_\mathcal{A}
+d_C\otimes\id_\mathcal{A}
+t(\id_C+j_C)\otimes\id_\mathcal{A}
+v(\id_C+j_C^2)\otimes\id_\mathcal{A},\\
d_N
&=
\id_{C'}\otimes d_\mathcal{A}
+d_{C'}\otimes\id_\mathcal{A}
+t(\id_{C'}+j_{C'})\otimes\id_\mathcal{A}
+v(\id_{C'}+j_{C'}^2)\otimes\id_\mathcal{A}.
\end{aligned}
\]
After truncation to $\mathbb{F}_2[v]$, the two models are clean, since
\[
\Phi_{v,M_{\Z_2}}=(\id_C+j_C^2)\otimes\id,
\qquad
\Phi_{v,N_{\Z_2}}=(\id_{C'}+j_{C'}^2)\otimes\id,
\]
and both maps square to zero.

Thus $\mathrm{AjCplx}(M)\otimes\mathrm{AjCplx}(N)$ is represented by
\[
\left(
C\otimes_{\mathbb{F}_2}C'\otimes_{\mathbb{F}_2}\mathbb{F}_2[v],
d_\otimes,j_\otimes
\right),
\]
where
\[
\begin{aligned}
d_\otimes
&=
(d_C\otimes\id+\id\otimes d_{C'})\otimes\id\\
&\qquad
+v\bigl(
j_C^2\otimes\id+\id\otimes j_{C'}^2
+(\id+j_C^2)\otimes(\id+j_{C'}^2)
\bigr)\otimes\id\\
&=
(d_C\otimes\id+\id\otimes d_{C'})\otimes\id
+v(\id+j_C^2\otimes j_{C'}^2)\otimes\id.
\end{aligned}
\]
On the hat truncation, identified with $C\otimes_{\mathbb{F}_2}C'$, the action is
\[
j_\otimes=j_C\otimes j_{C'}.
\]

The same thick-unit argument as in the proof of
\Cref{prop: tensor product for connected base}
gives a natural equivalence
\[
M\otimes^L_{C^\ast(B\Z_4;\mathbb{F}_2)}N
\simeq
\mathrm{RHom}_{\mathbb{F}_2[\Z_4]}
(\mathbb{F}_2,C\otimes_{\mathbb{F}_2}C'),
\]
where $C\otimes_{\mathbb{F}_2}C'$ carries the diagonal $\Z_4$-action. Its Borel differential is
\[
\begin{aligned}
d_{C\otimes_{\mathbb{F}_2}C'}^{\Z_4}
&=
\id\otimes\id\otimes d_\mathcal{A}
+(d_C\otimes\id+\id\otimes d_{C'})\otimes\id\\
&\qquad
+t(\id+j_C\otimes j_{C'})\otimes\id
+v(\id+j_C^2\otimes j_{C'}^2)\otimes\id.
\end{aligned}
\]
Therefore, by \Cref{lem: extracting j from eqv cochain}, the almost $j$-complex associated with this module is represented by
\[
\left(
C\otimes_{\mathbb{F}_2}C'\otimes_{\mathbb{F}_2}\mathbb{F}_2[v],
d'_\otimes,j'_\otimes
\right),
\]
where
\[
d'_\otimes
=
(d_C\otimes\id+\id\otimes d_{C'})\otimes\id
+v(\id+j_C^2\otimes j_{C'}^2)\otimes\id
\]
and, on the hat truncation,
\[
j'_\otimes=j_C\otimes j_{C'}.
\]
Since $d_\otimes=d'_\otimes$ and $j_\otimes=j'_\otimes$, the proposition follows.
\end{proof}

\bibliographystyle{alpha}
\bibliography{tex}

\end{document}